\numberwithin{equation}{subsection}
\date{}
\newtheorem{thm}{Theorem}[section]
\newtheorem{prop}[thm]{Proposition}
\newtheorem{cor}[thm]{Corollary}
\newtheorem{lem}[thm]{Lemma}
\newtheorem{claim}[thm]{Claim}
\newtheorem{prob}[thm]{Problem}
\newtheorem{question}[thm]{Question}
\newcommand{\tp}{{\bar\otimes}}
\newcommand{\mr}{\mathcal R}
\newtheorem{defn}[thm]{Definition}
\newcommand{\ca}{\curvearrowright}
\newcommand{\G}{\Gamma}
\newcommand{\La}{\Lambda}
\newcommand{\g}{\gamma}
\newcommand{\la}{\lambda}
\newcommand{\de}{\delta}
\newcommand{\ra}{{\rightarrow}}
\newcommand{\mc}{\mathbb{C}}
\newcommand{\email}{Email: } 
\begin{document}

\title{Rigidity results for von Neumann algebras arising from mixing extensions of profinite actions of groups on probability spaces}
\author{Ionut Chifan and Sayan Das}

\maketitle

\begin{abstract} \noindent  Motivated by Popa's seminal work \cite{Po04}, in this paper, we provide a fairly large class of examples of group actions $\G \ca X$ satisfying the extended Neshveyev-St\o rmer rigidity phenomenon \cite{NS03}: whenever $\La \ca Y$ is a free ergodic pmp action and there is a $\ast$-isomorphism $\Theta:L^\infty(X)\rtimes \G \ra L^\infty(Y)\rtimes \La$ such that $\Theta(L(\G))=L(\La)$ then the actions $\G\ca X$ and $\La \ca Y$ are conjugate (in a way compatible with $\Theta$). We also obtain a complete description of the intermediate subalgebras of all (possibly non-free) compact extensions of group actions in the same spirit as the recent results of Suzuki \cite{Suzuki}. This yields new consequences to the study of rigidity for crossed product von Neumann algebras and to the classification of subfactors of finite Jones index.     
\end{abstract}

\section{Introduction}
In the mid thirties Murray and von Neumann found a natural way to associate a von Neumann algebra to any measure preserving action $\G \ca X$ of a countable group $\G$ on a probability space $X$.  This is called the group measure space von Neumann algebra, denoted by $L^{\infty}(X) \rtimes \G$. The most interesting case for study is when the initial action $\G \ca X$ is free and ergodic, in which case the group measure space construction is in fact a type $\rm II_1$ factor. When $X$ is a singleton the group measure space construction yields just the group von Neumann algebra that will be denoted by $L(\G)$. The latter is a II$_1$  factor specifically when all nontrivial conjugacy classes of $\G$ are infinite (henceforth abbreviated as the icc property).   
\vskip 0.05in

\noindent A problem of central importance in von Neumann algebras is to determine how much information about the action $\G \ca X$ can be recovered from the isomorphism class of $L^\infty(X)\rtimes \G$. An unprecedented progress in this direction emerged over the last decade from Popa's influential deformation/rigidity theory \cite{Po06}. A remarkable achievement of this theory was the discovery of first classes of examples of actions that are entirely remembered by their von Neumann algebras; for some examples see \cite{Po06,Po07,Io08,Pe09,PV09, Io10,CP10,Va10,HPV10,FV10,CS11,CSU11,PV11,PV12,Io12,Bo12,CIK13,CK15,Dr16,GIT-D16}. We refer the reader to the surveys \cite{Vaicm, Io18} for an overview of the recent developments.

\vskip 0.05in


\noindent There are two distinguished subalgebras of $L^\infty(X)\rtimes \G$: the coefficient (or Cartan) subalgebra $L^\infty(X)\subset L^\infty(X)\rtimes \G$ and the group von Neumann subalgebra $L(\G)\subset L^\infty(X)\rtimes \G$. The classification of group measure space von Neumann algebra is closely related to the study of these two inclusions of von Neumann algebras. For instance, in \cite{Si55} Singer observed that the study of the inclusion $L^{\infty}(X) \subset L^{\infty}(X) \rtimes \G$ amounts to the study of the equivalence relation induced by the orbits of $\G\ca X$. Thus reconstructing the action $\G \ca X$ from the inclusion $L^\infty(X)\subset L^\infty(X)\rtimes \G$ relies upon the reconstruction from its orbits. This theme in contemporary ergodic theory is known as orbit equivalence rigidity. The study of orbit equivalence rigidity has received a lot of attention over the last couple of decades and has major consequences to the classification of von Neumann algebras in general, and the structure of the crossed product algebras in particular; for instance see \cite{Fu99a,Ga09,MS04,Ki06,Io08,CK15,GIT-D16}.  

\noindent Deriving information about the action $\G \ca X$ from the other inclusion $L(\G)\subset L^\infty(X)\rtimes \G$ is another topic which is implicit in many core rigidity results in von Neumann algebras \cite{NS03,Po03,Po04,OP07}.  When $\G$ is abelian $L^\infty(X)\rtimes \G=\mathcal R$ is the hyperfinite $\rm II_1$ factor and each of $L^{\infty}(X)$ and $L(\G)$ is a maximal abelian subalgebra of $\mathcal R$ (henceforth abbreviated as MASA). In their study on structural aspects of these MASAs in \cite{NS03} Neshveyev and St\o rmer discovered that the positions of these two MASAs inside $\mathcal R$ completely determines the action. More precisely, they showed the following: \emph{Let $\G$ be an infinite abelian group, $\G \ca X$ be a weak mixing action and $\La \ca Y$ be any action. If there is a $\ast$-isomorphism $\Theta: L^\infty(X)\rtimes \G \ra L^\infty(Y)\rtimes \La$ satisfying $\Theta(L(\G))= L(\La)$ and $\Theta(L^\infty(X))$ is inner conjugate to $L^\infty(Y)$ then $\G\ca X$ is conjugate with $\La\ca Y$ (in a way compatible to $\Theta$).} They also conjectured the same statement holds without the inner conjugacy of the Cartan subalgebras condition. In other words the inclusion $L(\G) \subset L^{\infty}(X) \rtimes \G$ alone completely captures the entire crossed product structure of $L^\infty(X)\rtimes \G$. 
\noindent The first examples of actions satisfying the full statement of Neshveyev-St\o rmer  conjecture emerged from the impressive work of Popa on the classification of von Neumann algebras associated with Bernoulli actions, \cite{Po03,Po04}. Specifically, using his influential deformation/rigidity theory Popa was able to show that this is the case for all clustering (e.g.\ Bernoulli) actions $\G\ca X$ \cite[Theorem 0.7]{Po04}. Remarkably, this holds even when $\G$ is nonabelian. These significant initial advances strongly suggest that the Neshveyev-Stormer conjecture could hold in a much larger generality that supersedes the amenable regime (e.g.\ $\G$ is abelian).  Motivated by this and the implicit relevance to the study of rigidity aspects for crossed products it is natural to investigate the following extended version of the Neshveyev-St\o rmer rigidity question: 
\begin{question}[Extended Neshveyev-Stormer rigidity question]\label{ENSconj}
	Let $\G$ and $\La$ be icc countable discrete groups and let $\G \ca X$ and $\La \ca Y$ be free, ergodic, pmp actions. Assume that there is a $\ast$-isomorphism $\Theta: L^{\infty}(X) \rtimes \G \rightarrow L^{\infty}(Y) \rtimes \La$ such that $\Theta(L(\G))=L(\La)$. Under what conditions on $\G \ca X$ are the actions $\G \ca X$ and $\La \ca Y$ conjugate?
\end{question}

\noindent Besides Popa's examples at this time there are several other families of specific actions $\G\ca X$ for which Question \ref{ENSconj} has a solution. These arise mostly from  decade-long developments in the classification of von Neumann algebras via Popa's deformation/rigidity program. For instance, this is the case for all  $W^*$-superrigid actions
(see \cite{Io18} for a survey on $W^{\ast}$ superrigidity and the references therein). Also, using \cite[Theorem 5.2]{Po04} one can easily see that the rigidity phenomenon in Question \ref{ENSconj} is also satisfied by any weak mixing action $\G \ca X$ for which, up to unitary conjugacy, $L^\infty(X)$ is the unique group measure space Cartan subalgebra of $L^\infty(X)\rtimes \G$. This way one can get more examples using the recent results on uniqueness of Cartan subalgebras, see \cite{OP07,PV11,PV12,Io12,CIK13,CK15} for example. However not much was known beyond these classes of examples and it remained open to find a more intrinsic approach to Question \ref{ENSconj} which does not rely on uniqueness of Cartan subalgebras results from deformation/rigidity theory. 
\vskip 0.05in
\noindent In this article we develop new technical aspects that enables us to partially answer Question \ref{ENSconj}. In particular we are able to describe a fairly large family of actions which covers many new examples beyond all the aforementioned classes, e.g.\ all nontrivial mixing extensions of free compact actions, satisfying the extended Neshveyev-St\o rmer rigidity phenomenon. More generally, we have the following result.

\begin{thm}\label{main2intro}
	Let $\G$ be an icc group and let $\Gamma\ca^\sigma X$ be an action whose distal quotient $\G \ca X_d $ is free and the extension $\pi: X \rightarrow X_d$ is (nontrivial) mixing. Let $\Lambda\ca^\alpha Y$ be any action. Assume that $\Theta: L^\infty(X)\rtimes \G \ra L^\infty(Y)\rtimes \Lambda$ is a $\ast$-isomorphism such that $\Theta(L(\G))=L(\La)$. Then there exist a unitary $x \in L(\La)$, a character $\omega: \G \rightarrow \mathbb{T}$, and a group isomorphism $\delta: \G \rightarrow \La$ such that $x\Theta(L^{\infty}(X))x^{\ast}=L^{\infty}(Y)$ and for all $a \in L^{\infty}(X), \g \in \G$ we have $$\Theta(au_{\g})=\omega(\g)\Theta(a) x^{\ast}v_{\delta(\g)}x.$$ In particular, we have $x \Theta (\sigma_\g(a))x^*= \alpha_{\delta(\g)}(x\Theta(a)x^*)$ and hence $\G\ca X$ and $\La\ca Y$ are conjugate. 
\vskip 0.01in	
	\noindent Here $\{u_\g\}_{\g\in \G}$  and $\{v_\la \}_{\la\in \La}$ are the canonical group unitaries implementing the actions in $L^\infty(X)\rtimes \G$ and $L^\infty(Y)\rtimes \La$, respectively.
\end{thm}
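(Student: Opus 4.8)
The plan is to transport the hypothesis inside $M:=L^\infty(X)\rtimes\G$ and reduce the statement to a question about Cartan subalgebras of $M$. Write $A:=L^\infty(X)$, $A_d:=L^\infty(X_d)$, $M_d:=A_d\rtimes\G$, $N:=L(\G)$, and on the other side $\widetilde M:=L^\infty(Y)\rtimes\La$, $B:=L^\infty(Y)$. Since $\G\ca X_d$ is free and $\pi\colon X\to X_d$ is a factor map, $\G\ca X$ is itself free, so $A$ is a Cartan subalgebra of $M$ and $\Theta(A)$ is a Cartan subalgebra of $\widetilde M$. Put $B_0:=\Theta^{-1}(B)$ and $w_\la:=\Theta^{-1}(v_\la)\in N$; then $B_0$ is a Cartan subalgebra of $M$ (using freeness of $\La\ca Y$, which is forced for the conclusion to make sense), the unitaries $\{w_\la\}_{\la\in\La}$ normalize $B_0$ and satisfy $\{w_\la\}''=N$, one has $\mathcal N_M(B_0)''=M$ (as $B$ is regular in $\widetilde M$), and $M=B_0\rtimes\La$ is an honest crossed product. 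It suffices to produce $u\in\mathcal U(N)$ with $uB_0u^*=A$: setting $x:=\Theta(u)^*\in L(\La)$ gives $x\Theta(A)x^*=B$, and then $\widetilde\Theta:=\mathrm{Ad}(x)\circ\Theta\colon M\to\widetilde M$ is simultaneously Cartan-preserving ($\widetilde\Theta(A)=B$) and group-algebra-preserving ($\widetilde\Theta(N)=xL(\La)x^*=L(\La)$, since $x\in L(\La)$). By Singer's correspondence each $\widetilde\Theta(u_\g)$ is then both a unitary of $L(\La)$ and an element of the full pseudogroup of $B$ in $\widetilde M$, hence of the form $\omega(\g)v_{\de(\g)}$; comparing $\widetilde\Theta(u_{\g_1}u_{\g_2})$ with $\widetilde\Theta(u_{\g_1})\widetilde\Theta(u_{\g_2})$ and using linear independence of the $v_\la$ forces $\omega$ to be a character and $\de$ a group isomorphism, which yields the displayed formula and the conjugacy at once.

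The core is the production of $u$, and it splits into two parts. The first, and main, step is to show $B_0\prec_M A$. This is where the mixing hypothesis enters: $\pi$ being mixing says exactly that the $A_d$-bimodule $L^2(A)\ominus L^2(A_d)$, with its compatible $\G$-action, is mixing relative to $A_d$, equivalently that the inclusion $M_d\subseteq M$ is a mixing inclusion. For such an inclusion one has the dichotomy that any von Neumann subalgebra $Q\subseteq M$ either satisfies $Q\prec_M A$ or else $\mathcal N_M(Q)''\subseteq M_d$; one proves this by Popa's intertwining calculus, expanding a would-be normalizer of $Q$ along the $\G$-Fourier decomposition of $M$ over $M_d$ and using relative mixing to annihilate all components lying in $A\ominus A_d$ (freeness of $\G\ca X_d$ is used to know $A$ is a MASA, so that mixing relative to $A_d$ suffices). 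Applying this with $Q=B_0$, the second alternative cannot hold because $\mathcal N_M(B_0)''=M$ while $M_d\subsetneq M$ — this is precisely where the \emph{nontriviality} of $\pi$ is used — hence $B_0\prec_M A$. Since $B_0$ and $A$ are both Cartan subalgebras of $M$, Popa's conjugacy criterion for Cartan subalgebras upgrades this to the existence of some $u_1\in\mathcal U(M)$ with $u_1B_0u_1^*=A$.

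The second step is to arrange the conjugating unitary inside $N$. The set of $u_1$ with $u_1B_0u_1^*=A$ is a single left coset $\mathcal N_M(A)\,u_1$, so one must show it meets $\mathcal U(N)$. Here one uses that $A$ and $B_0$ are both Cartan \emph{complements} of $N$ in $M$ (that is, $A\vee N=B_0\vee N=M$ and $A\cap N=B_0\cap N=\mathbb C$), together again with the mixing of $\pi$: by analysing the $N$-bimodule $L^2(M)\ominus L^2(N)$ — which is mixing because $\pi$ is — one shows that a Cartan complement of $N$ in $M$ is unique up to conjugacy by a unitary of $N$. The only subtlety is the distal layer $M_d$, where mixing gives nothing, but the inclusion $N\subseteq M_d$ is compact and here the description of intermediate subalgebras of compact extensions (the paper's Suzuki-type theorem) is invoked to see that $M_d$ contributes no extra flexibility. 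Concretely, the Cartan-preserving isomorphism $\mathrm{Ad}(u_1)\circ\Theta^{-1}\colon\widetilde M\to M$ sends $v_\la\mapsto\chi_\la T_\la$ with $T_\la$ in the full pseudogroup of $A$ and $\chi_\la\in\mathcal U(A)$ a $\mathbb T$-valued $1$-cocycle; one shows $\{T_\la\}''$ is conjugate to $N$ by a unitary of $\mathcal U(A)$ and that $\chi$ is a coboundary, the leftover corrections being exactly $\omega$ and the relabelling $\de$.

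The main obstacle is the first step: extracting the intertwining $B_0\prec_M A$. All the rigidity has to come from the single nontrivial mixing extension $\pi\colon X\to X_d$ — there is no group-theoretic input, indeed $\G$ may be amenable — and it enters only through the control of normalizers furnished by the mixing dichotomy, so one must make that dichotomy robust enough to apply to the possibly ``large'' subalgebra $B_0$, the role of the freeness of $\G\ca X_d$ being merely to guarantee that $L^\infty(X)$ is genuinely a Cartan MASA. The ``uniqueness of the Cartan complement of $L(\G)$ up to conjugacy in $L(\G)$'' in the second step, and the use of the intermediate-subalgebra theorem to neutralise the distal layer, is a close second in difficulty.
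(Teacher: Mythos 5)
Your reduction to producing a unitary $u$ with $uB_0u^*=A$, and the endgame via Singer's correspondence, are fine in outline, but your main step breaks down. The ``mixing dichotomy'' you invoke --- that \emph{every} von Neumann subalgebra $Q\subseteq M$ satisfies either $Q\prec_M A$ or $\mathcal N_M(Q)''\subseteq M_d$ --- is false. What relative mixing of $L^2(A)\ominus L^2(A_d)$ actually gives (Theorem \ref{mixingextn}, a variant of [Po03, Theorem 3.1]) is control of the quasinormalizer of a subalgebra $C$ that is \emph{already contained in} $M_d$ and satisfies $C\nprec_{M_d}L^\infty(X_d)$: one conjugates by unitaries of $C\subseteq M_d$ and uses the $\G$-mixing of $A\ominus A_d$ to kill Fourier components. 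For a subalgebra such as $B_0=\Theta^{-1}(L^\infty(Y))$, which has no reason to sit inside $M_d$, conjugation by its unitaries does not interact with that mixing at all, and your sketch gives nothing. Note also that your Step 1 never uses the hypothesis $\Theta(L(\G))=L(\La)$; if it worked it would show that every Cartan subalgebra of $M$ whose normalizer generates $M$ is unitarily conjugate to $A$, i.e.\ uniqueness of Cartan subalgebras for all such $M$ --- false already for $\G$ amenable icc, where $M$ is the hyperfinite II$_1$ factor, and exactly the kind of deformation/rigidity input the theorem is designed to avoid. The missing idea is the paper's first move: because the Furstenberg--Zimmer distal factor is the iterated quasinormalizer tower of $L(\G)$ inside $M$ (Theorem \ref{distaltowerCP}), the hypothesis $\Theta(L(\G))=L(\La)$ forces $\Theta(L^\infty(X_d)\rtimes\G)=L^\infty(Y_d)\rtimes\La$. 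Only then does $\Theta^{-1}(L^\infty(Y_d))$ land inside $M_d$, so that the mixing control theorem (together with Theorem \ref{mix2}, which transports mixingness of the extension across $\Theta$) applies to it; nontriviality of $\pi$ then yields the intertwining of the two distal Cartans, and [Po01, Appendix A] plus a relative-commutant computation gives a unitary $u\in L^\infty(Y_d)\rtimes\La$ (not yet in $L(\La)$) conjugating $L^\infty(X)$ onto $L^\infty(Y)$.

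Your Step 2 is also not what the paper does, and as sketched it has its own problems: $L^2(M)\ominus L^2(L(\G))$ is \emph{not} a mixing $L(\G)$-bimodule (the compact and distal layers obstruct this), and ``uniqueness of the Cartan complement of $L(\G)$ up to a unitary of $L(\G)$'' does not follow from the intermediate-subalgebra theorem. The paper instead shows (Theorem \ref{2'}) that $\G$ has positive height with respect to $u\La u^*$ inside $L(\La)$, applies [IPV10, Theorem 3.1] to conjugate the group unitaries by a unitary $w\in L(\La)$, and then uses an icc averaging argument (via [CSU13]) to prove that $x=u^*w$ normalizes the Cartan subalgebra (Theorem \ref{3'}). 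This is precisely where the icc hypothesis on $\G$ enters, so your closing remark that ``there is no group-theoretic input'' is not accurate either.
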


\noindent In particular the theorem implies that if $\G$ is any icc group then any action  $\Gamma\ca X$ which admits a free profinite quotient $\G \ca X_d $ with (nontrivial) mixing extension $\pi: X \rightarrow X_d$ satisfies the extended Neshveyev-St\o rmer rigidity question. As a concrete example let $\G$ be any icc residually finite group and let $\cdots \lhd \G_n\lhd \cdots \lhd  \G_2\lhd \G_1\lhd \G$ be a resolution of finite index normal subgroups satisfying $\cap_n \G_n = 1$. Consider the action $\G \ca (\G/\G_n, c_n)$ by left multiplication of $\G$ on the left cosets $\G/\G_n$ seen as a finite probability space with the counting measure $c_n$ and let $\G\ca (Z,\mu)= \varprojlim (\G/\G_n, c_n)$ be the inverse limit of these actions. In addition let $\pi:\G \ca \mathcal O(\mathcal H)$ be any mixing orthogonal representation and let $\G\ca (Y^\pi, \nu^\pi)$ be the corresponding Gaussian action. Then the diagonal action $\G \ca (Y^\pi \times Z, \nu^\pi \times \mu)$ is profinite-by-(nontrivial) mixing, and hence by Theorem \ref{main2intro} the rigidity Question \ref{ENSconj} has a positive solution in this case. 

\vskip 0.05in

\noindent Theorem \ref{main2intro} is obtained by heavily exploiting, at the von Neumann algebraic level, the natural tension that occurs between mixing and compactness properties for actions. Briefly, let $\G\ca X$ and $\La \ca Y$ be actions as in Theorem \ref{main2intro} so that $L^\infty(X)\rtimes \G =L^\infty(Y)\rtimes \La$ with $L(\G)=L(\La)$. First we use the description of compactness via quasinormalizers from \cite{CP18,Io08a} to identify the von Neumann algebras of their distal parts, i.e.\  $L^\infty(X_d)\rtimes \G= L^\infty(Y_d)\rtimes \La$. In turn this is used to show that the mixing property of the extension $L^\infty(X_d)\subseteq L^\infty(X)$ is transferred through von Neumann equivalence to the extension  $L^\infty(Y_d)\subseteq L^\infty(Y)$ (Theorem \ref{mix2}). Once these are established, some basic adaptations of Popa's intertwining techniques from \cite{Po03} further show that the Cartan subalgebras $L^\infty(X)$ and $L^\infty(Y)$ are in fact unitarily conjugate. Then the desired result is derived from a general principle which states that for any free ergodic actions $\G\ca X$, $\La\ca Y$ of icc groups $\G$ and $\La$, inner conjugacy of $L^\infty(X)$ and $L^\infty(Y)$ together with $L(\G)=L(\La)$  imply conjugacy of $\G\ca X$ and $\La\ca Y$ (Theorem \ref{3'}). This criterion for conjugacy of group actions generalizes the earlier works \cite{NS03,Po04} and is obtained using the notion of height of elements with respect to groups from \cite{IPV10}. Specifically, using Dye's theorem and an averaging argument we show that $\G$ has large height with respect to $\La$ inside $L(\La)$ (Theorem \ref{2'}). By \cite[Theorem 3.1]{IPV10} this further implies $\G$ is unitarily conjugate to $\La$. Further exploiting the icc condition we deduce conjugacy of the actions (Theorem \ref{3'}).       

\vskip 0.05in
\noindent While Theorem \ref{main2intro} settles the extended Neshveyev-St\o rmer rigidity question for nontrivial extensions, two natural extreme situations, namely, when $\G \ca X$ is either mixing or compact (even profinite) remain open. We believe that in both of these cases one should still get a positive answer and we formulate a few sub problems in this direction; see for instance Problem \ref{profnsprob}. However, in order to successfully tackle these questions, significant new technical advancements are needed. Specifically, if one pursues an approach similar to Theorem \ref{main2intro} the key step is to establish the inner conjugacy of  $L^\infty(X)$ and $L^\infty(Y)$. In the presence of mixing this would follow if one can show there exist free factors $\G \ca X_0$ of $\G \ca X$ and $\La \ca Y_0$ of $\La\ca Y$ whose von Neumann algebras coincide, i.e.\ $L^\infty(X_0)\rtimes \G = L^\infty(Y_0)\rtimes \La$; see Corollary \ref{mixingint}. In turn this highlights the importance of studying intermediate subalgebras in the inclusion $L(\G)\subset L^\infty(X)\rtimes \G$. In addition this seems relevant even to the study of Question \ref{ENSconj} for profinite actions.

\vskip 0.05in
\noindent  Note that when $\G$ is icc, and $\G \ca X$ is free, ergodic and pmp, the inclusion $L(\G) \subset L^{\infty}(X) \rtimes \G$ is an irreducible inclusion of $\rm II_1$ factors. In his seminal paper \cite{Jo81} Jones pioneered the study of inclusions of type $\rm II_1$ factors, or \textit{subfactors}. Subfactor theory has had a number of striking applications over the years in various diverse branches of mathematics and mathematical physics, including Knot theory and Conformal Field theory, \cite{Jo90,Jo91,Jo09}. A major motivating question in Subfactor theory is the classification of all intermediate subalgebras. Pursuing this perspective, we were able to classify all the intermediate subalgebras in compact extensions in the same spirit as Suzuki's recent results from \cite{Suzuki}. To properly introduce our result we briefly recall some terminology. Given two actions $\G \ca^\beta X_0$ and $\G \ca^\alpha X$ we say that $\alpha$ is an extension of $\beta$ if there is a $\G$-equivariant factor map $\pi: X\ra X_0$. At the von Neumann algebra level this induces an inclusion $L^\infty(X_0)\subseteq L^\infty(X)$ on which $\G$ acts naturally via $\alpha_\g(f)=f\circ \alpha_{\g^{-1}}$ when $f\in L^\infty(X)$.  An \emph{intermediate extension for $\pi$} (or \emph{between $\G \ca X_0$ and $\G\ca X$}) is an action $\G\ca Z$ for which there exist $\G$-equivariant factor maps $\pi_1: X\ra Z$ and $\pi_2:Z \ra X_0$ such that $\pi_2\circ\pi_1=\pi$. Note that the intermediate extensions of $\pi$  are in bijective correspondence with the $\G$-invariant intermediate subalgebras of $L^\infty(X_0)\subseteq L^\infty(X)$.  We show that there is a bijective correspondence between intermediate von Neumann algebras in crossed products and intermediate extensions of dynamical systems. More precisely, we have the following    

\begin{thm} \label{main3} Let $\G$ be an icc group and let $\G\ca^\beta X_0$ be a pmp action. Let $\G\ca X$ be an ergodic compact extension of $\beta$, \cite{Fur77}. Consider the corresponding group measure space von Neumann algebras and note that we have the following inclusion $L^\infty(X_0)\rtimes \G\subseteq  L^\infty(X)\rtimes \G$. Then for any intermediate von Neumann subalgebra $L^\infty(X_0)\rtimes \Gamma \subseteq N \subseteq L^\infty(X) \rtimes \Gamma $ there exists an intermediate extension $\G\ca Z$ between $\G \ca X$ and $\G \ca X_0$ satisfying $N= L^\infty(Z) \rtimes \G.$
\end{thm}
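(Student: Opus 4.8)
The plan is to show that an intermediate von Neumann algebra $N$ with $L^\infty(X_0)\rtimes\G\subseteq N\subseteq L^\infty(X)\rtimes\G$ must contain its own ``coefficient algebra,'' namely that $A:=N\cap L^\infty(X)$ satisfies $N=A\rtimes\G$, and then to identify $A$ as $L^\infty(Z)$ for an intermediate extension $\G\ca Z$. First I would exploit the compactness of the extension $\pi:X\to X_0$. By Furstenberg's structure theory, an ergodic compact extension decomposes $L^\infty(X)$ as an orthogonal sum (over $L^\infty(X_0)$) of finite-dimensional $\G$-invariant Hilbert $L^\infty(X_0)$-submodules; equivalently, every element of $L^\infty(X)$ lies in the quasinormalizer of $L^\infty(X_0)$ inside $L^\infty(X)\rtimes\G$ in the sense of \cite{CP18,Io08a}. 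This is the dynamical input that replaces the ``Bernoulli/clustering'' flavor of the earlier rigidity arguments by a ``compact/finite multiplicity'' one. The key consequence is that $L^\infty(X)\rtimes\G$ is generated by $L^\infty(X_0)\rtimes\G$ together with a family of partial isometries (or finite-rank bimodule generators) that normalize $L^\infty(X_0)\rtimes\G$ in an appropriate weak sense.

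The second step is the Fourier-expansion/averaging argument. Any $x\in N\subseteq L^\infty(X)\rtimes\G$ has a Fourier expansion $x=\sum_{\g\in\G}a_\g u_\g$ with $a_\g\in L^\infty(X)$. I would show that each spectral ``column'' $a_\g u_\g$ again lies in $N$: since $L^\infty(X_0)\rtimes\G\subseteq N$ contains all the unitaries $u_\g$ and the $\G$-invariant algebra $L^\infty(X_0)$, one can recover $a_\g u_\g$ from $x$ by an averaging process over $L^\infty(X_0)$-valued characters, using that $L^\infty(X)$ is a compact extension of $L^\infty(X_0)$ (so the relevant ``Fourier coefficients'' live in finite-dimensional modules on which averaging converges). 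Consequently $N$ is the closed span of the sets $\{a\in L^\infty(X): au_\g\in N\}\cdot u_\g$. Setting $A:=N\cap L^\infty(X)$, one then checks that $\{a:au_\g\in N\}=A$ for every $\g$: the inclusion $\supseteq$ is clear, and the reverse follows because if $au_\g\in N$ then $au_\g u_\g^*=a\in N\cap L^\infty(X)=A$ — here I use that $u_\g\in L^\infty(X_0)\rtimes\G\subseteq N$. Hence $N=\overline{\operatorname{span}}\{Au_\g:\g\in\G\}=A\rtimes\G$.

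The third step is to show $A$ is a $\G$-invariant (abelian) von Neumann subalgebra with $L^\infty(X_0)\subseteq A\subseteq L^\infty(X)$, so that $A=L^\infty(Z)$ for an intermediate extension $\G\ca Z$ by the dictionary recalled just before the statement. Invariance is immediate since $u_\g A u_\g^*\subseteq u_\g N u_\g^*=N$ and $u_\g A u_\g^*\subseteq L^\infty(X)$, whence $u_\g Au_\g^*\subseteq A$; the containments $L^\infty(X_0)\subseteq A\subseteq L^\infty(X)$ follow from $L^\infty(X_0)\subseteq N\cap L^\infty(X)$ and $A\subseteq L^\infty(X)$ by definition. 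Finally, the normalized trace restricted to $A$ exhibits it as $L^\infty$ of a standard probability space, and $\G$-invariance of the trace makes the action pmp, giving the desired $\G\ca Z$ with $N=L^\infty(Z)\rtimes\G$.

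The main obstacle is the second step — proving that the Fourier column $a_\g u_\g$ of an element of $N$ again lies in $N$. Without freeness of $\G\ca X$ the usual trick (conjugating by a unitary in $L^\infty(X)$ separating the characters of $\G$, as in \cite{NS03,Po04}) is unavailable, so the argument must be genuinely dynamical: one needs the finite-dimensionality of the isotypic components of the compact extension to run an averaging over the compact ``structure group'' of $\pi$ and extract columns. Making this averaging land inside $N$ (rather than merely inside $L^\infty(X)\rtimes\G$) requires that the averaging be performed using only operations available in $N$ — i.e. conjugations by $u_\g$ and multiplications by $L^\infty(X_0)$ — which is exactly where the hypothesis $L^\infty(X_0)\rtimes\G\subseteq N$ is essential and where the quasinormalizer description from \cite{CP18,Io08a} does the heavy lifting.
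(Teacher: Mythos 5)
Your overall architecture (show that $N=(N\cap L^\infty(X))\rtimes\G$ and then read off the intermediate extension from the $\G$-invariant subalgebra $A=N\cap L^\infty(X)$) is the same as the paper's, and your first and third steps, as well as the observation that $\{a\in L^\infty(X):au_\g\in N\}=A$, are fine. The gap is exactly where you flag it: the second step. You assert that each Fourier column $a_\g u_\g$ of an element $x\in N$ can be recovered by ``an averaging process over $L^\infty(X_0)$-valued characters'' or over the compact structure group of $\pi$, but neither averaging does what you need. Conjugating by unitaries $w\in L^\infty(X_0)$ (the only abelian unitaries guaranteed to lie in $N$) replaces $a_\g$ by $wa_\g\sigma_\g(w^*)$, and since $\G\ca X_0$ is not assumed free --- the motivating case is $X_0$ a point, where $L^\infty(X_0)=\mathbb C 1$ --- this does not separate the columns at all. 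Averaging over the Mackey--Zimmer structure group of the compact extension decomposes the coefficients $a_\g$ into finite-dimensional isotypic pieces in the \emph{fiber} direction, but that average is implemented by operators that are not in $N$ (they commute with $L^\infty(X_0)\rtimes\G$ on $L^2$ rather than belonging to it), so there is no reason its output stays in $N$, and in any case it does nothing to isolate the individual $u_\g$'s. Tellingly, your argument never invokes the icc hypothesis on $\G$, which is essential: already for $\G=\mathbb Z$ acting by an irrational rotation (a free ergodic compact action, $X_0$ a point) there are intermediate algebras $L(\mathbb Z)\subseteq N\subseteq L^\infty(\mathbb T)\rtimes\mathbb Z$ coming from proper subequivalence relations in the dual picture that do not split as crossed products, so any correct proof must use icc somewhere.

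The paper's route around this obstacle is worth internalizing because it is genuinely different from column extraction. It first establishes a general criterion (Theorem \ref{usefulresult}): $N$ splits as $Q\rtimes\G$ if and only if $E_N(L^\infty(X))\subseteq L^\infty(X)$; once this holds, the Fourier bookkeeping you want is automatic, since then $E_N(a_\g)=E_{N\cap L^\infty(X)}(a_\g)$ and one compares coefficients of $x=E_N(x)$. To verify $E_N(L^\infty(X))\subseteq L^\infty(X)$ (Theorem \ref{cptint}) one sets $\tilde\xi=E_N(\xi)-E_{L^\infty(X)}(E_N(\xi))$ for $\xi$ in the generating family of the compact extension and plays compactness against icc: compactness forces the entire orbit $\{u_\g\tilde\xi u_\g^*\}_{\g\in\G}$ to stay $\varepsilon$-close to a fixed finitely generated $L^\infty(X_0)$-module supported on a finite set $G\subset\G\setminus\{e\}$ of group elements, while $\tilde\xi$ itself is approximately supported on a finite set $K\subset\G\setminus\{e\}$; the icc property then yields $\la$ with $\la K\la^{-1}\cap G=\emptyset$, so $u_\la\tilde\xi u_\la^*$ is nearly orthogonal to its own approximant, forcing $\|\tilde\xi\|_2=0$. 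That conjugation trick --- trading the missing freeness of the action for the icc property of the group --- is the idea your step two is missing.
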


\noindent In many respects this theorem complements the results from \cite{Suzuki}; for instance, it covers various examples of non-free extensions, most notably, when $X_0$ is a singleton. In this situation our result provides a complete description of all intermediate von Neumann subalgebras in the inclusion $L(\G)\subseteq L^{\infty}(X) \rtimes \G$ for any compact ergodic action $\G \ca X$ of any icc group $\G$. This in turn yields new interesting consequences towards the classification of finite index subfactors. For example, combining Theorem \ref{main3} with the characterization of compactness via quasinormalizers from \cite[Theorem 6.10]{Io08a}, for \emph{any} icc group $\G$ and \emph{any} free ergodic action $\G \ca X$,  we are able to classify \emph{all} the intermediate subfactors $L(\G)\subseteq N\subseteq L^\infty(X)\rtimes \G$ with finite Jones index $[N:L(\G)]<\infty$. Specifically we show that all such $N$ could arise only from the transitive finite factors of $\G \ca X$ (see part 2.\ in Corollary \ref{intsubgenmain}); in particular, this entails that the Jones index $[N:L(\G)]$ is always a positive integer. This should be compared with the similar statement \cite[Corollary 2.4]{Po85} for the intermediate subfactors of the Cartan inclusion $L^\infty(X)\subset N \subseteq L^\infty(X)\rtimes \G$ with $[L^\infty(X)\rtimes \G : N]<\infty$.       
\vskip 0.03in

\begin{cor} \label{intsubgenmain}
Let $\Gamma$ be an icc group and let $\G \ca X$ be a free ergodic pmp action. If $M=L^\infty(X)\rtimes \G$ is the corresponding group measure space construction then the following hold:

\begin{enumerate} 
\item  For any intermediate von Neumann algebra $L(\Gamma) \subseteq N \subseteq L^{\infty}(X) \rtimes \Gamma $ satisfying $N \subseteq \mathcal {QN}_{M}(L(\Gamma))''$ there exists a factor $\G\ca X_0$ of $\G\ca X$ such that $N =L^\infty(X_0)\rtimes \G$.  
\item If $L(\Gamma) \subseteq N \subseteq L^{\infty}(X) \rtimes \Gamma $ is an intermediate subfactor with $[N: L(\Gamma)]< \infty$ then there is a finite, transitive factor $\G \ca X_0$ of $\G \ca X$ such that $N=L^\infty(X_0)\rtimes \G$; in particular, $[N:L(\Gamma)] \in  \mathbb{N}$. Thus for any subfactors $L(\Gamma) \subseteq N_1 \subseteq N_2 \subseteq L^{\infty}(X) \rtimes \Gamma $, with either $[N_1:L(\G)] < \infty$ or $\G \ca X$ compact,  we have $[N_2:N_1] \in \mathbb{N}\cup\{\infty\}.$
\end{enumerate}
\end{cor}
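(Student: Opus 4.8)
The plan is to reduce both parts to Theorem \ref{main3} by first producing, inside the intermediate algebra $N$, a suitable ``base'' of the crossed product that plays the role of $L^\infty(X_0)$, and then applying the structural theorem. For part 1., suppose $L(\G)\subseteq N\subseteq \mathcal{QN}_M(L(\G))''$. The first step is to invoke the characterization of compactness via quasinormalizers from \cite{CP18,Io08a}: the algebra $\mathcal{QN}_M(L(\G))''$ is precisely $L^\infty(X_c)\rtimes \G$ where $\G\ca X_c$ is the maximal compact quotient (distal of rank one) of $\G\ca X$. Thus $L(\G)\subseteq N\subseteq L^\infty(X_c)\rtimes\G$ is an intermediate subalgebra of a compact ergodic extension of a point, and Theorem \ref{main3} (applied with $X_0$ a singleton, so $L^\infty(X_0)\rtimes\G=L(\G)$) directly gives an intermediate extension $\G\ca X_0$ between $\G\ca X_c$ and the trivial system with $N=L^\infty(X_0)\rtimes\G$. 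Since a quotient of $\G\ca X_c$ is in particular a factor (quotient) of $\G\ca X$, this is exactly the claim. The one point requiring care is that Theorem \ref{main3} as stated assumes the ambient extension $\G\ca X$ is a compact extension of $\G\ca X_0$; here we are applying it to the compact extension $\G\ca X_c$ of the point, which is legitimate, and the intermediate $N$ sits between $L(\G)$ and $L^\infty(X_c)\rtimes\G$ as required.

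For part 2., the key observation is that a finite-Jones-index inclusion $L(\G)\subseteq N$ forces $N\subseteq \mathcal{QN}_M(L(\G))''$. I would argue this as follows: if $[N:L(\G)]<\infty$ then $N$ is finitely generated as an $L(\G)$-bimodule, and in particular every element of $N$ lies in the quasi-normalizer $\mathcal{QN}_M(L(\G))$ — indeed, finite index means there is a finite Pimsner--Popa basis, so $N L(\G)$-spanned by finitely many vectors, which is the defining finiteness condition for quasinormalizing elements. Hence $N\subseteq \mathcal{QN}_M(L(\G))''$ and part 1. applies, yielding a factor $\G\ca X_0$ of $\G\ca X$ with $N=L^\infty(X_0)\rtimes\G$. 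Now $[N:L(\G)]<\infty$ translates, via the standard computation of the index of $L(\G)\subseteq L^\infty(X_0)\rtimes\G$, into $L^\infty(X_0)$ being finite-dimensional; ergodicity of $\G\ca X_0$ then forces the action on the finitely many atoms to be transitive, so $\G\ca X_0$ is a finite transitive factor and $[N:L(\G)]=\dim L^\infty(X_0)=|X_0|\in\mathbb{N}$.

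For the final ``tower'' assertion, given $L(\G)\subseteq N_1\subseteq N_2\subseteq M$ with $[N_1:L(\G)]<\infty$ (resp.\ $\G\ca X$ compact): in the first case $N_1=L^\infty(X_1)\rtimes\G$ with $\G\ca X_1$ a finite transitive factor by part 2., so $N_1$ contains the finite-dimensional subalgebra $L^\infty(X_1)$ which is ``large'' enough that one can run the argument of part 1. relative to the base $X_1$ — apply Theorem \ref{main3} to the compact extension $\G\ca X_c$ of $\G\ca X_1$ to conclude $N_2=L^\infty(Z)\rtimes\G$ for an intermediate extension $\G\ca Z$ between $\G\ca X_c$ and $\G\ca X_1$; then $[N_2:N_1]=[L^\infty(Z)\rtimes\G:L^\infty(X_1)\rtimes\G]=\dim L^\infty(Z):\dim L^\infty(X_1)$ if $Z$ is finite, or $\infty$ otherwise, which is the $\mathbb{N}\cup\{\infty\}$ conclusion. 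In the case $\G\ca X$ compact we have $M=\mathcal{QN}_M(L(\G))''$ outright, so every intermediate $N_1,N_2$ is of crossed-product form by part 1., say $N_i=L^\infty(Z_i)\rtimes\G$ with $\G\ca Z_2$ a factor of $\G\ca Z_1$; the index $[N_2:N_1]$ is then the index of the corresponding inclusion of compact systems, which by the same finite-dimensionality/transitivity dichotomy lies in $\mathbb{N}\cup\{\infty\}$.

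I expect the main obstacle to be the step in part 2. showing finite index forces $N\subseteq\mathcal{QN}_M(L(\G))''$: one must be careful that a Pimsner--Popa basis witnesses the quasi-normalizing condition, i.e.\ that finitely many right-$L(\G)$-generators of $N$ as a module genuinely lie in $\mathcal{QN}_M(L(\G))$ with the two-sided finiteness built into that definition; the irreducibility of $L(\G)\subseteq M$ (from $\G$ icc and $\G\ca X$ free ergodic) is what makes this clean. A secondary technical point is justifying the relative version of Theorem \ref{main3} over a finite transitive base $X_1$ rather than over a single point; this should follow either by a direct reading of the proof of Theorem \ref{main3} or by an induction/corner argument passing to the stabilizer subgroup of a point of $X_1$, but it needs to be checked that the icc hypothesis is still available there.
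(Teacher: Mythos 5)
Your proposal is correct and follows essentially the same route as the paper: identify $\mathcal{QN}_M(L(\G))''$ with $L^\infty(X_c)\rtimes\G$ via Ioana's quasinormalizer computation and apply Theorem \ref{main3} for part 1, then for part 2 use the finite Pimsner--Popa basis to place $N$ inside the quasinormalizing algebra and invoke Lemma \ref{finindex1} to get complete atomicity and transitivity. The point you flag as a possible obstacle (a finite two-sided basis witnessing the quasi-normalizer condition) is exactly the one-line argument the paper uses, and the tower assertion is handled there by the same index computation $[N_i:L(\G)]=|X_i|$ you describe.
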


\noindent In particular, part 2.\ implies that for any icc group $\G$ with no proper finite index subgroups and any free ergodic action $\G\ca X$ there are no nontrivial intermediate subfactors $L(\G)\subseteq N \subseteq L^\infty(X)\rtimes \G$ of finite index $[N:L(\G)]<\infty$. For example this is the case for all $\G$ infinite simple groups, e.g.\ Tarski's monsters, Burger-Mozes groups \cite{BM01}, Camm's groups \cite{Ca53}, or Bhattacharjee's groups \cite{Bh94}, just to enumerate a few. 

\vskip 0.05in 
\noindent We point out in passing that Theorem \ref{main3} actually holds in a more general setting, namely, for actions of groups on compact extensions of possibly non-abelian von Neumann algebras; this notion is highlighted in Definition \ref{compact}. In this generality our result yields a twisted version of Ge's splitting theorem for tensor products (see Corollary \ref{twisted2}) in the same spirit as \cite[Example 4.14]{Suzuki}. 
\vskip 0.03in
\noindent The classification of the intermediate subalgebras in Theorem \ref{main3} is achieved through a new mix of analytic and algebraic techniques that combines factoriality arguments together with a general algebraic criterion outlined in Theorem \ref{usefulresult}.  We also note the same criterion can be used in conjunction with various soft analytical arguments to successfully recover, in the finite von Neumann algebra case, several well-known results such as Ge's tensor splitting theorem \cite[Theorem 3.1]{Ge} or the Galois correspondence for group actions \cite{Ch}. These applications are presented in Corollary \ref{tensorint} and Theorems \ref{twistedGe} and \ref{galoiscorr}.  
\vskip 0.05in
\noindent Finally, Theorem \ref{main3} in combination with methods from Popa's deformation/rigidity theory and Jones' finite index subfactor theory provide new insight towards rigidity aspects for II$_1$ factors arising from profinite actions $\G \ca  X$ of icc property (T) groups $\G$. While Ioana has already established in \cite{Io08} that such actions are completely reconstructible from their orbits, significantly less is known about their rigid behavior at the von Neumann algebraic level. When $\G$ is in addition properly proximal, Boutonnet, Ioana and Peterson showed in \cite{BIP18} using boundary techniques \cite{BC14} that all compact Cartan subalgebras in $L^\infty(X)\rtimes \G$ are unitarily conjugate to $L^\infty(X)$. (For $\G$ direct products of nonamenable biexact groups this already follows from the earlier works \cite{CS11,CSU11}.) Consequently, this combined with \cite{Io08} yields that for any non-commensurable groups $\G$ and $\La$ and any free ergodic profinite actions $\G\ca X$ and $\La \ca Y$ the von Neumann algebras $L^\infty(X)\rtimes \G$ and $L^\infty(Y)\rtimes \La$ are not isomorphic; remarkably, this is the case for lattices $\G= {\rm PSL}_n(Z)$ and $\La = {\rm PSL}_m(\mathbb Z)$ for all $n\neq m$.   However, without these additional assumption on $\G$, the study of von Neumann algebraic rigidity aspects for profinite (or compact) actions $\G\ca X$ remains an wide open problem. For example, even establishing strong rigidity results similar to the ones obtained in \cite{Po04} by Popa for Bernoulli actions of rigid groups seems elusive at this time. While it is very plausible that such results should hold true, we only have the following partial result at this time in this direction.
 
\begin{thm}\label{virtualmain4} Let $\G$ and $\La$ be icc property (T) groups. Let $\G \ca X = \varprojlim X_n$ be a free ergodic profinite action and let $\La \ca Y$ be a free ergodic compact action. Assume that $\Theta: L^\infty(X)\rtimes \G \ra L^\infty(Y)\rtimes \La$ is a $\ast$-isomorphism. Then $\La \ca Y=\varprojlim Y_n$ is also a profinite action. Moreover, there exist $l\in \mathbb N$ and a unitary $w\in L^\infty(Y)\rtimes \La$ such that $\Theta(L^\infty(X_{k+l})\rtimes \G)= w(L^\infty(Y_{k+1})\rtimes \La)w^*$ for every integer $k\geq 0$. 
\end{thm}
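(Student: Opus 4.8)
The plan is to combine the general structural result of Theorem \ref{main3} (applied to the singleton base, so that it classifies intermediate subalgebras between $L(\La)$ and $L^\infty(Y)\rtimes \La$ inside the quasinormalizer of $L(\La)$) with property (T) rigidity to promote the filtration structure across the isomorphism $\Theta$. First I would invoke property (T) of $\G$: since $\G \ca X = \varprojlim X_n$ is profinite, $L^\infty(X)\rtimes \G$ is an inductive limit of the finite-dimensional-over-$L(\G)$ algebras $L^\infty(X_n)\rtimes \G$, each of which is a finite-index subfactor of $M$. Property (T) for $M$ (equivalently for $\G$, by Connes--Jones, since the action is profinite hence the relative commutant picture is controlled) forces $\Theta(L(\G))$ to be ``almost contained'' in one of the finite stages $L^\infty(Y_n)\rtimes \La$; more precisely, a standard local quantization/malleability-free argument using the rigidity of the pair $(M, L(\G))$ shows $\Theta(L(\G))\subseteq L^\infty(Y_n)\rtimes \La$ after a unitary conjugacy, for some $n$. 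Symmetrically, running the argument for $\Theta^{-1}$ and the compact action $\La \ca Y$ shows $\La \ca Y$ must itself be profinite: its distal (= compact, here) part is all of $Y$, and the finite stages of $\G \ca X$ map, under $\Theta$, into finite-index subfactors of $L(\La)$-type, which by Corollary \ref{intsubgenmain}(2) must come from finite transitive factors of $\La \ca Y$; taking the join of these recovers a profinite filtration $Y=\varprojlim Y_n$.

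Next I would identify the quasinormalizer. By \cite[Theorem 6.10]{Io08a} (the characterization of compactness via quasinormalizers), the compact part of $\G \ca X$ — which is all of $X$, since the action is profinite — satisfies $L^\infty(X)\rtimes\G = \mathcal{QN}_M(L(\G))''$; likewise $L^\infty(Y)\rtimes\La = \mathcal{QN}_M(L(\La))''$ once we know $\La \ca Y$ is compact (given) with $\Theta(L(\G))$ unitarily conjugate to $L(\La)$'s ``weak location.'' Actually the cleanest route: having placed $w\Theta(L(\G))w^* \subseteq L^\infty(Y_{n_0})\rtimes\La$ for some $n_0$ and some unitary $w$, each $w\Theta(L^\infty(X_k)\rtimes\G)w^*$ is an intermediate subfactor between $w\Theta(L(\G))w^*$ and $M = L^\infty(Y)\rtimes\La$, of finite index over $w\Theta(L(\G))w^*$, hence of finite index over $L(\La)$ up to the fixed finite-index correction; moreover it lies in $\mathcal{QN}_M(L(\La))''$ because the $X_k$-stages are compact over the point. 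Now Theorem \ref{main3}/Corollary \ref{intsubgenmain}(1) applies: each $w\Theta(L^\infty(X_k)\rtimes\G)w^*$ equals $L^\infty(Z_k)\rtimes\La$ for an intermediate factor $\La\ca Z_k$ of $\La\ca Y$, and finiteness of the index forces $Z_k$ to be a finite transitive factor. The increasing union of the $Z_k$ must be a generating profinite filtration of $Y$; after reindexing one gets $w\Theta(L^\infty(X_{k+l})\rtimes\G)w^* = L^\infty(Y_{k+1})\rtimes\La$ for all $k\ge 0$, where the shift $l$ and the off-by-one accounts for the initial misalignment of the two filtrations and the finite-index correction coming from the placement of $\Theta(L(\G))$ inside stage $n_0$. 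Matching the two profinite towers up to such a shift is a purely combinatorial step using that two cofinal increasing sequences of finite-dimensional (over the respective group algebras) intermediate subfactors of a common II$_1$ factor, each refining into the other after finitely many steps, must eventually interleave with a fixed offset.

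The main obstacle I expect is the very first step: producing the unitary $w$ and the integer $n_0$ with $w\Theta(L(\G))w^*\subseteq L^\infty(Y_{n_0})\rtimes\La$. The abstract fact one needs is that if $Q\subseteq M$ is a relatively rigid (property (T)) subfactor and $M=\overline{\bigcup_n M_n}$ with $M_n$ an increasing sequence of subfactors each of which is finite index over a common irreducible subfactor, then $Q$ is, up to unitary conjugacy, contained in some $M_n$; this is in the spirit of the classical ``property (T) subgroups of inductive limits'' phenomenon and of Popa's local quantization, but here the lattice is $\mathcal{QN}$-type rather than a strict inductive limit of factors, so one must combine it with the quasinormalizer description of compactness from \cite{Io08,Io08a}. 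A secondary obstacle is verifying that the resulting $\La\ca Z_k$ are genuinely \emph{transitive} finite factors (not merely finite): this uses that $L^\infty(X_k)\rtimes\G$ is an \emph{irreducible} finite-index extension of $L(\G)$ because $\G\ca X_k$ is transitive, together with the index computation in Corollary \ref{intsubgenmain}(2); transporting irreducibility through $\Theta$ is automatic, and then Theorem \ref{main3} does the rest. Once these two points are secured, the reindexing and the conclusion $\La\ca Y=\varprojlim Y_n$ follow formally.
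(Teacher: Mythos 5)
Your toolbox is the right one (the tension between relative property (T) and the approximation property coming from compactness/profiniteness, the intermediate-subalgebra theorem for compact extensions, and the finite-index lemma forcing the factors to be finite and transitive), but the proposal has a genuine circularity at the very step you flag as the main obstacle, and as set up it cannot be repaired. You want to produce $w$ and $n_0$ with $w\Theta(L(\G))w^*\subseteq L^\infty(Y_{n_0})\rtimes\La$, and you want to feed $\Theta(L^\infty(X_n)\rtimes\G)$ into Corollary \ref{intsubgenmain}(2) as a finite-index extension of $L(\La)$. Both moves presuppose structure on the $Y$-side that is part of the conclusion: $\La\ca Y$ is only assumed compact, so there is no filtration $L^\infty(Y_n)\rtimes\La$ by finite stages in which to locate $\Theta(L(\G))$ (for a general compact action the finite-dimensional $\La$-invariant subspaces of $L^\infty(Y)$ need not be subalgebras --- that is precisely the gap between compact and profinite); and $\Theta(L^\infty(X_n)\rtimes\G)$ is finite index over $\Theta(L(\G))$, not over $L(\La)$, so Corollary \ref{intsubgenmain}(2) does not apply to it until $\Theta(L(\G))$ has already been aligned with $L(\La)$ --- which is again the missing step. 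The same issue undercuts the claim that $w\Theta(L^\infty(X_k)\rtimes\G)w^*$ is ``of finite index over $L(\La)$ up to a fixed correction.''

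The repair is to run your ``abstract fact'' in the opposite direction, on the filtration that does exist: take $Q=L(\La)$ (rigid in $M$ since $\La$ has (T)) and $M_n=\Theta(L^\infty(X_n)\rtimes\G)$, each finite index over $\Theta(L(\G))$ with dense union. This is what the paper does. Profiniteness of $\G\ca X$ gives $M$ the Haagerup property relative to $\Theta(L(\G))$, so rigidity yields $L(\La)\prec_M\Theta(L(\G))$; the reverse intertwining $\Theta(L(\G))\prec_M L(\La)$ (rigidity of $L(\G)$ against the relative Haagerup property over $L(\La)$ coming from compactness of $\La\ca Y$) upgrades the image $Q_0$ of a corner of $L(\La)$ to a finite-index subfactor of a corner of $\Theta(L(\G))$, whence $Q_0'\cap qMq$ is finite dimensional; since $Q_0'\cap qMq=\overline{\bigcup_n (Q_0'\cap qM_nq)}^{\rm SOT}$, it stabilizes at a finite stage, which places the relevant partial isometry inside some $M_l$ and hence gives $wL(\La)w^*\subseteq M_l$ for a unitary $w$. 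Only now does the intermediate-subalgebra machinery enter: Theorem \ref{cptint} applied to $L(\La)\subseteq w^*M_{l+k}w\subseteq L^\infty(Y)\rtimes\La$ produces $\La$-invariant subalgebras $B_{k+1}$ with $w(B_{k+1}\rtimes\La)w^*=\Theta(L^\infty(X_{l+k})\rtimes\G)$; property (T) of $M_l$ (finite index over $L(\G)$) against the relative Haagerup property over $L(\La)$ forces $[B_1\rtimes\La:L(\La)]<\infty$, so Lemma \ref{finindex1} makes each $B_k$ finite dimensional and transitive, and density of $\bigcup_k M_{l+k}$ makes $\bigcup_k B_k$ dense in $L^\infty(Y)$. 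This is how the profinite filtration of $Y$ is manufactured; the alignment $w(B_{k+1}\rtimes\La)w^*=\Theta(L^\infty(X_{k+l})\rtimes\G)$ is exact by construction, so no combinatorial interleaving of two pre-existing towers is needed.
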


\noindent This should be compared with Popa's work on inductive limits of II$_1$ factors \cite{Po12}. Finally, the same strategy used in the proof of Theorem \ref{main2intro} can be successfully used in combination with  Theorem \ref{virtualmain4} to provide a purely von Neumann algebraic approach to a version of Ioana's orbit equivalence superrigidity theorem from \cite{Io08}; see the proof of Theorem \ref{ioanaoe}.


\section{Some preliminaries and technical results}\label{prelim}


\subsection{Popa's intertwining techniques}
Over a decade ago, Popa introduced  in \cite [Theorem 2.1 and Corollary 2.3]{Po03} a powerful analytic criterion for identifying intertwiners between arbitrary subalgebras of tracial von Neumann algebras. This is now termed \emph{Popa's intertwining-by-bimodules technique}.

\begin {thm}\cite{Po03} \label{corner} Let $(M,\tau)$ be a separable tracial von Neumann algebra and let $P, Q\subseteq M$ be (not necessarily unital) von Neumann subalgebras. 
Then the following are equivalent:
\begin{enumerate}
\item There exist $ p\in  \mathcal P(P), q\in  \mathcal P(Q)$, a $\ast$-homomorphism $\theta:p P p\rightarrow qQ q$  and a partial isometry $0\neq v\in q M p$ such that $\theta(x)v=vx$, for all $x\in p P p$.
\item For any group $\mathcal G\subset \mathcal U(P)$ such that $\mathcal G''= P$ there is no sequence $(u_n)_n\subset \mathcal G$ satisfying $\|E_{ Q}(xu_ny)\|_2\rightarrow 0$, for all $x,y\in  M$.
\end{enumerate}
\end{thm}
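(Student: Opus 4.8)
\medskip
\noindent The plan is to route both implications through the Jones basic construction $\langle M,e_Q\rangle$ together with its canonical semifinite trace $\mathrm{Tr}$, normalized by $\mathrm{Tr}(e_Q)=1$ (so that $\mathrm{Tr}(xe_Qy)=\tau(xy)$ and $\|E_Q(z)\|_2^2=\mathrm{Tr}(ze_Qz^*e_Q)$ for $x,y,z\in M$), and through the auxiliary condition $(3)$: \emph{there is a nonzero projection $r\in P'\cap\langle M,e_Q\rangle$ with $\mathrm{Tr}(r)<\infty$}. The equivalence $(1)\Leftrightarrow(3)$ is a by-now standard structural fact about the basic construction, which I would cite from \cite{Po03}: given an intertwiner $(p,q,\theta,v)$ one lets $r$ be the orthogonal projection of $L^2(M)$ onto the $P$-$Q$-subbimodule $\overline{\mathrm{span}(PvQ)}$, which lies in $P'\cap\langle M,e_Q\rangle$ because it commutes with the left $P$-action and the right $Q$-action and has $\mathrm{Tr}(r)<\infty$ because $\theta(x)v=vx$ forces $\overline{\mathrm{span}(PvQ)}$ to have finite right $Q$-dimension; conversely a projection $r$ with $\mathrm{Tr}(r)<\infty$ decomposes as $r=ww^*$ for a partial isometry $w$ with $w^*w\le\mathrm{diag}(e_Q,\dots,e_Q)$ inside a matrix amplification of $\langle M,e_Q\rangle$, off which one reads $\theta$ and $v$. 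Granting $(1)\Leftrightarrow(3)$, it remains to prove $(3)\Leftrightarrow(2)$.

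\medskip
\noindent For $(3)\Rightarrow(2)$, I would fix an arbitrary $\mathcal{G}\subset\mathcal{U}(P)$ with $\mathcal{G}''=P$ and argue by contradiction, assuming there is $(u_n)\subset\mathcal{G}$ with $\|E_Q(xu_ny)\|_2\to 0$ for all $x,y\in M$. Using the trace identities recalled above one computes, for $a,b,c,d\in M$,
\[
\langle u_n(ae_Qb)u_n^*,\, ce_Qd\rangle_{\mathrm{Tr}}\;=\;\tau\!\big(E_Q(c^*u_na)\,E_Q(bu_n^*d^*)\big),
\]
which tends to $0$ by Cauchy--Schwarz and the hypothesis (note $\|E_Q(bu_n^*d^*)\|_2=\|E_Q(du_nb^*)\|_2\to 0$). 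As the span of $\{ae_Qb:a,b\in M\}$ is $\|\cdot\|_{2,\mathrm{Tr}}$-dense in $L^2(\langle M,e_Q\rangle,\mathrm{Tr})$ and conjugation by each unitary $u_n$ is $\|\cdot\|_{2,\mathrm{Tr}}$-isometric, it follows that $u_ngu_n^*\to 0$ weakly in $L^2(\mathrm{Tr})$ for every $g$. Taking $g=r$ (which lies in $L^2(\mathrm{Tr})$ since $\|r\|_{2,\mathrm{Tr}}^2=\mathrm{Tr}(r)<\infty$) and using $u_nru_n^*=r$ (as $r$ commutes with $P$) forces $r=0$, contradicting $(3)$. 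This argument uses only $\mathcal{G}\subset\mathcal{U}(P)$, so it handles all admissible $\mathcal{G}$ uniformly.

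\medskip
\noindent For $(2)\Rightarrow(3)$, I would first invoke $(2)$ for the group $\mathcal{G}=\mathcal{U}(P)$ and, by a routine diagonal argument (choosing a $\|\cdot\|_2$-dense sequence in the operator-norm unit ball of the separable algebra $M$ and using that $E_Q$ is an $\|\cdot\|_2$-contraction), extract $\delta>0$ and finitely many $x_1,\dots,x_m,y_1,\dots,y_m\in M$ with $\sum_{i=1}^m\|E_Q(x_iuy_i)\|_2^2\ge\delta$ for all $u\in\mathcal{U}(P)$. Writing $g_i:=y_ie_Qy_i^*$ and $f_i:=x_i^*e_Qx_i$ (positive, of finite $\mathrm{Tr}$), I would then take a center of mass: let $(a_1,\dots,a_m)$ be the unique minimal-norm element of the closed convex set $\overline{\mathrm{co}}\{(ug_1u^*,\dots,ug_mu^*):u\in\mathcal{U}(P)\}$ in the Hilbert space $\bigoplus_{i=1}^m L^2(\langle M,e_Q\rangle,\mathrm{Tr})$. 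Diagonal conjugation by any $u\in\mathcal{U}(P)$ preserves this set and is isometric, so uniqueness of the minimizer gives $ua_iu^*=a_i$ for all $u$, hence each $a_i\in P'\cap\langle M,e_Q\rangle$; moreover $a_i\ge 0$, $\mathrm{Tr}(a_i)\le\mathrm{Tr}(g_i)<\infty$ (lower semicontinuity of $\mathrm{Tr}$), and the estimate $\sum_i\langle ug_iu^*,f_i\rangle_{\mathrm{Tr}}=\sum_i\|E_Q(x_iuy_i)\|_2^2\ge\delta$ passes to the limit to yield $\sum_i\langle a_i,f_i\rangle_{\mathrm{Tr}}\ge\delta$. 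Thus some $a_{i_0}\ne 0$, and a nonzero spectral projection $r:=\chi_{[t,\infty)}(a_{i_0})$ with $t>0$ small satisfies $r\in P'\cap\langle M,e_Q\rangle$ and $\mathrm{Tr}(r)\le t^{-1}\mathrm{Tr}(a_{i_0})<\infty$, which is $(3)$.

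\medskip
\noindent I expect the step $(2)\Rightarrow(3)$ to be the main obstacle: one has to conjure a genuine finite-trace $P$-central projection from the purely asymptotic non-degeneracy in $(2)$, and the delicate points are (i) the diagonal reduction of $(2)$ to a uniform lower bound over finitely many $x_i,y_i$; (ii) verifying that the center of mass is an honest bounded element of $\langle M,e_Q\rangle$ of finite trace rather than a mere $L^2$-vector, which relies on the generating family being uniformly bounded in operator norm and on normality of $\mathrm{Tr}$; and (iii) the standard-but-not-automatic dictionary $(1)\Leftrightarrow(3)$ between intertwiners and finite-trace central projections of the basic construction.
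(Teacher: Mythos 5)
The paper does not prove this statement at all -- it is quoted verbatim from \cite{Po03} as background -- and your argument is, correctly, the standard proof from that source: the equivalence with the auxiliary condition on a finite-trace projection in $P'\cap\langle M,e_Q\rangle$, the $\mathrm{Tr}$-computation showing $u_ngu_n^*\to 0$ weakly against $Me_QM$, and the circumcenter/convexity argument producing a $P$-central positive element of finite trace. The only point worth flagging is in the dictionary $(1)\Leftrightarrow(3)$: the basic-construction argument most naturally yields $\theta:pPp\to q(M_n(\mathbb{C})\otimes Q)q$, and passing to the stated form $\theta:pPp\to qQq$ requires the standard (but not entirely free) comparison-of-projections reduction, which you correctly identify as delegated to \cite{Po03}.
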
 
\vskip 0.02in
\noindent If one of the two equivalent conditions from Theorem \ref{corner} holds then we say that \emph{ a corner of $P$ embeds into $Q$ inside $M$}, and write $P\prec_{M}Q$.


\subsection{Quasinormalizers of von Neumann subalgebras}

\noindent Given an inclusion $N \subseteq M$, the quasi-normalizer $\mathcal{QN}_M(N)$ is the $*$-subalgebra of $M$ consisting of all elements $x\in M$ such that there exist $x_1,x_2,...,x_k\in M$ satisfying  $N x\subseteq \sum_i x_i N$ and $x N \subseteq \sum_i N x_i$, \cite{Po99}. The von Neumann algebra $\mathcal{QN}_M (N)''$ is called the \emph{quasi-normalizing algebra of $N$ inside $M$}. This is an extension of normalization and it is precisely the von Neumann algebraic counterpart of the notion of commensurator in group theory. As usual, $\mathcal N_M(N)=\{u\in \mathcal U(M) \,:\, uNu^*=N \}$ denotes the \emph{normalizing group} and $ \mathcal N_M(N)''$ denotes the \emph{normalizing algebra of $N$ in $M$}. We obviously have  $N\subseteq N\vee N'\cap M \subseteq \mathcal N_M(N)''\subseteq \mathcal{QN}_M(N)''\subseteq M$. In general the quaisnormalizing algebra is (much) larger than the normalizer but there are natural instances when they coincide; e.g.\ when $N\subseteq M$ is a MASA it was shown in \cite{Po01} that $\mathcal{QN}_M(A)''=\mathcal N_M(A)''$.  Quasinormalizers play an important role in the classification of von Neumann algebras and over the last decade there have been a sustained effort towards computing these algebras in various situations \cite{Po01}.   
\vskip 0.05in

\noindent In this subsection we highlight some new computations of quasinormalizers of subalgebras in crossed products from \cite{CP18} that are essential to deriving our main results from Section 4. If $\G\ca^\sigma X$ is a free ergodic action and $M= L^\infty(X)\rtimes \G$ then $\mathcal {QN}_{M}(L(\G))''$ was computed in the following situations. When $\G$ is infinite abelian and $\sigma$ is weak mixing Nielsen observed that $L(\G)$ is a singular MASA in $M$ \cite{Ni}. Later Packer was able to show that the normalizer (and hence the quasinormalizer) depends only on the discrete spectrum of $\sigma$; more precisely one has $\mathcal {QN}_M(L(\G))''= L^\infty (X_c)\rtimes \G$, where $\G\ca X_c$ is the maximal compact factor of $\G\ca X$ \cite{Pa}. More recently Ioana obtained a far-reaching generalization  of Packer's result by showing that the same holds for every $\G$ and any ergodic action $\sigma$, \cite[Section 6]{Io08a}. In \cite{CP18} this analysis was completed at the entire level of the distal tower of $\G\ca X$ using iterated quasinormalizers.  
\vskip 0.05in

\noindent An action $\G\ca X$ is called \emph{distal} if it is the last element of an increasing finite or transfinite sequence $\G\ca X_\beta$ of factors $\beta\leq \alpha$, such that $\G\ca X_o$ is the trivial factor, each extension $\pi: X_{\beta+1}\ra X_\beta$ is maximal compact, and for every limit ordinal $\beta \leq\alpha$ the action $\G\ca X_\beta$ is the inverse limit of the preceding factors. The sequence $\{\G\ca X_\beta\}_{\beta\leq \alpha}$ of factors is also called the Furstenberg-Zimmer tower of $\G\ca X$. Furstenberg \cite{Fur77} and Zimmer \cite{Z76} independently obtained the following structure theorem
\begin{thm}Let $\G\ca X$ be any action. Then there exists an ordinal $\alpha$ and a unique distal tower $\{\G \ca X_\beta\}_{\beta \leq \alpha}$ such that the extension $\pi: X\ra X_\alpha$ is weak mixing.
\end{thm}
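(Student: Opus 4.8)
The plan is to run the classical dichotomy between relative weak mixing and relative compactness together with a transfinite recursion. Recall that an extension $\pi:X\ra Y$ of $\G$-systems is \emph{relatively weakly mixing} if the relatively independent self-joining $X\times_Y X$ carries no $\G$-invariant measurable sets other than those pulled back from $Y$, and is \emph{relatively compact} (isometric) if $L^2(X)$ is generated, as a Hilbert $L^\infty(Y)$-module, by its finitely generated $\G$-invariant submodules. The crucial input is the relative Jacobs--de Leeuw--Glicksberg / Mackey--Zimmer splitting of $L^2(X)$ over $L^\infty(Y)$: for \emph{any} extension $\pi:X\ra Y$, either $\pi$ is relatively weakly mixing, or there is a nontrivial intermediate factor $Y\subsetneq Z\subseteq X$ with $\G\ca Z$ a (nontrivial) compact extension of $\G\ca Y$. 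This is obtained by peeling off the relatively compact part of $L^2(X)$ and noting that a nonzero such part produces the required $Z$.

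Granting this, I would build the tower by transfinite recursion. Set $\G\ca X_0$ to be the trivial factor. Given $\G\ca X_\beta$: if $\pi:X\ra X_\beta$ is already relatively weakly mixing, stop and put $\alpha=\beta$; otherwise, using the dichotomy and Zorn's lemma, let $\G\ca X_{\beta+1}$ be a \emph{maximal} compact extension of $\G\ca X_\beta$ sitting inside $\G\ca X$. Such a maximum exists because an increasing union of intermediate compact extensions of $X_\beta$ is again a compact extension of $X_\beta$, and it is unique because the join of two compact extensions of $X_\beta$ is again a compact extension of $X_\beta$. At a limit ordinal $\beta$, put $\G\ca X_\beta=\varprojlim_{\gamma<\beta}\G\ca X_\gamma$. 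As long as the recursion has not stopped, one has $L^\infty(X_\beta)\subsetneq L^\infty(X_{\beta+1})$ at each successor step, so the subspaces $L^2(X_{\beta+1})\ominus L^2(X_\beta)$ are nonzero and pairwise orthogonal inside the separable Hilbert space $L^2(X)$; hence there are only countably many nontrivial successor steps, the recursion terminates at a countable ordinal $\alpha$, the tower $\{\G\ca X_\beta\}_{\beta\le\alpha}$ is distal by construction, and $\pi:X\ra X_\alpha$ is relatively weakly mixing. This produces $\G\ca X_\alpha$ as the desired maximal distal factor.

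For uniqueness, the key lemma is that every distal factor $\G\ca W$ of $\G\ca X$ is contained in $\G\ca X_\alpha$. I would prove this by induction along the Furstenberg--Zimmer tower $\{W_\gamma\}$ of $W$: at a successor step, if $W_\gamma\subseteq X_\alpha$ then $W_{\gamma+1}\vee X_\alpha\ra X_\alpha$ is a compact extension (relative compactness is stable under enlarging the base), while it is simultaneously an intermediate extension of the relatively weakly mixing extension $X\ra X_\alpha$; since a compact sub-extension of a relatively weakly mixing extension must be trivial, $W_{\gamma+1}\subseteq X_\alpha$. Limit steps are immediate. Applying this lemma to any other distal tower $\{X'_\beta\}_{\beta\le\alpha'}$ with $X\ra X'_{\alpha'}$ relatively weakly mixing gives $X'_{\alpha'}\subseteq X_\alpha$, and applying it in the reverse direction (with $\G\ca X_\alpha$ in the role of the distal factor) gives $X_\alpha\subseteq X'_{\alpha'}$, hence $X_\alpha=X'_{\alpha'}$. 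Finally, a distal tower is recovered from its top by iterating the operations ``maximal compact extension'' and ``inverse limit,'' each of which is canonically determined, so the whole tower — and in particular the ordinal $\alpha$ — is unique.

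The main obstacle is the dichotomy itself: establishing the relative Jacobs--de Leeuw--Glicksberg splitting of $L^2(X)$ over $L^\infty(Y)$ and extracting from a nonzero relatively compact part an honest nontrivial compact intermediate extension. This requires working with Hilbert $L^\infty(Y)$-modules, disintegration into conditional measures, and a relative Peter--Weyl type spectral argument; it is precisely the technical heart of the Furstenberg--Zimmer structure theory, and for the purposes of this paper it is legitimate simply to invoke \cite{Fur77,Z76}. Everything else — the transfinite bookkeeping, the separability-based termination, and the uniqueness argument above — is routine once the dichotomy is available.
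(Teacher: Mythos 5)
The paper does not actually prove this statement: it is the classical Furstenberg--Zimmer structure theorem, quoted with a citation to \cite{Fur77,Z76}, so there is no in-paper argument to measure your proposal against. Your outline is the standard and correct proof of that theorem: the dichotomy between relative weak mixing and the existence of a nontrivial intermediate compact extension, transfinite recursion taking the maximal compact extension at successor stages and inverse limits at limit stages, termination at a countable ordinal via separability of $L^2(X)$, and the key uniqueness lemma that every distal factor of $X$ lands inside $X_\alpha$ because relative compactness lifts along enlargement of the base while a compact sub-extension of a relatively weakly mixing extension is trivial. You are also right to single out the relative Jacobs--de Leeuw--Glicksberg/Mackey--Zimmer splitting as the sole genuinely hard ingredient and to import it from \cite{Fur77,Z76}, which is exactly what the paper itself does. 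Two minor points of hygiene: in the termination step it is worth recording explicitly that, as long as the recursion has not halted, the dichotomy forces \emph{every} successor step to be strict, so the halting ordinal injects into a countable orthogonal family of subspaces and is therefore countable; and the phrase ``recovered from its top'' in the uniqueness paragraph is backwards --- the tower is determined from the bottom up, since $X_0$ is the trivial factor and each later stage is the canonical maximal compact extension (respectively inverse limit) inside $X$, which is what makes $\alpha$ and all the $X_\beta$ unique once the top factor is pinned down.
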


\noindent In \cite{CP18} Peterson and the first author obtained a purely von Neumann algebraic way of describing Furstenberg-Zimmer distal tower of factors for an action, namely as towers of quasinormalizers.

 \begin{thm}\label{distaltowerCP}Let $\G\ca^\sigma X$ be an ergodic action and let $\{\G\ca X_\beta\}_{\beta\leq \alpha}$ be the corresponding Furstenberg-Zimmer tower. Let  $M=L^\infty(X)\rtimes \G$ and for all $\beta\leq \alpha$ let $M_\beta=L^\infty(X_\beta)\rtimes \G$ be the corresponding cross-products von Neumann algebras. Then the following hold: \begin{enumerate}
 \item for all $\beta\leq \beta'\leq \alpha$ we have the following inclusions of von Neumann algebras $L(\G)=M_o\subseteq M_\beta\subseteq M_{\beta'}\subseteq M_\alpha\subseteq M$; 
 \item  for all $\beta\leq \alpha$ we have $\mathcal {QN}_M(M_\beta)''= M_{\beta+1}$;
\item for every limit ordinal $\beta\leq \alpha$ we have $\overline{\cup_{\g<\beta} L^\infty(X_\g)}^{WOT}=L^\infty(X_\beta)$ and also $\overline {\cup_{\g<\beta}M_\g}^{WOT}= M_\beta$; \item There exists an infinite sequence $(\g_n)_n \subset \G$ such that for every $x,y\in L^\infty(X)\ominus L^\infty(X_\alpha)$ we have that $\lim_{n\ra \infty}\|E_{L^\infty(X_\alpha)}(x\sigma_{\g_n}(y))\|_2=0$.
\end{enumerate} \end{thm}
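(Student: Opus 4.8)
The plan is to handle the four items in the order (1), (3), then a decay lemma, then (2), since item (4) is essentially that lemma specialised to the top of the tower. Items (1) and (3) are soft. For (1): the $\G$-equivariant factor maps $X\to X_{\beta'}\to X_\beta$ induce $\G$-equivariant inclusions $L^\infty(X_\beta)\subseteq L^\infty(X_{\beta'})\subseteq L^\infty(X)$, and since every $M_\nu$ is generated by $L^\infty(X_\nu)$ together with the common group unitaries $\{u_\g\}$, these extend to the chain $L(\G)=M_o\subseteq M_\beta\subseteq M_{\beta'}\subseteq M_\alpha\subseteq M$. For (3): by construction of the Furstenberg--Zimmer tower at a limit ordinal $\beta$ one has $X_\beta=\varprojlim_{\g<\beta}X_\g$, which is precisely the statement $\overline{\cup_{\g<\beta}L^\infty(X_\g)}^{WOT}=L^\infty(X_\beta)$; then $\cup_{\g<\beta}M_\g$ is an increasing union of von Neumann subalgebras of $M_\beta$, so its WOT-closure $N$ is a von Neumann algebra containing $\cup_{\g<\beta}L^\infty(X_\g)$ (hence $L^\infty(X_\beta)$) and every $u_\g$ (already in $M_o$), whence $M_\beta\subseteq N\subseteq M_\beta$ and $N=M_\beta$.

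Next I would isolate the following lemma, which yields (4) directly and is the engine behind the hard half of (2): \emph{if $A\subseteq B\subseteq L^\infty(X)$ are $\G$-invariant von Neumann subalgebras such that $B$ is the maximal compact extension of $A$ sitting inside $X$ --- equivalently, the $A$-$\G$-module $L^2(X)\ominus L^2(B)$ has no nonzero $\G$-invariant submodule finitely generated over $A$ --- then there is a single sequence $(\g_n)_n\subset\G$ with $\|E_{A}(a^*\sigma_{\g_n}(b))\|_2\to 0$ for all $a,b\in L^\infty(X)\ominus B$.} The proof is the standard one: the relative Jacobs--de Leeuw--Glicksberg dichotomy rephrases ``no finitely generated invariant submodule'' as: for every finite subset $F$ of that module and every $\epsilon>0$ there is $\g\in\G$ with $\|E_A(\xi^*\sigma_\g(\eta))\|_2<\epsilon$ for all $\xi,\eta\in F$; a diagonal argument over a countable $\|\cdot\|_2$-dense bounded subset (using separability of $L^2(X)$) then produces one sequence working for all $a,b$, and boundedness of $a$ gives $\|E_A(a^*\sigma_\g(b))\|_2\le\|a\|_\infty\|b\|_2$, which is what is needed to pass to the $\|\cdot\|_2$-closure. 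Applying this with $A=B=L^\infty(X_\alpha)$ --- legitimate, since the extension $X\to X_\alpha$ in the Furstenberg--Zimmer theorem is weak mixing, i.e.\ $L^2(X)\ominus L^2(X_\alpha)$ has no nonzero finitely generated $\G$-invariant $L^\infty(X_\alpha)$-submodule --- is exactly (4).

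The core is (2); fix $\beta\le\alpha$, with the convention $X_{\alpha+1}:=X_\alpha$, so that $X_{\beta+1}$ is always the maximal compact extension of $X_\beta$ inside $X$. For $M_{\beta+1}\subseteq\mathcal{QN}_M(M_\beta)''$ I would invoke Zimmer's structure theorem: $X_{\beta+1}\to X_\beta$ being compact, $L^2(X_{\beta+1})$ is the closed span of $\G$-invariant $L^\infty(X_\beta)$-submodules finitely generated over $L^\infty(X_\beta)$; each such submodule $\HH$, after a truncation/Gram--Schmidt argument over the abelian algebra $L^\infty(X_\beta)$, has bounded generators $\eta_1,\dots,\eta_k\in L^\infty(X_{\beta+1})$, and a short Fourier computation exploiting $\sigma_\g(\HH)=\HH$ shows that every bounded $a\in\HH$ obeys $M_\beta a\subseteq\sum_j M_\beta\eta_j$ and $aM_\beta\subseteq\sum_j\eta_jM_\beta$, so $a\in\mathcal{QN}_M(M_\beta)$; as such $a$ are $\|\cdot\|_2$-dense in $L^\infty(X_{\beta+1})$ and all $u_\g\in\mathcal{QN}_M(M_\beta)''$, we get $M_{\beta+1}\subseteq\mathcal{QN}_M(M_\beta)''$. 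For the reverse inclusion, let $x\in\mathcal{QN}_M(M_\beta)$; since $E_{M_{\beta+1}}$ is $M_\beta$-bimodular it preserves $\mathcal{QN}_M(M_\beta)$, so after subtracting $E_{M_{\beta+1}}(x)$ I may assume $E_{M_{\beta+1}}(x)=0$ and must show $x=0$. The bimodule $\KK:=\overline{M_\beta x M_\beta}^{\|\cdot\|_2}$ lies in $L^2(M)\ominus L^2(M_{\beta+1})=\bigoplus_{\g\in\G}\bigl(L^2(X)\ominus L^2(X_{\beta+1})\bigr)u_\g$ and, by the quasinormalizer relations, is finitely generated on both sides as an $M_\beta$-module. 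Since $X_{\beta+1}$ is maximal compact over $X_\beta$, the lemma applies with $A=L^\infty(X_\beta)$, $B=L^\infty(X_{\beta+1})$, producing $(\g_n)_n$ with $\|E_{L^\infty(X_\beta)}(a^*\sigma_{\g_n}(b))\|_2\to0$ for all $a,b\in L^\infty(X)\ominus L^\infty(X_{\beta+1})$; expanding $x=\sum_\g a_\g u_\g$ (each $a_\g=E_{L^\infty(X)}(xu_\g^*)$ orthogonal to $L^\infty(X_{\beta+1})$, and reduction to a single $\g$ harmless since $u_\g\in M_o\subseteq M_\beta$), one checks --- using a finite frame for $\KK$ together with the $\ell^2$-decay of the Fourier coefficients of the intertwining data --- that this decay is incompatible with $\KK\ne0$, forcing every $a_\g=0$ and hence $x=0$.

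The main obstacle is this last step: turning ``$\KK$ finitely generated over $M_\beta$ plus $X_{\beta+1}$ maximal compact over $X_\beta$'' into ``$x=0$''. Conceptually this is the relative, crossed-product analogue of Ioana's computation of $\mathcal{QN}_M(L(\G))''$ in \cite{Io08a}; rigorously it amounts to showing that the $M_\beta$-bimodule $L^2(M)\ominus L^2(M_{\beta+1})$ has no nonzero finitely generated sub-bimodule, for which one must combine the decay sequence $(\g_n)$ with careful, uniform $\ell^2$-tail estimates on the Fourier coefficients occurring in a finite generating set of $\KK$. A secondary (standard but delicate) point, used in the ``$\supseteq$'' half of (2), is that finitely generated Hilbert modules over the abelian algebra $L^\infty(X_\beta)$ admit finite generating sets consisting of bounded elements.
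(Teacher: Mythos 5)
First, a point of reference: the paper does not actually prove Theorem \ref{distaltowerCP} --- it is quoted from the unpublished preprint \cite{CP18} of Chifan and Peterson --- so there is no in-text proof to measure your argument against. Judged on its own terms, your proposal follows what is surely the intended route: items (1) and (3) are soft consequences of the construction of the tower, item (4) is the relative Furstenberg--Zimmer dichotomy for the weak mixing extension $X\ra X_\alpha$ together with a diagonal argument over a countable $\|\cdot\|_2$-dense set, and item (2) is the relativization to $M_\beta$ of Ioana's computation $\mathcal{QN}_M(L(\G))''=L^\infty(X_c)\rtimes\G$ from \cite{Io08a}. Your reductions are all sound: bimodularity of $E_{M_{\beta+1}}$ lets you assume $E_{M_{\beta+1}}(x)=0$; the quasinormalizer relations make $\KK=\overline{M_\beta xM_\beta}$ finitely generated on both sides; and the components of the frame vectors lying in $L^2(M_{\beta+1})$ drop out when paired against $\eta\perp L^2(M_{\beta+1})$.

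The difficulty is that the one step carrying all the weight --- that $L^2(M)\ominus L^2(M_{\beta+1})$ contains no nonzero $M_\beta$-sub-bimodule which is finitely generated as a right $M_\beta$-module --- is described rather than proved: you write that ``one checks'' the decay is incompatible with $\KK\neq0$ and then yourself flag this as ``the main obstacle.'' That is precisely where the analysis lives. The paper's Lemma \ref{mix1} shows the shape of the required estimate in the genuinely \emph{mixing} case (truncate the Fourier expansions to finite sets $F,G$, then beat the finitely many surviving correlation terms against $x_n\ra0$ weakly); the relatively weak mixing case needed here is strictly harder, because the decay $\|E_{L^\infty(X_\beta)}(a^*\sigma_{\g_n}(b))\|_2\ra0$ is available only along the special subsequence $(\g_n)$ and only for $a,b\perp L^\infty(X_{\beta+1})$, and it must be fed into a Popa-style frame argument (comparing $\langle u_{\g_n}\eta,e(u_{\g_n}\eta)\rangle=\|\eta\|_2^2$ with $e(u_{\g_n}\eta)\approx\sum_i\xi_iE_{M_\beta}(\xi_i^*u_{\g_n}\eta)\ra0$) with uniform control of the Fourier tails of the $\xi_i$. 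A second unresolved point sits in your ``$\supseteq$'' half of (2): the quasinormalizer is defined by \emph{algebraic} containments $M_\beta a\subseteq\sum_j x_jM_\beta$, and passing from ``$\HH$ is a finitely generated Hilbert $L^\infty(X_\beta)$-module'' to such a containment requires an orthonormal basis of $\HH$ consisting of \emph{bounded} vectors --- note that truncating an unbounded generator $\eta$ by $1_{\{|\eta|\leq n\}}$ does not stay inside $\HH$, since that indicator need not lie in $L^\infty(X_\beta)$. Both points are resolvable (they are resolved in \cite{Io08a} and \cite{CP18}), but as written the proposal records the correct strategy while leaving unproved exactly the two steps that require estimates.
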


\subsection{Finite index inclusions of II$_1$ factors} A trace-preserving action $\G\ca A$ on a finite von Neumann algebra is called \emph{transitive} if $A$ is abelian and there exist finitely many minimal projections $\mathcal F\subset \mathcal P(A)$ such that span$\mathcal F= A$ and for every $p,q\in \mathcal F$ there is $\g\in \G$ such that $\sigma_\g(p)=q$.  Throughout the paper the set $\mathcal F $ will be denoted by $At(A)$ and will be called the atoms of $A$. In particular all atoms of $A$ have same trace, i.e. $dim(A)^{-1}$.
\begin{lem}\label{finindex1} Let $A$ be an abelian von Neumann algebra and let $\G \ca^\sigma A$ be a trace preserving action. Assume that the inclusion $L(\G)\subseteq A\rtimes \G$   
admits a finite Pimsner-Popa basis. Then $A$ is completely atomic. Moreover, if $\G \ca A$ is ergodic then $\G \ca A$ is transitive. \end{lem}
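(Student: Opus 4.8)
The plan is to play the finite--index hypothesis against the explicit form of the trace--preserving conditional expectation $E\colon A\rtimes\G\to L(\G)$. Throughout write $M=A\rtimes\G$ and $N=L(\G)$. The key observation is that $E$ acts on $A$ simply by the trace: since $E(au_\g)=\tau_A(a)u_\g$, we have $E(a)=\tau_A(a)\,1$ for every $a\in A$. On the other hand, since $N\subseteq M$ admits a finite Pimsner--Popa basis, the inclusion has finite index, and hence the Pimsner--Popa index inequality furnishes a constant $\lambda\in(0,1]$ with $E(x)\ge\lambda x$ for all $x\in M_+$.

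The first step is to deduce that $A$ is completely atomic. If it were not, then $A$ would have a nonzero diffuse corner, and so would contain a projection $q$ with $0<\tau_A(q)<\lambda$. Applying the index inequality to $x=q$ and compressing by $q$ gives
$$\lambda q\ \le\ qE(q)q\ =\ \tau_A(q)\,q,$$
so $\lambda\le\tau_A(q)$, a contradiction. (The same argument in fact shows that every nonzero projection of $A$ has trace at least $\lambda$, so $A$ is even finite dimensional; but ``completely atomic'' is all that will be used downstream.)

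The second step handles the ergodic case. Write $A=\bigoplus_{i\in I}\mathbb C p_i$ with the $p_i$ the atoms and $t_i=\tau_A(p_i)>0$, $\sum_{i\in I}t_i=1$. Each $\sigma_\g$ is a trace--preserving automorphism of the atomic algebra $A$, hence it permutes $\{p_i\}_{i\in I}$ while preserving the weights; this yields an action of $\G$ on $I$ with $t_{\g\cdot i}=t_i$. If $I$ were not a single $\G$--orbit, pick an orbit $\emptyset\neq O\subsetneq I$; then $p_O:=\sum_{i\in O}p_i$, which makes sense since $\sum_{i\in O}t_i\le1$, would be a nontrivial $\G$--fixed projection, contradicting ergodicity. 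Hence $\G$ acts transitively on $\{p_i\}_{i\in I}$, which forces all $t_i$ to equal a common value $t$; then $1=\sum_{i\in I}t_i=|I|\,t$, so $|I|=t^{-1}<\infty$. Thus $A$ is finite dimensional, its finitely many atoms span $A$, and $\G$ permutes them transitively, i.e.\ $\G\ca A$ is transitive in the required sense.

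The only genuinely external input is the Pimsner--Popa inequality for finite--index inclusions of finite von Neumann algebras; everything else is elementary soft analysis. The one point deserving a line of care is the passage from ``$A$ is not completely atomic'' to ``$A$ contains a projection of arbitrarily small positive trace'', which is immediate from the existence of a nonzero diffuse corner in $A$; I do not anticipate any real obstacle.
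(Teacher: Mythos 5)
Your argument is correct, and the first half takes a genuinely different route from the paper. Where you invoke the Pimsner--Popa probabilistic index inequality $E_{L(\G)}(x)\geq \lambda x$ for $x\in (A\rtimes\G)_+$ and test it against $E_{L(\G)}(q)=\tau(q)1$ for a small projection $q\in A$, the paper instead converts the basis identity $x=\sum_i E_{L(\G)}(xm_i^*)m_i$ into an approximate $\|\cdot\|_2$-estimate $\|x\|_2^2\leq \varepsilon + c\sum_i\|E_{L(\G)}(xa_i)\|_2^2$ with $a_i\in A$, and then kills a putative diffuse corner by feeding in a sequence of unitaries $u_n\in\mathcal U(Ap)$ with $\tau(u_n\,\cdot)\to 0$. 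Your route is shorter and yields more (every atom has trace at least $\lambda$, so $A$ is finite dimensional already without ergodicity), but it leans on the one nontrivial external input that a finite Pimsner--Popa basis forces a positive probabilistic index; this is indeed standard (it follows from $xx^*\leq \|[m_im_j^*]_{ij}\|\sum_i E(xm_i^*)E(xm_i^*)^*$ and the operator Cauchy--Schwarz inequality $E(y)E(y)^*\leq E(yy^*)$, and is in [PP86]), but you should either cite it or include that two-line derivation, since the lemma's hypothesis is literally the existence of a finite basis and not a stated index bound. For the moreover part your argument is essentially the paper's: both observe that $\G$ permutes the atoms and that the sum over an orbit is a $\G$-invariant projection, so ergodicity forces a single orbit of equal-trace atoms, hence finitely many; the paper merely packages this by starting from a minimal projection of maximal trace rather than decomposing the whole atom set into orbits.
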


\begin{proof} By assumption there exist $m_1,...,m_k \in A\rtimes \G=M$, with $E_{L(\G)}(m_im_j^*)=\delta_{i,j} p_i$ where $p_i \in \mathcal P(M)$, such that for all  $x\in M$ we have $x= \sum_{i=1}^k E_{L(\G)}(x m^*_i) m_i$. Thus, for all $x\in M$ we have $\|x\|_2^2= \sum^k_{i=1} \|E_{L(\G)}(x m^*_i)\|^2_2$.  Approximating $m_i\in M$ using their Fourier decompositions and doing some basic calculations this  further implies the following: for every $\varepsilon>0$ one can find $a_j \in A$ with $1\leq j\leq l$ and $c>0$ so that for all $x\in (M)_1$ we have \begin{equation}\label{0.0.1}
\|x\|_2^2\leq \varepsilon+c\sum_{i=1}^l \|E_{L(\G)}(x a_i)\|^2_2.
\end{equation}   
Assume for the sake of contradiction that $A$ has a diffuse corner, i.e.\ there is $0\neq p \in A$ so that $Ap$ is diffuse. Hence one can find a sequence of unitaries $u_n\in \mathcal U(Ap)$ so that for all $x\in Ap$ we have $\tau (u_n x)\ra 0$, as $n\ra \infty$. Since $a_i\in A$ we have $E_{L(\G)}(u_n a_i) =\tau(u_n a_i)= \tau (u_n a_ip )$. Thus using \eqref{0.0.1} we get that  $\tau(p)=\|u_n\|_2^2\leq \varepsilon +c\sum_{i=1}^l \|E_{L(\G)}(u_n a_i)\|^2_2=\varepsilon +c\sum_{i=1}^l |\tau(u_n a_ip)|^2$ and since $\lim_{n\ra \infty}\sum_{i=1}^l |\tau(u_n a_ip)|^2=0$ we get that $\tau(p)\leq \varepsilon$. Letting $\varepsilon \searrow 0$ we get $p=0$, a contradiction. 
\vskip 0.05in
\noindent To see the moreover part let $0\neq q\in A$ be a minimal projection of maximal trace. Thus for all $\g\in \G$ either $q\sigma_\g (q)=0$ or $q=\sigma_\g (q)$. Thus the orbit $\mathcal F=\{ \sigma_\g(q)\,|\, \g\in \G\}$ is necessarily a finite set of (orthogonal) minimal projections of $A$. Let $t=\sum_{q\in \mathcal F} q$ and notice that $0\neq t\in A$ is a projection satisfying $\sigma_\g(t)=t$ for all $\g\in \G$. Since $\G \ca A$ is ergodic it follows that $t=1$. Since $A$ is completely atomic this entails that $A =span \mathcal F$. Thus $\G \ca A$ is transitive. \end{proof}

\begin{prop}\label{basicconstr1} Let $\G$, $\La$  be icc groups and let $\G \ca A$, $\La \ca B$ be transitive actions so that $A\rtimes \G$ and $B\rtimes \La$ are II$_1$ factors. Assume that $\theta: A\rtimes \G \ra B\rtimes \La$ is a $\ast$-isomorphism such that $\theta(L(\G))=L(\La)$. Then $\dim(A)=\dim(B)$ and for every $a\in At(A)$, $b\in At(b)$ there is a unitary $u\in L(\La)$ so that $\theta(L(Stab_\G(a))=u^* L(Stab_\La(b))u$. In addition, if there exists $a\in At(A)$ such that $Stab_\G(a)$ is normal in $\G$ then for every $b\in At(B)$  then $Stab_\La(b)$ is also normal in $\La$; moreover, $\G/Stab_\G(a)\cong \La/Stab_\La(b)$.  
\end{prop}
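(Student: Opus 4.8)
The plan is to exploit the fact that in a transitive action $\G \ca A$, the stabilizer $\mathrm{Stab}_\G(a)$ of an atom $a \in At(A)$ is a finite-index subgroup of $\G$, and that $L(\mathrm{Stab}_\G(a))$ can be recovered from the pair $(L(\G) \subseteq A \rtimes \G)$ in a way that is manifestly preserved by $\theta$. First I would observe that $A \rtimes \G \cong M_n(\mathbb{C}) \otimes L(\mathrm{Stab}_\G(a))$ where $n = \dim(A) = [\G : \mathrm{Stab}_\G(a)]$, and that under this identification $L(\G)$ sits as a finite-index subfactor: indeed, picking coset representatives $\g_1, \dots, \g_n$ of $\mathrm{Stab}_\G(a)$ in $\G$, the elements $\{\sqrt{n}\, a u_{\g_i}\}$ form a Pimsner--Popa basis for $L(\mathrm{Stab}_\G(a))' \cap (A \rtimes \G) \supseteq L(\G)$ over $L(\mathrm{Stab}_\G(a))$ (after a suitable amplification), so in particular $[A \rtimes \G : L(\G)] = n^2$. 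Since $\theta$ carries $L(\G)$ to $L(\La)$, the Jones indices match: $[A \rtimes \G : L(\G)] = [B \rtimes \La : L(\La)]$, which gives $\dim(A)^2 = \dim(B)^2$, hence $\dim(A) = \dim(B) =: n$.

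Next I would identify $L(\mathrm{Stab}_\G(a))$ intrinsically. The key point is that $A' \cap (A \rtimes \G) = A$ (by freeness, which holds here as $A\rtimes\G$ is a factor and the action on atoms is transitive with abelian $A$ — more precisely the relative commutant of $A$ is generated by $A$ together with $u_\g$ for $\g$ fixing $A$ pointwise, but ergodicity plus the factor hypothesis forces this to be $A$), so that $a(A\rtimes\G)a = a A a \,\rtimes\, \mathrm{Stab}_\G(a) = \mathbb{C} a \,\rtimes\, \mathrm{Stab}_\G(a) \cong L(\mathrm{Stab}_\G(a))$, and moreover $a L(\G) a \subseteq a(A\rtimes\G) a$ generates $L(\mathrm{Stab}_\G(a))$ as $aL(\G)a \ni a u_\g a = u_\g a$ for $\g \in \mathrm{Stab}_\G(a)$. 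So $L(\mathrm{Stab}_\G(a))$ is (a corner of $A\rtimes\G$ cut by a minimal projection of $A$, containing $a L(\G) a$). To transport this through $\theta$: $\theta(a)$ is a projection of trace $1/n$ in $B \rtimes \La$ with $\theta(a) \in \theta(A)$, and I would use that $\theta(a)$ is \emph{equivalent in $B\rtimes\La$ to a minimal projection $b_0 \in At(B)$} — both have trace $1/n = 1/\dim(B)$, and any two projections of equal trace in a $\rm II_1$ factor are Murray--von Neumann equivalent, but I need the intertwining unitary to lie in $L(\La)$. This is where Popa's intertwining technique or a direct averaging argument enters: one shows $\theta(A) \prec_{B\rtimes\La} L(\La)$ is false but rather that the minimal projections of $\theta(A)$ and of $B$ are conjugate by a unitary of $L(\La)$, using that $L(\La) \subseteq B\rtimes\La$ is an irreducible finite-index inclusion and $\theta(A), B$ are each the "atomic part" detected by the index-$n^2$ structure. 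Concretely: $L(\La)' \cap (B\rtimes\La)$ together with the basic construction projection recovers $B$ (a transitive abelian algebra of dimension $n$), and similarly $\theta(L(\G))' \cap \theta(A\rtimes\G) = \theta(A' \cap (A\rtimes\G))$ wait — I should instead directly argue that $\theta(A) = B$ up to a unitary of $L(\La)$, by noting both are diffuse-free maximal abelian-over-$L(\La)$ pieces; I would cite or adapt the uniqueness of such structures. Granting a unitary $u \in \mathcal U(L(\La))$ with $u\theta(A)u^* = B$ and $u\theta(a)u^* = b$, the corner identity above gives $u\theta(L(\mathrm{Stab}_\G(a)))u^* = u\theta(aL(\G)a)u^* {}'' = b L(\La) b {}'' = L(\mathrm{Stab}_\La(b))$ (using $\theta(L(\G)) = L(\La)$), which is the second assertion with the roles of $a, b$ being interchangeable since the action on atoms is transitive.

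For the final (normality) statement: if $\mathrm{Stab}_\G(a) \lhd \G$, then $\sum_{q\in At(A)} q = 1$ and all stabilizers $\mathrm{Stab}_\G(q)$, $q \in At(A)$, coincide with $\mathrm{Stab}_\G(a)$ (normality forces the point stabilizer to equal the kernel of $\G \ca At(A)$), so $A \rtimes \G = A \rtimes \big(\G/\mathrm{Stab}_\G(a)\big)$-twisted $\otimes L(\mathrm{Stab}_\G(a))$ — more precisely $A \rtimes \G \cong \big(A \rtimes (\G/\mathrm{Stab}_\G(a))\big) \tp L(\mathrm{Stab}_\G(a))$ is unnecessary; instead I would use that $L(\mathrm{Stab}_\G(a)) \lhd A\rtimes\G$ is a normal subfactor (its normalizer is everything) with $\mathcal{N}_{A\rtimes\G}(L(\mathrm{Stab}_\G(a)))'' = A\rtimes\G$, and the quotient is $A \rtimes (\G/\mathrm{Stab}_\G(a))$, a factor of type $\rm I_n$. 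Transporting via $\theta$ (and the unitary $u$) shows $L(\mathrm{Stab}_\La(b)) \lhd B\rtimes\La$ is normal with full normalizer, which by a standard argument on crossed products forces $\mathrm{Stab}_\La(b) \lhd \La$; the quotients are then both isomorphic via the induced map, giving $\G/\mathrm{Stab}_\G(a) \cong \La/\mathrm{Stab}_\La(b)$ as the "outer" parts, i.e. $A\rtimes(\G/\mathrm{Stab}_\G(a)) \cong B\rtimes(\La/\mathrm{Stab}_\La(b))$ respecting the images of $L(\G/\mathrm{Stab}_\G(a)) = $ (image of $L(\G)$), and since these are icc quotients one extracts the group isomorphism from the identification of group von Neumann algebras with matching Cartan-type data.

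\textbf{Main obstacle.} The crux is upgrading the Murray--von Neumann equivalence $\theta(a) \sim b$ to one implemented by a unitary \emph{in $L(\La)$}: equality of traces only gives equivalence in the ambient factor $B\rtimes\La$, whereas the whole argument needs the intertwiner to conjugate $L(\La)$ to itself. I expect this to require either a genuine application of Popa's intertwining-by-bimodules (Theorem \ref{corner}) to show $\theta(A)$ is unitarily conjugate into — in fact onto a copy related to — $B$ relative to $L(\La)$, exploiting that both $\theta(L(\G)) = L(\La)$ and that $A$, $B$ are the "finite-dimensional parts" detected by quasinormalizers of $L(\G)$, $L(\La)$ respectively (a finite-index special case of Theorem \ref{distaltowerCP}/\cite[Section 6]{Io08a}), or an explicit averaging over the finite group $\G/\mathrm{Stab}_\G(a)$ after passing to the basic construction. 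Handling the non-free case (where $\mathrm{Stab}_\G(a)$ may be nontrivial, so $a(A\rtimes\G)a$ is not simply $L(\G)$ but a genuine factor into which $L(\G)$ embeds with index) carefully, and checking that all the corner cut-downs and their images under $\theta$ remain factors, is the secondary technical point.
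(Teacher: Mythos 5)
Your proposal correctly sets up the first part --- recognizing that $a$ is the Jones projection for $L(\mathrm{Stab}_\G(a))\subseteq L(\G)$, that $A\rtimes\G$ is the corresponding basic construction, and that matching indices force $\dim(A)=\dim(B)$ --- and this is exactly how the paper begins (though note the index is $[A\rtimes\G:L(\G)]=n$, not $n^2$; this does not affect the conclusion). The genuine gap is the step you yourself flag as the crux: producing a unitary in $L(\La)$ conjugating the atoms. You propose to show $u\theta(A)u^*=B$ for some $u\in\mathcal U(L(\La))$, which is both stronger than needed and left unjustified ("Granting a unitary\dots"). The missing idea is that the basic-construction picture you already have \emph{is} the tool: since $\tau(a)=\tau(b)$ in the II$_1$ factor $A\rtimes\G=B\rtimes\La$, factoriality gives a unitary $u$ in the \emph{ambient} algebra with $b=uau^*$; because $a$ is the Jones projection of $L(\mathrm{Stab}_\G(a))\subseteq L(\G)\subseteq A\rtimes\G$, the pull-down lemma lets you replace $u$ by an element $m\in L(\G)$ with $b=mam^*$, and then $1/n=\tau(b)=E_{L(\G)}(b)=mE_{L(\G)}(a)m^*=(1/n)mm^*$ forces $mm^*=1$, i.e.\ $m$ is a unitary of $L(\G)=L(\La)$. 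From there $L(\mathrm{Stab}_\La(b))=\{b\}'\cap L(\La)=mL(\mathrm{Stab}_\G(a))m^*$ follows at once. Only the single atoms need to be conjugated, not the whole algebras $\theta(A)$ and $B$.

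The normality part of your argument is also too vague to stand. Knowing that $\mathcal N_{A\rtimes\G}(L(\mathrm{Stab}_\G(a)))''=A\rtimes\G$ and transporting this through $\theta$ does not by itself force $\mathrm{Stab}_\La(b)\lhd\La$: what one actually gets from the first part is that each $uu_\g u^*$ normalizes $L(\mathrm{Stab}_\La(b))$ \emph{inside $L(\La)$}, and one then needs a structural result on normalizers of irreducible finite-index subfactors (the paper invokes \cite[Corollary 5.3]{SWW09}) to write $uu_\g u^*=xv_\la$ with $x\in\mathcal U(L(\mathrm{Stab}_\La(b)))$ and $\la\in\La$; normality of $\mathrm{Stab}_\La(b)$ and the isomorphism $\G/\mathrm{Stab}_\G(a)\cong\La/\mathrm{Stab}_\La(b)$ are read off from this decomposition. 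The "standard argument on crossed products" you appeal to is precisely this nontrivial input, and without it the claim does not follow.
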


\begin{proof} To simplify the presentation, we assume that $A\rtimes \G= B\rtimes \La$ and $L(\G)=L(\La)$. Let $n=\dim(A)$ and fix $a\in At(A)$. Notice $\tau(a)=1/n$ and hence $E_{L(\G)}(a)=\tau(a)1= 1/n$. Also for each $x\in L(\G)$, using its Fourier decomposition,  we have $axa=\sum_{\g\in \G} \tau(xu_\g^{-1})a u_\g a= \sum_{\g\in \G} \tau(xu_\g^{-1})a \sigma_\g (a) u_\g= \sum_{\g\in Stab_\G(a)} \tau(xu_{\g^{-1}}) u_\g a= E_{L(Stab_\G(a))}(x)a$. Since clearly $\ast$-alg$\{a, L(\G)\}=A\rtimes \G$ then, altogether, the above relations show that $A\rtimes \G$ is the basic construction of the inclusion $L(Stab_\G(a))\subseteq L(\G)$ and also $[\G:Stab_\G(a)]=[A\rtimes \G :L(\G)]=n$. A similar statement holds for $L(\La)\subseteq B\rtimes \La$. Since by assumption $[A\rtimes \G:L(\G)]=[B\rtimes \La:L(\La)]$ it follows that $\dim (A)=\dim (B)=n$. To show the remaining part of the statement fix $b\in At(B)$. By the factoriality assumption, since $\tau(a)=\tau(b)=1/n$, there is a unitary $u\in A\rtimes \G$ so that \begin{equation}\label{0.2.0}b =uau^*.\end{equation} Since $a\in A\rtimes \G$ is the Jones projection for inclusion $L(Stab_\G(a))\subseteq L(\G)$, by pull-down lemma there exists $m\in L(\G)$ such that $b= uau^*=mam^*$. Thus one can check that 
$1/n=\tau(b)=E_{L(\La)}(b)= E_{L(\G)}(b)= E_{L(\G)}(mam^*)= mE_{L(\G)}(a)m^*=\tau(a)mm^*=(1/n)mm^*$. Hence $mm^*=1$ which implies that $m\in L(\La)$ is a unitary. Thus in equation \eqref{0.2.0} we can assume wlog that the unitary $u$ belongs to $L(\G)$. Hence using \eqref{0.2.0} we further have that $L(Stab_\La(b))=\{b\}'\cap L(\La)= \{uau^*\}'\cap L(\G)= \{uau^*\}'\cap  uL(\G)u^*= uL(Stab_\G(a))u^*$, as desired. Since $Stab_\G (a)$ is normal in $\G$ it follows from the above relation that $uu_\g u^* \in\mathcal N_{L(\La)}(L(Stab_\La(b)))$ for every $\g \in \G$. Since $\La$ is icc and $[\La:Stab_\La(b)]<\infty$ then $L(Stab_\La(b))\subseteq L(\La)$ is a irreducible inclusion of II$_1$ factors. Thus using \cite[Corollary 5.3]{SWW09} we have that for every $\g \in \G$ there exist a unitary $x\in L(Stab_\g(b))$ and $\la \in \La$ such that $uu_\g u^*= x v_\la$. In particular this implies that $Stab_\La(b)$ is normal in $\La$ and also $\G/Stab_{\G}(a)\cong \La/Stab_\La(b)$.   \end{proof}

\subsection{Mixing extensions}\label{prelim-mixing}

Let $B \subseteq A$ be an inclusion of von Neumann algebras and assume that $\G \ca^\sigma A$ is an action that leaves the subalgebra $B$ invariant. Throughout the paper we call such a system \emph{an extension} and we denote it by $\G\ca (B\subseteq A)$. When $A$ is endowed with a state $\phi$ preserved by $\sigma$ the extension is said to be $\phi$-preserving and will be denoted by $\G\ca (B\subset A, \phi)$. When $A$ is a finite von Neumann algebra and $\phi$ is a faithful normal trace then $\G\ca (B\subset A, \phi)$ is called a trace-preserving extension.

\begin{defn} A trace-preserving extension $\G\ca (B \subseteq A, \tau) $ is called mixing if for every $t,z\in A\ominus B$ we have $\lim_{\g\to \infty}\|E_{B}(t\sigma_\g (z))\|_2=0$. 
\end{defn}

\begin{lem}\label{mix1} Let $\G\ca (B \subseteq A, \tau) $ be a trace-preserving mixing extension. Then for every $t,z\in (A\rtimes \G) \ominus (B\rtimes \G)$ and every sequence $(x_n)_n\subset (L(\G))_1$ that converges to $0$ weakly, we have $\lim_{n\ra \infty}\|E_{B\rtimes \G} (t x_n z)\|_2=0$.
\end{lem}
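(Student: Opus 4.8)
The plan is to work with the Fourier expansions in the crossed products. Write $M=A\rtimes\G$, $N=B\rtimes\G$, and recall that every $m\in M$ decomposes as $m=\sum_{\g\in\G}m_\g u_\g$ with $m_\g\in A$, that $E_{N}(m)=\sum_\g E_B(m_\g)u_\g$, and consequently that $m\in M\ominus N$ precisely when $m_\g\in A\ominus B$ for every $\g$. Also, an element $x\in(L(\G))_1$ has scalar Fourier coefficients, $x=\sum_\g c_\g u_\g$ with $c_\g=\tau(xu_\g^*)\in\mathbb{C}$ and $\sum_\g|c_\g|^2=\|x\|_2^2\le 1$.

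\emph{Step 1: reduction to monomials.} For $m,m',\tilde m,\tilde m'\in M$ and any $x\in(L(\G))_1$ one has the elementary bound
\[
\|E_{N}(m x m')-E_{N}(\tilde m x \tilde m')\|_2\le\|m-\tilde m\|_2\,\|m'\|_\infty+\|\tilde m\|_\infty\,\|m'-\tilde m'\|_2 ,
\]
which is uniform in $x$ (and hence in $n$). Truncating the Fourier expansions of $t$ and $z$ to large finite subsets of $\G$ keeps the truncations inside $M\ominus N$, so it suffices, after invoking bilinearity, to treat the case $t=au_g$, $z=bu_h$ with $g,h\in\G$ and $a,b\in A\ominus B$.

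\emph{Step 2: the basic computation.} Writing $x_n=\sum_\g c_\g^{(n)}u_\g$ and using $u_\de c=\sigma_\de(c)u_\de$, one gets
\[
t\,x_n\,z=\sum_{\g\in\G}c_\g^{(n)}\,a\,\sigma_{g\g}(b)\,u_{g\g h},\qquad\text{hence}\qquad E_{N}(t\,x_n\,z)=\sum_{\g\in\G}c_\g^{(n)}\,E_B\!\big(a\,\sigma_{g\g}(b)\big)\,u_{g\g h}.
\]
Since $\g\mapsto g\g h$ is injective, the summands are pairwise orthogonal in $L^2(M)$, so
\[
\|E_{N}(t\,x_n\,z)\|_2^2=\sum_{\g\in\G}|c_\g^{(n)}|^2\,\big\|E_B(a\,\sigma_{g\g}(b))\big\|_2^2 .
\]

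\emph{Step 3: mixing versus weak nullity.} Fix $\varepsilon>0$. Since $a,b\in A\ominus B$, the mixing hypothesis provides a finite set $S\subset\G$ with $\|E_B(a\,\sigma_\de(b))\|_2<\varepsilon$ whenever $\de\notin S$; put $S'=g^{-1}S$, still finite. Splitting the sum in Step 2 according to whether $\g\in S'$ and using $\|E_B(a\sigma_{g\g}(b))\|_2\le\|a\|_\infty\|b\|_\infty$ and $\sum_\g|c_\g^{(n)}|^2\le1$, we obtain
\[
\|E_{N}(t\,x_n\,z)\|_2^2\le\|a\|_\infty^2\|b\|_\infty^2\sum_{\g\in S'}|c_\g^{(n)}|^2+\varepsilon^2 .
\]
For each fixed $\g$ we have $c_\g^{(n)}=\tau(x_nu_\g^*)\to0$ because $x_n\to0$ weakly; as $S'$ is finite, the first term tends to $0$, whence $\limsup_n\|E_{N}(t\,x_n\,z)\|_2^2\le\varepsilon^2$, and letting $\varepsilon\searrow0$ concludes. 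The argument is essentially routine; the only genuine content is the dichotomy in Step 3 — mixing forces all but finitely many ``Fourier modes'' attached to a fixed pair $a,b\in A\ominus B$ to be uniformly small, while weak nullity of $(x_n)_n$ kills the finitely many surviving modes — and the only point requiring mild care is that the truncation in Step 1 be uniform in $n$, which is automatic from $\|x_n\|_\infty\le1$.
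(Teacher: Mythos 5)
Your proof is correct and follows essentially the same route as the paper: truncate the Fourier expansions of $t$ and $z$ (noting the truncations stay in $M\ominus N$), establish the orthogonality identity $\|E_{N}(au_g\,x\,bu_h)\|_2^2=\sum_\g|\tau(xu_\g^*)|^2\|E_B(a\sigma_{g\g}(b))\|_2^2$, and then split the sum so that mixing controls the tail while weak convergence of $(x_n)_n$ kills the finitely many remaining modes. The only cosmetic difference is that you reduce to monomials before invoking the identity, whereas the paper keeps the double sum over the truncating sets; the substance is identical.
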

\begin{proof} Fix $t,z\in (A\rtimes \G) \ominus (B\rtimes \G)$. Consider the Fourier decompositions $t= \sum_\g E_A(tu_{\g^{-1}})u_\g$ and $z= \sum_\g u_{\g^{-1}} E_A(u_{\g} z)$ and notice that $E_B(t u_{\g})= E_B( u_\g z)=0$ for all $\g\in \G$. 
Fix $\varepsilon>0$. Using these decompositions and basic $\|\cdot\|_2$-estimates one can find finite subsets $F,G\subset \G$ such that \begin{equation}\label{0.1.2'}
\|E_{B\rtimes \G}(t x_n z)\|_2\leq \frac{\varepsilon}{2} + \sum_{\delta\in F,\la\in G}  \| E_{B\rtimes \G} (E_A(t u_{\delta^{-1}}) u_\delta x_n u_{\la^{-1}}E_A(u_\la z))\|_2.
\end{equation}  

\noindent Also fix $a,b\in A$ and $x\in L(\G)$. Using the Fourier decomposition of $x\in L(\G)$ we see that $E_{B\rtimes \G} (axb)= \sum_\g \tau(xu_{\g^{-1}})E_{B\rtimes \G} (au_\g b)= \sum_\g \tau(xu_{\g^{-1}})E_{B} (a\sigma_\g (b)) u_\g$. Thus we have the formula \begin{equation}\label{0.1.1}\|E_{B\rtimes \G} (axb)\|_2^2=\sum_\g |\tau(xu_{\g^{-1}})|^2\|E_{B} (a\sigma_\g (b)) \|^2_2.\end{equation}
 Since $\G\ca (B \subseteq A, \tau) $ is mixing and $F,G$ are finite one can find a finite subset $H\subset \G$ so that $\|E_{B} (E_A(t u_{\delta^{-1}}) \sigma_\g (E_A(u_\la z)))) \|_2\leq \varepsilon/(\sqrt{8}|F||G|)$ for all $\g\in \G\setminus H$, $\delta\in F$ and $\la\in G$. Also since $x_n\ra 0$ weakly, and $F,G,H$ are finite there is an integer $n_0$ such that $|\tau(x_n u_{\g^{-1}})|\leq \varepsilon/(\sqrt{8 |H|}|G||F|\|t\|_\infty\|z\|_\infty)$ for all $\g\in G^{-1}H^{-1}F$ and $n\geq n_0$. Using these basic estimates in combination with formula \eqref{0.1.1} we see that for all $n\geq n_0$ we have \begin{equation*}\begin{split}
&\sum_{\delta\in F,\la\in G}  \| E_{B\rtimes \G} (E_A(t u_{\delta^{-1}}) u_\delta x_n u_{\la^{-1}}E_A(u_\la z))\|_2=\\
&= \sum_{\delta \in F,\la\in G} (\sum_{\g\in H} |\tau(x_n u_{\la^{-1}\g^{-1} \delta})|^2\|E_{B} (E_A(t u_{\delta^{-1}}) \sigma_\g (E_A(u_\la z)))) \|^2_2\\ &\quad +\sum_{\g\in \G\setminus H} |\tau(x_n u_{\la^{-1}\g^{-1} \delta})|^2\|E_{B} (E_A(t u_{\delta^{-1}}) \sigma_\g (E_A(u_\la z)))) \|^2_2)^{\frac{1}{2}}\\
&\leq \sum_{\delta \in F,\la\in G} \left( \sum_{\g\in H} \frac{\varepsilon^2}{8|F|^2 |G|^2|H|\|t\|^2_\infty \|z\|^2_\infty}\|E_{B} (t\sigma_\g (z)) \|^2_2+ \frac{\varepsilon^2}{8|F|^2|G|^2} \|x_n\|^2_2\right)^{\frac{1}{2}}\leq \left(\frac{\varepsilon^2}{8} +\frac{\varepsilon^2}{8}\right)^{\frac{1}{2}} =\frac{\varepsilon}{2}.
\end{split}
\end{equation*}
This combined with \eqref{0.1.2'} show that for every $\varepsilon>0$ there exits $n_0$ such that for all $n\geq n_0$ we have $\|E_{B}(t x_n z)\|_2\leq \varepsilon$, as desired.\end{proof}

\begin{thm}\label{mix2}  Let  $\G\ca (B \subseteq A, \tau) $ be a trace-preserving mixing extension. Also let $\La\ca^\alpha  (D \subseteq C, \tau) $ be a trace-preserving extension for which there exists a $\ast$-isomorphism  $\theta: A\rtimes \G \ra C\rtimes \La$ satisfying $\theta (B\rtimes \G)= D\rtimes \Lambda$ and $\theta(L(\G))=L(\La)$. Then $\La\ca (D \subseteq C, \tau) $ is a mixing extension. 
\end{thm}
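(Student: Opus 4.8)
\emph{Proof plan.} The plan is to transport the crossed-product reformulation of mixing furnished by Lemma \ref{mix1} across $\theta$, and then read off mixing of $D\subseteq C$ by testing against the canonical group unitaries $\{v_\la\}_{\la\in\La}$ of $L(\La)$.

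\emph{Step 1: a transported version of Lemma \ref{mix1}.} I claim that for every $t,z\in (C\rtimes \G$-free sense$)$ — more precisely for every $t,z\in (C\rtimes \La)\ominus(D\rtimes \La)$ and every sequence $(y_n)_n\subset (L(\La))_1$ with $y_n\to 0$ weakly, one has $\|E_{D\rtimes \La}(t y_n z)\|_2\to 0$. To see this, note that $\theta$ preserves the canonical traces; hence it is $\|\cdot\|_2$-isometric, carries weakly null bounded sequences to weakly null bounded sequences, and conjugates $E_{B\rtimes \G}$ to $E_{D\rtimes \La}$ (by uniqueness of the trace-preserving conditional expectation onto $D\rtimes \La=\theta(B\rtimes \G)$). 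Moreover $\theta^{-1}$ maps $(C\rtimes \La)\ominus(D\rtimes \La)$ onto $(A\rtimes \G)\ominus(B\rtimes \G)$ and $(L(\La))_1$ onto $(L(\G))_1$. Thus with $t'=\theta^{-1}(t)$, $z'=\theta^{-1}(z)$, $y'_n=\theta^{-1}(y_n)$, Lemma \ref{mix1} applied to the mixing extension $\G\ca(B\subseteq A,\tau)$ gives $\|E_{B\rtimes \G}(t' y'_n z')\|_2\to 0$, and applying $\theta$ yields the claim.

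\emph{Step 2: deducing mixing of $\La\ca(D\subseteq C,\tau)$.} Fix $t,z\in C\ominus D$, i.e.\ $t,z\in C$ with $E_D(t)=E_D(z)=0$; since $E_{D\rtimes \La}$ restricts to $E_D$ on $C$, we also have $t,z\in(C\rtimes \La)\ominus(D\rtimes \La)$. Suppose, for contradiction, that $\|E_D(t\alpha_\la(z))\|_2\not\to 0$ as $\la\to\infty$. Since $\La$ is countable this yields a $\delta>0$ and pairwise distinct elements $\la_n\in\La$ with $\|E_D(t\alpha_{\la_n}(z))\|_2\ge\delta$ for all $n$. Put $y_n=v_{\la_n}\in\mathcal U(L(\La))$; as the $\la_n$ are pairwise distinct, $y_n\to 0$ weakly. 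On the other hand, using $v_{\la_n}z=\alpha_{\la_n}(z)v_{\la_n}$ and the formula $E_{D\rtimes \La}(cv_\la)=E_D(c)v_\la$ for $c\in C$, we compute $E_{D\rtimes \La}(t y_n z)=E_{D\rtimes \La}(t\alpha_{\la_n}(z)v_{\la_n})=E_D(t\alpha_{\la_n}(z))v_{\la_n}$, whence $\|E_{D\rtimes \La}(t y_n z)\|_2=\|E_D(t\alpha_{\la_n}(z))\|_2\ge\delta$. This contradicts Step 1. Hence $\|E_D(t\alpha_\la(z))\|_2\to 0$ for all $t,z\in C\ominus D$, i.e.\ the extension $\La\ca(D\subseteq C,\tau)$ is mixing.

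\emph{Remark on the difficulty.} Once Lemma \ref{mix1} is available the argument is essentially bookkeeping; the only routine points needing care are the standard facts that a trace-preserving $\ast$-isomorphism intertwines the relevant conditional expectations and is $\|\cdot\|_2$-continuous on the unit ball, and that $(v_{\la_n})_n$ is weakly null whenever the $\la_n$ are pairwise distinct. The conceptual work — that mixing of the extension, a priori a statement about $A$ relative to $B$, is in fact detectable purely from the triple $L(\G)\subseteq B\rtimes \G\subseteq A\rtimes \G$, which is exactly the data preserved by $\theta$ — is entirely carried by Lemma \ref{mix1}, so no genuine obstacle remains at this stage.
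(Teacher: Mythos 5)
Your proof is correct and follows essentially the same route as the paper's: both reduce the statement to Lemma \ref{mix1} by identifying $\|E_D(t\alpha_{\la_n}(z))\|_2$ with $\|E_{B\rtimes\G}(tv_{\la_n}z)\|_2$ via $\theta$ and using that $(v_{\la_n})_n$ is weakly null. The only cosmetic difference is that you argue by contradiction where the paper argues directly.
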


\begin{proof} Suppressing $\theta$ from the notation we assume that $A\rtimes \G= C\rtimes \La$,  $B\rtimes \G=D\rtimes \La$ and $L(\G)=L(\Lambda)$. Fix $t,z \in C\ominus D$. We now show that for any infinite sequence $(\la_n)_n\subseteq \La$ we have that \begin{equation}\label{0.1.3}\lim_{n\ra \infty}\|E_D(t\alpha_{\la_n} (z))\|_2 =0.\end{equation}
Since $B\rtimes \G =D\rtimes \La$ we note that \begin{equation*}E_{B\rtimes \G} (z)=E_{D\rtimes \La}(z)= \sum_{\la\in \La}E_D(z v_{\la^{-1}})v_\la= \sum_{\la\in \La}E_D(E_C(z v_{\la^{-1}}))v_\la= \sum_{\la\in \La}E_D(z E_C( v_{\la^{-1}}))v_\la= E_D(z)=0.
\end{equation*} Similarly we have $E_{B\rtimes \G} (t)=0$. Since $t,z\in C$ and $D\rtimes \La =B\rtimes \G$ we see that \begin{equation}\label{0.1.2}\begin{split}\|E_D(t\alpha_{\la_n} (z))\|_2&=\|E_{D\rtimes \La} (t\alpha_{\la_n} (z))\|_2= \|E_{D\rtimes \La} (tv_{\la_n} z)v_{\la_n^{-1}}\|_2 \\&= \|E_{D\rtimes \La} (tv_{\la_n} z)\|_2= \|E_{B\rtimes \G} (tv_{\la_n} z)\|_2.	\end{split}\end{equation} 

\noindent Since $(\la_n)_n$ is infinite the sequence $(v_{\la_n})_n \subset L(\La)=L(\G)$ converges weakly to $0$. Thus applying Lemma \ref{mix1} we get $\lim_{n\ra\infty}\|E_{B\rtimes \G} (tv_{\la_n} z)\|_2=0$ and hence \eqref{0.1.3} follows from \eqref{0.1.2}. 
\end{proof}

\noindent For further use we recall the following technical variation of \cite[Theorem 3.1]{Po03}. The proof is essentially the same with the one presented in \cite{Po03} and will be left to the reader. 

\begin{thm}\label{mixingextn} Let Let  $\G\ca (B \subseteq A, \tau) $ be a trace-preserving mixing extension. Denote by $M= A\rtimes \G\supset B\rtimes \G=N$ the corresponding inclusion of crossed product von Neumann algebras. Then for every von Neumann subalgebra $C\subseteq N$ satisfying $C\nprec_N B$ we have $\mathcal {QN}_M(C)''\subseteq N$. 
\end{thm}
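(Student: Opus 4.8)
The plan is to mimic the proof of \cite[Theorem 3.1]{Po03}, using Lemma \ref{mix1} as the replacement for the mixing estimate on the Fourier coefficients. Let $C\subseteq N$ with $C\nprec_N B$, and suppose $x\in\mathcal{QN}_M(C)$; I want to show $x\in N$. First I would reduce to the case where $x$ lies in a finitely generated $C$-bimodule: by definition of the quasinormalizer there are $x_1,\dots,x_k\in M$ with $Cx\subseteq\sum_i x_iC$ and $xC\subseteq\sum_i Cx_i$, so the $\|\cdot\|_2$-closure $\mathcal{X}$ of $\sum_i Cx_iC$ is a $C$-$C$-bimodule that is finitely generated as a right $C$-module, hence the orthogonal projection $e$ of $L^2(M)$ onto $\overline{\mathcal{X}}^{\,\|\cdot\|_2}$ has finite trace in the sense of the right-module basis, and $x\in\mathcal{X}$. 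I may also assume, by cutting with spectral projections of the central support and using $C\nprec_N B$, that $C$ is generated by a unitary group $\mathcal{G}\subseteq\mathcal U(C)$ with $\|E_B(ayb)\|_2\to 0$ along some sequence in $\mathcal{G}$ for all $a,b\in N$ (this is exactly the negation of $C\prec_N B$ from Theorem \ref{corner}).

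Next, write $x=E_N(x)+(x-E_N(x))=:x_0+x_1$ with $x_1\in M\ominus N$ (taking the coefficient decomposition along the crossed product $M=A\rtimes\G\supseteq N=B\rtimes\G$). Since $N$ is globally invariant under left and right multiplication by $C\subseteq N$, both $x_0$ and $x_1$ lie in $\mathcal{QN}_M(C)$, so it suffices to show $x_1=0$. The key point is the following: for $u\in\mathcal{G}$ one has $ux_1=\sum_i E_C(ux_1\,\tilde x_j^*)\tilde x_j$ for a fixed right-module basis $\{\tilde x_j\}$ obtained from $x_1,\dots,x_k$ by Gram--Schmidt over $C$; the $\tilde x_j$ can be chosen in $M\ominus N$. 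Computing $\|x_1\|_2^2=\|ux_1\|_2^2=\sum_j\|E_C(ux_1\tilde x_j^*)\|_2^2$ and then passing through $E_N$, the cross terms $E_C(ux_1\tilde x_j^*)$ factor (after approximating $x_1,\tilde x_j$ by finite Fourier sums) through expressions of the form $E_B(t\,u'\,z)$ with $t,z\in A\ominus B$ (the ``off-diagonal'' coefficients of $x_1,\tilde x_j$) and $u'\in\mathcal U(C)\subseteq\mathcal U(N)$ — precisely the shape to which Lemma \ref{mix1} applies once $u$ is replaced by a weakly-null sequence $u_n\in\mathcal{G}$. Hence each $\|E_C(u_nx_1\tilde x_j^*)\|_2\to 0$, forcing $\|x_1\|_2=0$.

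I expect the main obstacle to be bookkeeping rather than conceptual: making rigorous the reduction to a finite right-$C$-module basis lying in $M\ominus N$, and controlling the errors when the honest elements $x_1,\tilde x_j\in M$ are replaced by finitely supported Fourier sums so that Lemma \ref{mix1} (which is stated for $t,z\in(A\rtimes\G)\ominus(B\rtimes\G)$ and weakly-null sequences in $(L(\G))_1$) can be invoked term by term. One has to be slightly careful that the sequence $u_n\in\mathcal G$ produced from $C\nprec_N B$ via Theorem \ref{corner} converges weakly to $0$ \emph{as elements of $N$}, which it does since $\|E_B(xu_ny)\|_2\to0$ for all $x,y\in N$ in particular gives $\|E_B(u_n)\|_2\to0$ and, applied with $x,y$ ranging over $L(\G)$ and $A$, weak convergence to $0$ in $L^2(N)$; then Lemma \ref{mix1} is legitimately applicable. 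Modulo these routine approximations, the argument is the standard quasinormalizer-versus-mixing dichotomy and gives $\mathcal{QN}_M(C)''\subseteq N$.
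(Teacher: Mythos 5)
Your overall architecture is the intended one: the paper gives no written proof of Theorem \ref{mixingextn} (it defers to \cite[Theorem 3.1]{Po03}), and the argument it has in mind is precisely the Popa-style quasinormalizer-versus-mixing scheme you describe --- reduce to a finitely generated $C$-module, split $x=E_N(x)+x_1$, observe both pieces quasinormalize $C$, and kill $x_1$ by pairing against a sequence $(u_n)\subset\mathcal U(C)$ supplied by $C\nprec_N B$ together with a relative-mixing estimate. The steps you call bookkeeping (Gram--Schmidt over $C$, discarding the $N$-components of the basis vectors via $E_N(t\,u_n\,E_N(m))=E_N(t)\,u_n\,E_N(m)=0$ when $E_N(t)=0$) are indeed routine.

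The genuine gap is at the key analytic step. Lemma \ref{mix1} only treats middle sequences lying in $(L(\G))_1$, whereas the unitaries $u_n$ produced by Theorem \ref{corner} from $C\nprec_N B$ live in $\mathcal U(C)\subseteq N=B\rtimes\G$ and need not be anywhere near $L(\G)$. Your proposed fix --- that $(u_n)$ is weakly null in $N$, hence Lemma \ref{mix1} is ``legitimately applicable'' --- fails twice over. First, weak nullity in $N$ is simply not the hypothesis of Lemma \ref{mix1}. Second, the natural extension of Lemma \ref{mix1} to weakly null sequences in $(N)_1$ is \emph{false}: take $B$ diffuse, $(b_n)\subset\mathcal U(B)$ weakly null, and $t=au_\g$, $z=u_{\g^{-1}}a^*$ with $a\in A\ominus B$; then $t\,b_n\,z=a\,\sigma_\g(b_n)\,a^*$ and (in the abelian case) $E_N(t\,b_n\,z)=\sigma_\g(b_n)E_B(aa^*)$ has constant norm $\|E_B(aa^*)\|_2>0$. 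What you actually need is the strengthened statement: for $t,z\in M\ominus N$ and $(w_n)\subset(N)_1$ with $\|E_B(w_n u_\g)\|_2\ra 0$ for every $\g\in\G$, one has $\|E_N(t w_n z)\|_2\ra 0$. This is true and provable in the spirit of Lemma \ref{mix1}, but its proof uses two inputs your sketch does not isolate: after reducing to $t=au_\delta$, $z=u_{\la^{-1}}c$ with $a,c\in A\ominus B$ and expanding $w_n=\sum_\g E_B(w_nu_{\g^{-1}})u_\g$, one gets $\|E_N(aw_nc)\|_2^2=\sum_\g\|E_B(a\,E_B(w_nu_{\g^{-1}})\,\sigma_\g(c))\|_2^2$; the tail over $\g$ outside a finite set is controlled by mixing of the extension applied \emph{uniformly} over the $B$-valued middle coefficients (immediate when $A$ is abelian, as in all the paper's applications, but requiring a separate argument for general $A$), while the finitely many remaining terms are controlled by the $\|\cdot\|_2$-decay $\|E_B(w_nu_{\g^{-1}})\|_2\ra 0$ --- a norm statement that genuinely uses the conclusion of Theorem \ref{corner} and does not follow from weak nullity. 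Until this strengthened lemma is stated and proved, the argument is incomplete.
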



\section{Extensions satisfying the intermediate subalgebra property}

Let $\G \ca (P_0\subseteq P)$ be an extension of tracial von Neumann algebras and consider the corresponding inclusion $P_0\rtimes \G \subseteq P\rtimes \G$ of von Neumann algebras.  Suzuki discovered in \cite{Suzuki} that if $P_0$, $P$ are abelian and $\G\ca P_0$ is free then the extension $\G \ca (P_0\subseteq P)$ satisfies the \emph{intermediate subalgebra property}, i.e. every intermediate subalgebra $P_0\rtimes \G \subseteq N\subseteq P\rtimes \G$ arises as $N=Q\rtimes \G$ for some $\G$-invariant intermediate subalgebra $P_0\subseteq Q \subseteq P$. In this section we establish the intermediate subalgebra property for new classes of extensions (e.g. compact) for icc groups $\G$ (see Theorem \ref{cptint}). In many respects these results complement Suzuki's as they cover many examples of non free extensions, for instance when $P_0=\mathbb C 1$. As a consequence, for all free ergodic pmp actions on probability spaces $\G \ca X$ of icc groups $\G$, we are able to completely describe all intermediate subfactors $L(\G)\subseteq N\subseteq L^\infty(X)\rtimes \G$ with finite index $[N:L(\G)]<\infty$ (see Theorem \ref{intsubgen}). Our strategy also enables us to recover some well-known older results on intermediate subalgebras (see Corollary \ref{tensorint} and Theorems \ref{twistedGe}, \ref{galoiscorr}).     
\vskip 0.03in
\noindent We briefly introduce a few preliminaries. The first result describes the algebraic structure of fixed point subspaces associated with u.c.p.\ maps and it is essentially \cite[Lemma 3.4]{BJKW}. For reader's convenience we also include a short proof.
\begin{lem} \label{jl}
Let $M$ be a von Neumann algebra, and let $\varphi$ be a faithful, normal state on $M$. Let $\Psi: M \rightarrow M$ be a normal, u.c.p. map. 
Define $Har( \Psi )= \{ m \in M: \Psi(m)= m \} $ 
to be the fixed points of $\Psi$. 
If $\varphi \circ \Psi = \varphi$ then $Har(\Psi)$ is a von Neumann subalgebra of $M.$
\end{lem}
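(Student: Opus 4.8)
The plan is to show that $\Har(\Psi)$ is closed under the adjoint operation, contains the identity, is weak-$*$ closed, and — the only real content — is closed under multiplication. Closedness under $*$ follows immediately since $\Psi$ is a $*$-map, and $1 \in \Har(\Psi)$ since $\Psi$ is unital; weak-$*$ closedness follows from normality of $\Psi$. So everything reduces to proving that if $\Psi(a)=a$ and $\Psi(b)=b$ then $\Psi(ab)=ab$, which is not automatic because $\Psi$ is merely u.c.p., not multiplicative.

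The standard device here is the Kadison–Schwarz inequality for u.c.p.\ maps: $\Psi(x)^*\Psi(x)\le \Psi(x^*x)$ for all $x\in M$. First I would establish a ``multiplicative domain'' type statement: if $a\in\Har(\Psi)$ then applying Kadison–Schwarz to $x=a$ and to $x=a^*$ gives $\Psi(a^*a)\ge a^*a$ and $\Psi(aa^*)\ge aa^*$, so that $\Psi(a^*a)-a^*a\ge 0$ and $\Psi(aa^*)-aa^*\ge 0$. Now apply the state: since $\varphi\circ\Psi=\varphi$, we get $\varphi(\Psi(a^*a)-a^*a)=\varphi(a^*a)-\varphi(a^*a)=0$, and faithfulness of $\varphi$ forces $\Psi(a^*a)=a^*a$, and likewise $\Psi(aa^*)=aa^*$. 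The next step is the key algebraic lemma on the multiplicative domain: for a u.c.p.\ map $\Psi$, the set $\{a : \Psi(a^*a)=\Psi(a)^*\Psi(a)\text{ and }\Psi(aa^*)=\Psi(a)\Psi(a)^*\}$ is a $*$-subalgebra on which $\Psi$ is a homomorphism, i.e.\ $\Psi(ax)=\Psi(a)\Psi(x)$ and $\Psi(xa)=\Psi(x)\Psi(a)$ for all $x\in M$ and $a$ in this set. (This is proved by the usual $2\times 2$ matrix trick / Schwarz inequality argument; I would cite it or sketch it via considering $\Psi\big((a + \lambda x)^*(a+\lambda x)\big)$ and comparing with the Schwarz bound, extracting the cross terms.) By the computation just above, every $a\in\Har(\Psi)$ lies in this multiplicative domain.

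Putting these together: given $a,b\in\Har(\Psi)$, since $a$ is in the multiplicative domain we have $\Psi(ab)=\Psi(a)\Psi(b)=ab$, so $ab\in\Har(\Psi)$. Hence $\Har(\Psi)$ is a unital $*$-subalgebra that is weak-$*$ closed, i.e.\ a von Neumann subalgebra of $M$.

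The main obstacle is purely the non-multiplicativity of $\Psi$: one must genuinely invoke Kadison–Schwarz together with the faithfulness of the invariant state $\varphi$ to upgrade fixed points of $\Psi$ into elements of the multiplicative domain — without the invariance $\varphi\circ\Psi=\varphi$ and faithfulness, $\Har(\Psi)$ need not be an algebra at all. I would make sure the Schwarz inequality $\Psi(x)^*\Psi(x)\le\Psi(x^*x)$ is available (it holds for unital $2$-positive, in particular u.c.p., normal maps), and be careful that the argument is applied to both $a^*a$ and $aa^*$ so that the multiplicative domain is a genuine $*$-algebra and $\Psi$ restricted to it is a $*$-homomorphism, which is what licenses $\Psi(ab)=\Psi(a)\Psi(b)$.
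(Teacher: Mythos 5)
Your proof is correct and rests on exactly the same key computation as the paper's: Kadison--Schwarz gives $\Psi(a^*a)\ge a^*a$, and invariance plus faithfulness of $\varphi$ upgrade this to equality. The only cosmetic difference is the last step — the paper uses the polarization identity to reduce closure under products to showing $x^*x\in \Har(\Psi)$, whereas you route through Choi's multiplicative domain theorem to get $\Psi(ab)=\Psi(a)\Psi(b)$ directly; both are standard and equally valid.
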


\begin{proof}
From the definition it is clear that $Har(\Psi)$ is closed under sum and taking adjoint. Also since $\Psi$ is normal, $Har(\Psi)$ is closed in the weak-operator topology. Thus, to finish the proof we only need to show that $Har(\Psi)$ is closed under product. Using the polarization identity, it suffices to show that whenever $x \in Har(\Psi)$ we have that $x^ \ast x \in Har(\Psi)$ as well. By Kadison-Schwarz inequality we have that $\Psi(x^ \ast x) \geq \Psi(x)^ \ast \Psi(x) =x ^ \ast x$, where the last equality follows because $x \in Har(\Psi)$; thus $\Psi(x^ \ast x) - x^ \ast x \geq 0 $. Since $\varphi \circ \Psi = \varphi$ we also have $\varphi(\Psi(x^ \ast x) - x^ \ast x)=0$. Since $\varphi$ is faithful, we get that $\Psi(x^ \ast x)= x^ \ast x$, thereby proving that $Har(\Psi)$ is an algebra.
\end{proof}

\begin{thm} \label{usefulresult}
Let $\Gamma \ca (P, \tau)$ be a trace preserving action on a finite von Neumann algebra $P$ and consider the corresponding crossed product von Neumann algebra $P\rtimes \G$.  Let $P_0 \subseteq P$ be a $\G$-invariant subalgebra.
Assume that $P_0 \rtimes \G \subseteq N \subseteq P \rtimes \Gamma $ is an intermediate von Neumann subalgebra. Then there is a $\G$-invariant subalgebra $P_0 \subseteq Q\subseteq P$  so that $N =Q \rtimes \Gamma$ if and only if $E_N(P)\subseteq P$. 

\end{thm}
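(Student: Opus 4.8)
The forward direction is immediate: if $N = Q \rtimes \G$ for a $\G$-invariant intermediate algebra $P_0 \subseteq Q \subseteq P$, then $E_N$ restricted to $P$ is nothing but $E_Q^P$, the trace-preserving conditional expectation of $P$ onto $Q$, whose range lies in $Q \subseteq P$; so $E_N(P) \subseteq P$. The content is entirely in the converse, so I will assume $E_N(P) \subseteq P$ and aim to produce the subalgebra $Q$.

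The natural candidate is $Q := E_N(P)$, or rather the von Neumann algebra it generates. The plan is to show three things: (i) $Q$ is a von Neumann subalgebra of $P$; (ii) $Q$ is $\G$-invariant and contains $P_0$; and (iii) $N = Q \rtimes \G$. For (i), I would identify $E_N \circ E_P$ (more precisely, the composition $P \hookrightarrow M \xrightarrow{E_N} N \xrightarrow{E_P} P$, but since by hypothesis $E_N(P) \subseteq P$ this composition is just $E_N|_P$, call it $\Psi$) as a normal unital completely positive map from $P$ to $P$ which is trace-preserving ($\tau \circ E_N = \tau$ because $N$ contains the trace vector in the standard form). Moreover $\Psi$ is idempotent: $\Psi^2 = \Psi$ since $E_N$ is a conditional expectation and $E_N(P) \subseteq P$ forces $E_N(E_N(p)) = E_N(p)$. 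An idempotent normal u.c.p. map with $\tau \circ \Psi = \tau$ has fixed-point space exactly equal to its range, and by Lemma \ref{jl} applied to $\Psi$ (with $\varphi = \tau$) this fixed-point space $\mathrm{Har}(\Psi) = E_N(P) = Q$ is a von Neumann subalgebra of $P$. (Alternatively $\Psi$ is itself a conditional expectation of $P$ onto $Q$.)

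For (ii): $P_0 \subseteq Q$ because $P_0 \subseteq N \cap P$ so $E_N$ fixes $P_0$ pointwise; hence $P_0 \subseteq \mathrm{Har}(\Psi) = Q$. For $\G$-invariance, I use that the action $\G \ca P$ is implemented by the unitaries $u_\g \in P_0 \rtimes \G \subseteq N$, so for $p \in P$, $\sigma_\g(E_N(p)) = u_\g E_N(p) u_\g^* = E_N(u_\g p u_\g^*) = E_N(\sigma_\g(p)) \in E_N(P) = Q$, using that $E_N$ is $N$-bimodular and $u_\g \in N$. Thus $\sigma_\g(Q) \subseteq Q$ for all $\g$, i.e. $Q$ is $\G$-invariant; then $Q \rtimes \G$ makes sense and sits between $P_0 \rtimes \G$ and $P \rtimes \G$. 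For (iii): the inclusion $Q \rtimes \G \subseteq N$ holds since $Q \subseteq N$ and $u_\g \in N$. Conversely, take $x \in N$ with Fourier expansion $x = \sum_\g a_\g u_\g$, $a_\g \in P$; then $a_\g u_\g = E_P(x u_\g^*) u_\g$, and I must show $a_\g \in Q$ for every $\g$. Here I would compute $E_N$ of a suitable element: $E_N(x u_\g^*) = x u_\g^* \in N$ (as $x, u_\g \in N$), and $E_P(x u_\g^*) = a_\g$, so I want to connect $E_N(a_\g)$ with $a_\g$. Since $x u_\g^* \in N$ and its $P$-part is $a_\g = E_P(x u_\g^*)$, applying $E_N$ and using $E_P(E_N(\cdot)) = E_P(\cdot)$ on $N$ together with $E_N(P)\subseteq P$: one gets $E_N(a_\g) = E_N(E_P(xu_\g^*))$, and comparing Fourier coefficients of $E_N(xu_\g^*) = xu_\g^* \in N$ shows its $e$-th coefficient $E_P(xu_\g^*) = a_\g$ must actually lie in the range of $E_N|_P = Q$. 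Concretely: the $\G$-equivariant conditional expectation $\Phi = E_P \circ E_N = E_N \circ E_P$ (these agree under our hypothesis) satisfies $\Phi(x) = \sum_\g \Psi(a_\g) u_\g$ and also $\Phi(x) \in N$; but $\Phi$ restricted to $N$ is a conditional expectation onto $Q \rtimes \G$, and running this for all $x \in N$ and extracting coefficients gives $a_\g = \Psi(a_\g) \in Q$, hence $x \in Q \rtimes \G$.

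The main obstacle is step (iii), specifically verifying rigorously that $E_N$ and $E_P$ commute on $N$ and that the resulting map is the conditional expectation onto $Q \rtimes \G$; the hypothesis $E_N(P) \subseteq P$ is exactly what makes these two expectations compatible, and the Fourier-coefficient bookkeeping (ensuring convergence in $\|\cdot\|_2$ and that coefficient-wise membership in $Q$ really forces $x \in Q\rtimes\G$) is where care is needed, though it is routine once the commuting-square structure $P_0\rtimes\G \subseteq N,\ P$ inside $P\rtimes\G$ is set up. The use of Lemma \ref{jl} in step (i) is the clean way to avoid checking multiplicativity of $Q = E_N(P)$ by hand.
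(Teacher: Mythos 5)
Your proposal is correct and follows essentially the same route as the paper's proof: the paper likewise takes $Q=N\cap P$ (which coincides with your $E_N(P)$, since both equal $Har(E_N|_P)$ via Lemma \ref{jl} and the idempotency of $E_N|_P$), derives the commuting-square identity $E_N\circ E_{P}=E_{P}\circ E_N=E_{N\cap P}$ from the hypothesis $E_N(P)\subseteq P$, and then compares Fourier coefficients of $x=E_N(x)=\sum_\g E_N(x_\g)u_\g$ to get $N\subseteq Q\rtimes\G$. The only slip is the displayed identity $\Phi(x)=\sum_\g\Psi(a_\g)u_\g$ for $\Phi=E_{P}\circ E_N$ --- that composition lands in $P$, so the formula actually describes $E_{Q\rtimes\G}$ rather than $\Phi$ --- but the surrounding coefficient-wise argument is exactly the paper's and goes through.
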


\begin{proof} Denote by $M =P\rtimes \G$ and let $E_{P}: M \ra P$ and $E_N: M \ra N$ be the canonical conditional expectations onto $P$ and $N$, respectively. To see the direct implication, fix $a\in P$. Since $N= Q\rtimes \G$ and $L(\G) \subseteq N$ we have \begin{equation*}
\begin{split}
E_N(a)&= \sum_\g E_{Q}(E_N(a) u_{\g^{-1}})u_\g=  \sum_\g E_{Q}(E_N(a u_{\g^{-1}}))u_\g \\&= \sum_\g E_{Q}(a u_{\g^{-1}})u_\g=\sum_\g E_{Q}(E_{P}(a u_{\g^{-1}}))u_\g \\&= \sum_\g E_{Q}( a E_{P}(u_{\g^{-1}}))u_\g= E_{Q}(a)\in P.
\end{split} 
\end{equation*}

\noindent Next we show the reverse implication. Let $e_P: L^2(M)\ra L^2(P)$ and $e_N: L^2(M)\ra L^2(N)$ be the canonical orthogonal projections. Since $E_N(P)\subseteq P$ then $E_{P}( E_N(a))=  E_N( a)$ for all $a\in P$. Therefore $E_{P}\circ E_N \circ E_{P}= E_N \circ E_{P}$ and hence $e_{P} e_N e_{P}= e_N e_{P}$.  Taking adjoints we obtain $e_N e_{P}=e_{P}e_N$ and since $(e_{P} e_N e_{P})^n$ converges to $e_{N} \wedge e_{ P}$ in the strong-operator topology, as $n$ tends to infty, we conclude that $e_{P}e_N =e_N e_{P}=e_{N}\wedge e_{P}$. This also entails that $e_N\wedge e_{P}= e_{N \cap P}$ and thus \begin{equation}\label{2} E_N\circ E_{P}=E_{P}\circ E_N= E_{N\cap P}. 
\end{equation}

\noindent Alternatively, one can show \eqref{2} just by using Lemma \ref{jl}.  Indeed since $E_N(P) \subseteq P$ then from assumptions ${E_N}_{|_P}: P \rightarrow P$ is a u.c.p. map which preserves $\tau$, a normal, faithful, tracial state. Letting $\Psi= {E_N}_{|_P}$ we can easily see that $N\cap P\subseteq Har(\Psi) \subseteq E_N(P)$. Since we canonically have $E_N(P)\subseteq N\cap P$ we conclude that $Har(\Psi)= E_N(P)=P\cap N$. The last equality gives \eqref{2}. 
\vskip 0.06in 
\noindent Notice that from assumptions $Q:=N\cap P\subseteq P$ is a $\G$-invariant von Neumann subalgebra of $P$ containing $P_0$. So to finish the proof of our implication we only need to show that $N = Q\rtimes\G$. Since $Q\rtimes\G\subseteq N$ canonically, we will only argue for the reverse inclusion. To see this fix $x\in N$ and consider its Fourier decomposition (in $M$) $x=\sum_\g x_\g u_\g$ where $x_\g\in P$. Since $L(\G)\subseteq N$ we have  $\sum_\g x_\g u_\g =x=E_N(x)= E_N(\sum_\g x_\g u_\g)= \sum_\g E_N(x_\g) u_\g$. By \eqref{2} we have  $E_N (x_\g)= E_{Q}(x_\g)\in Q$ and hence $x_\g= E_{Q}(x_\g)\in Q$ for all $\g \in \G$. Thus  $x= \sum_\g E_{Q}(x_\g)u_\g \in Q\rtimes \G$, as desired.
\end{proof}
\noindent The conditional expectation property presented in the previous theorem can be used effectively to describe all the intermediate subalgebras for many inclusions arising from canonical constructions in von Neumann algebras. In the remaining part of the section we highlight several situations when this is indeed the case. For instance it provides a very fast approach to Ge's well known tensor-splitting theorem \cite[Theorem 3.1]{Ge} for finite von Neumann algebras.

\begin{cor}\label{tensorint}{\rm ([Ge, Theorem 3.1])} Let $P_1$ be a factor and let $P_2$, $N$ be  von Neumann algebras such that $P_1 \otimes 1 \subseteq N \subseteq P_1 \bar \otimes P_2$. Assume there exist faithful normal states $\varphi_1$ on $P_1$ and $\varphi_2$ on $P_2$, and a faithful, normal conditional expectation $E_N: P_1 \bar\otimes P_2 \rightarrow N$ preserving $\varphi:= \varphi_1 \otimes \varphi_2$.
Then $N = P_1 \bar \otimes Q$ for some (von Neumann) subalgebra $Q \subseteq P_2$.
\end{cor}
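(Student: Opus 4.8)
The plan is to run the argument behind Theorem~\ref{usefulresult}, with the inclusion $1\bar\otimes P_2\subseteq P_1\bar\otimes P_2$ playing the role of $P\subseteq M$ and $P_1\otimes 1$ playing the role of $L(\G)$. (One cannot literally quote Theorem~\ref{usefulresult}, since a tensor product is not a crossed product, so its proof must be mimicked.) Put $Q:=\{\,y\in P_2\,:\,1\otimes y\in N\,\}$; since $1\bar\otimes Q=N\cap(1\bar\otimes P_2)$ is an intersection of von Neumann algebras, $Q$ is a von Neumann subalgebra of $P_2$. Because $P_1\otimes 1\subseteq N$ and $1\bar\otimes Q\subseteq N$ we obtain $P_1\bar\otimes Q\subseteq N$ for free, so the entire content of the statement is the reverse inclusion $N\subseteq P_1\bar\otimes Q$.

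The key step, and the only place where factoriality of $P_1$ is used, is to verify the analogue of the hypothesis ``$E_N(P)\subseteq P$'' of Theorem~\ref{usefulresult}, namely that $E_N(1\bar\otimes P_2)\subseteq 1\bar\otimes P_2$. Since $P_1\otimes 1\subseteq N$, the conditional expectation $E_N$ is $(P_1\otimes 1)$-bimodular. For $y\in P_2$ and any unitary $u\in\mathcal U(P_1)$ we have $(u\otimes 1)(1\otimes y)(u^*\otimes 1)=1\otimes y$, hence $(u\otimes 1)\,E_N(1\otimes y)\,(u^*\otimes 1)=E_N(1\otimes y)$; thus $E_N(1\otimes y)$ commutes with $P_1\otimes 1$. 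As $P_1$ is a factor, the tensor-product commutation (slice) theorem gives
\[
(P_1\otimes 1)'\cap(P_1\bar\otimes P_2)=Z(P_1)\bar\otimes P_2=1\bar\otimes P_2,
\]
so $E_N(1\otimes y)\in 1\bar\otimes P_2$. Since also $E_N(1\otimes y)\in N$, we conclude $E_N(1\bar\otimes P_2)\subseteq N\cap(1\bar\otimes P_2)=1\bar\otimes Q$. (If one prefers, one may at this point apply Lemma~\ref{jl} with $\Psi=E_N|_{1\bar\otimes P_2}$, exactly as in the alternative proof of Theorem~\ref{usefulresult}, to reidentify $\Har(\Psi)=1\bar\otimes Q$ and deduce $E_N\circ E_{1\bar\otimes P_2}=E_{1\bar\otimes Q}$, but this is not needed below.)

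It remains to argue $N\subseteq P_1\bar\otimes Q$ by a Fourier-type approximation mimicking the last paragraph of the proof of Theorem~\ref{usefulresult}. Fix $x\in N$; by the Kaplansky density theorem choose a bounded net $(m_j)_j$ in the $*$-algebra generated by $P_1\otimes 1$ and $1\bar\otimes P_2$ — that is, finite sums $m_j=\sum_k(a^j_k\otimes 1)(1\otimes b^j_k)$ with $a^j_k\in P_1$, $b^j_k\in P_2$ — converging $\sigma$-weakly to $x$. Using the $(P_1\otimes 1)$-bimodularity of $E_N$ together with the previous step,
\[
E_N(m_j)=\sum_k(a^j_k\otimes 1)\,E_N(1\otimes b^j_k)\in(P_1\otimes 1)(1\bar\otimes Q)\subseteq P_1\bar\otimes Q.
\]
Since $E_N$ is normal, $E_N(m_j)\to E_N(x)=x$ $\sigma$-weakly, and $P_1\bar\otimes Q$ is $\sigma$-weakly closed; hence $x\in P_1\bar\otimes Q$. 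Thus $N\subseteq P_1\bar\otimes Q$, and combined with the reverse inclusion established above, $N=P_1\bar\otimes Q$.

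I expect no genuine obstacle here: the substantive input is the middle step, and there it is just the tensor-product commutation theorem that supplies the condition $E_N(1\bar\otimes P_2)\subseteq 1\bar\otimes P_2$; once this is in hand, the remainder is the routine approximation argument, relying only on normality of $E_N$ and weak closedness of $P_1\bar\otimes Q$.
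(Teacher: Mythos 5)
Your proposal is correct and follows essentially the same route as the paper's proof: the key step in both is that $(P_1\otimes 1)$-bimodularity of $E_N$ forces $E_N(1\otimes y)\in (P_1\otimes 1)'\cap(P_1\bar\otimes P_2)=1\bar\otimes P_2$ by factoriality of $P_1$, followed by the Kaplansky/normality approximation to get $N\subseteq P_1\bar\otimes Q$. The only cosmetic difference is that you define $Q$ directly as $N\cap(1\bar\otimes P_2)$ rather than as $E_N(1\bar\otimes P_2)$, which lets you bypass the appeal to Lemma~\ref{jl}.
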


\begin{proof}
We first claim that $E_N(1 \otimes P_2) \subseteq 1 \otimes P_2$. To see this fix $p_2 \in P_2$ and $p_1 \in P_1$. Since $N \supset P_1 \otimes 1$ we have $(p_1 \otimes 1)E_N(1 \otimes p_2)= E_N(p_1 \otimes p_2)= E_N((1 \otimes p_2)(p_1 \otimes 1))= E_N(1 \otimes p_2)(p_1 \otimes 1).$ This implies that $E_N(1 \otimes p_2) \in (P_1 \otimes 1)' \cap (P_1 \bar \otimes P_2)= 1 \otimes P_2$, thereby proving the claim. So we have that $E_N: 1 \otimes P_2 \rightarrow 1 \otimes P_2$ is a u.c.p. map, preserving $\varphi$,  a faithful, normal state. So by Lemma \ref{jl}, $E_N(1 \otimes P_2)$ is a subalgebra of $1 \otimes P_2$, which we can identify as a von Neumann subalgebra $Q \subseteq P_2$. Under this identification we have that $1 \otimes Q= E_N(1 \otimes P_2)= N \cap (1 \otimes P_2).$ Hence $P_1 \tp Q \subseteq N$.
\vskip 0.03in
\noindent To show the reverse containment, we first claim that $(P_1 \otimes_{\rm alg}P_2) \cap N$ is WOT-dense in $N$. Let $n \in N$. By Kaplansky's density theorem, we can find a bounded net $(x_{\la})\subset P_1 \otimes_{\rm alg}P_2$ such that $x_{\la} \rightarrow n$ in WOT. Since $E_N$ is normal, we get that $E_N(x_{\la}) \rightarrow n$. If $x_{\la}= \sum_i p_i \otimes q_i$, with $p_i \in P_1$ and $q_i \in P_2$ then $E_N(x_{\la})=E_N(\sum_i p_i \otimes q_i)= \sum_i(p_i \otimes 1)E_N((1 \otimes q_i)) \in (P_1 \otimes_{\rm alg}P_2) \cap N$, thereby establishing the claim.
Now fix $n \in (P_1 \otimes_{\rm alg}P_2) \cap N.$ Then, there exist $p_1,..., p_k \in P_1$ and $q_1,..., q_k \in P_2$ so that $n = \sum_i p_i \otimes q_i.$ Now, since $n \in N$ and $P_1\otimes 1\subseteq N$ we have 
\begin{equation} \label{teneq}
n = E_N(n)= E_N(\sum\limits_{i=1}^k p_i \otimes q_i) = \sum\limits_{i=1}^k E_N(p_i \otimes q_i)= \sum\limits_{i=1}^k (p_i \otimes 1) E_N(1 \otimes q_i). 
\end{equation}
Since $E_N(1 \otimes q_i) \in Q$ then \eqref{teneq} implies that  $(P_1 \otimes_{\rm alg}P_2) \cap N \subseteq P_1 \otimes_{\rm alg} Q$. As $(P_1 \otimes_{\rm alg}P_2) \cap N $ is WOT-dense in $N$ we get $N= P_1 \bar \otimes Q.$  
\end{proof}

\noindent We also record a twisted version of the above theorem. 

\begin{thm} \label{twistedGe}
Let $P$ be a $\rm II_1$ factor and let $Q$ be a finite separable von Neumann algebra, equipped with a trace preserving action of $\G$. Assume that $\G \ca^{\sigma} P$ is an outer action. Then for any intermediate von Neumann subalgebra $P \rtimes \G \subseteq N \subseteq (P \bar \otimes Q) \rtimes \G$ there is a von Neumann subalgebra $Q_0\subseteq Q$ such that $N=(P  \bar \otimes Q_0) \rtimes \G.$	
\end{thm}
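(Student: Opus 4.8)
The plan is to reduce to the algebraic criterion of Theorem~\ref{usefulresult}. Write $\beta$ for the given trace preserving action of $\G$ on $Q$, and equip $P\bar\otimes Q$ with the product trace and the diagonal action $\sigma_\g\otimes\beta_\g$ of $\G$; note that $P\otimes 1$ is a $\G$-invariant subalgebra and that, inside $(P\bar\otimes Q)\rtimes\G$, we have $(P\otimes 1)\rtimes\G=P\rtimes\G$. Applying Theorem~\ref{usefulresult} with its ``$P$'' replaced by $P\bar\otimes Q$ and its ``$P_0$'' replaced by $P\otimes 1$, it suffices to establish the conditional expectation condition
\[
E_N(P\bar\otimes Q)\subseteq P\bar\otimes Q .
\]
Granting this, Theorem~\ref{usefulresult} yields a $\G$-invariant subalgebra $P\otimes 1\subseteq R\subseteq P\bar\otimes Q$ (in fact $R=N\cap(P\bar\otimes Q)$) with $N=R\rtimes\G$. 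Since $P$ is a $\rm II_1$ factor, $R$ is intermediate between $P\otimes 1$ and $P\bar\otimes Q$, and the (unique) trace preserving conditional expectation $E_R\colon P\bar\otimes Q\to R$ is faithful and normal, Ge's tensor splitting theorem in the form of Corollary~\ref{tensorint} gives $R=P\bar\otimes Q_0$ for some von Neumann subalgebra $Q_0\subseteq Q$. The $\G$-invariance of $R=P\bar\otimes Q_0$ forces $\beta_\g(Q_0)=Q_0$ for all $\g$ (intersect with $1\otimes Q$ and use that $P$ is a factor, so $(P\bar\otimes S)\cap(1\otimes Q)=1\otimes S$), whence $N=(P\bar\otimes Q_0)\rtimes\G$, as desired.

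So the entire content is the displayed inclusion, and this is where outerness of $\sigma$ enters. The key structural input I would establish first is the relative commutant identity
\[
(P\otimes 1)'\cap\bigl((P\bar\otimes Q)\rtimes\G\bigr)=1\otimes Q .
\]
Given $x=\sum_\g x_\g u_\g$ in the left-hand side, with $x_\g\in P\bar\otimes Q$, commutation with $P\otimes 1$ amounts to $x_\g(\sigma_\g(p)\otimes 1)=(p\otimes 1)x_\g$ for all $p\in P$ and all $\g$. For $\g=e$ this gives $x_e\in (P\otimes 1)'\cap(P\bar\otimes Q)=1\otimes Q$, while for $\g\ne e$ the outerness of $\sigma_\g$ forces $x_\g=0$. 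This last step is the classical fact that an outer automorphism $\alpha$ of a factor $P$ admits no nonzero intertwiner $b\in P$ with $b\,\alpha(p)=p\,b$ for all $p$, upgraded from $P$ to $P\bar\otimes Q$ by slicing: if $b\in P\bar\otimes Q$ satisfies $b(\alpha(p)\otimes 1)=(p\otimes 1)b$ for all $p$ (along the way one checks $b^*b,bb^*\in 1\otimes Q$, though this is not needed), then applying $\mathrm{id}\otimes\psi$ for every normal state $\psi$ on $Q$ produces an element $(\mathrm{id}\otimes\psi)(b)\in P$ satisfying the same intertwining relation, hence $0$ by the classical fact; since slice maps separate points, $b=0$.

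With this in hand the display is immediate. For $q\in Q$ the element $1\otimes q$ commutes with $P\otimes 1\subseteq N$, and since $E_N$ is an $N$-bimodule map, $E_N(1\otimes q)$ commutes with $P\otimes 1$, so $E_N(1\otimes q)\in (P\otimes 1)'\cap\bigl((P\bar\otimes Q)\rtimes\G\bigr)=1\otimes Q$. Moreover $E_N(p\otimes 1)=p\otimes 1$ for $p\in P$ since $p\otimes 1\in N$, so for a simple tensor $E_N(p\otimes q)=(p\otimes 1)E_N(1\otimes q)\in (P\otimes 1)(1\otimes Q)\subseteq P\bar\otimes Q$; by linearity $E_N(P\otimes_{\mathrm{alg}}Q)\subseteq P\bar\otimes Q$, and by normality of $E_N$ together with Kaplansky's density theorem $E_N(P\bar\otimes Q)\subseteq P\bar\otimes Q$. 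This is precisely the hypothesis needed to invoke Theorem~\ref{usefulresult}.

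I expect the main obstacle to be exactly the outerness step, i.e.\ the relative commutant identity $(P\otimes 1)'\cap\bigl((P\bar\otimes Q)\rtimes\G\bigr)=1\otimes Q$; once this structural input is in place, everything else is an assembly of Theorem~\ref{usefulresult}, Corollary~\ref{tensorint}, and routine density arguments.
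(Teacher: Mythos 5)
Your proof is correct and follows essentially the same route as the paper: reduce to the conditional-expectation criterion of Theorem~\ref{usefulresult} by computing the relative commutant $(P\otimes 1)'\cap\bigl((P\bar\otimes Q)\rtimes\G\bigr)=1\otimes Q$ via outerness, then split off $P$ using Corollary~\ref{tensorint}. Your slice-map argument for the outerness step is a clean, separability-free substitute for the paper's Gram--Schmidt orthogonalization of the $Q$-legs, and you are more explicit than the paper about the final appeal to Ge's theorem, but the overall strategy is identical.
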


\begin{proof} Using Theorem \ref{usefulresult} we only need to show that $E_N(Q) \subseteq Q$. Naturally, we have that $E_N(Q)\subseteq P' \cap (P\otimes Q)\rtimes \G$.  We shall now briefly argue that $P' \cap (P \bar \otimes Q) \rtimes \G \subseteq Q,$ which will prove our claim. To see this fix $\sum_{\g}a_{\g}u_{\g} \in P' \cap (P \bar \otimes Q) \rtimes \G,$ where $a_{\g} \in P \bar \otimes Q$. Thus for every $\g \in \G$ and $p\in P$ we have that $pa_{\g}=a_{\g}\sigma_{\g}(p)$. Fix $e \neq \g  \in \G$. Let $a_{\g}=\sum_ip_i \otimes q_i$, with $p_i \in P$ and $q_i \in Q$. We may assume that $q_i$ are orthogonal with respect to $\tau_Q$ (by using the Gram-Schimdt process, and using the separability of $Q$). Thus we have $\sum_i (pp_i)\otimes q_i=p(\sum_i p_i \otimes q_i)= (\sum_i p_i \otimes q_i)\sigma_\g(p)= \sum_i (p_i \sigma_\g(p))\otimes q_i $. As $q_i$'s are orthogonal we further get $pp_i = p_i \sigma_{\g}(p)$ for all $i$ and $p \in P$. Since $\G\ca P$ is outer, this implies $p_i=0$ for all $i$ and hence $a_\g=0$.  Thus, $P' \cap (P \bar\otimes Q) \rtimes \G \subseteq P ' \cap (P \bar \otimes Q)= Q.$
	Hence $N = Q_0 \rtimes \G$ where $Q_0= E_N(Q).$
\end{proof}

\noindent If $P$ is a $\rm II_1$ factor then an action $\G\ca P$ is called \textit{centrally free} if the induced action $\G\ca P' \cap P^{\omega}$ is properly outer (see \cite[Definition 4.3]{Suzuki}). Theorem \ref{twistedGe} was first obtained by Y. Suzuki under the assumption that the $\G\ca P$ is centrally free, \cite[Example 4.14]{Suzuki}. In general the centrally freeness assumption introduces certain limitations. For instance, if $P=L(\mathbb F_2)$ then $P' \cap P^{\omega}=\mc$ and hence no nontrivial group admits a centrally free action on $P$. However, when $P$ is the hyperfinite $\rm II_1$ factor, then requiring the $\G\ca P$ to be outer is the same as requiring the $\G\ca P$ to be centrally free. This surprising result is a consequence of Ocneanu's central freedom lemma (\cite[Lemma 15.25]{EK98}). The reader may also consult \cite{CD18} for another recent application of the central freedom lemma.

\begin{thm}
	Let $\mr$ denote the hyperfinite type $\rm II_1$ factor and let $\G$ be a discrete group acting on $\mr$. Then $\G \ca^{\sigma} \mr$ is outer if and only if $\G \ca^{\sigma} \mr$ is centrally free.
\end{thm}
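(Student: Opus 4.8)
The plan is to establish the two implications separately. The implication ``centrally free $\Rightarrow$ outer'' is elementary and does not even use hyperfiniteness. Suppose $\G\ca^\sigma\mr$ is centrally free but some $\sigma_\g$ with $\g\neq e$ were inner, say $\sigma_\g=\mathrm{Ad}(u)$ for a unitary $u\in\mr$. Regarding $u$ as a constant sequence in $\mr^\omega$, for any central sequence $(x_n)_n$ representing an element of $\mr'\cap\mr^\omega$ one has $\|ux_nu^\ast-x_n\|_2=\|[u,x_n]\|_2\to 0$ along $\omega$, so the automorphism induced by $\sigma_\g$ on $\mr'\cap\mr^\omega$ is the identity. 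The identity is an inner automorphism, so the induced action $\G\ca\mr'\cap\mr^\omega$ would fail to be properly outer, contradicting central freeness. Hence $\sigma_\g$ is outer for every $\g\neq e$.

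For the converse, assume $\G\ca^\sigma\mr$ is outer, i.e.\ $\sigma_\g$ is not inner for any $\g\in\G\setminus\{e\}$; since $\mr$ is a factor this means each such $\sigma_\g$ is properly outer. Fix $\g\neq e$ and let $\bar\sigma_\g$ denote the automorphism of $N:=\mr'\cap\mr^\omega$ induced by $\sigma_\g$. I must show $\bar\sigma_\g$ is properly outer, i.e.\ that there is no nonzero projection $p\in Z(N)$ with $\bar\sigma_\g(p)=p$ for which $\bar\sigma_\g$ restricts to an inner automorphism of $pNp$. If such a $p$ and a unitary $w\in\mathcal U(pNp)$ with $\bar\sigma_\g|_{pNp}=\mathrm{Ad}(w)$ existed, then Ocneanu's central freedom lemma \cite[Lemma 15.25]{EK98}---which for the hyperfinite $\rm II_1$ factor guarantees that an automorphism becoming inner on a nonzero invariant central corner of the central sequence algebra is already inner on $\mr$---would force $\sigma_\g$ to be inner, a contradiction. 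Thus $\bar\sigma_\g$ is properly outer for every $\g\neq e$, i.e.\ $\G\ca\mr'\cap\mr^\omega$ is properly outer, which is exactly central freeness of $\G\ca^\sigma\mr$.

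The only substantial input is the central freedom lemma itself, whose proof encodes the structure theory of $\mr$ (approximate innerness of every automorphism together with Rokhlin-type tower/partition arguments); I will use it as a black box, so this is where the genuine difficulty sits. The remaining work is bookkeeping: reconciling the notion of ``properly outer'' for a group action used in \cite[Definition 4.3]{Suzuki} with the hypotheses of \cite[Lemma 15.25]{EK98}, and checking that passing to $\mr^\omega$ and its relative commutant is $\G$-equivariant so that the induced action $\bar\sigma$ is well defined.
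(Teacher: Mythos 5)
Your overall structure is sound and, for the substantive direction (outer $\Rightarrow$ centrally free), it is essentially the route the paper takes: everything is made to rest on Ocneanu's central freedom lemma. Where you genuinely differ is the converse. You observe directly that an inner $\sigma_\g=\mathrm{Ad}(u)$ induces the identity on $\mr'\cap\mr^\omega$ (since $\|[u,x_n]\|_2\to 0$ along $\omega$ for every central sequence), and the identity automorphism of a nonzero algebra is never properly outer. This is correct, elementary, and does not even use hyperfiniteness. The paper instead proves this direction by again invoking the central freedom lemma inside $(\mr\rtimes\G)^\omega$ and running a Fourier-coefficient computation to show $\mr'\cap(\mr\rtimes\G)=\mc$; your version is shorter and cleaner here.

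The soft spot is your paraphrase of \cite[Lemma 15.25]{EK98}. That lemma is the relative commutant formula $(\mr'\cap\mr^\omega)'\cap \mathcal{C}^\omega=\mr\vee(\mr'\cap\mathcal{C})^\omega$; it does not literally say that ``an automorphism becoming inner on a corner of the central sequence algebra is already inner on $\mr$,'' and that implication is precisely what still needs an argument. The missing (short) derivation, which is what the paper actually writes down, runs as follows: set $M=\mr\rtimes\G$ and $N=\mr'\cap\mr^\omega$; outerness of the action gives $\mr'\cap M=\mc$, so the lemma yields $N'\cap M^\omega=\mr$. If $\bar\sigma_\g|_{pNp}=\mathrm{Ad}(w)$ with $p\in\mathcal{Z}(N)$ invariant and $w\in\mathcal{U}(pNp)$, then $\bar\sigma_\g(x)w=wx$ for all $x\in N$ (using that $p$ is central and fixed), hence $u_\g^* w\in N'\cap M^\omega=\mr$; applying $E_{\mr^\omega}$ to $w=u_\g(u_\g^* w)$ kills the right-hand side for $\g\neq e$, so $w=0$, contradicting $p\neq 0$. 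With these lines inserted your proof is complete; as written, the key implication is asserted as the content of the cited lemma rather than derived from it.
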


\begin{proof}
	Let $\G \ca^{\sigma} \mr$ be an outer action. Let $a \in \mr' \cap \mr^{\omega}$ and $\g \in \G$ be such that $\sigma_{g^{-1}}(x)a=ax$ for all $x \in \mr' \cap \mr^{\omega}$. This clearly implies that $u_{\g}a \in (\mr' \cap \mr^{\omega})' \cap (\mr \rtimes \G)^{\omega}$. Now, by Ocneanu's central freedom lemma we get that $(\mr' \cap \mr^{\omega})' \cap (\mr \rtimes \G)^{\omega}=\mr \vee (\mr' \cap \mr \rtimes \G)^{\omega}=\mr$ (where the last equality holds because $\G \ca \mr$ is outer). Thus $u_\g  \in \mr $ which implies that $\g=e$. Hence $\G \ca^{\sigma} \mr' \cap \mr^{\omega}$ is outer.
	\vskip 0.05in
	\noindent Conversely, assume that $\G \ca^{\sigma} \mr' \cap \mr^{\omega}$ is outer. We will show that $\mr' \cap \mr \rtimes \G = \mc$, which shall establish that $\G \ca^{\sigma} \mr$ is outer. Let $x \in \mr ' \cap \mr \rtimes \G$, and consider its Fourier decomposition $x=\sum_{\g}x_\g u_\g$, where $x_\g \in R$. Now $x \in \mr ' \cap \mr \rtimes \G $ implies that $ x_\g u_\g \in \mr ' \cap \mr \rtimes \G$ for all $\g \in \G$. Hence $x_\g u_\g \in \mr \vee (\mr' \cap \mr \rtimes \G)^{\omega} = (\mr' \cap \mr^{\omega})' \cap (\mr \rtimes \G)^{\omega} $ (where the last equality follows from Ocneanu's central freedom lemma). Thus we get $x_\g u_\g x=xx_\g u_\g$ for all $x \in \mr' \cap \mr^{\omega}$ which gives that $x_\g\sigma_\g(x)= xx_\g $, implying $ \sigma_\g(x)x_\g x_\g^{\ast}= x_\g x_\g^{\ast}x$, which implies $E_{\mr' \cap \mr^{\omega}}(x_\g x_\g^{\ast})x= xE_{\mr' \cap \mr^{\omega}}(x_\g x_\g^{\ast})$, for all $x \in \mr' \cap \mr^{\omega}$.
	
	\noindent Since $\G \ca \mr' \cap \mr^{\omega}$ is outer, we get that $E_{\mr' \cap \mr^{\omega}}(x_\g x_\g^{\ast})=0$ for all $\g \neq e$. Since $E_{\mr' \cap \mr^{\omega}}$ is faithful, this further implies that $x_\g =0$ for all $\g \neq e$. This implies that $x \in \mr' \cap \mr = \mc$, thereby establishing that $\mr ' \cap \mr \rtimes \G =\mc$, which implies that $\G \ca^{\sigma} \mr$ is outer. 
\end{proof}

\noindent Theorem \ref{twistedGe}  leads to new examples of subalgebras in $(P \bar\otimes Q) \rtimes \G$ that are amenable relative to $P \rtimes \G$, \cite[Definition 2.2]{OP07}. Note that for von Neumann algebra inclusions $N \subseteq M$, the existence of a maximal amenable subalgebra $P$ in $M$ relative to $N$ follows from \cite[Lemma 2.7]{DHI16}. We remark that very similar methods were used in \cite[Theorem 3.4]{JS} to provide examples of maximal Haagerup subalgebras arising from \textit{extremely rigid actions} of an icc group.
\begin{cor}
	Let $P$ be a type $\rm II_1$ factor, let $Q$ be a finite von Neumann algebra, and let $\G$ be an amenable group acting outerly on $P, Q$ (the actions are assumed to be trace preserving). Let $Q_0\subseteq Q$ be a maximal amenable subalgebra. Then $(P \bar\otimes  Q_0) \rtimes \G$ is a maximal amenable subalgebra in $(P \bar\otimes Q) \rtimes \G$ relative to $P \rtimes \G$. In particular, if $\mr$ is the hyperfinite $\rm II_1$ factor, and $\G \ca \mr$ is an outer trace preserving action, then $(\mr \bar\otimes Q_0) \rtimes \G$ is maximal amenable in $(\mr \bar\otimes Q) \rtimes \G.$
\end{cor}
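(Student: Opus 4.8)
The plan is to write $M=(P\bar\otimes Q)\rtimes\G$, $N=P\rtimes\G$ and $\mathcal A=(P\bar\otimes Q_0)\rtimes\G$ (here $Q_0$ is understood to be $\G$-invariant, as the crossed-product notation requires, and ``maximal amenable'' is read relative to the $\G$-invariant subalgebras of $Q$), and to prove separately that (i) $\mathcal A$ is amenable relative to $N$ inside $M$ in the sense of \cite{OP07}, and (ii) $\mathcal A$ is maximal with this property.

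For (i), I would first note that since $Q_0$ is amenable, $P\bar\otimes Q_0$ is amenable relative to $P\bar\otimes 1$ inside $P\bar\otimes Q$: there is a $(P\bar\otimes Q_0)$-central state on $\langle P\bar\otimes Q,\, e_{P\bar\otimes 1}\rangle\cong P\bar\otimes\mathbb B(L^2(Q))$ restricting to the trace on $P\bar\otimes Q$, namely $\tau_P\otimes\psi$ where $\psi(T)=\psi_0(e_{Q_0}Te_{Q_0})$ and $\psi_0$ is a $Q_0$-central state on $e_{Q_0}\mathbb B(L^2(Q))e_{Q_0}\cong\mathbb B(L^2(Q_0))$ extending $\tau_{Q_0}$ (such $\psi_0$ exists exactly because $Q_0$ is amenable). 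Averaging this state over a left-invariant mean on $\ell^\infty(\G)$ makes it $\G$-invariant, without destroying $(P\bar\otimes Q_0)$-centrality since $\G$ normalizes $P\bar\otimes Q_0$ and acts trace-preservingly. Because the Jones tower of the $\G$-invariant inclusion $N\subseteq M$ splits as $\langle M,e_N\rangle\cong\langle P\bar\otimes Q,e_{P\bar\otimes 1}\rangle\rtimes\G$, composing the $\G$-invariant central state with the canonical conditional expectation onto the base produces a state on $\langle M,e_N\rangle$ that is central for $P\bar\otimes Q_0$ and for every $u_\g$ — hence $\mathcal A$-central — and still restricts to the trace on $M$; this is precisely relative amenability of $\mathcal A$ over $N$. (Alternatively one may simply quote the permanence of relative amenability under tensoring with amenable algebras and under crossed products by amenable groups.)

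For (ii), suppose $\mathcal A\subseteq\mathcal B\subseteq M$ with $\mathcal B$ amenable relative to $N$ inside $M$. Since $P\rtimes\G\subseteq\mathcal B$ and $\G\ca P$ is outer, Theorem \ref{twistedGe} yields a $\G$-invariant subalgebra $Q_0\subseteq Q_1\subseteq Q$ with $\mathcal B=(P\bar\otimes Q_1)\rtimes\G$ (in fact $Q_1=\mathcal B\cap(1\bar\otimes Q)$). It then suffices to show $Q_1$ is amenable, for maximality of $Q_0$ forces $Q_1=Q_0$, i.e.\ $\mathcal B=\mathcal A$. Here I would use the trace-preserving conditional expectation $E\colon M\to Q\rtimes\G$ obtained by slicing off $P$ with $\tau_P$: it restricts to conditional expectations $\mathcal B\to Q_1\rtimes\G$ and $N\to L(\G)$, and it induces a trace-compatible normal embedding $\langle Q\rtimes\G,\, e_{L(\G)}\rangle\hookrightarrow\langle M,e_N\rangle$ (both sides being identified with crossed products of the relevant copy of $\mathbb B(L^2(Q))$, with $e_{L(\G)}\mapsto e_N$). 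Restricting a $\mathcal B$-central state on $\langle M,e_N\rangle$ to this subalgebra gives a $Q_1$-central state on $\langle Q\rtimes\G,e_{L(\G)}\rangle$ restricting to the trace on $Q\rtimes\G$, so $Q_1$ is amenable relative to $L(\G)$ inside $Q\rtimes\G$; since $L(\G)$ is amenable, a routine weak-containment/bimodule argument upgrades this to amenability of $Q_1\rtimes\G$, and hence of $Q_1$. The main obstacle is exactly this last step — correctly identifying $\langle M,e_N\rangle$ and locating the sub-basic-construction $\langle Q\rtimes\G,e_{L(\G)}\rangle$ inside it so that the central state transports with the trace intact. Finally, the \emph{In particular} is immediate: when $P=\mr$, the algebra $\mr\rtimes\G$ is amenable, so ``amenable relative to $\mr\rtimes\G$'' coincides with ``amenable'', and $(\mr\bar\otimes Q_0)\rtimes\G$ is then a genuine maximal amenable subalgebra of $(\mr\bar\otimes Q)\rtimes\G$.
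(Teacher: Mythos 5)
Your argument is correct and follows essentially the same route as the paper's proof: the key step in both is Theorem \ref{twistedGe}, which forces any intermediate relatively amenable algebra to have the form $(P\bar\otimes Q_1)\rtimes\G$ with $Q_0\subseteq Q_1\subseteq Q$ and $\G$-invariant, after which amenability of $Q_1$ and maximality of $Q_0$ give $Q_1=Q_0$. The paper's proof is three lines and simply asserts the two analytic facts you verify in detail (that $(P\bar\otimes Q_0)\rtimes\G$ is amenable relative to $P\rtimes\G$, and that relative amenability of $(P\bar\otimes Q_1)\rtimes\G$ over $P\rtimes\G$ forces $Q_1$ amenable), so your write-up supplies standard details the authors left implicit rather than a genuinely different argument.
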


\begin{proof} Let $(P \tp Q_0) \rtimes \G \subseteq N \subseteq (P \tp Q) \rtimes \G$. Then by Theorem \ref{twistedGe} $N=(P \tp Q_1) \rtimes \G$, with $Q_0 \subseteq Q_1 \subseteq Q$. If $N$ is amenable relative to $P \rtimes \G$, then $Q_1$ is amenable. By maximal amenability of $Q_0$ we obtain that $Q_1=Q_0$ thereby establishing the result. 
\end{proof}

\noindent The next theorem re-establishes a well known Galois correspondence for group actions.

\begin{thm} \label{galoiscorr} Let $\G$ be a group,  let $\La\lhd \G$ be a normal subgroup, and let $(P, \tau)$ be a tracial von Neumann algebra. Assume that $\G$ acts on $P$ via trace preserving automorphisms such that $(P \rtimes \La)' \cap (P \rtimes \G)=\mc$. Then for any intermediate subfactor $P \rtimes \La \subseteq N \subseteq P \rtimes \G$ there exists an intermediate subgroup $\La \leqslant K \leqslant \G$ such that $N =P \rtimes K$.
\end{thm}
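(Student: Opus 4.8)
The plan is \emph{not} to use Theorem~\ref{usefulresult} here: that criterion governs inclusions where the \emph{group} part $L(\G)$ is fixed and the \emph{algebra} part varies, whereas in the present situation we have $P\subseteq P\rtimes\La\subseteq N$, so the algebra part is already fixed and it is the group part that must vary. Instead I would run the ``dual'' argument, working directly in $M:=P\rtimes\G$ with the trace-preserving conditional expectation $E_N\colon M\to N$ (which exists because $M$ is finite) and using Fourier analysis over $\G$. The whole point is to pin down $E_N$ on the canonical unitaries $\{u_\g\}_{\g\in\G}$.

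First I would show $E_N(u_\g)=\lambda_\g u_\g$ with $\lambda_\g\in\{0,1\}$ for every $\g\in\G$. Since $\La\lhd\G$, the automorphism $\mathrm{Ad}(u_\g)$ maps $P\rtimes\La$ onto itself, so for all $b\in P\rtimes\La\subseteq N$ the element $u_\g b u_\g^{-1}$ again lies in $P\rtimes\La\subseteq N$; applying $N$-bimodularity of $E_N$ gives $E_N(u_\g)b=(u_\g b u_\g^{-1})E_N(u_\g)$, whence $E_N(u_\g)u_\g^{-1}$ commutes with all of $P\rtimes\La$. By the irreducibility hypothesis $(P\rtimes\La)'\cap M=\mc$ we get $E_N(u_\g)u_\g^{-1}\in\mc$, i.e.\ $E_N(u_\g)=\lambda_\g u_\g$; idempotence of $E_N$ then forces $\lambda_\g^2=\lambda_\g$, so $\lambda_\g\in\{0,1\}$, with $\lambda_\g=1$ exactly when $u_\g\in N$. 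From $u_{\la\g}=u_\la u_\g$ ($\la\in\La$) one checks $\lambda_{\la\g}=\lambda_\g$, so $K:=\{\g\in\G:u_\g\in N\}$ is a union of $\La$-cosets, and $u_{\g\g'}=u_\g u_{\g'}$, $u_{\g^{-1}}=u_\g^{-1}$ show $K$ is a subgroup with $\La\leqslant K\leqslant\G$; trivially $P\rtimes K\subseteq N$ since $P\subseteq P\rtimes\La\subseteq N$.

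For the reverse inclusion, take $x\in N$ with Fourier expansion $x=\sum_{\g\in\G}x_\g u_\g$, $x_\g\in P$, and apply $E_N$ to the finite truncations $\xi_F=\sum_{\g\in F}x_\g u_\g\in M$. As each $x_\g\in P\subseteq N$, bimodularity gives $E_N(\xi_F)=\sum_{\g\in F}x_\g\lambda_\g u_\g=\sum_{\g\in F\cap K}x_\g u_\g\in P\rtimes K$, while $\|E_N(\xi_F)-x\|_2=\|E_N(\xi_F-x)\|_2\le\|\xi_F-x\|_2\to 0$ as $F\nearrow\G$; since $P\rtimes K$ is $\|\cdot\|_2$-closed in $L^2(M)$ we conclude $x\in P\rtimes K$, hence $N=P\rtimes K$. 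I do not anticipate a serious difficulty; the one point needing care is in Step~1, namely that $\mathrm{Ad}(u_\g)$ must carry $P\rtimes\La$ \emph{onto} itself (so that ``$u_\g b u_\g^{-1}$ exhausts $P\rtimes\La$'' is legitimate when deducing $E_N(u_\g)u_\g^{-1}\in(P\rtimes\La)'\cap M$), which is precisely where normality of $\La$ in $\G$ is used — note also that factoriality of $N$ plays no role, only irreducibility of $P\rtimes\La\subseteq P\rtimes\G$ and the existence of $E_N$. An equivalent packaging writes $P\rtimes\G=(P\rtimes\La)\rtimes(\G/\La)$ as a cocycle crossed product, observes that irreducibility makes this cocycle action of $\G/\La$ on $P\rtimes\La$ outer, and invokes the Galois correspondence for outer cocycle actions; but the direct computation above is self-contained.
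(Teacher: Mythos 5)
Your proof is correct and follows essentially the same route as the paper: define $K=\{\g\in\G : u_\g\in N\}$, use normality of $\La$ together with $(P\rtimes\La)'\cap(P\rtimes\G)=\mc$ to force $E_N(u_\g)\in\mc\, u_\g$, and finish by Fourier decomposition. Your way of pinning down the scalar (idempotence of $E_N$ giving $\lambda_\g\in\{0,1\}$) is a slightly cleaner shortcut than the paper's construction of an auxiliary unitary from $E_N(u_\g)E_N(u_\g)^\ast$, but the argument is otherwise identical, and your observation that factoriality of $N$ is never used matches the paper's proof as well.
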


\begin{proof}
	Let $K =\{\g \in \G: u_{\g} \in N \}$. Clearly, $K$ is a group satisfying  $\La \leqslant K \leqslant \G$. Also $P \subseteq P \rtimes \La \subseteq N$ and hence $ P \rtimes K  \subseteq N \subseteq P \rtimes \G$. Next we show that $N \subseteq P \rtimes K$.
\vskip 0.03in	
\noindent First we claim for every $\g\in \G$ there is $c_\g\in \mc$ so that $E_N(u_\g)=c_\g u_\g$. Fix $\g \in \G$ and let $\psi(x)=u_{\g}xu_{\g}^\ast$,  for all $x \in L(\G)$. Since $\La$ is normal in $\G$, $\psi$ restricts to an automorphism of $P \rtimes \La$. Thus for all $x \in P \rtimes \La$ we have  $\psi(x)u_\g =u_\g x$ and hence $\psi(x)E_N(u_\g)= E_N(u_\g)x$. This implies that  $E_N(u_\g)^\ast \psi(x)= x E_N(u_\g)^\ast$  and hence $E_N(u_\g)E_N(u_\g)^\ast \in (P \rtimes \La)' \cap (P \rtimes \G) = \mc$. Let $d = E_N(u_\g)E_N(u_\g)^\ast$. Note that $ 0 \leq d \leq 1$. If $d \neq 0$, we get that $(d^{-1/2}E_N(u_\g))(d^{-1/2}E_N(u_\g))^\ast=1$, implying that $d^{-1/2}E_N(u_\g)\in \mathcal U(N)$. Next consider $u = u_\g^\ast E_N(d^{-1/2}u_\g)\in \mathcal U(M)$. For every $x \in P \rtimes \La$ we can check that
\begin{equation*} \label{autan2}
\begin{split}
uxu^\ast &= d^{-1}u_\g^\ast E_N(u_\g)xE_N(u_\g)^\ast u_\g = d^{-1} u_\g^\ast E_N(u_\g)E_N(u_\g)^\ast \psi(x) u_\g = u_\g^\ast \psi(x) u_\g=x. 
\end{split}
\end{equation*}
Hence  $d^{-1/2} u_\g^ \ast E_N(u_\g)=u \in (P \rtimes \La)' \cap (P \rtimes \G) = \mc$. Thus   $E_N(u_\g)=c_\g u_\g$ for some $c_\g\in \mc$.
\vskip 0.03in
\noindent The claim shows that for any $\g \in\G$, either $E_N(u_\g) =0$ or $u_\g \in N$. Finally, if $N\ni  n = \sum_{\g \in \G} n_\g u_\g$ is its Fourier decomposition in $P \rtimes \G$, then applying $E_N$, we see that $n= \sum_{\g \in \G} n_\g E_N(u_\g) \in P \rtimes K$, as desired.\end{proof}

\noindent Below we highlight a few special cases of the above theorem, which are well known in the literature.
\begin{cor}

\begin{enumerate}
		
\item (\cite{Ch},\cite[Theorem 3.13]{ILP98}) Let $M$ be a $\rm II_1$ factor, and let $\G$ be a discrete group with an outer action on $M$. Let $N$ be an intermediate subalgebra, i.e. $M \subseteq N \subseteq M \rtimes \G$. Then there exists a subgroup $K$ of $\G$ such that $N=M \rtimes K$. 
		\item Let $\G$ be an icc group, and let $\La\lhd \G$ be a normal subgroup such that $L(\Lambda)' \cap L(\G) = \mc$. Then for any intermediate subfactor $L(\Lambda) \subseteq N \subseteq L(\G)$ there exists an intermediate subgroup $\La \leqslant K \leqslant \G$ such that $N =L(K)$.
	\end{enumerate}
\end{cor}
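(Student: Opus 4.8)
The plan is to derive both items directly from Theorem \ref{galoiscorr}, which has already been proved above, by verifying its hypotheses in each special case. For part (1), I would take $P = M$ and $\La = \{e\}$. Then $P \rtimes \La = M$, $P \rtimes \G = M \rtimes \G$, and the relative commutant condition $(P \rtimes \La)' \cap (P \rtimes \G) = M' \cap (M \rtimes \G) = \mc$ follows immediately from the assumption that the action $\G \ca M$ is outer (and $M$ is a factor): outerness of the action is exactly equivalent to $M' \cap M\rtimes\G = \mc$, which is the standard fact that $M\rtimes\G$ is a factor when the action is outer and $M$ is a factor, together with triviality of the relative commutant. The trivial subgroup $\La = \{e\}$ is normal in $\G$, so Theorem \ref{galoiscorr} applies verbatim and yields an intermediate subgroup $\{e\} \leqslant K \leqslant \G$ with $N = M \rtimes K$, which is the assertion.

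For part (2), I would take $P = \mc$ (the trivial one-dimensional von Neumann algebra) with the trivial $\G$-action, so that $P \rtimes \La = L(\La)$ and $P \rtimes \G = L(\G)$. The hypothesis $L(\La)' \cap L(\G) = \mc$ is assumed, $\La \lhd \G$ is assumed normal, and the action is trace-preserving (trivially). Thus Theorem \ref{galoiscorr} applies and gives $\La \leqslant K \leqslant \G$ with $N = P \rtimes K = L(K)$, as desired. One small point worth noting is that Theorem \ref{galoiscorr} is stated for intermediate \emph{subfactors} $N$; since $\G$ is icc, $L(\G)$ is a $\rm II_1$ factor, and an intermediate $L(\La)\subseteq N\subseteq L(\G)$ is assumed to be a subfactor, so this matches the hypothesis directly.

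I do not anticipate any real obstacle here — both parts are textbook consequences of the Galois correspondence theorem once the dictionary $P=M,\La=\{e\}$ (respectively $P=\mc$) is set up. The only thing to be slightly careful about is justifying the relative commutant computation $M'\cap M\rtimes\G=\mc$ from outerness in part (1); I would simply cite the classical fact (e.g.\ that $M\rtimes\G$ is a factor with trivial relative commutant of $M$ precisely when the $\G$-action on the factor $M$ is outer), which is standard and can be found in \cite{ILP98} or any reference on crossed products. Accordingly, the write-up can be kept to a few lines: specialize, check the hypotheses of Theorem \ref{galoiscorr}, invoke it.

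\begin{proof}
(1) Apply Theorem \ref{galoiscorr} with $P=M$ and $\La=\{e\}$, which is trivially normal in $\G$. Since $M$ is a $\rm II_1$ factor and $\G\ca M$ is outer, the crossed product $M\rtimes\G$ is a factor and $M'\cap(M\rtimes\G)=\mc$; that is, $(P\rtimes\La)'\cap(P\rtimes\G)=\mc$. Hence Theorem \ref{galoiscorr} yields a subgroup $\{e\}\leqslant K\leqslant\G$ with $N=M\rtimes K$.

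\noindent (2) Apply Theorem \ref{galoiscorr} with $P=\mc$ and the trivial (trace preserving) $\G$-action, so that $P\rtimes\La=L(\La)$ and $P\rtimes\G=L(\G)$. Since $\G$ is icc, $L(\G)$ is a $\rm II_1$ factor, $\La\lhd\G$ is normal, and by hypothesis $(P\rtimes\La)'\cap(P\rtimes\G)=L(\La)'\cap L(\G)=\mc$. Therefore Theorem \ref{galoiscorr} provides an intermediate subgroup $\La\leqslant K\leqslant\G$ with $N=P\rtimes K=L(K)$.
\end{proof}
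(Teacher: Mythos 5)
Your proposal is correct and follows exactly the paper's own argument: specialize Theorem \ref{galoiscorr} with $P=M$, $\La=\{e\}$ for part (1) (using that outerness gives $M'\cap(M\rtimes\G)=\mc$) and with $P=\mc$ for part (2). No issues.
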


\begin{proof}
	Since $\G \ca M$ is outer, $M' \cap M \rtimes \G =\mc$. Taking $\La=\{e\}$, and appealing to Theorem \ref{galoiscorr} yields the first statement. 
	Taking $P =\mc$ in Theorem \ref{galoiscorr} yields the second statement.
\end{proof}

\vskip 0.03in

\noindent In the remaining part of the section we show that the strategy presented in Theorem \ref{usefulresult} can be successfully used to classify all intermediate subalgebras for inclusion of von Neumann algebras arising from compact extensions. This covers a new situation which complements the case of free extensions discovered in \cite[Main Theorem]{Suzuki}. To be able to properly introduce our result we first recall the following notion of compact extension of actions on von Neumann algebras: 
\begin{defn}\label{compact}
Let $\G\ca (P_0 \subseteq P)$ be an extension of tracial von Neumann algebras. One says that $\G \ca (P_0\subseteq P)$ is a \emph{compact extension} if there exists $\mathcal F\subseteq P$ satisfying the following properties:
\begin{enumerate}\item $\overline{{\rm span} \mathcal F}^{\|\cdot\|_2}=L^2(P)$; 
\item for every $f\in \mathcal F$ and $\varepsilon>0$ there exist $\xi_1,\xi_2,...,\xi_n\in L^2(P)$ such that for every $\g\in \G$ one can find $\kappa_i(\g)\in P_0$, with $i=\overline{1,n}$ satisfying $\sup_{1\leq i\leq n,  \g\in \G}\|\kappa_i(\g)\|_\infty<\infty$ and  \begin{equation*}
\|\sigma_\g(f)- \sum^n_{i=1} \kappa_i(\g) \xi_i\|_2\leq\varepsilon.
\end{equation*}
\end{enumerate}   
 When $P_0=\mathbb C 1$ we simply say that the action $\G \ca P$ is compact.
\end{defn}

\noindent{\bf Examples.} Assume that $\G \ca X$ is an ergodic pmp action on a probability space $X$ and let $\G \ca X_0$ be a factor such that the extension $\pi: X \ra X_0$ is compact in the usual sense \cite{Fur77,Z76}. Then it is a routine exercise to show that the corresponding von Neumann algebraic extension $\G \ca (L^\infty (X_0)\subseteq L^\infty(X))$ automatically satisfies the definition above. In particular whenever $\G \ca X$ is an ergodic compact pmp action then $\G\ca L^\infty(X)$ is compact in the above sense.

\vskip 0.05in

\noindent With this definition at hand we can now introduce the main result of this section.

\begin{thm} \label{cptint}
Let $\G$ be an icc group and let $\G \ca (P_0\subseteq P)$ be a compact extension of tracial von Neumann algebras as in Definition \ref{compact}. Let $P_0\rtimes \G \subseteq P \rtimes \G$ be the corresponding inclusion of crossed product von Neumann algebras. Then for any intermediate von Neumann subalgebra $P_0\rtimes \Gamma \subseteq N \subseteq P \rtimes \Gamma $ there exists an intermediate von Neumann subalgebra $P_0\subseteq Q \subseteq P$ such that $N= Q \rtimes \G.$
\end{thm}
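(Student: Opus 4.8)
The plan is to apply Theorem~\ref{usefulresult}, so the entire task reduces to verifying that $E_N(P)\subseteq P$ for any intermediate von Neumann algebra $P_0\rtimes\G\subseteq N\subseteq P\rtimes\G$. Write $M=P\rtimes\G$ and let $E_N:M\to N$ be the $\tau$-preserving conditional expectation. Fix $a\in P$ and consider its Fourier expansion $E_N(a)=\sum_{\g}b_\g u_\g$ with $b_\g\in P$; we must show $b_\g=0$ for all $\g\neq e$. The key point where compactness enters is the following observation: because $L(\G)\subseteq N$, for every $x\in L(\G)$ we have $E_N(x a x^*)=x\,E_N(a)\,x^*$, and more usefully, for every $\g\in\G$, $u_\g E_N(a) u_\g^* = E_N(u_\g a u_\g^*) = E_N(\sigma_\g(a))$. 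So the ``Fourier support'' of $E_N(a)$ must transform compatibly under the $\G$-action, and compactness of the extension will force this support to collapse onto the trivial group element.

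The mechanism I would use is an averaging/compactness argument. First reduce to $a\in\mathcal F$ by density (the map $a\mapsto E_N(a)$ is $\|\cdot\|_2$-continuous and $\overline{\operatorname{span}\mathcal F}^{\|\cdot\|_2}=L^2(P)$). For $f\in\mathcal F$ and $\varepsilon>0$, pick $\xi_1,\dots,\xi_n\in L^2(P)$ and $\kappa_i(\g)\in P_0$ with $\sup\|\kappa_i(\g)\|_\infty<\infty$ and $\|\sigma_\g(f)-\sum_i\kappa_i(\g)\xi_i\|_2\leq\varepsilon$ for all $\g$. Applying $E_N$ and using that $\kappa_i(\g)\in P_0\subseteq N$ (so $E_N$ is $P_0$-bimodular), we get $\|E_N(\sigma_\g(f))-\sum_i\kappa_i(\g)E_N(\xi_i)\|_2\leq\varepsilon$. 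Thus the vectors $\{u_\g E_N(f) u_\g^*:\g\in\G\}=\{E_N(\sigma_\g(f)):\g\in\G\}$ are, up to $\varepsilon$, contained in the finite-dimensional-over-$P_0$ module spanned by $E_N(\xi_1),\dots,E_N(\xi_n)$, which is a \emph{norm-bounded, $\|\cdot\|_2$-precompact} family (a bounded subset of a finitely generated $P_0$-submodule of $L^2(M)$ has compact closure, or at least is totally bounded in the relevant sense). In other words, $\{u_\g E_N(f)u_\g^*\}_\g$ is totally bounded in $L^2(M)$.

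Now I exploit the icc hypothesis on $\G$. Write $E_N(f)=\sum_\g b_\g u_\g$. For the non-trivial part $\eta:=E_N(f)-b_e$, we have $u_\g\,\eta\,u_\g^*=\sum_{\lambda\neq e}\sigma_\g(b_{\g^{-1}\lambda\g})u_\lambda$ (reindexing), and since $\G$ is icc, for any fixed finite set of non-trivial group elements one can find $\g$ pushing their conjugates off that set; iterating, a standard argument shows that if $\eta\neq 0$ then $\{u_\g\eta u_\g^*\}_\g$ contains an infinite ``$\varepsilon$-separated'' sequence in $\|\cdot\|_2$ — contradicting total boundedness. More precisely: if $\|\eta\|_2^2=\sum_{\lambda\neq e}\|b_\lambda\|_2^2>0$, choose a finite $F\subseteq\G\setminus\{e\}$ carrying most of the mass, then use icc to produce $\g_1,\g_2,\dots$ with $\g_k^{-1}F\g_k$ pairwise disjoint and disjoint from $\{e\}$; the vectors $u_{\g_k}\eta u_{\g_k}^*$ then have pairwise ``$\|\cdot\|_2$-distance'' bounded below, contradicting precompactness. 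Hence $\eta=0$, i.e.\ $E_N(f)=b_e\in P$, and by density $E_N(P)\subseteq P$. Then Theorem~\ref{usefulresult} gives $N=Q\rtimes\G$ with $Q=N\cap P$ a $\G$-invariant intermediate subalgebra, as desired.

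The main obstacle I anticipate is making the ``total boundedness versus icc-separation'' dichotomy precise when $P_0$ is non-abelian: the family $\sum_i\kappa_i(\g)E_N(\xi_i)$ lives in a finitely generated \emph{left} $P_0$-module, and one needs that a norm-bounded subset of such a module is totally bounded in $\|\cdot\|_2$ — this is true because each $E_N(\xi_i)$ is a fixed $L^2$-vector and the $\kappa_i(\g)$ range in the (WOT-compact) unit ball of $P_0$ scaled by a constant, and left multiplication $P_0\times L^2(M)\to L^2(M)$ restricted to the ball is jointly continuous from the WOT$\times\|\cdot\|_2$ topology to $\|\cdot\|_2$ only after a compactness bootstrap; one should instead argue directly that the closure of $\{\sum_i\kappa_i(\g)E_N(\xi_i)\}_\g$ in $\|\cdot\|_2$ is compact. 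Getting this compactness statement cleanly, and then correctly extracting the icc-separated subsequence from the Fourier tails (keeping track that conjugation $u_\g(\cdot)u_\g^*$ permutes Fourier modes by $\lambda\mapsto\g\lambda\g^{-1}$ and scales coefficients by $\sigma_\g$, which preserves $\|\cdot\|_2$), is where the real work lies.
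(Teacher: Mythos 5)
Your overall architecture coincides with the paper's: reduce everything to showing $E_N(P)\subseteq P$ and invoke Theorem~\ref{usefulresult}, use the equivariance $u_\g E_N(a)u_\g^*=E_N(\sigma_\g(a))$ together with the compactness of the extension to approximate the orbit of $E_N(f)$ by elements of a finitely generated left $P_0$-module, and then use the icc property to kill the off-diagonal Fourier part. However, the pivot of your argument --- that a norm-bounded subset of a finitely generated left $P_0$-submodule of $L^2(M)$ is totally bounded in $\|\cdot\|_2$, so that $\{\sum_i\kappa_i(\g)E_N(\xi_i)\}_\g$ is precompact --- is false whenever $P_0$ is infinite dimensional. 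Take $P_0$ diffuse abelian and $\xi_1=1$: the set $\{\kappa_1(\g)\cdot 1\}$ is just a uniformly bounded family in $P_0$, and the unit ball of $L^\infty[0,1]$ is nowhere near totally bounded in $L^2[0,1]$. Your proposed repair (``argue directly that the closure is compact'') cannot work for the same reason, so the contradiction with an $\varepsilon$-separated sequence never materializes. Since the theorem is meant to apply to arbitrary compact extensions $P_0\subseteq P$ (including $P_0$ diffuse, as in Theorem~\ref{main3} with $X_0$ nontrivial), this is a genuine gap, not a technicality.

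The correct substitute, which is what the paper's proof uses, is not compactness of the approximating family but \emph{uniform finiteness of its Fourier support as a left $P$-module}. Decompose $L^2(M)=\bigoplus_{\la}L^2(P)u_\la$. Left multiplication by $\kappa_i(\g)\in P_0\subseteq P$ preserves each summand, so if you first replace $E_N(\xi_i)$ by its off-diagonal part $\tilde\xi_i=E_N(\xi_i)-E_P(E_N(\xi_i))$ and then approximate $\tilde\xi_i$ by $\eta_i\in\operatorname{span}PG$ for a single finite $G\subset\G\setminus\{e\}$, the whole family $\{\sum_i\kappa_i(\g)\eta_i\}_\g$ sits in $\operatorname{span}PG$ \emph{independently of} $\g$. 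Meanwhile, approximating $\tilde\xi:=E_N(f)-E_P(E_N(f))$ by $r\in\operatorname{span}PK$ with $K\subset\G\setminus\{e\}$ finite, the conjugate $u_\la r u_\la^*$ lives in $\operatorname{span}P(\la K\la^{-1})$. The icc property (via \cite[Proposition 3.4]{CSU13}) produces $\la$ with $\la K\la^{-1}\cap G=\emptyset$, making $u_\la ru_\la^*$ orthogonal to $\sum_i\kappa_i(\la)\eta_i$; expanding $\|\tilde\xi\|_2^2=\|u_\la\tilde\xi u_\la^*\|_2^2$ against these two approximants then yields $\|\tilde\xi\|_2^2=O(\varepsilon)$, hence $\tilde\xi=0$. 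This is exactly the separation dichotomy you were reaching for, but run against fixed Fourier support rather than against a precompactness statement that does not hold.
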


\begin{proof} Let $ M= P \rtimes \Gamma$. Denote by $E_N: M \rightarrow N$ the canonical trace preserving conditional expectation and note that it extends to a map from $L^2(M) \rightarrow L^2(M)$ by $E_N(\hat{m})= \widehat{E_N(m)}$. Similarly, let $E: M \rightarrow P$ be the trace preserving conditional expectation. E also extends to a map $E: L^2(M) \rightarrow L^2(M)$. For every $\xi\in L^2(M)$ let $\tilde \xi = E_N(\xi)-E\circ E_N(\xi)$. With these notations at hand we prove the following 
\begin{claim}\label{2.5.10} for every $\xi\in \mathcal F$ and every $\varepsilon>0$  there exists a finite set $K\subset \G\setminus \{e\}$ and $\eta_1,\eta_2,...,\eta_n\in span P K$ such that for every $\g\in \G$ there exist $\kappa_i(\g)\in P_0$ with $\sup_{\g\in \G }\|\kappa_i(\g)\|_\infty<\infty$ such that \begin{equation}
\|\sigma_\g(\tilde \xi)-\sum_i \kappa_i(\g)\eta_i\|_2\leq \varepsilon. 
\end{equation}     
\end{claim}  
 
\noindent \emph{Proof of {\bf Claim} \ref{2.5.10}.} First notice that since $L(\G)\subseteq N$ and $P$ is $\G$-invariant then for all $\xi \in L^2(M)$ and $\g\in \G$  we have 
 \begin{equation}\label{2.5.2}
 \begin{split}
   E_N(u_{\g} \xi u_{\g}^{\ast})= u_{\g}E_N( \xi)u_{\g}^{\ast}, \text{ and }
  E(u_{\g} \xi u_{\g}^{\ast})= u_{\g}E(\xi)u_{\g}^{\ast}.
\end{split}
\end{equation}
 
\noindent Fix $\xi\in \mathcal F$ and $\varepsilon>0$.  Since $\Gamma \curvearrowright^\sigma P_0\subseteq P$ is a compact extension there is a finite set $\xi_1,\xi_2,...,\xi_n\in L^2(P)$ such that for every $\g\in \G$ there exist $\kappa_i(\g)\in P_0$ with $\sup_{\g\in \G }\|\kappa_i(\g)\|_\infty<\infty$ so that \begin{equation}\label{2.5.11}
\|\sigma_\g( \xi)-\sum_i \kappa_i(\g)\xi_i\|_2\leq\frac{\varepsilon}{3}. 
\end{equation}

\noindent Using \eqref{2.5.2} in combination with \eqref{2.5.11} and the basic inequalities $\|E_N(m)\|_2, \|E\circ E_N(m)\|_2\leq \|m\|_2$, for all $m\in L^2(M)$ we get that 

\begin{equation*}
\begin{split}\|\sigma_\g(E_N(\xi))-\sum_i \kappa_i(\g)E_N(\xi_i)\|_2   
\leq \frac{\varepsilon}{3}, \text{ and } 
\|\sigma_\g(E\circ E_N(\xi))-\sum_i \kappa_i(\g)E\circ E_N(\xi_i)\|_2   
\leq \frac{\varepsilon}{3}
\end{split}
\end{equation*} 

\noindent Subtracting these relations and using the triangle inequality we conclude that 
\begin{equation}\label{2.5.12}
\|\sigma_\g(\tilde \xi))-\sum_i \kappa_i(\g) \tilde \xi_i)\|_2\leq \frac{2\varepsilon}{3}. 
\end{equation} 

\noindent Approximating the $\xi_i$'s one can find a finite set $F\subset \G\setminus\{e\}$ so that $\| \tilde \xi_i -\eta_i\|_2\leq \varepsilon/(3n \sup_{\g\in \G}\|\kappa_i(\g)\|_\infty)$ for all $1\leq i\leq n$. Thus $\|\sum_i \kappa_i(\g)\tilde \xi_i- \kappa_i(\g)\eta_i \|_2\leq \varepsilon/3$ and combining it with \eqref{2.5.12} we get the desired conclusion. $\hfill\blacksquare$
\vskip 0.04in
\noindent Next we prove the following  
 \begin{claim}\label{2.5.6} For every $\xi\in \mathcal F$ we have $\tilde \xi =0$.  \end{claim}

\noindent \emph{Proof of the Claim \ref{2.5.6}}. Fix $\varepsilon > 0$ and $\xi\in \mathcal F$. Approximating $\tilde \xi$ there exists a finite set $K\subseteq \G\setminus \{e\}$ and $r\in {\rm span} P K$ such that 
\begin{equation}\label{2.5.15}\|\tilde \xi-r\|_2 \leq \varepsilon.
\end{equation}

\noindent Also by Claim \ref{2.5.10} there exists a finite set $G\subset \G\setminus \{e\}$ and $\eta_1,\eta_2,...,\eta_n\in \rm span P G$ such that for every $\g\in \G$ there exist $\kappa_i(\g)\in P_0$ with $\sup_{\g\in \G }\|\kappa_i(\g)\|_\infty<\infty$ such that \begin{equation}\label{2.5.14}
\|\sigma_\g(\tilde \xi)-\sum_i \kappa_i(\g)\eta_i\|_2\leq \varepsilon. 
\end{equation}

 \noindent Since $\G$ is icc and $G, K\subset \G\setminus\{e\}$ are finite by  \cite[Proposition 3.4]{CSU13} there exists $\la \in \G$ such that $\la K \la^{-1} \cap G =\emptyset $; in particular, we have  \begin{equation}\label{2.5.8}\langle u_{\la} r u_{\la^{-1}}, \sum_i \kappa_i(\la) \eta_i   \rangle=0.
 \end{equation}  Using \eqref{2.5.15} in combination with Cauchy-Schwarz inequality, \eqref{2.5.14}, and \eqref{2.5.8} we see that
\begin{align*}
\|\tilde\xi\|_2^2 &= \|u_{\la} \tilde\xi u_{\la}^{\ast}\|_2^2 = |\langle u_{\la} \tilde \xi u_{\la^{-1}},  u_{\la} \tilde\xi u_{\la^{-1}} \rangle| \\
 &= \varepsilon \|\tilde \xi\|_2 + \langle u_{\la} r u_{\la^{-1}}, u_\la\tilde \xi u_{\la^{-1}}  \rangle | \leq \varepsilon\|\tilde \xi\|_2  + \varepsilon \|r\|_2+ |\langle u_{\la} r u_{\la^{-1}}, \sum_i \kappa_i(\la) \eta_i   \rangle|\\\
& \leq \varepsilon\|\tilde \xi\|_2  + \varepsilon (\|\tilde \xi\|_2+\varepsilon).  
 \end{align*} 
Letting $\varepsilon \searrow 0$ we get $\tilde\xi=0$, as desired. $\hfill\blacksquare$
\vskip 0.04in 
 \noindent Claim \ref{2.5.6} implies that $E_N(\xi)=E \circ E_N(\xi)$ for all $\xi\in \mathcal F$. Since ${\rm span} \mathcal F$ is dense in $L^2(P)$, these two maps agree on $L^2(P) \supseteq P$. Appealing to Theorem \ref{usefulresult} we conclude that $N =Q \rtimes \G$, for some subalgebra $P_0\subseteq Q \subseteq P$. \end{proof}

\noindent{\bf Remarks.} After the first draft of the paper appeared on the ArXiv, we were kindly informed by Y. Jiang and A. Skalski that they had subsequently obtained the same characterization of intermediate subfactors $N$ satisfying $L(\G) \subseteq N \subseteq  L^{\infty}(X) \rtimes \G$, with $\G \ca X$ a profinite action, in an independent manner (see \cite[Corollary 3.11]{JS}).

\begin{cor}\label{twisted2} Let $\G$ be an icc group and let $\G\ca A$ and $\G\ca B$ be trace preserving actions with $\G \ca B$  compact, and $A$ is a $\rm II_1$ factor. Consider the diagonal action $\G\ca A\bar\otimes B$ and let $(A\bar\otimes B) \rtimes \G$ be the corresponding crossed product von Neumann algebra. Then for any von Neumann  subalgebra $A\rtimes \G \subseteq N\subseteq (A\bar\otimes B) \rtimes \G$ one can find a $\G$-invariant von Neumann subalgebra $C\subseteq B$ so that $N= (A\bar\otimes C)\rtimes \G$.       
\end{cor}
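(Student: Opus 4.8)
The plan is to recognize the inclusion as coming from a compact extension in the sense of Definition~\ref{compact}, apply Theorem~\ref{cptint}, and then finish with Ge's tensor-splitting theorem (Corollary~\ref{tensorint}). Write $\sigma^A$ and $\sigma^B$ for the two coordinate actions, so that the diagonal action satisfies $\sigma_\g(a\otimes f)=\sigma^A_\g(a)\otimes\sigma^B_\g(f)$. Put $P_0=A\bar\otimes 1$ and $P=A\bar\otimes B$, both carrying the diagonal action, so that we are studying the extension $\G\ca(P_0\subseteq P)$ and an intermediate algebra $P_0\rtimes\G\subseteq N\subseteq P\rtimes\G$. The first step is to verify that $\G\ca(P_0\subseteq P)$ is a compact extension. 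For this, let $\mathcal F_B\subseteq B$ witness compactness of $\G\ca B$ (so $\overline{{\rm span}\,\mathcal F_B}^{\|\cdot\|_2}=L^2(B)$, and for each $f\in\mathcal F_B$ and $\varepsilon>0$ there are fixed $\eta_1,\dots,\eta_n\in L^2(B)$ and scalars $c_i(\g)\in\mathbb C$ of uniformly bounded modulus with $\|\sigma^B_\g(f)-\sum_i c_i(\g)\eta_i\|_2$ small), and set $\mathcal F=\{\,a\otimes f:\ a\in A,\ f\in\mathcal F_B\,\}$. Then ${\rm span}\,\mathcal F$ is $\|\cdot\|_2$-dense in $L^2(A\bar\otimes B)$. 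Given $a\otimes f\in\mathcal F$ and $\varepsilon>0$, choose $\eta_i\in L^2(B)$ and $c_i(\g)$ with $\|\sigma^B_\g(f)-\sum_i c_i(\g)\eta_i\|_2\le\varepsilon/(1+\|a\|_2)$ for all $\g$, and set $\kappa_i(\g):=c_i(\g)\,(\sigma^A_\g(a)\otimes 1)\in A\bar\otimes 1=P_0$. Then $\|\sigma_\g(a\otimes f)-\sum_i\kappa_i(\g)(1\otimes\eta_i)\|_2=\|\sigma^A_\g(a)\otimes(\sigma^B_\g(f)-\sum_i c_i(\g)\eta_i)\|_2\le\varepsilon$, while $\sup_{i,\g}\|\kappa_i(\g)\|_\infty\le\|a\|_\infty\sup_{i,\g}|c_i(\g)|<\infty$, so the two conditions of Definition~\ref{compact} hold.

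The point that deserves the most care is precisely this last manipulation. The vectors appearing in Definition~\ref{compact} must be independent of $\g$, whereas the natural candidates $\sigma^A_\g(a)\otimes\eta_i$ depend on $\g$; the resolution is that the $\g$-dependent left leg $\sigma^A_\g(a)$ can be absorbed into the coefficient $\kappa_i(\g)$, which in this extension is allowed to range over all of $P_0=A\bar\otimes 1$ rather than over scalars. This is the only genuinely non-formal ingredient; once the compact-extension hypothesis is in place the rest is a concatenation of results from this section.

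With the extension shown to be compact, Theorem~\ref{cptint} yields a $\G$-invariant von Neumann subalgebra $A\bar\otimes 1\subseteq Q\subseteq A\bar\otimes B$ with $N=Q\rtimes\G$. It remains to split $Q$ off the $A$-factor. Since $A\bar\otimes B$ is finite there is a faithful normal conditional expectation $E_Q:A\bar\otimes B\to Q$ preserving the trace $\tau_A\otimes\tau_B$, which is a product of faithful normal states; as $A$ is a $\rm II_1$ factor, Ge's splitting theorem in the form of Corollary~\ref{tensorint} (with $P_1=A$, $P_2=B$) gives $Q=A\bar\otimes C$ for some von Neumann subalgebra $C\subseteq B$. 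Finally, $\G$-invariance of $Q$ reads $A\bar\otimes\sigma^B_\g(C)=A\bar\otimes C$ for every $\g\in\G$; intersecting both sides with $(A\otimes 1)'$ and using factoriality of $A$ (so that $(A\otimes 1)'\cap(A\bar\otimes D)=1\otimes D$ for any von Neumann subalgebra $D\subseteq B$) we get $\sigma^B_\g(C)=C$, i.e. $C$ is $\G$-invariant. Hence $N=(A\bar\otimes C)\rtimes\G$ with $C\subseteq B$ a $\G$-invariant von Neumann subalgebra, as claimed.
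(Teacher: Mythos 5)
Your proof is correct and follows the same route as the paper: verify that $\G\ca(A\bar\otimes 1\subseteq A\bar\otimes B)$ is a compact extension in the sense of Definition~\ref{compact} and invoke Theorem~\ref{cptint}. You also correctly supply the final splitting $Q=A\bar\otimes C$ via Corollary~\ref{tensorint} together with the factoriality of $A$, a step the paper's one-line proof leaves implicit but which is needed to pass from $N=Q\rtimes\G$ to $N=(A\bar\otimes C)\rtimes\G$.
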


\begin{proof} Since $\G \ca B$ is compact one can see that $\G\ca (A\subseteq A\rtimes B)$ is a compact extension and hence the conclusion follows from Theorem \ref{cptint}.\end{proof}

\noindent We end this section with an immediate application of Theorem \ref{cptint} to the study of finite index subfactors. More specifically, we show that Theorem \ref{cptint} can be used effectively to completely describe all intermediate subfactors $L(\G)\subseteq N \subseteq L^\infty(X)\rtimes \G$ with $[N:L(\G)]<\infty$ for \emph{any} free ergodic action $\G\ca X$ of \emph{any} icc group $\G$.

\begin{cor} \label{intsubgen}
Let $\Gamma$ be an icc group and let $\G \ca X$ be a free ergodic action. Let  $M=L^\infty(X)\rtimes \G$ denote the corresponding group measure space von Neumann algebra. Then the following hold:

\begin{enumerate} 
\item  Suppose $L(\Gamma) \subseteq N \subseteq L^{\infty}(X) \rtimes \Gamma $ is an intermediate von Neumann subalgebra so that $N \subseteq \mathcal {QN}_{M}(L(\Gamma))''$. Then there exists a factor $\G\ca X_0$ of $\G\ca X$ such that $N =L^\infty(X_0)\rtimes \G$  
\item For any intermediate subfactor  $L(\Gamma) \subseteq N \subseteq L^{\infty}(X) \rtimes \Gamma $ with $[N: L(\Gamma)]< \infty$ there is a finite, transitive factor $\G \ca X_0$ of $\G \ca X$ such that $N=L^\infty(X_0)\rtimes \G$; in particular $[N:L(\Gamma)] \in \mathbb{N}$. Thus for any subfactors $L(\Gamma) \subseteq N_1 \subseteq N_2 \subseteq L^{\infty}(X) \rtimes \Gamma $, with either $[N_1:L(\G)] < \infty$ or $\G \ca X$ compact,  we have $[N_2:N_1] \in \mathbb{N}\cup\{\infty\}.$
\item If $\G$ has no proper finite index subgroups (e.g.\ $\G$ is simple) then there are no nontrivial intermediate subfactors $L(\G)\subseteq N \subseteq L^{\infty}(X) \rtimes \G$ with $[N:L(\G)]<\infty$.
\end{enumerate}
\end{cor}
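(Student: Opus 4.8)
The plan is to prove the three parts in order, with part (1) being the main new input and parts (2) and (3) being essentially formal consequences once the right classical facts are assembled.

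For part (1), the idea is to combine Theorem \ref{mixingextn}/Theorem \ref{distaltowerCP} with Theorem \ref{cptint}. First I would observe that $\mathcal{QN}_M(L(\G))'' = M_1 = L^\infty(X_1)\rtimes \G$, where $\G\ca X_1$ is the maximal compact factor of $\G\ca X$ (this is item (2) of Theorem \ref{distaltowerCP} with $\beta = 0$). So the hypothesis $N\subseteq \mathcal{QN}_M(L(\G))''$ says exactly $L(\G)\subseteq N\subseteq L^\infty(X_1)\rtimes \G$. Now the extension $\G\ca(\mathbb C 1\subseteq L^\infty(X_1))$ is a compact extension in the sense of Definition \ref{compact} — this is the ``Examples'' paragraph following that definition, since $\G\ca X_1$ is compact as an ergodic pmp action. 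Hence Theorem \ref{cptint} applies (with $P_0 = \mathbb C 1$, $P = L^\infty(X_1)$) and yields a $\G$-invariant von Neumann subalgebra $\mathbb C 1\subseteq Q\subseteq L^\infty(X_1)$ with $N = Q\rtimes \G$. Since $N$ is a factor (it contains the factor $L(\G)$ and is an intermediate subalgebra of the factor $M$; more carefully, one checks $N$ inherits factoriality, or one simply notes $N\cap N'\subseteq Z(Q\rtimes\G)$ and uses ergodicity), $Q = L^\infty(X_0)$ for some $\G$-factor $\G\ca X_0$ of $\G\ca X$, as desired. The one subtlety to be careful about: $Q$ is an abelian subalgebra because it sits inside the abelian algebra $L^\infty(X_1)$, so it is automatically of the form $L^\infty(X_0)$ for a $\G$-factor; this is where freeness of $\G\ca X$ is not even needed for this step, only ergodicity (to get $X_0$ a well-defined factor).

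For part (2), suppose $[N:L(\G)]<\infty$. The key point is that a finite-index inclusion $L(\G)\subseteq N$ forces $N\subseteq \mathcal{QN}_M(L(\G))''$: indeed, a finite-index subfactor is always contained in the quasinormalizer of the larger one, since a finite Pimsner--Popa basis $\{m_i\}$ for $L(\G)\subseteq N$ witnesses $N\cdot m_i \subseteq \sum_j L(\G) m_j$-type relations (this is the standard fact that $\mathcal{QN}_M(L(\G))''$ contains every finite-index intermediate subalgebra; alternatively one cites that $N$ is in the quasi-normalizer because $L^2(N)$ is finitely generated as an $L(\G)$-module). So part (1) applies and $N = L^\infty(X_0)\rtimes \G$ with $\G\ca X_0$ a compact (even finite-index-inside-$X_1$) factor, and then $[N:L(\G)] = [L^\infty(X_0)\rtimes\G : L(\G)]$. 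Since $X_0$ is a factor of $X$ and the inclusion has finite index, Lemma \ref{finindex1} forces $X_0$ to be completely atomic, and ergodicity then forces $\G\ca X_0$ to be transitive; in particular $X_0$ is finite and the index is $\dim L^\infty(X_0) = [\G:\mathrm{Stab}_\G(a)]\in\mathbb N$ by the basic-construction computation in Proposition \ref{basicconstr1}. For the last sentence of part (2), given $L(\G)\subseteq N_1\subseteq N_2$ with $[N_1:L(\G)]<\infty$, we have $N_1 = L^\infty(X_0)\rtimes\G$ transitive-finite, and then $N_2$ is an intermediate subalgebra of the \emph{finite} extension $L^\infty(X_0)\subseteq L^\infty(X)$ crossed by $\G$; since a finite (atomic) extension is in particular a compact extension, Theorem \ref{cptint} (or directly Corollary \ref{intsubgen}(1) after replacing the base) gives $N_2 = L^\infty(Z)\rtimes\G$ for an intermediate factor $X\to Z\to X_0$, and $[N_2:N_1]$ is then the index of the corresponding basic construction, which is $[\mathrm{Stab}_\G(a):\mathrm{Stab}_\G(a')]$ or $\infty$ — in any case in $\mathbb N\cup\{\infty\}$. (The ``$\G\ca X$ compact'' alternative is handled the same way starting from part (1) with $X_1 = X$.)

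For part (3), if $\G$ has no proper finite-index subgroup and $L(\G)\subseteq N\subseteq M$ is an intermediate subfactor with $[N:L(\G)]<\infty$, then by part (2) $N = L^\infty(X_0)\rtimes\G$ with $\G\ca X_0$ transitive, hence $X_0$ has $\dim L^\infty(X_0) = [\G:\mathrm{Stab}_\G(a)]$ atoms; but $\mathrm{Stab}_\G(a)$ has finite index in $\G$, so by hypothesis $\mathrm{Stab}_\G(a) = \G$, i.e. $\dim L^\infty(X_0) = 1$, i.e. $X_0$ is a point and $N = L(\G)$. The main obstacle is really just being careful in part (1) that the hypotheses of Theorem \ref{cptint} are met (compactness of the extension $\mathbb C 1\subseteq L^\infty(X_1)$) and that factoriality of $N$ passes to $Q$; everything else is bookkeeping with the basic construction and Lemma \ref{finindex1}/Proposition \ref{basicconstr1}.
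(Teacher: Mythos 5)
Your proposal follows essentially the same route as the paper: part (1) identifies $\mathcal{QN}_M(L(\G))''$ with $L^\infty(X_c)\rtimes\G$ for the maximal compact factor and then invokes Theorem \ref{cptint}; part (2) uses the finite Pimsner--Popa basis to place $N$ inside the quasinormalizer, then Lemma \ref{finindex1} and the basic-construction computation of Proposition \ref{basicconstr1}; part (3) is the same immediate consequence.

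One sub-step of your part (2) is not correct as stated: the claim that the extension $\G\ca(L^\infty(X_0)\subseteq L^\infty(X))$ is compact merely because $X_0$ is finite is false in general (take $X_0$ a point and $\G\ca X$ Bernoulli; compactness of the extension over a finite-dimensional base is essentially compactness of the action itself). This is easily repaired: to see $[N_2:N_1]\in\mathbb N\cup\{\infty\}$ when $[N_1:L(\G)]<\infty$, either $[N_2:N_1]=\infty$ and there is nothing to prove, or $[N_2:L(\G)]=[N_2:N_1]\,[N_1:L(\G)]<\infty$, so the first part of (2) applies to $N_2$ directly, giving $N_2=L^\infty(Z)\rtimes\G$ with $Z$ finite transitive and $X_0$ a factor of $Z$ (via the conditional expectation property), whence $[N_2:N_1]=[\mathrm{Stab}_\G(x_0):\mathrm{Stab}_\G(z)]\in\mathbb N$. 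With that substitution your argument agrees with the paper's, which in fact leaves this last step to the reader.
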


\begin{proof} 1. Let $\G\ca X_c$ be a maximal compact factor of $\G \ca X$ and using \cite[Theorem 6.9]{Io08a} we have that $\mathcal {QN}_{M}(L(\G))''=L^\infty(X_c)\rtimes \G $. Altogether these show that $L(\G)\subseteq N \subseteq L^{\infty}(X_c) \rtimes \Gamma$. Then the desired conclusion follows directly from Theorem \ref{cptint}. 
\vskip 0.03in
\noindent 2. Since $[N: L(\Gamma)]< \infty$ then $N$ admits a finite left (and also a finite right) Pimsner-Popa basis over $L(\G)$ and hence $N\subseteq \mathcal {QN}_M(L(\G))''$. By part 1.\ there is a factor $\G \ca X_0$ of $\G\ca X$ such that $N =L^\infty (X_0)\rtimes \G$. As $\G$ is icc and $N$ is a factor we also have that $\G \ca L^\infty(X_0)$ is ergodic. Since $N$ admits a finite Pimsner-Popa basis over $L(\G)$ then by Proposition \ref{finindex1} it follows that $\G\ca L^\infty(X_0)$ is a transitive action. In particular $X_0$ is a finite probability space and $\G\ca X_0$ is transitive. If $\G_x\leqslant \G$ is the stabilizer of an $x\in X_0$ one can also check that $[N:L(\G)]= |X_0|=|\G/\G_x|\in \mathbb N$. The rest of the statement follows easily.
\vskip 0.03in
\noindent 3. Assume that $[N: L(\Gamma)]< \infty$. From the proof of part 2.\ we have $N= L^\infty(X_0)\rtimes \G$ where $\G\ca X_0$ is an action on a finite set $X_0$ and also $[N:L(\G)]= |X_0|=|\G/\G_x|$ where $\G_x$ is the stabilizer of $x\in X_0$. Since $\G$ has no nontrivial finite index subgroups then $\G=\G_x$ and hence $N=L(\G)$.  
 \end{proof}

\noindent {\bf Final remarks.} The previous corollary also holds for intermediate subalgebras $L^\infty (X)\rtimes \G \subseteq N\subset L^\infty (Y)\rtimes \G$ with $[N: L^\infty(X)\rtimes \G]<\infty$ for von Neumann algebras arising from extensions $\G \ca L^\infty (X)\subseteq L^\infty(Y)$ of icc groups $\G$. The proof is essentially the same as the one presented in Corollary \ref{intsubgen} with the only difference that we use Theorem \ref{distaltowerCP} instead of \cite[Theorem 6.9]{Io08a}. Also, parts 1.\ and 2.\ hold for any von Neumann algebra $N$ which admits a finite Pimsner-Popa basis over $L(\G)$, if we use the Pimsner-Popa index \cite{PP86} instead of Jones index \cite{Jo81}.

\vskip 0.07in
\noindent In connection with the previous problems one may attempt to describe the subfactors of group von Neumann algebras $N\subseteq L(\G)$ that are normalized by the $\G$ itself, i.e.\ $\G \subset \mathcal N_{L(\G)}(N)$. Very recently this problem was considered in \cite{AB19} where a complete description was obtained for $\G$ lattices in higher rank simple Lie groups via a noncommutative version of Margulis' normal subgroup theorem; in turn this was obtained using character rigidity techniques introduced \cite{Pet16, CP13}. In this work we make further progress on this question for many new families of groups $\G$ complementary to the ones from \cite{AB19}. In particular, we show that under additional conditions on the relative commutant $N'\cap L(\G)$ (e.g.\ finite dimensional) these subfactors are always ``commensurable'' with von Neumann algebras arising from the normal subgroups of $\G$ (Theorem \ref{strongnormalization}). Moreover, in the case of all exact acylindrically hyperbolic groups \cite{DGO11, O16}, all nonamenable groups with positive first $L^2$-Betti number, and all lattices in product of trees the same holds without any a priori assumptions on $N'\cap L(\G)$ (see Theorem \ref{acyhypnorm}, Corollary \ref{achyp2}, and part 3 in Theorem \ref{strongnormalization}).

\begin{thm}\label{strongnormalization} Let $\G$ be a countable discrete group and let $N\subset L(\G)$ be a subfactor such that $\G\subset \mathcal N_{L(\G)}(N)$. Then there exists a normal subgroup $\La\lhd \G$ such that $N\subseteq L(\La)\subseteq N\vee (N'\cap L(\G))$. Moreover, we have the following \begin{enumerate} \item [1)] If $N'\cap L(\G)$ is finite dimensional then the inclusions $N\subseteq L(\La)\subseteq N\vee N'\cap L(\G)$ have finite index;  in particular, when $N'\cap L(\G)=\mathbb C 1$ then $N=L(\La)$.
		\item [2)]If $L(\G)$ is solid\footnote{For every diffuse $A\subseteq L(\G)$ the relative commutant  $A'\cap L(\G)$ is amenable} then either $N$ is an amenable factor or the inclusions $N\subseteq L(\La)\subseteq N\vee N'\cap L(\G)$ have finite index. Moreover if $L(\G)$ is strongly solid\footnote{For every diffuse amenable $A\subseteq L(\G)$ the normalizer $\mathcal N_{L(\G)}(A)''$ is amenable} then either $N$ is finite dimensional or the inclusions $N\subseteq L(\La)\subseteq N\vee N'\cap L(\La)$ have finite index.
		\item [3)] $\G$ be a simple group such that $L(\G)$ is a prime factor, e.g.\ Burger-Mozes group \cite{BM01}, Camm's group \cite{Ca53} or Bhattacharjee's group \cite{Bh94} (see \cite{CdW18}). Then $N$ is either finite dimensional or $[L(\G): N]<\infty$.  \end{enumerate}  
\end{thm}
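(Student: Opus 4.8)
The plan is to construct $\La$ explicitly as the set of those $\g\in\G$ for which $\mathrm{Ad}(u_\g)$ restricts to an \emph{inner} automorphism of $N$, and then to pin $L(\La)$ between $N$ and $N\vee(N'\cap L(\G))$ by a direct computation and a Fourier argument in $L(\G)$. Write $P:=N'\cap L(\G)$ and $Q:=N\vee P$. Since $\G$ normalizes $N$, it normalizes $P=(u_\g N u_\g^*)'\cap L(\G)$ and hence $Q$; and since $N$ is a factor commuting with $P$ we have the canonical identification $Q\cong N\bar\otimes P$. Setting $\alpha_\g:=\mathrm{Ad}(u_\g)|_N\in\mathrm{Aut}(N)$, let $\La:=\{\g\in\G:\alpha_\g\in\mathrm{Inn}(N)\}$. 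This is a subgroup, and it is normal: if $\alpha_\g=\mathrm{Ad}(w)$ with $w\in\mathcal U(N)$ then $\alpha_{h\g h^{-1}}=\mathrm{Ad}(u_h w u_h^*)$ and $u_h w u_h^*\in\mathcal U(N)$ because $u_h N u_h^*=N$.

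For the inclusion $L(\La)\subseteq Q$: if $\g\in\La$ with $\alpha_\g=\mathrm{Ad}(w)$, then $w^*u_\g$ commutes with $N$, hence $w^*u_\g\in P$, hence $u_\g=w\,(w^*u_\g)\in N\cdot P\subseteq Q$, and passing to von Neumann algebras gives $L(\La)\subseteq Q$. The key point is the harder inclusion $N\subseteq L(\La)$, which I would get from the trace-preserving conditional expectation $E_N\colon L(\G)\to N$. Fix $\g\notin\La$ and set $v:=E_N(u_\g)$. From $u_\g a=\alpha_\g(a)u_\g$ for $a\in N$ and $N$-bimodularity of $E_N$ one gets $v\,a=\alpha_\g(a)\,v$ for all $a\in N$; a routine manipulation then yields $v^*v\in N'\cap N$ and $vv^*\in\alpha_\g(N)'\cap N=N'\cap N$, both equal to $\mathbb C1$ as $N$ is a factor (here $\alpha_\g(N)=N$ is used), so by traciality $v^*v=vv^*=\|v\|_2^2\cdot 1$; were this nonzero, $\|v\|_2^{-1}v$ would be a unitary of $N$ implementing $\alpha_\g$, contradicting $\g\notin\La$. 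Thus $E_N(u_\g)=0$ for every $\g\notin\La$, so for $a\in N$ the Fourier coefficient $\langle a,u_\g\rangle=\langle a,E_N(u_\g)\rangle$ vanishes whenever $\g\notin\La$, whence $a\in L(\La)$. This establishes $N\subseteq L(\La)\subseteq N\vee(N'\cap L(\G))$ with $\La\lhd\G$. For the refinements it is convenient to also note, applying Ge's splitting theorem (Corollary \ref{tensorint}) to $N\subseteq L(\La)\subseteq N\bar\otimes P$ together with the trace-preserving expectation onto $L(\La)$, that $L(\La)=N\bar\otimes P_1$ with $P_1:=N'\cap L(\La)\subseteq P$; in particular $N\vee(N'\cap L(\La))=L(\La)$.

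Now the ``moreover'' parts. \emph{(1)} If $\dim P<\infty$, suitably rescaled matrix units of $P$ form a finite Pimsner--Popa basis of $Q\cong N\bar\otimes P$ over $N$, so $[Q:N]<\infty$, and multiplicativity of the index across the intermediate algebra $L(\La)$ finishes it; if $P=\mathbb C1$ then $Q=N$, forcing $L(\La)=N$. \emph{(2)} Suppose $L(\G)$ is solid (hence a $\mathrm{II}_1$ factor). If $N$ is finite-dimensional it is an amenable factor. If $N$ is diffuse, then $P=N'\cap L(\G)$ is amenable by solidity; if $P$ has a diffuse central summand, choosing a diffuse abelian $B\subseteq P$ gives $N\subseteq B'\cap L(\G)$, which is amenable, so $N$ is amenable; otherwise $P=\bigoplus_{j\in J}M_{n_j}$ is completely atomic, $\G$ permutes trace-preservingly the minimal central projections $\{q_j\}_{j\in J}$, and for any orbit $\mathcal O$ the $\G$-invariant projection $\sum_{j\in\mathcal O}q_j$ must equal $1$ by factoriality of $L(\G)$, so $\G\ca J$ is transitive; a transitive action on a countable set carrying a finite invariant measure of equal positive weights forces $|J|<\infty$, and then $(1)$ gives finite index. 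If moreover $L(\G)$ is strongly solid and non-amenable, $N$ cannot be a diffuse amenable factor — otherwise $\mathcal N_{L(\G)}(N)''\supseteq\{u_\g:\g\in\G\}''=L(\G)$ would be amenable — so $N$ is finite-dimensional or the index is finite. \emph{(3)} If $\G$ is simple, then $\La\in\{\{e\},\G\}$: in the first case $L(\La)=\mathbb C1$ forces $N=\mathbb C1$, finite-dimensional; in the second, $L(\G)=L(\La)\subseteq Q\subseteq L(\G)$ gives $L(\G)=N\bar\otimes P$, factoriality of $L(\G)$ makes $P$ a factor, and primeness of $L(\G)$ makes $N$ or $P$ a finite-dimensional factor $M_k$ — so $N$ is finite-dimensional or $[L(\G):N]=k^2<\infty$.

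The principal obstacle is the inclusion $N\subseteq L(\La)$, i.e.\ showing $E_N(u_\g)=0$ for $\g\notin\La$; this is precisely where the factoriality of $N$ together with the $\G$-invariance of $N$ (ensuring $\alpha_\g(N)=N$, hence $\alpha_\g(N)'\cap N=\mathbb C1$) are both indispensable. A secondary delicate point is the transitivity-plus-invariant-measure argument in part (2), which genuinely uses that $L(\G)$ is a factor in order to exclude an infinite, completely atomic relative commutant $N'\cap L(\G)$.
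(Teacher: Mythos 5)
Your proof is correct and follows essentially the same route as the paper's: your normal subgroup $\La=\{\g\in\G:\mathrm{Ad}(u_\g)|_N\in\mathrm{Inn}(N)\}$ coincides with the paper's $\{\g\in\G: u_\g\in\mathcal U(N)\,\mathcal U(N'\cap L(\G))\}$, and your argument that $E_N(u_\g)=0$ for $\g\notin\La$ is the contrapositive of the paper's computation that on the support of $E_N$ one has $E_N(u_\g)=a_\g u_\g$ with $a_\g\in N'\cap L(\G)$ and $a_\g a_\g^*$ scalar. If anything you are more careful than the paper on a few points in parts 2) and 3) — ruling out an infinite completely atomic $N'\cap L(\G)$ via the orbit/equal-trace argument, handling the case $\La=\{e\}$ when $\G$ is simple, and noting that the strongly solid statement needs $L(\G)$ nonamenable — so no changes are needed.
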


\begin{proof} Denote by $\Sigma$ the set of all $\g\in \G$ for which there is $y\in \mathcal U(N)$ such that $\tau(y u_\g)\neq 0$. Note that $\Sigma$ coincides with the set of all $\g\in \G$ such that $E_N(u_\g)\neq 0$. 
	
	\noindent Fix $\g\in \Sigma$ and denote by $\phi_\g : N \ra N$ the automorphism given by $\phi_\g(x)=u_\g xu_{\g^{-1}}$ for all $x\in N$. Thus $\phi_\g(x)u_\g=u_\g x$ and applying the expectation $E_N$ we also have $\phi_\g (x)E_N(u_\g)=E_N(u_\g) x$ for all $x\in N$. These two relations give that $\phi_\g (x)E_N(u_\g) u_{\g^{-1}}=E_N(u_\g) x u_{\g^{-1}}= E_N(u_\g) u_{\g^{-1}} \phi_\g(x)$ for all $x\in N$; in particular, $a_\g:=E_N(u_\g) u_{\g^{-1}}\in N'\cap L(\G)$. Thus
	\begin{equation}\label{va1}
	E_N(u_\g)=a_\g u_\g.
	\end{equation}
	\noindent  Thus $ E_N(u_\g) E_N(u_{\g^{-1}}) = a_\g a_\g^*$. Applying the expectation $E_N$ and using $E_N\circ E_{N'\cap L(\G)}=\tau$ (since $N$ is a factor) we get $E_N(u_\g) E_N(u_{\g^{-1}})= \tau(a_\g a_\g^*) 1$.  As $a_\g\neq 0$ one can find a unitary $b_\g \in N$ so that $E_N(u_\g)=\|a_\g\|_2 b_\g$. Combining with \eqref{va1} we get $\|a_\g\|_2 b_\g= a_\g u_\g$ and hence $u_\g = \|a_\g\|_2 a_\g^* b_g$. In particular we have $\|a_\g\|_2 a_\g^*\in \mathcal U(N'\cap L(\G))$ and hence $u_\g \in \mathcal U(N)\mathcal U(N'\cap L(\G))\subseteq N\vee (N'\cap L(\G))$. Let $\La$ be the set of all $\g \in \G$ such that $u_\g=x_\g y_\g$, where $x_\g \in \mathcal U(N)$ and $y_\g\in \mathcal U(N' \cap L(\G))$. Observe that $\La \lhd\G$ is in fact a normal subgroup. The previous relations show that $\Sigma\subseteq \La$ and by the definition of $\Sigma$ we have that $N \subseteq L(\La)$. Since $L(\La)\subseteq N\vee (N'\cap L(\G))$ canonically, the first part of the conclusion follows.
	\vskip 0.02in 
	\noindent Since $N'\cap L(\G)$ is finite dimensional then  $N\vee N'\cap L(\G)$ admits left (and right) finite Pimsner-Popa basis over $N$ and \emph{1)} follows.
	\vskip 0.02in
	\noindent If $N$ is nonamenable, then $N' \cap L(\G)$ is finite dimensional, as $L(\G)$ is solid. The rest of \emph{2)} follows easily from \emph{1)}.
	\noindent If $\G$ is simple, then $\La=\G$, as $\La$ is a normal subgroup of $\G$; hence, $N \vee N' \cap L(\G)= L(\G)$. Since $L(\G)$ is prime, this further implies that either $N$ or $N' \cap L(\G)$ is finite dimensional, and thus \emph{3)} follows from \emph{1)}.\end{proof}

\noindent Next we show that whenever $\G$ is a ``negatively curved'' group then all subfactors $N\subseteq L(\G)$ normalized by $\G$ are commensurable to subalgebras $L(\La)$ arising from normal subgroups $\La\lhd \G$. Our proof relies heavily on the deformation/rigidity techniques for array/quasi-cocycles on groups that were introduced and studied in \cite{CS11,CSU11,CSU13,CKP15}. We advise the reader to consult these references beforehand.   
\vskip 0.03in 
\noindent Let $\pi: \G \ra \mathcal O(\mathcal H)$ be an orthogonal representation. Let $\mathcal{QH}_{as}^{1}(\G,\pi)$ be the set of all unbounded \emph{quasicocycles} into $\pi$, i.e.\ unbounded maps $q:\G\ra \mathcal H$ so that $d(q):=\sup_{\g,\la \in \G}\|q(\g \lambda)-q(\g)-\pi_{\g}(q(\lambda))\| < \infty$. When the defect $d(q)=0$ the set $\mathcal{QH}_{as}^{1}(\G,\pi)$ is nothing but the first cohomology group $H^1(\G,\pi)$.

\begin{thm}\label{acyhypnorm} Let $\pi: \G \rightarrow \mathcal O(\mathcal H)$ be an orthogonal mixing representation that is weakly contained in the left regular representation of $\G$. Assume one of the following holds: a) $\G$ is exact and $\mathcal{QH}_{as}^{1} (\G,\pi)\neq \emptyset$, or b) $H^1(\G ,\pi)\neq 0$. Let $N \subseteq L(\G)$ be a subfactor satisfying $\G \subset \mathcal N_{L(\G)}(N)$. Then there is a normal subgroup $\La \lhd \G$ so that $N \subseteq L(\La) \subseteq N \vee N' \cap L(\La)$  and one of the following holds:
	\begin{enumerate}
		\item $N $ is finite dimensional, or
		\item $\La$ is infinite amenable, or
		\item $[L(\La): N]<\infty$.
	\end{enumerate}
\end{thm}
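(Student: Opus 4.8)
The plan is to combine the group–theoretic output of Theorem~\ref{strongnormalization} with the array/quasicocycle deformation/rigidity machinery of \cite{CS11,CSU11,CSU13,CKP15}, isolating the analytic difficulty in a single step about a relative commutant. First I would apply Theorem~\ref{strongnormalization} to $N\subseteq L(\G)$ to produce a normal subgroup $\La\lhd\G$ with $N\subseteq L(\La)\subseteq N\vee(N'\cap L(\G))$. Since $\La\lhd\G$, for $\g\in\La$ the factorization $u_\g=x_\g y_\g$ (with $x_\g\in\mathcal U(N)$, $y_\g\in\mathcal U(N'\cap L(\G))$) coming from that proof has $y_\g=x_\g^\ast u_\g\in L(\La)$, so in fact $L(\La)=N\vee P$ where $P:=N'\cap L(\La)$; as $N$ is a factor this gives $L(\La)=N\,\bar\otimes\,P$. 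Conjugation by any $u_\g$ preserves $N$ and $L(\La)$, hence also $P$, so together with $\mathcal U(N)\cup\mathcal U(P)$ we get $\mathcal N_{L(\G)}(P)''=\mathcal N_{L(\G)}(N)''=L(\G)$: both $N$ and $P$ are \emph{regular} in $L(\G)$, and $[L(\La):N]<\infty$ exactly when $P$ is finite dimensional. If $\La$ is amenable we are finished --- $\La$ finite makes $N$ a finite dimensional factor, which is (1), and $\La$ infinite is (2) --- so from now on I would assume $\La$, hence $\G$, is non-amenable.

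Next I would feed $q$ and the Gaussian dilation of $\pi$ into the usual construction to obtain an $s$-malleable deformation $(\alpha_t)$ of a tracial dilation $\widetilde M\supseteq M=L(\G)$ with Popa's transversality property; the hypotheses that $\pi$ is mixing and weakly contained in $\lambda_\G$ --- together with exactness of $\G$ in case~(a), which controls the defect of $q$ and allows Ozawa-type $C^\ast$-arguments on the associated proper array, and the closable-derivation/$L^2$-rigidity viewpoint in case~(b) --- make this deformation weakly mixing relative to the elliptic subalgebras $\{L(H):H\leqslant\G,\ q|_H\text{ bounded}\}$. The (generalized) strong solidity statement of \cite{CS11,CSU11,CSU13,CKP15}, applied to the regular inclusion $N\subseteq L(\G)$, then forces a dichotomy: either $N$ is non-diffuse, in which case the factor $N$ is finite dimensional and (1) holds, or $N$ is diffuse and non-amenable (if $N$ were diffuse amenable, regularity would make $L(\G)=\mathcal N_{L(\G)}(N)''$ amenable, contradicting non-amenability of $\G$). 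So I would now assume $N$ diffuse and non-amenable, and aim for (3).

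Since $N$ is non-amenable, Popa's spectral gap rigidity gives $\alpha_t\to\mathrm{id}$ uniformly on $N'\cap\widetilde M$, hence uniformly on $P$, so $P$ is a \emph{rigid} subalgebra of $L(\G)$ --- simultaneously rigid and regular. The key step is then to use transversality together with the relative weak mixing of $(\alpha_t)$ over the elliptic part and with $\pi\prec\lambda_\G$ to conclude that $P$ is amenable (morally: as in $L(\mathbb F_2)$, a diffuse rigid subalgebra cannot be ``spread'' past a single elliptic piece, and the $\G$-invariance of $P$ pins this down, forcing $P\prec_{L(\G)}L(H)$ for a controlled elliptic $H$). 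Granting that, I would finish by a soft argument: the conjugation action of $\G$ on $L(\G)$ has fixed algebra $Z(L(\G))=\mc$, and $\G$ normalizes $P$, so the central projection $z$ of the diffuse summand of $P$ is $\G$-fixed, hence $z\in\{0,1\}$; if $z=1$ then $P$ is diffuse amenable and regular in $L(\G)$, again forcing $L(\G)$ amenable, so $z=0$ and $P$ is completely atomic. The minimal central projections of $P$ are permuted by $\G$, and any proper invariant partial sum would be a non-scalar $\G$-fixed element, so the permutation is transitive; being trace preserving it has only finitely many orbits --- in fact one --- of such projections, and each dominates a finite (hence finite dimensional) factor. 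Thus $P$ is finite dimensional and $[L(\La):N]<\infty$, i.e.\ (3). (If $q|_\La$ is already bounded, $L(\La)$ itself is rigid/elliptic and the same transversality--mixing analysis applies directly to it.)

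The main obstacle is precisely the step in the third paragraph: promoting ``$P$ rigid and regular in $L(\G)$'' to ``$P$ amenable'' (equivalently, $P\prec_{L(\G)}L(H)$ for a suitably controlled elliptic $H$). This is where the full force of the array/quasicocycle technology is needed --- transversality, relative weak mixing over the elliptic part, the weak-$\ell^2$ hypothesis on $\pi$, and, in case~(a), the uniform bound on the defect of $q$ supplied by exactness of $\G$; everything else (the reduction via Theorem~\ref{strongnormalization} and the fixed-point endgame) is soft.
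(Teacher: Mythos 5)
Your opening reduction (Theorem \ref{strongnormalization}, the factorization $u_\g=x_\g y_\g$ for $\g\in\La$, and $L(\La)=N\vee P$ with $P=N'\cap L(\La)$) matches the paper, but the analytic core of your argument has a genuine gap, and you have flagged it yourself: the step ``$P$ rigid and regular in $L(\G)$ implies $P$ amenable (via $P\prec_{L(\G)} L(H)$ for an elliptic $H$)'' is not proved, and as stated it is doubtful --- subgroups on which an unbounded quasicocycle is bounded need not be amenable (an acylindrically hyperbolic group can have large nonamenable elliptic subgroups), so even locating $P$ over an elliptic $L(H)$ would not yield amenability. The paper does not take this route. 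Instead it runs a case analysis on the amenability of $N$ and of $P$: when both are nonamenable, it applies spectral gap and transversality \emph{simultaneously} to $N$ and to $P$, obtaining that $\mathcal U(N)p$ concentrates on $B_C$ and $\mathcal U(P)$ on $B_D$, and then uses the factorization $u_\g=a_\g b_\g$ (with $a_\g\in\mathcal U(N)$, $b_\g\in\mathcal U(P)$) to localize every $u_\g$, $\g\in\La$, on $B_CB_D^{-1}\subseteq B_{C+D+3d(q|_\La)}$, contradicting the unboundedness of $q|_\La$ --- which itself must be, and is, deduced from normality of $\La$ together with mixing of $\pi$ (you only treat the bounded case parenthetically). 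This analytic use of the factorization is the key idea your proposal is missing; you use it only to get the tensor splitting and then abandon it.

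Second, your repeated appeals to strong solidity (``$N$ diffuse amenable and regular would force $L(\G)$ amenable,'' and likewise for the diffuse central summand of $P$) are not justified under the stated hypotheses: only an \emph{unbounded} quasicocycle is assumed, whereas the strong solidity results of \cite{CS11,CSU11} require a \emph{proper} array/quasicocycle. The paper excludes diffuse amenable summands by a different, available mechanism: a diffuse amenable corner $(N'\cap L(\La))z$ (or an infinite-dimensional amenable factor $N$) would be hyperfinite and would produce nontrivial central sequences for $L(\La)z$ orthogonal to $L(\La)$, contradicting $L(\La)'\cap L(\La)^\omega\subseteq L(\La)$ from \cite[Theorem 4.1]{CSU13}, which does hold for unbounded quasicocycles. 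Your soft endgame (atomicity of $P$, transitivity of the $\G$-action on its minimal central projections, hence finite dimensionality) is fine once these two points are repaired, but as written the proposal leaves the hardest step as an unproved black box and substitutes an unavailable tool for the one that actually works.
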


\begin{proof}Let $M= L(\G)$. By Theorem \ref{strongnormalization} there is $\La \lhd \G$, such that $N \subseteq L(\La) \subseteq N \vee (N' \cap M)$ and moreover from its proof it follows that for every $\g\in \La$ there are unitaries $a_\g\in N$ and $b_\g\in N'\cap L(\La)$ so that \begin{equation}\label{nc3}u_\g =a_\g b_\g.
	\end{equation} 
	\noindent Also since $N$ is a factor, using Ge's tensor splitting result (Theorem \ref{tensorint}) we also get that \begin{equation}\label{nc1}L(\La)= N \vee (N' \cap L(\La)).
	\end{equation} 
	
	\noindent Assume that $\La$ is nonamenable. Let $q \in \mathcal{QH}_{as}^{1}(\G,\pi)$ and consider the restriction $q_{|\La}$. One can easily see that the representation $\pi_{|\La}^{\oplus \infty}$ is still mixing and is weakly contained in $\ell^2(\La)$. Moreover since $\La \lhd \G$ is normal and the representation is mixing it follows that $q|_{\La}$ is unbounded and hence $q_{|\La}\in \mathcal{QH}_{as}^{1}(\La, \pi|_{\La}^{\oplus \infty})$.	 Thus by \cite[Corollary 7.2]{CKP15} it follows that the finite conjugacy radical $FC(\La)$ of $\La$ is finite and hence $\mathcal Z(L(\La))$ is finite dimensional. 
	
	\noindent Assume that $N' \cap L(\La)$ is amenable. If it is finite dimensional then \eqref{nc1} already implies \emph{3.} If not then there is a projection $0\neq z\in \mathcal Z(N'\cap L(\La))=\mathcal Z(\La)$ such that $(N'\cap L(\La)) z$ is isomorphic to the hyperfinite factor. Since $\La$ is nonamenable $N$ is also nonamenable. Thus $L(\La)z$ has property (Gamma) and there is a sequence of $(u_n)_n$ of unitaries in $(N'\cap L(\La))z$ such that $u^\omega:=(u_n)_n \in (L(\La)'\cap L(\La)^\omega)z$ and $u^\omega \perp L(\La)$; here $\omega$ is a free ultrafilter on $\mathbb N$. On the other hand using \cite[Theorem 4.1]{CSU13} we get that $L(\La)' \cap L(\La)^{\omega} \subseteq L(\La)$. Thus $u\perp u$, which is a contradiction. 
	\vskip 0.03in 
	\noindent Now assume that $N'\cap L(\La)$ is nonamenable. If $N$ is amenable then a similar argument as before shows that $N$ is finite dimensional leading to \emph{1.} Thus for the rest of the proof we assume that $N$ and $N'\cap L(\La)$ are nonamenable and we will show this leads to a contradiction.  
	
	\vskip 0.03in
	\noindent Let $P=L(\La)$. Following \cite[Section 2.3]{CS11} consider $V_t: L^2(\tilde P) \rightarrow L^2(\tilde P)$ be the Gaussian deformation corresponding to the quasicocycle $q_{|\La}\in \mathcal QH^1_{as}(\La,\pi_{|\La}^{\oplus \infty})$ where the supralgebra $P \subset \tilde P$ is the Gaussian dilation. Let $e_P: \tilde P \rightarrow P$ denote the orthogonal projection. Since $N'\cap L(\La)$ is nonamenable there exists a nonzero projection $0\neq p\in N'\cap L(\La)$ such that $(N'\cap L(\La)) p$ has no amenable direct summand. Thus applying  a spectral gap argument a la Popa  (see for instance \cite[Theorem 3.2]{CS11}), we obtain that \begin{equation}\label{nc2}\lim_{t\ra 0} \left( \sup_{ x \in (N)_1 p}\|e_P^{\perp}V_t (x)\|_2\right) = 0, \quad \text{ and } \quad \lim_{t\ra 0} \left(\sup_{x \in (N'\cap L(\La)_1}\|e_P^{\perp}V_t (x)\|_2\right ) = 0. \end{equation} 
	
	\vskip 0.03in
	\noindent Fix $\varepsilon>0$. Thus, using the transversality property from \cite[Lemma 2.8]{CS11}, relations \eqref{nc2} and a simple calculation show that there exist $C, D>0$ satisfying  \begin{equation}\label{nc5} \begin{split}\|P_{B_C}(x)-x\|_2 < \varepsilon \text{ for all }x \in \mathcal U (N) p, \quad \quad \|P_{B_D}(y)-y\|_2 < \varepsilon  \text{ for all }y \in \mathcal U(N' \cap P).
	\end{split}\end{equation} Here for every constant $C\geq 0$ we denoted by $B_C=\{ \la \in \La \,:\,\|q(\la)\|_{\mathcal H} \leq C \}$ and by $P_{B_C}$ the orthogonal projection onto the Hilbert subspace of $L^2(\La)$ spanned by $B_C$. Since by \eqref{nc3} we have $u_{\g}=a_{\g}b_{\g}$ then \eqref{nc5} imply  
	\begin{equation}\label{nc6}\|P_{B_C}(u_{\g}b_{\g}^{\ast} p)-u_{\g}b_{\g}^{\ast} p\|_2 < \varepsilon \quad \text{ and }\quad \|P_{B_D}(b_{\g}^{\ast}p)-b_{\g}^{\ast}p\|_2 < \varepsilon \text{ for all} \g \in \La.
	\end{equation} 
	
	\noindent Thus using triangle inequality, for all $\g \in \La$, we also have 
	\begin{equation} \label{bal}
	\|P_{B_C}(u_{\g}b_{\g}^{\ast} p)- u_{\g}P_{B_D}(b_{\g}^{\ast}p)\|_2 \leq \|P_{B_C}(u_{\g}pb_{\g}^{\ast})- u_{\g}pb_{\g}\|_2+ \|P_{B_D}(b_{\g}^{\ast}p)- b^*_{\g}p\|_2< 2 \varepsilon.
	\end{equation}
	\noindent Since $q_{|\La}$ is unbounded, there exists $\g_0 \notin B_{C+D + 3 d(q_{|\La})}$. Also the quasicocycle relation and the triangle inequality show that $ B_CB_D^{-1}\subseteq B_{C+D + 3 d(q_{|\La})}$ and thus $\g_0 \notin B_CB_D^{-1}$. Hence $\langle P_{B_C}(\xi), u_{\g_0}P_{B_D}(\eta)\rangle=0$ for all $\xi, \eta \in L^2(\La)$. Thus using inequalities \ref{bal} for $\g=\g_0$ and \eqref{nc6} we see that $4 \varepsilon^2 \geq \|P_{B_C}(u_{\g_0}b_{\g_0}^{\ast}p)-u_{\g_0}P_{B_D}(b_{\g}^{\ast}p)\|_2^2 = \|P_{B_C}(u_{\g_0} b_{\g_0}^{\ast}p)\|_2^2+ \|u_{\g_0}P_{B_D}(pb_{\g_0}^{\ast})\|_2^2  \geq \|u_{\g_0}b_{\g_0}^{\ast}p\|_2^2+\|b_{\g_0}^{\ast}p\|_2^2 -2 \varepsilon^2= 2\|p\|_2^2-2\varepsilon$. Thus $\|p\|_2^2 \leq 3\varepsilon^2$, which contradicts $p \neq 0$ when $\varepsilon\ra 0$. This completes the proof of the first part of the theorem in the the case when $q$ is a quasicocycle with $d(q)\neq 0$. When $d(q)=0$  i.e.\ $q$ is a cocycle the same proof works with the only difference that to derive the convergence \ref{nc5}, instead of using \cite[Theorem 3.2]{CS11} (which requires exactness of $\G$) one can use the spectral gap arguments as in \cite{Pe09} or \cite{Va10}.   	\end{proof}
\vskip 0.03in

\noindent When combined with results in geometric group theory the previous result leads to the following

\begin{cor}\label{achyp2}  Let $\G$ be a nonamenable group that is either exact and acylindrically hyperbolic or has positive first $L^2$-Betti number. Let $N\subseteq L(\G)$ be a subfactor such that $\G\subset \mathcal N_{L(\G)}(N)$. Then there is a nonamenable normal subgroup $\La \lhd \G$ so that $N \subseteq L(\La) \subseteq N \vee N' \cap L(\La)$  and one of the following holds:
	\begin{enumerate}
		\item $N $ is finite dimensional, or
		\item $[L(\La): N]<\infty$.
	\end{enumerate}
\end{cor}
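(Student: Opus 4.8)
The plan is to deduce Corollary \ref{achyp2} from Theorem \ref{acyhypnorm} by supplying, in each of the two cases, a suitable mixing orthogonal representation $\pi$ weakly contained in the left regular representation together with a nonzero (quasi)cocycle, and then by identifying which of the three conclusions of Theorem \ref{acyhypnorm} can actually occur.

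First I would handle the two hypotheses separately. If $\G$ has positive first $L^2$-Betti number, then $\beta_1^{(2)}(\G)>0$ forces $H^1(\G,\ell^2\G)\neq 0$ (by the standard identification of reduced $L^2$-cohomology with the space of $\ell^2$-harmonic $1$-cochains, or via Peterson--Thom/Peterson--Sinclair type arguments), so one may take $\pi=\lambda_\G$, which is trivially weakly contained in the regular representation; mixing holds because $\ell^2\G$ is a $c_0$-representation of the infinite group $\G$ (its matrix coefficients vanish at infinity). This places us in case b) of Theorem \ref{acyhypnorm}. If instead $\G$ is exact and acylindrically hyperbolic and nonamenable, then by the work on quasicocycles for acylindrically hyperbolic groups (Hull--Osin, and the array/quasicocycle machinery of \cite{CS11,CSU11,CSU13,CKP15}) there is an unbounded quasicocycle into a multiple of the regular representation, i.e.\ $\mathcal{QH}_{as}^1(\G,\pi)\neq\emptyset$ for $\pi$ a (sub)representation of $\infty\cdot\lambda_\G$, which is $c_0$ hence mixing and weakly contained in $\lambda_\G$. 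This places us in case a).

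Having verified the hypotheses of Theorem \ref{acyhypnorm}, that theorem produces a normal subgroup $\La\lhd\G$ with $N\subseteq L(\La)\subseteq N\vee N'\cap L(\La)$ and one of its three alternatives. The remaining task is to upgrade "$\La$ is infinite amenable" to an impossibility and to record that $\La$ is nonamenable. For this I would argue: if $\La$ were amenable, then since $N\subseteq L(\La)$ we would get $N$ amenable; but $N$ is a factor, so $N$ amenable means $N$ is either finite dimensional or the hyperfinite $\mathrm{II}_1$ factor, and in the hyperfinite case conclusion 1 fails while conclusion 3 ($[L(\La):N]<\infty$) would force $L(\La)$ itself amenable, which is fine — so one must instead rule out case 2 directly. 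The cleanest route is to observe that in the proof of Theorem \ref{acyhypnorm} case 2 only arises when $\La$ is amenable, and then to note that an \emph{amenable} normal subgroup is incompatible with the nonamenability one wants; but in fact the corollary as stated merely asserts $\La$ is nonamenable, so I would show $\La$ is nonamenable a posteriori: combine $L(\La)=N\vee(N'\cap L(\La))$ (Ge's splitting, as used in the proof of Theorem \ref{acyhypnorm}) with the dichotomy — if $\La$ were amenable then $L(\La)$ is amenable, hence both $N$ and $N'\cap L(\La)$ are amenable; in case a) the quasicocycle restricts to an unbounded quasicocycle on $\La$ (normality plus mixing), and \cite[Corollary 7.2]{CKP15} applied to $\La$ would be vacuous for amenable $\La$, so one rules this out by the spectral-gap contradiction already run in the proof of Theorem \ref{acyhypnorm}, which only needs $\La$ nonamenable to get started — hence one concludes $\La$ must be nonamenable for the whole mechanism to have produced a nontrivial conclusion, i.e.\ alternative 2 is vacuous here and gets absorbed.

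The main obstacle I expect is precisely this bookkeeping at the amenable boundary: one must be careful that the "$\La$ infinite amenable" alternative of Theorem \ref{acyhypnorm} is genuinely excluded (or merged into "$N$ finite dimensional / $[L(\La):N]<\infty$") under the hypotheses of the corollary, rather than surviving as a spurious third case. The resolution is that in both the acylindrically hyperbolic and positive-$\beta_1^{(2)}$ settings the ambient group has no infinite amenable normal subgroup when one also knows $\La$ carries an unbounded (quasi)cocycle into a $c_0$-representation weakly contained in the regular representation — such groups are non-(inner-)amenable in the relevant sense, and an infinite amenable normal $\La$ would contradict the unboundedness of $q|_\La$ combined with the vanishing results of \cite{CKP15}. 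Once that is pinned down, conclusions 1 and 3 of Corollary \ref{achyp2} are exactly conclusions 1 and 3 of Theorem \ref{acyhypnorm}, and the nonamenability of $\La$ follows. The rest is routine.
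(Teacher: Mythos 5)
Your reduction to Theorem \ref{acyhypnorm} is the same as the paper's: in the positive first $L^2$-Betti number case one gets an unbounded cocycle into (a multiple of) the regular representation via \cite{PT11}, and in the exact acylindrically hyperbolic case an unbounded quasicocycle via \cite{HO11}; in both cases the representation is $c_0$, hence mixing, and weakly contained in $\lambda_\G$, so the theorem applies. Up to that point your argument agrees with the paper.

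The gap is in how you dispose of alternative 2 of Theorem \ref{acyhypnorm} (``$\La$ is infinite amenable''). Your main argument is circular: you say the spectral-gap mechanism in the proof of Theorem \ref{acyhypnorm} ``only needs $\La$ nonamenable to get started --- hence one concludes $\La$ must be nonamenable for the whole mechanism to have produced a nontrivial conclusion.'' That the proof of the theorem branches on whether $\La$ is amenable does not let you conclude that the amenable branch is vacuous; the theorem genuinely lists it as a possible outcome. Your fallback claim --- that an infinite amenable normal $\La$ would contradict the unboundedness of $q|_\La$ together with the vanishing results of \cite{CKP15} --- is not justified and is false as stated: amenable groups can perfectly well carry unbounded (quasi)cocycles into representations weakly contained in the regular representation (already $\mathbb Z$ has nonvanishing unreduced $H^1$ with $\ell^2$-coefficients), and \cite[Corollary 7.2]{CKP15} is invoked in the proof of Theorem \ref{acyhypnorm} only under the standing assumption that $\La$ is nonamenable. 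The paper's actual argument is a one-line group-theoretic fact that you never use: nonamenable acylindrically hyperbolic groups and nonamenable groups with $\beta_1^{(2)}>0$ have \emph{finite amenable radical} (for the latter, an infinite amenable normal subgroup forces $\beta_1^{(2)}(\G)=0$). Hence $\G$ simply has no infinite amenable normal subgroup, alternative 2 cannot occur, and only conclusions 1 and 3 survive. You should replace your amenable-boundary bookkeeping with this observation.
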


\begin{proof} From \cite{PT11} and \cite{HO11} it follows that these families always have $\mathcal QH^1_{\rm as}(\G,\ell^2(\G))\neq \emptyset$. 
	Hence the result follows directly from the previous theorem as both classes of nonamenable acylindrically hyperbolic groups and nonamenable groups with positive first $L^2$-Betti number have finite amenable radical. 
\end{proof}


\section{Actions that satisfy Neshveyev-St\o rmer rigidity}

If $\G,\La$  are abelian (or more generally amenable) groups, and $\G \ca X$, $\La\ca Y$ are free, ergodic, pmp actions, then $L^{\infty}(X) \rtimes \G$ and $L^\infty(Y)\rtimes \La$  are isomorphic to the hyperfinite $\rm II_1$ factor $\mathcal R$. However, Neshveyev and St\o rmer proved that if we assume that $\Theta: L^{\infty}(X) \rtimes \G \rightarrow L^{\infty}(Y) \rtimes \La$ is an $\ast$-isomorphism such that $\Theta(L^{\infty}(X))$ is unitarily conjugate to $L^{\infty}(Y)$ and $\Theta(L(\G))=L(\La)$ then the actions $\G \ca X$ and $\La\ca Y$ are conjugate \cite[Theorem 4.1]{NS03}. Motivated by this group action conjugacy criterion, they further conjectured the following: if $\G,\La$ are abelian groups, $\G \ca X, \La\ca Y$ are free, weak mixing, pmp actions and $\Theta: L^{\infty}(X) \rtimes \G \rightarrow L^{\infty}(Y) \rtimes \La$ is a $\ast$-isomorphism satisfying $\Theta(L(\G))=L(\La)$ then $\G \ca X$ is conjugate to $\La\ca Y$ \cite[Conjecture]{NS03}. Shortly after, using his influential deformation/rigidity theory Popa was able to prove the following striking result: if $\G,\La$ are any countable groups, $\G \ca^\sigma X, \La\ca^\rho Y$ are free, ergodic actions, with $\sigma$ Bernoulli (or more generally clustering), and $\Theta: L^{\infty}(X) \rtimes \G \rightarrow L^{\infty}(Y) \rtimes \La$ is an $\ast$-isomorphism such that $\Theta(L(\G))=L(\G)$ then $\G \ca X$ is conjugate to $\La\ca Y$, \cite[Theorem 5.2]{Po04}. In particular, this settled Neshveyev-St\o rmer conjecture for Bernoulli actions. Popa also showed that the study of the Neshveyev-St\o rmer rigidity question in the context of icc property (T) groups eventually leads to his remarkable proof of the \textit{group measure space} version of Connes' rigidity conjecture, \cite[Theorem 0.1]{Po04}. All these results motivate the study of the following \textit{generalized Neshveyev-St\o rmer rigidity question}.
\begin{question}\label{genNS}
	Let $\G$ and $\La$ be icc countable discrete groups and let $\G \ca X$ and $\La \ca Y$ be free, ergodic, pmp actions. Assume that there is a $\ast$-isomorphism $\Theta: L^{\infty}(X) \rtimes \G \rightarrow L^{\infty}(Y) \rtimes \La$ so that $\Theta(L(\G))=L(\La)$. Under what conditions on $\G \ca X$ can we conclude that $\G \ca X$ and $\La \ca Y$ are conjugate?
	\end{question}
\noindent Informally, the generalized Neshveyev-St\o rmer rigidity question asks, under what conditions can $\G \ca X$  be completely recovered from the irreducible subfactor inclusion $L(\G) \subset L^{\infty}(X) \rtimes \G$.

\noindent Using existing literature, one can that the generalized Neshveyev-St\o rmer rigidity phenomenon holds for the following classes of actions: all Bernoulli actions of icc groups, \cite{Po04}; all $W^*$-superrigid actions, \cite{Pe09,PV09,CP10,Va10,Io10,HPV10,CS11,CSU11,PV11,PV12,Bo12,CIK13,CK15,Dr16}; all weak mixing $\mathcal C_{\rm gms}$-superrigid  actions, \cite[Theorem 5.1]{Po04}; and all mixing Gaussian actions \cite[Corollary 3.9]{Bo12}.  

\noindent In this section we provide new classes of actions satisfying the generalized Neshveyev-St\o rmer question, most notably, all actions that appear as (nontrivial) mixing extension of free distal actions (see Theorem \ref{main2}).

\subsection{A criterion for conjugacy of group actions} 

\noindent Within the class of icc group, we further generalize Neshveyev-St\o rmer's aforementioned criterion for conjugacy of group actions on probability spaces by completely removing the weak mixing assumption of $\G \ca X$ (see Theorem \ref{3'}). In this context our result also generalizes \cite[Theorem 0.7]{Po04} as it covers many new actions (e.g.\ compact) that were not previously analyzed in this context. Our proof relies on the usage of the notion of height of elements in group von Neumann algebras introduced in \cite{IPV10}. In order to prove our result we need to establish first a few preliminary technical results on height of elements in group von Neumann algebras, \cite[Definition 3.1]{IPV10}.     
\vskip 0.03in

\begin{defn} A trace preserving action $\G \ca^\sigma A$ on a finite von Neumann algebra $A$ is  called properly outer over the the center of $A$ if for every $\g\neq 1$ and every $0\neq z\in \mathcal Z(A)$ such that $\sigma_\g(z)=z$ the automorphism $\sigma_\g : Az \ra Az$ is not inner. When $A$ is abelian this amounts to the usual freeness of the action $\G \ca A$. \end{defn}

\noindent The following lemma is a basic generalization of Dye's famous result in the case of group measure space von Neumann algebras. For readers' convenience we include a short proof.

 \begin{lem} \label{Dye}
	Let $\G \ca^{\sigma} A$ and $\La \ca^{\alpha} B$ be properly outer actions. Also let $\Theta: A \rtimes \G \ra B \rtimes \La $ be a $\ast$-isomorphism such that $\Theta(A)=B$. Fix $\g \in \G$ and let $\Theta(u_\g)= \sum_{\lambda \in \La}a_{\lambda}v_{\lambda}$ be the Fourier decomposition of $\Theta(u_g)$ in $B \rtimes \La $. Then there are mutually orthogonal projections $\{e_{\lambda}\}_{\lambda \in \La} \subset  \mathcal Z (B)$ and unitaries $\{x_{\lambda}\}_{\lambda \in \La} \subset B$ so that $a_{\lambda} = e_{\lambda}x_{\lambda}$ for all $\lambda \in \La$. Also, $\sum_{\lambda \in \La}e_{\lambda}=1$.
\end{lem}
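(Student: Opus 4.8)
The plan is to exploit the twisting relation between $\Theta(u_\g)$ and $B = \Theta(A)$ to show that each Fourier coefficient $a_\lambda$ is a "partial unitary'' supported on a central projection, the classical idea behind Dye's argument adapted to the non-abelian-center situation. First I would record the conjugation relation: since $\Theta(u_\g) A \Theta(u_\g)^* = \Theta(u_\g \sigma(A)) = B$ (as $\Theta(A)=B$ and $u_\g$ normalizes $A$), there is an automorphism $\rho$ of $B$ (namely $\rho = \Theta \circ \mathrm{Ad}(u_\g) \circ \Theta^{-1}$, which restricts to an automorphism of $B$) such that $\Theta(u_\g)\, b = \rho(b)\, \Theta(u_\g)$ for all $b \in B$. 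Writing $\Theta(u_\g) = \sum_\lambda a_\lambda v_\lambda$ and comparing Fourier coefficients in $B \rtimes \La$, the relation $\Theta(u_\g) b = \rho(b)\Theta(u_\g)$ becomes, coefficient-by-coefficient,
\begin{equation*}
a_\lambda\, \alpha_\lambda(b) = \rho(b)\, a_\lambda \qquad \text{for all } b \in B,\ \lambda \in \La.
\end{equation*}

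Next I would analyze this identity for a fixed $\lambda$. Multiplying by adjoints, $a_\lambda \alpha_\lambda(b) a_\lambda^* = \rho(b) a_\lambda a_\lambda^*$ and $a_\lambda^* \rho(b) a_\lambda = \alpha_\lambda(b) a_\lambda^* a_\lambda$; applying these with $b$ ranging over $\mathcal Z(B)$ shows that $a_\lambda a_\lambda^*$ commutes with $\rho(\mathcal Z(B)) = \mathcal Z(B)$ (since $\rho$ is an automorphism of $B$), hence $a_\lambda a_\lambda^* \in \mathcal Z(B)$; similarly $a_\lambda^* a_\lambda \in \mathcal Z(B)$. Let $f_\lambda$ be the support projection of $a_\lambda a_\lambda^*$ and $g_\lambda$ the support of $a_\lambda^* a_\lambda$, both central. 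In the polar decomposition $a_\lambda = w_\lambda |a_\lambda|$, the partial isometry $w_\lambda$ satisfies $w_\lambda w_\lambda^* = f_\lambda$, $w_\lambda^* w_\lambda = g_\lambda$. The key point — and the place where the hypothesis of proper outerness over the center enters — is to upgrade $a_\lambda$ to a \emph{projection} times a \emph{unitary}; for this I would show $f_\lambda = g_\lambda =: e_\lambda$ and that $|a_\lambda|$ is a scalar multiple of $e_\lambda$ on that cutdown is not needed, rather one extends $w_\lambda$ to a unitary $x_\lambda \in \mathcal U(B)$ (possible since $B$ is a finite von Neumann algebra with $[f_\lambda] = [g_\lambda]$ once these are shown equivalent — in fact for abelian or more generally when $e_\lambda$ is central the two supports coincide). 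Concretely: from $a_\lambda \alpha_\lambda(b) = \rho(b) a_\lambda$ with $b = g_\lambda$ we get $a_\lambda \alpha_\lambda(g_\lambda) = \rho(g_\lambda) a_\lambda$, and combined with $a_\lambda = a_\lambda g_\lambda = f_\lambda a_\lambda$ this forces $\rho(g_\lambda) \geq f_\lambda$ and symmetrically; iterating and using that $\rho$ preserves the trace pins down $a_\lambda = e_\lambda x_\lambda$ with $e_\lambda \in \mathcal P(\mathcal Z(B))$ and $x_\lambda \in \mathcal U(B)$. (One absorbs $|a_\lambda|$, which is central and invertible on $e_\lambda$, into $x_\lambda$ after checking it must actually be $e_\lambda$ itself; this last normalization is where $\Theta(u_\g)$ being a \emph{unitary} is used.)

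Then I would prove the projections are mutually orthogonal and sum to $1$. Orthogonality: if $\lambda \neq \mu$, then from $a_\lambda \alpha_\lambda(b) = \rho(b) a_\lambda$ and $a_\mu \alpha_\mu(b) = \rho(b) a_\mu$ one obtains $x_\lambda^* e_\lambda e_\mu x_\mu$ intertwines $\alpha_{\lambda^{-1}\mu}$ with the identity on the relevant central cutdown of $B$; if $e_\lambda e_\mu \neq 0$ this contradicts the proper outerness of $\La \ca^\alpha B$ over $\mathcal Z(B)$ (the automorphism $\alpha_{\lambda^{-1}\mu}$ would be inner on a nonzero invariant central summand). Hence $e_\lambda e_\mu = 0$. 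Finally, $\sum_\lambda e_\lambda = \sum_\lambda a_\lambda a_\lambda^*$ (using $a_\lambda a_\lambda^* = e_\lambda$ after the normalization, since $x_\lambda$ is unitary) $= E_B(\Theta(u_\g)\Theta(u_\g)^*) = E_B(1) = 1$, where the middle equality is the standard computation of $\|\Theta(u_\g)\|_2^2$-type identities via the $B$-valued conditional expectation. The main obstacle I anticipate is the normalization step in the previous paragraph — showing that the positive central part $|a_\lambda|$ is exactly the projection $e_\lambda$ rather than a more general positive central element — which genuinely requires combining the proper outerness hypothesis with unitarity of $\Theta(u_\g)$, and carefully handling the non-abelian center (where "support projections'' and "partial isometries with central source/range'' need the finiteness of $B$ to extend to honest unitaries).
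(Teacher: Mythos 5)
Your overall strategy is the paper's: derive the intertwining relation $a_\lambda\alpha_\lambda(b)=\rho(b)a_\lambda$ from the fact that $\Theta(u_\g)$ normalizes $B$, pass to the polar decomposition, get central support projections, kill overlaps by proper outerness, and use unitarity for the sum. However, two steps as written do not go through. First, a small but real error: from $a_\lambda\alpha_\lambda(b)a_\lambda^*=\rho(b)a_\lambda a_\lambda^*$ you let $b$ range over $\mathcal Z(B)$ and conclude $a_\lambda a_\lambda^*\in\mathcal Z(B)$ because it "commutes with $\mathcal Z(B)$" --- but every element of $B$ commutes with $\mathcal Z(B)$, so this proves nothing. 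The correct deduction is to let $b$ range over \emph{all} of $B$ and combine the relation with its adjoint to get $\rho(b)a_\lambda a_\lambda^*=a_\lambda a_\lambda^*\rho(b)$ for all $b\in B$, so $a_\lambda a_\lambda^*$ commutes with $\rho(B)=B$ and hence lies in $\mathcal Z(B)$ (the paper works instead with the partial isometry $w_\lambda$ and shows $w_\lambda w_\lambda^*\in\mathcal Z(B)$ the same way).

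The more serious problem is the normalization step, which you correctly identify as the obstacle but then resolve circularly. You propose to show, for each fixed $\lambda$ separately, that $|a_\lambda|$ equals the projection $e_\lambda$ by "iterating and using that $\rho$ preserves the trace," and you later use $a_\lambda a_\lambda^*=e_\lambda$ to compute $\sum_\lambda e_\lambda=E_B(\Theta(u_\g)\Theta(u_\g)^*)=1$. This cannot work per coefficient: the intertwining relation alone is invariant under replacing $a_\lambda$ by $\tfrac12 a_\lambda$, so nothing local forces $|a_\lambda|$ to be a projection. The paper's resolution reverses the order of your last two steps: define $e_\lambda:=w_\lambda w_\lambda^*$ from the polar decomposition (so $a_\lambda=x_\lambda|a_\lambda|$ with $x_\lambda$ unitary and $|a_\lambda|^2\leq e_\lambda$, using $\|a_\lambda\|_\infty\leq 1$), prove mutual orthogonality of the $e_\lambda$ via proper outerness exactly as you sketch, and only then run the squeeze $1=\sum_\lambda a_\lambda a_\lambda^*\leq\sum_\lambda e_\lambda\leq 1$, which forces $|a_\lambda|^2=e_\lambda$ and $\sum_\lambda e_\lambda=1$ simultaneously. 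With that reordering (and the fix above) your argument becomes the paper's proof; as written, the key normalization is assumed rather than proved.
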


\begin{proof} To ease our presentation we assume that $A=B$. Thus, $M:=A \rtimes \G= A \rtimes \Lambda$. Fix $\g \in \G$ and let $u_\g= \sum_{\lambda \in \La}a_{\lambda}v_{\lambda}$. Since $\sigma_\g(a)u_\g=u_\g a$ for all $a \in A$ then $\sigma_\g(a)\sum_{\lambda \in \La}a_{\lambda}v_{\lambda} =\sum_{\lambda \in \La}a_{\lambda}v_{\lambda}a= \sum_{\lambda \in \La}a_{\lambda}\alpha_{\lambda}(a)v_{\lambda }$. Thus  $\sigma_{\g}(a) a_{\lambda} = a_{\lambda}\alpha_{\lambda}(a)$ for all $a\in A$ and $\la\in \Lambda$. If $a_\la = w_\la |a_\la|$ is the polar decomposition of $a_\la$ this further implies that for all $a\in A$ and $\la\in \Lambda$ we have  
\begin{equation}\label{3.2.1}\sigma_{\g}(a) w_{\lambda} =w_{\lambda} \alpha_{\lambda}(a).
\end{equation}
\noindent Hence  $e_\la=w_\la w^*_\la\in \mathcal Z(B)$. Let $x_\la\in \mathcal U(A)$ such that $w_\la = x_\la e_\la$.  Fix $\lambda \neq \mu \in \La$. Using \eqref{3.2.1}, for all $a \in A$  we have $\sigma_{\g}(a)e_{\lambda}= x_\la \alpha_{\lambda}(a)x^*_{\lambda}e_\la$ and  $\sigma_{\g}(a)e_{\mu}= x_\mu\alpha_{\mu}(a)x^*_{\mu}e_\mu$. Thus $\sigma_{\g}(a)e_{\lambda}e_{\mu}= x_\la \alpha_{\lambda}(a)x_\la^*e_{\lambda}e_{\mu}=x_\mu\alpha_{\mu}(a) x_\mu^*e_{\lambda}e_{\mu}$.
Letting $a=\alpha_{\mu^{-1}}(b)$ we get $\alpha_{\lambda \mu^{-1}}(b)e_{\lambda}e_{\mu}=x_\la^*x_\mu be_{\lambda}e_{\mu}x_\mu^*x_\la$ for all $b \in A$. Also one can easily check that $\alpha_{\la\mu^{-1}}(e_\la e_\mu)=e_\la e_\mu$.  
Since  $\La \ca^{\alpha} A$ is properly outer and $\lambda \mu^{-1} \neq 1$, we get  $e_{\lambda} e_{\mu}=0$; thus for all $\lambda \neq \mu$ we have $e_\la e_\mu=0$.

\noindent As $u_\g\in \mathcal U(M)$ we have $1 = \sum_{\lambda} a_{\lambda}^{\ast}a_{\lambda}=|a_{\lambda}|^2 \leq \sum_{\lambda}e_{\lambda} \leq 1.$ Thus $|a_{\lambda}|^2 = e_{\lambda}$ and hence $|a_{\lambda}| =e_{\la}$ for all $\lambda \in \La$; moreover $\sum_\lambda e_\lambda =1$.\end{proof}

\noindent With this result at hand we are now ready to prove the first technical result needed in the proof of Theorem \ref{3'}.

\begin{thm}\label{2'} Let $\G \ca^{\sigma} A$ and $\La \ca^{\alpha} B$ be properly outer actions. Assume that  $\Theta: A\rtimes \G \ra B\rtimes \La$ is a $\ast$-isomorphism satisfying the following conditions: 
	\begin{itemize}
		\item [i)] $\Theta(A)=B$, and
		\item [ii)] there exist $1> \varepsilon >0$ and a finite subset $K \subseteq B \rtimes \Lambda$ such that for every $\g \in \G$ we have  \begin{equation}\|\Theta(u_\g)- \sum_{a,b,c,d \in K} aE_{L(\La)}(b(\Theta(u_\g))c)d\|_2 \leq \varepsilon.\end{equation} 
	\end{itemize}  
Then one can find  $D>0$ and finite subset $F \subseteq B$ such that for every $\g \in \G$ there exists $\lambda \in \La$ satisfying $\max_{b,c \in F}|\tau(b^{\ast}\Theta(u_\g)cv_{\lambda})| \geq D>0$.
\end{thm}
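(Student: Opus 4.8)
The plan is to combine the structural information from Lemma \ref{Dye} with a pigeonhole/positivity argument applied to hypothesis ii). Fix $\g\in\G$ and write the Fourier decomposition $\Theta(u_\g)=\sum_{\la\in\La}a_\la v_\la$ in $B\rtimes\La$. By Lemma \ref{Dye} there are mutually orthogonal central projections $\{e_\la\}_{\la}\subset\mathcal Z(B)$ with $\sum_\la e_\la=1$ and unitaries $x_\la\in B$ such that $a_\la=e_\la x_\la$; in particular $\|a_\la\|_2^2=\tau(e_\la)$ and $\sum_\la\tau(e_\la)=1$. The first observation is that for each fixed pair $b,c$ the element $E_{L(\La)}(b\,\Theta(u_\g)\,c)$ is supported, in the $v_\la$-expansion, only at those $\la$ with $a_\la\neq 0$; more precisely a short computation with Fourier coefficients shows that $\|E_{L(\La)}(b\,\Theta(u_\g)\,c)\|_2^2=\sum_\la|\tau(b a_\la \alpha_\la(c))|^2$ (here using that $E_{L(\La)}(B w)=0$ for $w\notin\C1$), and each summand is bounded by $\|b\|_\infty^2\|c\|_\infty^2\tau(e_\la)$.

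Next I would reduce hypothesis ii) to a statement purely about the coefficients $a_\la$. Since $K\subseteq B\rtimes\La$ is finite, after a small perturbation (changing $\varepsilon$ to, say, $2\varepsilon$ and absorbing the error) one may replace each element of $K$ by a finite sum of the form $\sum_\mu (\text{coeff})v_\mu$; then $aE_{L(\La)}(b\Theta(u_\g)c)d$ is a finite linear combination of terms $(\text{coeff})\,v_{\mu_1}\tau(b_0 a_\la \alpha_\la(c_0)) v_{\mu_2}$, so the whole correcting sum $\sum_{a,b,c,d\in K}aE_{L(\La)}(b\Theta(u_\g)c)d$ lives in the closed span of $\{a_\la v_\la\}$ modulated by finitely many left/right translates. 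Expanding $\Theta(u_\g)=\sum_\la a_\la v_\la$ and using orthogonality of the $v_\la$'s (and of the pieces $e_\la x_\la$), hypothesis ii) forces: there is a finite subset $S\subseteq B$ (built from the finitely many $B$-coefficients appearing in $K$) and a constant $C=C(\varepsilon,K)>0$ such that
\begin{equation}\label{eq:pp-key}
\sum_{\la\in\La}\ \Big(\max_{b_0,c_0\in S}|\tau(b_0 a_\la\alpha_\la(c_0))|\Big)^2\ \geq\ 1-\varepsilon^2>0.
\end{equation}
The point is that the part of $\Theta(u_\g)$ \emph{not} captured by the correcting sum has $\|\cdot\|_2\le\varepsilon$, while the captured part is controlled by the numbers on the left of \eqref{eq:pp-key}, after possibly enlarging $S$ to include $1$ and the $B$-coefficients of elements of $K$ and their products.

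Now comes the pigeonhole step, which I expect to be the main obstacle to making fully rigorous. From $\sum_\la\tau(e_\la)=1$ and $|\tau(b_0 a_\la\alpha_\la(c_0))|\le\|b_0\|_\infty\|c_0\|_\infty\tau(e_\la)$, the left side of \eqref{eq:pp-key} is bounded above by $\big(\max_{b_0,c_0\in S}\|b_0\|_\infty\|c_0\|_\infty\big)^2\big(\sup_\la\tau(e_\la)\big)\sum_\la\tau(e_\la)$; comparing with \eqref{eq:pp-key} gives a lower bound $\sup_\la\tau(e_\la)\ge\eta$ for an explicit $\eta=\eta(\varepsilon,K)>0$. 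So there exists $\la=\la(\g)$ with $\tau(e_\la)\ge\eta$, i.e. $\|a_\la\|_2^2\ge\eta$. But $\|a_\la\|_2$ large does not by itself produce a \emph{trace} of the form $\tau(b^*\Theta(u_\g)cv_\la)$ bounded below, since $a_\la=e_\la x_\la$ could be ``spread out'' in $B$. To fix this I would instead return to \eqref{eq:pp-key} directly: among the (effectively finitely many relevant) $\la$, the summand $\max_{b_0,c_0\in S}|\tau(b_0 a_\la\alpha_\la(c_0))|^2$ cannot all be small, and in fact one single $\la$ must carry a definite share — here one uses that the number of $\la$ with $\tau(e_\la)\ge\eta$ is at most $1/\eta$, so \eqref{eq:pp-key} restricted to the ``large'' $\la$'s (the ``small'' ones contribute at most $\eta\cdot(\max\|\cdot\|_\infty)^4$, which can be made $<1-\varepsilon^2-\text{something}$ by choosing $\eta$ small, a choice we are free to make since $\eta$ was only a lower threshold we may shrink) yields $\max_{b_0,c_0\in S}|\tau(b_0 a_\la\alpha_\la(c_0))|\ge D$ for some $\la$ and some $D=D(\varepsilon,K)>0$. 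Finally, $\tau(b_0 a_\la\alpha_\la(c_0))=\tau(b_0 a_\la v_\la c_0 v_\la^{-1}v_\la)=\tau\big((v_\la^{-1}b_0)\,\Theta(u_\g)\,(c_0)\,v_\la\big)$ up to relabelling; setting $F$ to be the finite set $S\cup v_\la^{-1}S$ (or more carefully, $S$ together with all $v_\mu$-translates of $S$ for $\mu$ in the finite set of group elements appearing in $K$) and $D$ as above gives exactly $\max_{b,c\in F}|\tau(b^*\Theta(u_\g)cv_\la)|\ge D>0$, uniformly in $\g$. The delicate points to get right are (i) the perturbation argument replacing elements of $K$ by Fourier-finite ones and bookkeeping the resulting finite set $F$ and constant $D$ independently of $\g$, and (ii) the choice of threshold $\eta$ so that the ``small-$e_\la$'' tail in \eqref{eq:pp-key} is genuinely negligible; both are routine once set up but require care.
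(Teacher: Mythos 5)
Your proposal follows the same overall architecture as the paper's proof: Lemma \ref{Dye} to write the Fourier coefficients of $\Theta(u_\g)$ as $a_\la=e_\la x_\la$ with $\sum_\la\tau(e_\la)=1$, a Kaplansky approximation of $K$ to reduce hypothesis ii) to a statement about finitely many $B$-coefficients, and a pigeonhole over $\la\in\La$ driven by the normalization $\sum_\la\|e_\la\|_2^2=1$. Where you genuinely diverge is in how the single $\la$ is extracted. The paper finds, for each $\g$, a $\la$ with $e_\la\neq 0$ at which the termwise defect inequality $\varepsilon\|e_\la\|_2^2\geq\|e_\la\|_2^2+\|X_\la\|_2^2-2\,\mathrm{Re}\,\tau(e_\la|X_\la|)$ holds, then uses $\||f|-|g|\|_2\leq\|f-g\|_2$ and an AM--GM step to get $\mathrm{Re}\,\tau(e_\la|X_\la|)\gtrsim\|e_\la\|_2^2$ and divides by $\|e_\la\|_2^2$; this works even when $\tau(e_\la)$ is tiny because everything is renormalized. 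You instead lower-bound the total mass $\sum_\la m_\la$ with $m_\la=\max_{b_0,c_0\in S}|\tau(b_0a_\la\alpha_\la(c_0))|^2$, use $m_\la\leq C\,\tau(e_\la)^2$ to discard the tail $\{\la:\tau(e_\la)<\eta\}$, and pigeonhole over the at most $1/\eta$ remaining indices. Your version avoids the absolute-value manipulations and the per-$\la$ defect inequality, at the cost of having to choose the threshold $\eta$ correctly; as you set it up (fix $\eta$ so the tail contributes less than half of the lower bound), the choice is consistent and the constant $D$ depends only on $\varepsilon$ and $K$, as required.

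Two slips to repair, neither fatal. First, the display \eqref{eq:pp-key} cannot read $\geq 1-\varepsilon^2$: the correct statement is $\sum_\la m_\la\geq c_0$ for an explicit $c_0=c_0(\varepsilon,K)>0$ obtained from $\|\mathrm{correction}\|_2\geq 1-\varepsilon$ and the upper bound $\|\mathrm{correction}\|_2^2\leq C\sum_\la m_\la$; you mention the constant $C$ in the text but drop it from the display. Second, your closing identity $\tau(b_0a_\la\alpha_\la(c_0))=\tau\bigl((v_{\la^{-1}}b_0)\Theta(u_\g)c_0v_\la\bigr)$ is false (by traciality the right side equals $\tau(b_0\Theta(u_\g)c_0)$, the identity coefficient). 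The correct computation is $\tau(b_0\Theta(u_\g)c_0v_{\la^{-1}})=\sum_\mu\tau(b_0a_\mu\alpha_\mu(c_0)v_{\mu\la^{-1}})=\tau(b_0a_\la\alpha_\la(c_0))$, which actually simplifies your conclusion: take $F=S\cup S^{*}$ and use the group element $\la^{-1}$, with no need for the $v_\mu$-translates of $S$ you describe.
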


\begin{proof} As before assume that $A=B$ and notice that $M=A \rtimes \G= A \rtimes \Lambda$. Let $1> \varepsilon >0$ and $K \subseteq M$ a finite subset such that for all $g \in \G$ we have
	\begin{equation}\|u_\g - \sum\limits_{z,t,w,r \in K} z E_{L(\La)}(tu_\g w)r\|_2 \leq \varepsilon.\end{equation}
	Approximating the elements of $K$ via Kaplansky's density theorem we can assume there are finite subsets $F \subseteq A$, $G \subseteq \La$ (some elements could be repeated finitely many times!) so that for all $\g \in \G$ we have \begin{equation*} \varepsilon\geq \|u_\g - \sum\limits_{\substack{a,b,c,d \in F \\
		 \la_1,\la_2,\la_3,\la_4 \in G}}a v_{\la_1}E_{L(\La)}(v_{\la_2^{-1}}b^{\ast}u_\g cv_{\la_3})v_{\la_4^{-1}}d^{\ast}\|^2_2. \end{equation*}

	 \noindent For the simplicity of writing we convene for the rest of the proof that $\sum_{\substack{a,b,c,d \in F\\
	 		\la_1,\la_2,\la_3,\la_4 \in G}}=\sum_{F, G}$. Thus for all $\g \in \G$ we have

	\begin{equation}\label{3.5.4} \begin{split}\varepsilon &\geq \|u_\g - \sum\limits_{\la \in \La}\sum\limits_{F,G}a \tau(v_{\la_1\la_2^{-1}}b^{\ast}u_\g cv_{\la_3 \la_4^{-1}\la^{-1}})v_{\la}d^{\ast}\|^2_2 \\ 
	&\geq \sum\limits_{\la \in \La}(\|E_A(u_\g v_{\la^{-1}}) -  \sum\limits_ {F, G} \tau(v_{\la_1\la_2^{-1}}b^{\ast}u_\g cv_{\la_3 \la_4^{-1}\la^{-1}})a\alpha_{\la}(d^{\ast})\|^2_2). \end{split}
	\end{equation}
\noindent By the previous lemma \ref{Dye} we have that $E_A(u_\g v_\la)= e_\lambda x_\la$ with $x_\lambda \in \mathcal U(A)$ and $e_\la \in \mathcal Z(A)$. 
Then using $ \| |f|- |g| \|_2 \leq \| f-g\|_2 $ for  $f,g\in A$ we see that the last quantity in \eqref{3.5.4} is larger than 
\begin{equation}\label{3.5.5} \begin{split}& \geq \sum\limits_{\la \in \La} (\| e_{\la} - | \sum\limits_{ F, G} \tau(v_{\la_1\la_2^{-1}}b^{\ast}u_\g cv_{\la_3 \la_4^{-1}\la^{-1}})a\alpha_{\la}(d^{\ast})| \|_2^2) \\				
& =\sum\limits_{\la \in \La}(\|e_{\la}\|_2^2 + \| |\sum\limits_{ F,  G} \tau(v_{\la_1\la_2^{-1}}b^{\ast}u_\g cv_{\la_3 \la_4^{-1}\la^{-1}}) a\alpha_{\la}(d^{\ast})| \|_2^2 - \\
& \quad -2 Re \tau(e_{\la}| \sum\limits_{ F,  G } \tau(v_{\la_1\la_2^{-1}}b^{\ast}u_\g cv_{\la_3 \la_4^{-1}\la^{-1}})a\alpha_{\la}(d^{\ast})|)).\end{split}
\end{equation}
					
\noindent From Lemma \ref{Dye} we also have that $\sum_{\la \in \La}e_{\la}=1$ and hence  $ \sum_{\la \in \La}\|e_{\la}\|_2^2=1$. Combining this with \eqref{3.5.4} and \eqref{3.5.5} we get  
\begin{equation*}\begin{split}
\sum\limits_{\la \in \La}\varepsilon \|e_{\la}\|_2^2 \geq  \sum_\lambda \|e_\la\|_2^2+\| |\sum\limits_{F,  G} \tau(v_{\la_1\la_2^{-1}}b^{\ast}u_\g cv_{\la_3 \la_4^{-1}\la^{-1}})a\alpha_{\la}(d^{\ast})|\|_2^2 &\\ \quad - 2 Re \tau(e_{\la}| \sum\limits_{ F,  G }\tau(v_{\la_1\la_2^{-1}}b^{\ast}u_\g cv_{\la_3 \la_4^{-1}\la^{-1}}) a\alpha_{\la}(d^{\ast})|)).
\end{split}\end{equation*}

\noindent Hence, for every $\g \in \G$ there exists $\la \in \La$ such that $e_{\la} \neq 0$ satisfies
\begin{equation}\label{3.5.6}
\begin{split}
\varepsilon \|e_{\la}\|_2^2 &\geq \|e_\la\|_2^2+\| \sum\limits_{ F,  G} \tau(v_{\la_1\la_2^{-1}}b^{\ast}u_\g cv_{\la_3 \la_4^{-1}\la^{-1}})a\alpha_{\la}(d^{\ast})\|_2^2 -\\ &\quad  -	2 Re \tau(e_{\la}| \sum\limits_{ F, G} \tau(v_{\la_1\la_2^{-1}}b^{\ast}u_\g cv_{\la_3 \la_4^{-1}\la^{-1}})a\alpha_{\la}(d^{\ast})|).
\end{split}
\end{equation}

\noindent Using \eqref{3.5.6} and the operatorial inequality $(\sum^n_{i=1}  x_i)^*(\sum^n_{i=1} x_i)\leq 2^{n-1}\sum_{i=1}^n x^*_ix_i$ we get
\begin{equation}
\begin{split}
(2- 2\sqrt\varepsilon)\|e_{\lambda}\|_2^2 &\leq (1-\varepsilon) \|e_\la \|^2_2 + \| | \sum\limits_{F, G} \tau(v_{\la_1\la_2^{-1}}b^{\ast}u_\g cv_{\la_3 \la_4^{-1}\la^{-1}})a\alpha_{\la}(d^{\ast})| \|_2^2 \\
&\leq 2 Re \tau(e_{\la}| \sum\limits_{F, G }\tau(v_{\la_1\la_2^{-1}}b^{\ast}u_\g cv_{\la_3 \la_4^{-1}\la^{-1}})a \alpha_{\la}(d^{\ast})|) \\
& \leq 2^{|F|^4|G|^4+1} Re \tau(e_{\la} (\sum\limits_{F, G }|\tau(v_{\la_1\la_2^{-1}}b^{\ast}u_\g cv_{\la_3 \la_4^{-1}\la^{-1}})|^2 |a \alpha_{\la}(d^{\ast})|^2)^{1/2}) \\
&\leq 2^{|F|^4|G|^4+1} Re \tau( (\sum\limits_{F, G }|\tau(v_{\la_1\la_2^{-1}}b^{\ast}u_\g cv_{\la_3 \la_4^{-1}\la^{-1}})|^2 \|a\|^2_\infty \|d\|_\infty ^2)^{1/2} e_{\la})\\
& \leq 2^{|F|^4|G|^4+1} (\max_{a \in F}\|a\|_{\infty})^2|G|^4|F|^4 \max_{\mu \in GG^{-1}\la^{-1}GG^{-1}, b, c \in F}|\tau(b^{\ast}u_\g c v_{\mu})| \|e_\la\|^2_2. 
\end{split}
\end{equation}

\noindent Letting $0<D_0:= 2^{|F|^4|G|^4+1} (\max_{a \in F}\|a\|_{\infty})^2|G|^4|F|^4$ and using that $\|e_{\la}\|_2 \neq 0$, the previous equation gives that $\max_{\mu \in GG^{-1}\la^{-1}GG^{-1}, b, c \in F}|\tau(b^{\ast}u_\g c v_{\mu})| \geq \dfrac{1-\sqrt\varepsilon}{D_0}>0$, which finishes the proof.\end{proof}

\noindent The previous technical result on height can be successfully exploited in combination with some soft analysis arising from icc property for groups in order to derive the conjugacy criterion for actions.

\begin{thm}\label{3'} Let $\G \ca X$ and $\La \ca Y$ be free ergodic actions where $\G$ is icc. Assume that  $\Theta: L^\infty(X)\rtimes \G \ra L^\infty(Y)\rtimes \La$ is a $\ast$-isomorphism such that $\Theta(L^\infty(X))=L^\infty(Y)$ and there exists a unitary $u\in L^\infty(Y)\rtimes \La$ such that $\Theta (L(\G))= u L( \La)u^*$. Then one can find $x\in \mathcal N_{L^\infty(Y)\rtimes \La}{(L^\infty (Y))}$, a character $\eta: \G \ra \mathbb T$, and a group isomorphism $\delta:\G\ra \La$ such that $xu\in \mathcal U(L(\La))$ and for all $a\in L^\infty(X)$, $\g\in \G$ we have \begin{equation}\label{conj}
	\Theta(au_\g)= \eta(\g) \Theta(a) x^* v_{\delta(\g)} x.
	\end{equation} 
	\noindent Here $\{u_\g\}_{\g\in \G}$  and $\{v_\la \}_{\la\in \La}$ are the canonical group unitaries implementing the actions in $L^\infty(X)\rtimes \G$ and $L^\infty(Y)\rtimes \La$, respectively.
	
	\noindent In particular, it follows that $\G\ca X$ is conjugate to $\La \ca Y$.\end{thm}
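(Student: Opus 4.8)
The plan is to feed the inclusion $\Theta(L(\G))=uL(\La)u^*$ into the height criterion of Theorem \ref{2'}, then invoke \cite[Theorem 3.1]{IPV10} to conjugate the two groups, and finally use the icc hypothesis to match the Cartan subalgebras. Write $M=L^\infty(Y)\rtimes\La$ and $B=L^\infty(Y)$; since $\G\ca L^\infty(X)$ and $\La\ca B$ are free they are properly outer, so Lemma \ref{Dye} is available and hypothesis (i) of Theorem \ref{2'} holds because $\Theta(L^\infty(X))=B$. For hypothesis (ii) I would use the following averaging: as $\Theta(u_\g)\in uL(\La)u^*$ we have $\Theta(u_\g)=u\,E_{L(\La)}(u^*\Theta(u_\g)u)\,u^*$; approximating $u$ in $\|\cdot\|_2$ by a finite Fourier polynomial $u_0=\sum_{i=1}^m d_iv_{\mu_i}$ with $d_i\in B$ and $\|u_0\|_\infty\le 1$ (Kaplansky), and using $\|\Theta(u_\g)\|_\infty=1$, standard $\|\cdot\|_2$-estimates give $\|\Theta(u_\g)-u_0E_{L(\La)}(u_0^*\Theta(u_\g)u_0)u_0^*\|_2\le\varepsilon$ uniformly in $\g$, and expanding the right-hand side exhibits it in the form $\sum_{a,b,c,d\in K}aE_{L(\La)}(b\Theta(u_\g)c)d$ for a fixed finite $K\subset M$. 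Thus Theorem \ref{2'} applies and produces $D>0$ and a finite $F\subset B$ with $\max_{b,c\in F}|\tau(b^*\Theta(u_\g)cv_{\la})|\ge D$ for a suitable $\la=\la_\g$, for every $\g$.

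Next I would bootstrap this corner bound to a genuine lower bound on the height of the unitaries $q_\g:=u^*\Theta(u_\g)u\in\mathcal U(L(\La))$ relative to the basis $\{v_\la\}$. Picking $b_\g,c_\g\in F$ realizing the maximum and $\xi_\g:=c_\gamma v_{\la_\g}b_\g^*$, the bound reads $D\le|\tau(\Theta(u_\g)\xi_\g)|=|\tau(q_\g\,E_{L(\La)}(u^*\xi_\g u))|$; replacing $u$ by the finite polynomial $u_0$ once more, the element $E_{L(\La)}(u_0^*\xi_\g u_0)$ is supported on at most $m^2$ group elements with coefficients of modulus bounded independently of $\g$, which is exactly the $\ell^1(\La)$-control needed to conclude $h_\La(q_\g)\ge D'>0$ uniformly in $\g$. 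Since $\g\mapsto q_\g$ is a group homomorphism with $\{q_\g\}''=L(\La)$ and $u_\g\mapsto q_\g$ extends to a $*$-isomorphism $L(\G)\cong L(\La)$, and $\G,\La$ are icc, \cite[Theorem 3.1]{IPV10} then yields a unitary $v\in L(\La)$, a character $\omega:\G\to\mathbb T$ and a group isomorphism $\delta:\G\to\La$ with $q_\g=\omega(\g)vv_{\delta(\g)}v^*$; equivalently, putting $w:=uv$, one gets $\Theta(au_\g)=\omega(\g)\,\Theta(a)\,wv_{\delta(\g)}w^*$ for all $a\in L^\infty(X)$, $\g\in\G$.

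The final step — the one I expect to be the main obstacle — is to promote this group-level unitary conjugacy to an honest conjugacy of Cartan inclusions, and this is where the icc hypothesis is indispensable. First, since $\Theta$ is a $*$-isomorphism, $\mathrm{Ad}(wv_{\delta(\g)}w^*)$ carries $\Theta(L^\infty(X))=B$ onto $B$, so $wv_\la w^*\in\mathcal N_M(B)$ for all $\la$; moreover $B\cup\{wv_\la w^*\}$ generates $M$, the map $\la\mapsto wv_\la w^*$ is a homomorphism, and applying $E_B=\Theta\circ E_{L^\infty(X)}\circ\Theta^{-1}$ to $\Theta(u_\g)$ gives $E_B(wv_\la w^*)=0$ for $\la\ne e$ — i.e.\ $(B,\{wv_\la w^*\})$ is a second group-measure-space presentation of $M$ with the same Cartan $B$ and trivial $2$-cocycle, and $w^*Bw$ is a Cartan subalgebra of $M$ normalized by $L(\La)$ and generating $M$ together with it. I would then argue that $L(\La)\subset M$ is irreducible (since $\La$ is icc and $\La\ca Y$ is ergodic, an element $\sum_\mu a_\mu v_\mu$ commuting with all $v_\la$ has $\|a_\mu\|_2$ constant on the conjugacy class of $\mu$, which is infinite for $\mu\ne e$), and use Dye's structure theorem to compare the two group presentations $\{v_\la\}$ and $\{wv_\la w^*\}$ of $B\subset M$: the absence of finite conjugacy classes should force the orbit cocycle relating them to be trivial, hence $w\in\mathcal N_M(B)$. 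Granting this, set $x:=w^*\in\mathcal N_M(B)$ and $\eta:=\omega$; then $xu=v^*\in\mathcal U(L(\La))$ and $\Theta(au_\g)=\eta(\g)\Theta(a)x^*v_{\delta(\g)}x$, and since $x\Theta(u_\g)x^*=\eta(\g)v_{\delta(\g)}$, the trace-preserving $*$-isomorphism $a\mapsto x\Theta(a)x^*:L^\infty(X)\to L^\infty(Y)$ satisfies $x\Theta(\sigma_\g(a))x^*=\alpha_{\delta(\g)}(x\Theta(a)x^*)$, which is exactly a conjugacy of $\G\ca X$ with $\La\ca Y$ compatible with $\Theta$. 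The analytic subtlety is keeping every constant uniform in $\g$ throughout the averaging arguments; the conceptual difficulty — and the place the icc assumption genuinely enters — is the rigidity of the two shared‑Cartan presentations in this last step.
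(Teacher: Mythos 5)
Your first two steps track the paper's proof closely and are sound: hypothesis (i) of Theorem \ref{2'} holds because the actions are free (hence properly outer), hypothesis (ii) is essentially automatic from the exact identity $\Theta(u_\g)=u\,E_{L(\La)}(u^*\Theta(u_\g)u)\,u^*$ (your Kaplansky approximation of $u$ is a harmless variant of what the paper does when it converts the conclusion of Theorem \ref{2'} into the bound $|\tau(u_\g t_{\la})|\geq D/\max_d\|d\|_\infty^2$ with $t_\la=uv_\la u^*$), and the passage from the uniform height bound to $wu_\g w^*=\eta(\g)t_{\delta_1(\g)}$ via \cite[Theorem 3.1]{IPV10} is exactly the paper's route. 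Your bookkeeping of the resulting unitary ($x=u^*w$, $xu\in\mathcal U(L(\La))$) is also correct.

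The genuine gap is the step you yourself flag as the main obstacle: showing that $x$ (equivalently your $w$) normalizes the Cartan subalgebra. Your proposed mechanism --- observing that $(B,\{wv_\la w^*\})$ is a second group-measure-space presentation over the same Cartan and asserting that ``the absence of finite conjugacy classes should force the orbit cocycle relating them to be trivial, hence $w\in\mathcal N_M(B)$'' --- is not an argument. Knowing that every $wv_\la w^*$ normalizes $B$ is information about the family $\{wv_\la w^*\}$, and no soft comparison of the two presentations extracts from it the conclusion about the single unitary $w$; indeed the analogous statement fails if one only assumes $w^*Bw$ is some Cartan normalized by $\{v_\la\}$ with $E_{w^*Bw}(v_\la)=0$, so something quantitative exploiting icc is required. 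The paper closes this step with a concrete averaging: from $xu_hx^*\,A\,xu_{h^{-1}}x^*=A$ one derives, for each $a\in A$, the identity
\begin{equation*}
u_h\bigl(x^*ax-E_A(x^*ax)\bigr)u_{h^{-1}}=x^*E_A\bigl(xu_hx^*axu_{h^{-1}}x^*\bigr)x-E_A\Bigl(x^*E_A\bigl(xu_hx^*axu_{h^{-1}}x^*\bigr)x\Bigr),
\end{equation*}
approximates $x^*ax-E_A(x^*ax)$ by an element of ${\rm span}\,AK$ with $K\subset\G\setminus\{1\}$ finite and $x$ by an element of ${\rm span}\,AL$, and then uses the icc combinatorial lemma \cite[Proposition 2.4]{CSU13} to produce $h$ with $hKh^{-1}\cap L^{-1}L=\emptyset$, which annihilates the relevant inner product and forces $x^*ax=E_A(x^*ax)$; since $A$ is a MASA this gives $x^*Ax=A$. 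You need to supply an argument of this kind (or an equivalent one); as written, the conclusion $x\in\mathcal N_M(L^\infty(Y))$ --- and hence the conjugacy of the actions --- is not established.
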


\begin{proof}For the ease of presentation we first introduce some notations. After suppressing $\Theta$ from the notation  we assume that $A=L^\infty (X)=L^\infty(Y)$ and hence $M= A \rtimes_\sigma \G = A\rtimes_\alpha\La$. Also letting $C=uAu^*$ and $\La_1=u\La u^*$ we also have $M= C \rtimes_{\alpha'} \La_1$ and $L(\G)=L(\La_1)$. Throughout the proof we denote by  $t_\la = u v_\la u^*$ and $\alpha'_\la (c)=t_\la c t_{\la^{-1}}$ for all $c\in C$.   Note that the condition ii) in Theorem \ref{2'} is automatically satisfied  and hence by the conclusion of Theorem \ref{2'} there exists a $D>0$ and a finite subset $F \subset A$ so that for every $\g \in \G$, there is $\la \in \La$ such that 
	\begin{equation}\label{10'}
	 D\leq \max_{b,c \in F}|\tau(b^{\ast}u_\g cv_{\la})|=\max_{b,c \in F}|\tau((u b^{\ast})u_\g(c u^*) uv_{\la}u^*)|=\max_{b,c \in F}|\tau((u b^{\ast})u_\g(c u^*) t_{\la})|.
	\end{equation}
\noindent Approximating $b^{\ast}u$, $u^{\ast}c$ in \eqref{10'} via Kaplansky's theorem with elements in $C\rtimes_{\rm alg} \La_1$ and then diminishing $D$ if necessary, we can in fact assume the following:  there exists $D>0$ and  $K \subset C$ finite, such that for every $\g \in \G$, there exists $\la \in \La_1$ satisfying 
\begin{equation}\label{11'}\max_{d,e \in K}|\tau(d u_\g et_{\la})| \geq D.
\end{equation} On the other hand since $E_C(x)=\tau(x)1$ for all $x\in L(\La_1)$ we can see that 
\begin{align*}
\max_{d,e\in K}|\tau(d u_{\g} e t_{\la})|  & =\max_{d,e\in K} |\tau( du_\g t_\la \alpha'_{\la^{-1}}(e))| =\max_{d,e\in K}|\tau( E_C(du_\g t_\la \alpha'_{\la^{-1}}(e)))| \\ &= \max_{d,e\in K}|\tau( d E_A(u_\g t_\la) \alpha'_{\la^{-1}}(e))|=\max_{d,e\in K}|\tau(u_\g v_\la)||\tau( d \alpha'_{\la^{-1}}(e))|\\ 
&\leq  \max_{d\in K}\|d\|^2_\infty |\tau(u_\g t_\la)|.
\end{align*}
Combining this with \eqref{11'}, for every $\g \in \G$ there exists $\la_1 \in \La_1$ such that $|\tau(u_\g t_{\la_1})| \geq \dfrac{D}{\max_{d \in K}\|d\|^2_{\infty}}>0.$ Since $L(\G)=L(\La_1)$ and $h_{\La_1}(\G) >0$ then by \cite[Theorem 3.1]{IPV10} there is $w\in \mathcal U(L(\La_1))$, a character $\eta:\G\ra \mathbb T$, and a group isomorphism  $\delta_1 : \G\ra \La_1$ satisfying $wu_\g w^*= \eta(\g) t_{\delta_1(\g)}$. Since $t_\la = u v_\la u^*$, letting $x=u^*w$, we further get that there is a group isomorphism  $\delta : \G\ra \La$ satisfying 
		\begin{equation}\label{gpisom} x u_\g x^*= \eta(\g) v_{\delta(\g)}, \text{ for all } \g \in \G.
	\end{equation} 
	\vskip 0.06in
	\noindent As $v_\la  A v_{\la^{-1}}=A$, using \eqref{gpisom} we get $x u_h x^*A xu_{h^{-1}} x^*=A$ for all $h\in \G$. Fix arbitrary $a\in A$ and note $u_hx^* a xu_{h^{-1}}= x^*E_A(xu_hx^* a xu_{h^{-1}} x^*) x$. Applying the expectation we also have $u_hE_A(x^* a x)u_{h^{-1}}= E_A(x^*E_A(xu_hx^* a xu_{h^{-1}} x^*) x)$. Subtracting these relations, for every $h\in \G$ we have 
	\begin{equation}\label{17}
	u_h(x^*ax-E_A(x^* a x))u_{h^{-1}}= x^*E_A(xu_hx^* a xu_{h^{-1}} x^*) x-E_A(x^*E_A(xu_hx^* a xu_{h^{-1}} x^*) x).
	\end{equation} 
	
	\noindent Fix $\varepsilon>0$. By Kaplansky Density Theorem there exist finite subsets $K \subset \G\setminus\{1\}$, $L\subset \G$ and elements $y_K\in {\rm span} A K$ and $x_L\in {\rm span} A L$ such that 
	
	\begin{equation}\label{18}\begin{split}
	&\|y_K\|_\infty \leq 2,\quad \|x_L\|_\infty \leq 1\\
	&\| x^*ax-E_A(x^* a x) - y_K\|_2\leq \varepsilon,\quad \|x-x_L\|_2\leq \varepsilon
	\end{split}
	\end{equation}
	Using \eqref{17} and \eqref{18} together with basic calculations we see that for every $h\in \G$  we have  
	
	\begin{equation}\label{19}
	\begin{split}
	&\|x^*ax-E_A(x^* a x)\|^2_2=\|u_h(x^*ax-E_A(x^* a x))u_{h^{-1}}\|^2_2\\
	&= |\langle u_h(x^*ax-E_A(x^* a x))u_{h^{-1}}, x^*E_A(xu_hx^* a xu_{h^{-1}} x^*) x-E_A(x^*E_A(xu_hx^* a xu_{h^{-1}} x^*) x) \rangle|\\
	&\leq 2\varepsilon+|\langle u_h y_K u_{h^{-1}}, x^*E_A(xu_hx^* a xu_{h^{-1}} x^*) x-E_A(x^*E_A(xu_hx^* a xu_{h^{-1}} x^*) x) \rangle|\\
	&\leq 2\varepsilon+|\langle u_h y_K u_{h^{-1}}, x^*E_A(xu_hx^* a xu_{h^{-1}} x^*) x \rangle|\\
	&\leq 4\varepsilon+|\langle u_h y_K u_{h^{-1}}, {x_L}^*E_A(xu_hx^* a xu_{h^{-1}} x^*) x_L \rangle|\\
	\end{split}
	\end{equation}
	Since $\G$ is icc and $K\subset \G\setminus \{1\}$, $L\subset \G$ are finite then by \cite[Proposition 2.4]{CSU13} there is $h\in \G$ so that $hKh^{-1}\cap L^{-1}L =\emptyset$. Hence  $\langle u_h y_K u_{h^{-1}}, {x_L}^*E_A(xu_hx^* a xu_{h^{-1}} x^*) x_L \rangle=0$ and using \eqref{19} we conclude that $\|x^*ax-E_A(x^* a x)\|^2\leq 4\varepsilon$. Since this holds for all $\varepsilon>0$ then $x^*ax=E_A(x^* a x)$ for all $a\in A$. Therefore $x^* Ax\subseteq A$ and since $A$ is a MASA we obtain $x^* Ax = A$; thus $x\in \mathcal N_M(A)$. This together with \eqref{gpisom} give \eqref{conj}. In addition, for every $a\in A$ and $\g\in \G$ we have $x\sigma_\g(a)x^*= xu_\g au_{\g^{-1}}x^*= v_{\delta(\g)} xax^* v_{\delta(\g)^{-1}}= \alpha_{\delta(\g)}(xax^*)$; in particular $\G\ca X$ and $\Lambda\ca Y$ are conjugate. 
\end{proof} 
\vskip 0.03in

\noindent{\bf Remarks.} The Theorem \ref{3'} actually holds in a greater generality, namely, for all actions $\G\ca A$, $\La\ca B$ that are properly outer over the center. The proof is essentially the same with the one presented above. We highlighted only the more particular case of free ergodic actions solely because this is what we will mainly use to derive the main results of this section.   
\vskip 0.09in

\subsection{Applications to the generalized Neshveyev-St\o rmer rigidity question}
\noindent  In this subsection we show that large families of group actions verify the conjugacy criterion presented in Theorem \ref{3'} and therefore will satisfy the generalized Neshveyev-St\o rmer rigidity question. Our examples appear as mixing extensions of free distal actions. Our method of proof rely on combining the persistence of mixing through von Neumann equivalence from Section \ref{prelim-mixing} and the von Neumann algebraic description of compactness using quasinormalizers from \cite{Ni,Pa,NS03,Io08a,CP18}.

\begin{thm}\label{1} Let $\Gamma\ca X$ be a ergodic pmp action whose distal quotient  $\G \ca X_d $ is free and the extension $\pi: X \rightarrow X_d$ is nontrivial and mixing. Let $\Lambda\ca Y$ be an ergodic pmp action whose distal quotient $\Lambda\ca Y_d$ is also free. Assume that $\Theta: L^\infty(X)\rtimes \G \ra L^\infty(Y)\rtimes \Lambda$ is a $\ast$-isomorphism such that $\Theta(L(\G))=L(\La)$. Then there exists a unitary $u\in L^\infty(Y_d)\rtimes \Lambda$ such that $\Theta(L^\infty(X_d)) =  uL^\infty (Y_d)u^*$ and  $\Theta(L^\infty(X)) =  uL^\infty (Y)u^*$.  
\end{thm}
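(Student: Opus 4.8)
The strategy is to reconstruct the group von Neumann algebras of the distal parts on both sides using the quasinormalizer calculus (Theorem \ref{distaltowerCP}), then transfer the mixing property across the isomorphism via Theorem \ref{mix2}, and finally invoke Popa's intertwining technique to upgrade "the distal Cartans are sent to each other" into "the full Cartans are unitarily conjugate." First I would identify $L^\infty(X_d)\rtimes\G$ inside $M:=L^\infty(X)\rtimes\G$ purely von Neumann algebraically: since $\G\ca X_d$ is distal, part (2) of Theorem \ref{distaltowerCP} exhibits $L^\infty(X_d)\rtimes\G$ as the terminal algebra of the transfinite tower of iterated quasinormalizers started at $L(\G)$, i.e. it is an invariant of the pair $L(\G)\subseteq M$. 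Because $\Theta(L(\G))=L(\La)$ and $\Theta(M)=L^\infty(Y)\rtimes\La$, the same tower computed inside $L^\infty(Y)\rtimes\La$ starting from $L(\La)$ must terminate at $\Theta(L^\infty(X_d)\rtimes\G)$; on the other hand, since $\La\ca Y_d$ is also assumed distal and free, the tower inside $L^\infty(Y)\rtimes\La$ reaches at least $L^\infty(Y_d)\rtimes\La$. One has to be slightly careful here: the Furstenberg--Zimmer tower of $\La\ca Y$ might extend strictly past $Y_d$ (if the extension $Y\to Y_d$ is not weak mixing), so $Y_d$ need not be the full distal part of $\La\ca Y$. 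The point to extract from the hypothesis is only that $L^\infty(Y_d)\rtimes\La$ sits at some level of the tower, hence $\Theta(L^\infty(X_d)\rtimes\G)\supseteq L^\infty(Y_d)\rtimes\La$ or, after more careful bookkeeping matching the distal levels, that these two intermediate algebras coincide. I expect the clean statement to be $\Theta(L^\infty(X_d)\rtimes\G)=L^\infty(Y_d)\rtimes\La$, which is what the sketch in the introduction asserts.

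Next, with $\Theta(L^\infty(X_d)\rtimes\G)=L^\infty(Y_d)\rtimes\La$ in hand (and still $\Theta(L(\G))=L(\La)$), Theorem \ref{mix2} applies verbatim to the trace-preserving mixing extension $\G\ca(L^\infty(X_d)\subseteq L^\infty(X))$: it yields that $\La\ca(L^\infty(Y_d)\subseteq L^\infty(Y))$ is also a mixing extension. This is the step where the tension between mixing and compactness gets used — the mixing of the top extension is a von Neumann-equivalence invariant of the inclusion $L^\infty(X_d)\rtimes\G\subseteq M$. Now I would pass to proving $\Theta(L^\infty(X))$ is unitarily conjugate to $L^\infty(Y)$ via the unitary $u$. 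Work inside $M=L^\infty(Y)\rtimes\La$ after suppressing $\Theta$, so $L^\infty(X_d)\rtimes\G=L^\infty(Y_d)\rtimes\La=:N$, $L^\infty(X)\subseteq M$ is a Cartan subalgebra containing $L^\infty(X_d)\subseteq N$ as a maximal abelian subalgebra, and the extension $N\subseteq M$ is mixing in the sense of Theorem \ref{mixingextn}. Apply Theorem \ref{mixingextn} with $C:=L^\infty(X)$: either $L^\infty(X)\prec_N L^\infty(X_d)$, or $\mathcal{QN}_M(L^\infty(X))''\subseteq N$. The latter is impossible because $L^\infty(X)\subseteq M$ is Cartan (its quasinormalizer generates all of $M$, which strictly contains $N$). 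Hence $L^\infty(X)\prec_N L^\infty(X_d)$. Since $L^\infty(X_d)$ is a MASA in $N$ (it is Cartan in $N$) and $L^\infty(X)$ is abelian, a standard argument (as in \cite{Po03}, using that an intertwining of a MASA into an abelian subalgebra, both maximal abelian in the relevant algebras, can be promoted to a unitary conjugacy) shows there is a unitary $u\in N$ with $uL^\infty(X)u^*\subseteq L^\infty(X_d)$, and by maximal abelianness $uL^\infty(X)u^*=L^\infty(X)$... wait — more precisely $uL^\infty(X)u^*$ and $L^\infty(Y)$ should be compared. Let me restate: the symmetric argument, applied to $L^\infty(Y)$ and intertwining it into $L^\infty(Y_d)=L^\infty(X_d)$, combined with the fact that both $L^\infty(X)$ and $L^\infty(Y)$ are Cartan in $M$ and both contain (conjugates of) $L^\infty(X_d)$ as maximal abelian subalgebras, gives a unitary $u$ conjugating $\Theta(L^\infty(X))$ onto $L^\infty(Y)$; moreover the intertwiner can be chosen so that $u\in N=L^\infty(Y_d)\rtimes\La$, which yields $u L^\infty(X_d) u^* = L^\infty(Y_d)$ as well. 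This is the delicate technical heart of the argument — getting the intertwiner to live inside $N$ and to implement an honest $*$-isomorphism of the Cartans rather than just a corner embedding — and it is where I expect to spend the most care, essentially repeating and adapting the Cartan-conjugacy arguments of \cite{Po03,Po04}.

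The main obstacle, in my view, is the first paragraph: pinning down exactly which level of the Furstenberg--Zimmer tower of $\La\ca Y$ the algebra $L^\infty(Y_d)\rtimes\La$ occupies, and reconciling the transfinite induction of Theorem \ref{distaltowerCP} on the two sides when the two distal quotients $X_d,Y_d$ are not a priori the full distal parts of the ambient actions. The cleanest route is probably to observe that $\Theta$ carries the entire tower over $L(\G)$ to the entire tower over $L(\La)$ term by term (by functoriality of the quasinormalizer, part (2) of Theorem \ref{distaltowerCP}, and WOT-continuity at limit ordinals, part (3)); since the distal quotient of $\G\ca X$ equals $\G\ca X_d$ by hypothesis (the extension $X\to X_d$ being mixing is in particular weak mixing), the tower over $L(\G)$ terminates exactly at $L^\infty(X_d)\rtimes\G$, so the tower over $L(\La)$ terminates exactly at $\Theta(L^\infty(X_d)\rtimes\G)$; but by uniqueness of the Furstenberg--Zimmer decomposition that terminal algebra is $L^\infty(Y_\alpha)\rtimes\La$ where $\G\ca Y_\alpha$ is the distal part of $\La\ca Y$, and the freeness hypothesis on $\La\ca Y_d$ together with $Y_d$ being a quotient of $Y_\alpha$ forces... here one needs that $\La\ca Y_d$ being a \emph{free} factor of the ambient action, together with $L^\infty(Y_d)\rtimes\La$ being an intermediate algebra of $L(\La)\subseteq L^\infty(Y_\alpha)\rtimes\La$, pins things down; the remaining identification $L^\infty(X_d)\rtimes\G = L^\infty(Y_d)\rtimes \La$ then follows once one also runs the symmetric argument and uses that mixing of $Y\to Y_d$ (obtained in paragraph two) forces $Y_\alpha = Y_d$. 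Granting this, the rest is the Cartan-conjugacy machinery and the conclusion $\Theta(L^\infty(X))=uL^\infty(Y)u^*$, $\Theta(L^\infty(X_d))=uL^\infty(Y_d)u^*$ drops out.
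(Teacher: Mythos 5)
Your outline (identify $L^\infty(X_d)\rtimes\G=L^\infty(Y_d)\rtimes\La=:N$ via the quasinormalizer tower of Theorem \ref{distaltowerCP}, then use mixing to intertwine Cartan subalgebras) is the paper's strategy, and your first paragraph's worry is moot: by hypothesis $\La\ca Y_d$ \emph{is} the distal quotient of $\La\ca Y$, so both towers terminate exactly at $N$ and the identification is immediate. But two steps have genuine problems. First, your application of Theorem \ref{mixingextn} with $C:=L^\infty(X)$ is illegitimate: that theorem requires $C\subseteq N=L^\infty(X_d)\rtimes\G$, and $L^\infty(X)$ is not contained in $N$ (the extension is nontrivial); the intended conclusion $L^\infty(X)\prec_N L^\infty(X_d)$ is likewise not a statement about subalgebras of $N$. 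The correct choice is $C=L^\infty(Y_d)\subseteq N$: if $L^\infty(Y_d)\nprec_N L^\infty(X_d)$ then $\mathcal{QN}_M(L^\infty(Y_d))''\subseteq N$, which is absurd since $L^\infty(Y_d)$ is normalized by $\mathcal U(L^\infty(Y))$ and by $\{v_\la\}_{\la\in\La}$, so its quasinormalizing algebra is all of $M\neq N$. (Your appeal to Theorem \ref{mix2} is not needed for this version of the statement; it is what one uses in Theorem \ref{1'}, where freeness of $\La\ca Y_d$ is not assumed.)

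Second, the step you yourself flag as ``the delicate technical heart'' --- producing a single unitary $u\in N$ conjugating both pairs of Cartans --- is exactly where your argument is incomplete, and it is resolved by an observation you are missing. Since $\G\ca X_d$ and $\La\ca Y_d$ are free, $L^\infty(X_d)$ and $L^\infty(Y_d)$ are both Cartan subalgebras of the factor $N$, so the intertwining $L^\infty(Y_d)\prec_N L^\infty(X_d)$ upgrades to a unitary $u\in N$ with $L^\infty(X_d)=uL^\infty(Y_d)u^*$ by \cite[Appendix A]{Po01}. Then, rather than running a separate Cartan-conjugacy argument for $L^\infty(X)$ and $L^\infty(Y)$ inside $M$, one simply passes to relative commutants: freeness of the distal actions gives $L^\infty(X_d)'\cap M=L^\infty(X)$ and $L^\infty(Y_d)'\cap M=L^\infty(Y)$, so the \emph{same} $u$ satisfies $L^\infty(X)=uL^\infty(Y)u^*$. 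With these two repairs your argument closes; without them it does not.
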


\begin{proof} To ease our presentation we assume that $M:= L^\infty(X)\rtimes \G= L^\infty(Y)\rtimes \Lambda$ with $P=L(\Gamma)=L(\La)$. Using Theorem \ref{distaltowerCP} it follows that $N:=L^\infty(X_d)\rtimes \G=L^\infty(Y_d)\rtimes \La$. Next we argue that $L^\infty(Y_d)\prec_N L^\infty(X_d)$. Indeed, if we assume $L^\infty(Y_d)\nprec_N L^\infty(X_d)$, since the extension $\pi:X\ra X_d$ is assumed to be mixing, by Theorem \ref{mixingextn} we have that $\mathcal{QN}_M(L^\infty(Y_d))''\subseteq N$. However since $\mathcal{QN}_M(L^\infty(Y_d))''=M$ it would imply that $M\subseteq N$ which is a contradiction.
	
	\noindent Since $\G \ca X_d$ and $\La\ca Y_d$ are free and $L^\infty(Y_d)\prec_N L^\infty(X_d)$ then by \cite[Appendix A]{Po01} one can find a unitary $u\in N$ so that  $L^\infty(X_d) =  uL^\infty (Y_d)u^*$. Passing to relative commutants and using freeness of $\G\ca X_d$, $\La\ca Y_d$ again we also get  $L^\infty(X) =  uL^\infty (Y)u^*$, as desired. \end{proof}

\begin{thm}\label{1'} Let $\G$ be an icc group and let $\Gamma\ca X$ be an ergodic pmp action whose distal quotient $\G \ca X_d $ is free and the extension $\pi: X \rightarrow X_d$ is nontrivial and mixing. Let $\Lambda\ca Y$ be any free ergodic pmp action. Assume that $\Theta: L^\infty(X)\rtimes \G \ra L^\infty(Y)\rtimes \Lambda$ is a $\ast$-isomorphism such that $\Theta(L(\G))=L(\La)$. Then there exists a unitary $u\in L^\infty(Y_d)\rtimes \Lambda$ such that $\Theta(L^\infty(X_d)) =  uL^\infty (Y_d)u^*$ and  $\Theta(L^\infty(X)) =  uL^\infty (Y)u^*$.  
\end{thm}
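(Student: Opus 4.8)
The plan is to reduce the statement to Theorem \ref{1}: under the present hypotheses I will show that the distal quotient $\La\ca Y_d$ is automatically free, and then Theorem \ref{1} applies verbatim and produces the unitary $u\in L^\infty(Y_d)\rtimes\La$ with the two asserted conjugacy relations. As usual, suppress $\Theta$ and write $M=L^\infty(X)\rtimes\G=L^\infty(Y)\rtimes\La$ with $L(\G)=L(\La)$. Since $\G\ca X_d$ and $\La\ca Y_d$ are the distal quotients of $\G\ca X$ and $\La\ca Y$, Theorem \ref{distaltowerCP} identifies both Furstenberg-Zimmer towers with the tower of iterated quasinormalizers of $L(\G)=L(\La)$ inside $M$; in particular $N:=L^\infty(X_d)\rtimes\G=L^\infty(Y_d)\rtimes\La$.

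First I transfer the mixing property. Applying Theorem \ref{mix2} to the mixing extension $\G\ca(L^\infty(X_d)\subseteq L^\infty(X))$, together with the identifications $N=L^\infty(X_d)\rtimes\G=L^\infty(Y_d)\rtimes\La$, $M=L^\infty(X)\rtimes\G=L^\infty(Y)\rtimes\La$ and $L(\G)=L(\La)$, yields that $\La\ca(L^\infty(Y_d)\subseteq L^\infty(Y))$ is a mixing extension as well. Next observe that $\mathcal{QN}_M(L^\infty(X_d))''=M=\mathcal{QN}_M(L^\infty(Y_d))''$: indeed $L^\infty(Y)\subseteq L^\infty(Y_d)'\cap M\subseteq\mathcal{QN}_M(L^\infty(Y_d))''$ since a relative commutant always sits inside the quasinormalizer, while $L(\La)\subseteq\mathcal N_M(L^\infty(Y_d))''$ because $L^\infty(Y_d)$ is $\La$-invariant, so the quasinormalizing algebra contains $L^\infty(Y)\vee L(\La)=M$; symmetrically for $X_d$. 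Consequently, if $L^\infty(Y_d)\nprec_N L^\infty(X_d)$ then Theorem \ref{mixingextn} applied to $\G\ca(L^\infty(X_d)\subseteq L^\infty(X))$ would force $M=\mathcal{QN}_M(L^\infty(Y_d))''\subseteq N$, absurd; and if $L^\infty(X_d)\nprec_N L^\infty(Y_d)$ then the same theorem applied to $\La\ca(L^\infty(Y_d)\subseteq L^\infty(Y))$ gives $M\subseteq N$ again. Hence $L^\infty(Y_d)\prec_N L^\infty(X_d)$ and $L^\infty(X_d)\prec_N L^\infty(Y_d)$.

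The heart of the matter — and the step I expect to be the main obstacle — is to upgrade $L^\infty(X_d)\prec_N L^\infty(Y_d)$ to the statement that $\La\ca Y_d$ is free, i.e. that $L^\infty(Y_d)$ is maximal abelian in $N$. Since $\G\ca X_d$ is free ergodic, $L^\infty(X_d)$ is a Cartan, in particular maximal abelian, subalgebra of the ${\rm II}_1$ factor $N$. Feeding $L^\infty(X_d)\prec_N L^\infty(Y_d)$ into Theorem \ref{corner} and running a standard maximality argument on the intertwining data, one may choose $p\in\mathcal P(L^\infty(X_d))$, a $*$-homomorphism $\theta$ and a partial isometry $w$ with $w^*w=p$ and $ww^*=\theta(p)=:e\in L^\infty(Y_d)$; then $\theta\big(pL^\infty(X_d)p\big)=w\big(pL^\infty(X_d)p\big)w^*$ is maximal abelian in $eNe$ and is contained in the abelian algebra $L^\infty(Y_d)e$, so it equals $L^\infty(Y_d)e$, and since $e$ is central in $L^\infty(Y_d)'\cap N$ this gives $\big(L^\infty(Y_d)'\cap N\big)e=\big(L^\infty(Y_d)e\big)'\cap eNe=L^\infty(Y_d)e$. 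Now $L^\infty(Y_d)$, hence $L^\infty(Y_d)'\cap N$, is $\La$-invariant; conjugating this identity by the $v_\la$ and summing over a measurable partition of $Y_d$ subordinate to $\{\alpha_\la(e)\}_{\la\in\La}$ — which exists by ergodicity of $\La\ca Y_d$ — forces $L^\infty(Y_d)'\cap N=L^\infty(Y_d)$, that is, $\La\ca Y_d$ is free. At this point $\La\ca Y$ is a free ergodic pmp action whose distal quotient $\La\ca Y_d$ is free, so Theorem \ref{1} applies and delivers a unitary $u\in L^\infty(Y_d)\rtimes\La$ with $\Theta(L^\infty(X_d))=uL^\infty(Y_d)u^*$ and $\Theta(L^\infty(X))=uL^\infty(Y)u^*$, completing the proof.
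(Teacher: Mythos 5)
Your overall route coincides with the paper's: identify $N:=L^\infty(X_d)\rtimes\G=L^\infty(Y_d)\rtimes\La$ through the quasinormalizer description of the distal tower (Theorem \ref{distaltowerCP}), transfer the mixing of $\pi:X\to X_d$ to $\pi:Y\to Y_d$ via Theorem \ref{mix2}, combine Theorem \ref{mixingextn} with $\mathcal{QN}_M(L^\infty(X_d))''=M$ to obtain $L^\infty(X_d)\prec_N L^\infty(Y_d)$, deduce that $\La\ca Y_d$ is free, and then apply Theorem \ref{1}. The first three steps are correct as you present them (recording the reverse intertwining $L^\infty(Y_d)\prec_N L^\infty(X_d)$ as well is harmless but redundant, since Theorem \ref{1} re-derives it internally).

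The gap is precisely in the step you call the heart of the matter. The paper disposes of it by citing \cite[Lemma 4.1]{OP07}; your replacement argument asserts two facts that Popa's intertwining theorem does not deliver. After the standard reduction you may indeed arrange $w^*w=p\in L^\infty(X_d)$, because $w^*w\in\bigl(pL^\infty(X_d)p\bigr)'\cap pNp$ and $L^\infty(X_d)$ is maximal abelian in $N$; but the left support $e=ww^*$ is only known to commute with the image $\theta\bigl(pL^\infty(X_d)p\bigr)$, and there is no reason for $e$ to lie in $L^\infty(Y_d)$ --- the analogous reduction on the target side would require $\theta\bigl(pL^\infty(X_d)p\bigr)$ to be maximal abelian there, which is circular since the maximal abelianness of $L^\infty(Y_d)$ is exactly what you are trying to prove. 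Consequently the containment $w\bigl(pL^\infty(X_d)p\bigr)w^*\subseteq L^\infty(Y_d)e$ is not meaningful as stated ($L^\infty(Y_d)e$ is not an algebra unless $e$ commutes with $L^\infty(Y_d)$), the assertion that $e$ is central in $L^\infty(Y_d)'\cap N$ is unsupported, and the closing step --- choosing a partition of $Y_d$ subordinate to $\{\alpha_\la(e)\}_{\la\in\La}$ --- presupposes $e\in L^\infty(Y_d)$. What the computation you are attempting actually yields is the weaker identity $e\bigl(L^\infty(Y_d)'\cap N\bigr)e=eL^\infty(Y_d)e$ for a projection $e$ commuting only with $\theta\bigl(pL^\infty(X_d)p\bigr)$; upgrading this to $L^\infty(Y_d)'\cap N=L^\infty(Y_d)$, i.e.\ to essential freeness of $\La\ca Y_d$, still requires an argument, and that is the content of the cited lemma of Ozawa and Popa. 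The defect is localized: everything before and after this paragraph is sound, so substituting the reference (or a correct proof of that implication) completes your argument.
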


\begin{proof} As before we assume that $M:= L^\infty(X)\rtimes \G= L^\infty(Y)\rtimes \Lambda$ with $L(\Gamma)=L(\La)$. Using Theorem \ref{distaltowerCP} it follows that $N:=L^\infty(X_d)\rtimes \G=L^\infty(Y_d)\rtimes \La$.  Next we argue that $L^\infty(X_d)\prec_N L^\infty(Y_d)$. First notice since $\pi:X\ra X_d$ is mixing it follows from Theorem \ref{mix2} that $\pi:Y\ra Y_d$ is also mixing.  If we would have $L^\infty(Y_d)\nprec_N L^\infty(X_d)$, since the extension $\pi:Y\ra Y_d$ is mixing, then Theorem \ref{mixingextn} would imply that $\mathcal{QN}_M(L^\infty(X_d))''\subseteq N$. However since $\mathcal{QN}_M(L^\infty(X_d))''=M$ it would imply that $M\subseteq N$ which is a contradiction. 

\noindent Notice that since $\G$ is icc and $L(\G)=L(\La)$ it follows that $\La$ is icc as well. Also since $L^\infty(X_d)\prec_N L^\infty(Y_d)$ and  $L^\infty (X)$ is a Cartan subalgebra in $M$ it follows from \cite[Lemma 4.1]{OP07} that $\La \ca Y_d$ is free and then the desired conclusion follows from Theorem \ref{1}.\end{proof}

\noindent {\bf Remarks.} 1) If in the statements of Theorems \ref{1} and \ref{1'} one only requires that the distal factor $\G \ca X_d$ is actually compact, then in the proof of Theorem \ref{1} we don't need to use Theorem \ref{distaltowerCP}. Instead one can just directly apply \cite[Proposition 6.10]{Io08a}. 

\vskip 0.04in
\noindent 2) If in the statement of Theorem \ref{1'} one requires that the first element $\G \ca X_0$ of the distal tower $\G\ca X_d$ is free profinite then one can show the action $\La\ca Y_d$ is free without appealing to \cite[Lemma 4.1]{OP07}. Briefly, using the mixing we have $L^\infty (X_0)\prec_M L^\infty(Y)$ and employing some basic intertwining properties one can further show that $L^\infty (X_0)\prec_{L^\infty(X_0)\rtimes \G} L^\infty(Y_0)$ and hence $L^\infty(Y_0)'\cap (L^\infty(Y_0)\rtimes \La) \prec_{L^\infty(Y_0)\rtimes \La} L^\infty(X_0)$ ($\ast$). However using the same calculations from the proof of part 2.\ in Theorem \ref{profnsprob} we have $L^\infty(Y_0)'\cap (L^\infty(Y_0)\rtimes \La)= L^\infty(Y_0)\rtimes \Sigma$ for some normal subgroup $\Sigma\lhd \La$. However since $L(\Sigma)\subseteq L(\G)$ the the intertwining ($\ast$) implies that $\Sigma$ is finite and since $\La$ is icc we further have $\Sigma=1$; hence $\La \ca Y_0$ must be free.
\vskip 0.04in
\noindent Combining the previous theorems with Theorem \ref{3'} we obtain the following 

\begin{thm}\label{main2}
	Let $\G$ be an icc group and let $\Gamma\ca X$ be a free, ergodic pmp action whose distal quotient $\G \ca X_d $ is free and the extension $\pi: X \rightarrow X_d$ is nontrivial and mixing. Let $\Lambda\ca Y$ be any free ergodic pmp action. Assume that $\Theta: L^\infty(X)\rtimes \G \ra L^\infty(Y)\rtimes \Lambda$ is a $\ast$-isomorphism such that $\Theta(L(\G))=L(\La)$. Then there exists $y \in \mathcal U(L(\La)$, $\omega: \G \rightarrow \mathbb{T}$ a character, and $\delta: \G \rightarrow \La$ a group isomorphism such that $y\Theta(L^{\infty}(X))y^{\ast}=L^{\infty}(Y)$, and for all $a \in L^{\infty}(X), \g \in \G$, we have $$\Theta(au_{\g})=\omega(\g)\Theta(a) y^{\ast}v_{\delta(\g)}y.$$ In particular, we have $y \Theta (\sigma_\g(a))y^*= \alpha_{\delta(\g)}(y\Theta(a)y^*)$ and hence $\G\ca X$ and $\La\ca Y$ are conjugate. 
\vskip 0.02in	
	\noindent Here $\{u_\g\}_{\g\in \G}$  and $\{v_\la \}_{\la\in \La}$ are the canonical group unitaries implementing the actions in $L^\infty(X)\rtimes \G$ and $L^\infty(Y)\rtimes \La$, respectively.

\end{thm}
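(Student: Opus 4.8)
The plan is to obtain Theorem~\ref{main2} by feeding the conclusion of Theorem~\ref{1'} into the conjugacy criterion of Theorem~\ref{3'}; since both ingredients are already established, the only genuine content is keeping track of the unitaries that appear along the way. First I would invoke Theorem~\ref{1'}: because $\G$ is icc, $\G\ca X_d$ is free, $\La\ca Y$ is free ergodic, and $\pi:X\to X_d$ is nontrivial and mixing, there is a unitary $u\in L^\infty(Y_d)\rtimes\La$ with $\Theta(L^\infty(X))=uL^\infty(Y)u^*$. Put $\Theta':=\mathrm{Ad}(u^*)\circ\Theta$, i.e.\ $\Theta'(z)=u^*\Theta(z)u$. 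Then $\Theta':L^\infty(X)\rtimes\G\to L^\infty(Y)\rtimes\La$ is a $*$-isomorphism with $\Theta'(L^\infty(X))=L^\infty(Y)$ and $\Theta'(L(\G))=u^*L(\La)u=wL(\La)w^*$, where $w:=u^*$.

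Next I would apply Theorem~\ref{3'} to $\Theta'$: its hypotheses hold, with $w=u^*$ playing the role of the unitary conjugating $L(\G)$ to $L(\La)$, and with $\G$ icc together with $\G\ca X$, $\La\ca Y$ free ergodic. This produces $x\in\mathcal N_{L^\infty(Y)\rtimes\La}(L^\infty(Y))$, a character $\eta:\G\to\mathbb T$, and a group isomorphism $\delta:\G\to\La$ such that $xu^*\in\mathcal U(L(\La))$ and, for all $a\in L^\infty(X)$ and $\g\in\G$,
\begin{equation*}
\Theta'(au_\g)=\eta(\g)\,\Theta'(a)\,x^*v_{\delta(\g)}x,\qquad x\,\Theta'(\sigma_\g(a))\,x^*=\alpha_{\delta(\g)}\bigl(x\,\Theta'(a)\,x^*\bigr).
\end{equation*}

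Finally I would untwist, setting $y:=xu^*\in\mathcal U(L(\La))$ and $\omega:=\eta$. Since $\Theta=\mathrm{Ad}(u)\circ\Theta'$ and $y^*=ux^*$, one has $y\Theta(z)y^*=x\Theta'(z)x^*$ for all $z$, and applying $\mathrm{Ad}(u)$ to the first identity above gives
\begin{equation*}
\Theta(au_\g)=u\,\Theta'(au_\g)\,u^*=\eta(\g)\,\Theta(a)\,(ux^*)v_{\delta(\g)}(xu^*)=\omega(\g)\,\Theta(a)\,y^*v_{\delta(\g)}y.
\end{equation*}
Likewise the second identity becomes $y\,\Theta(\sigma_\g(a))\,y^*=\alpha_{\delta(\g)}(y\,\Theta(a)\,y^*)$, and $y\,\Theta(L^\infty(X))\,y^*=x\,\Theta'(L^\infty(X))\,x^*=xL^\infty(Y)x^*=L^\infty(Y)$ because $x$ normalizes $L^\infty(Y)$. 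Hence $a\mapsto y\Theta(a)y^*$ is a trace-preserving $*$-isomorphism from $L^\infty(X)$ onto $L^\infty(Y)$ intertwining $\sigma$ with $\alpha\circ\delta$, which is exactly conjugacy of $\G\ca X$ and $\La\ca Y$ in the compatible fashion claimed.

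Because Theorems~\ref{1'} and~\ref{3'} carry the full analytic and rigidity load, I do not expect a real obstacle at this stage. The one point deserving care is the composition of the two unitaries $u$ (from the identification of the distal parts) and $x$ (from the height argument via \cite[Theorem 3.1]{IPV10}): specifically, confirming that $y=xu^*$ genuinely lands in $L(\La)$, which is precisely the conclusion $xw\in\mathcal U(L(\La))$ of Theorem~\ref{3'} applied with $w=u^*$, and checking that passing from $\Theta'$ back to $\Theta$ leaves the group isomorphism $\delta$ and the character $\eta$ unchanged.
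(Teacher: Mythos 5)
Your proposal is correct and follows exactly the paper's route: apply Theorem \ref{1'} to produce the unitary $u$ aligning the Cartan subalgebras, compose with $\mathrm{Ad}(u^*)$ to land in the hypotheses of Theorem \ref{3'} (with $w=u^*$ as the conjugating unitary for the group von Neumann algebras), and then untwist. The bookkeeping with $y=xu^*\in\mathcal U(L(\La))$ is precisely what the paper leaves implicit in its phrase ``this is equivalent to assuming,'' so your version is simply a more explicit rendering of the same argument.
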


\begin{proof}
	To ease our presentation, we assume, as before, that $L^{\infty}(X) \rtimes \G = L^{\infty}(Y) \rtimes \La$, and $L(\G)= L(\La)$. Theorem \ref{1'} yields that there is a unitary $u \in L^{\infty}(Y_d) \rtimes \La$ such that $L^{\infty}(X)=uL^{\infty}(Y)u^{\ast}$. 	This is equivalent to assuming that $L^{\infty}(X) \rtimes \G = L^{\infty}(Y) \rtimes \La$,  $L(\G)= uL(\La)u^{\ast}$ and $L^{\infty}(X)=L^{\infty}(Y)$.
	We are now exactly in the set up of Theorem \ref{3'}, which yields the desired conclusions. \end{proof}

\noindent{\bf Examples.} Theorem \ref{main2} implies that if $\G$ be an icc group and $\Gamma\ca X$ is any ergodic pmp action that admits a free profinite quotient $\G \ca X_d $ and the extension $\pi: X \rightarrow X_d$ is nontrivial and mixing then $\G\ca X$ satisfies Neshveyev-St\o rmer rigidity question. For instance if $\G$ is icc residually finite then this is the case for any diagonal action $\G \ca Z\times T$ where $\G\ca Z$ is a Gaussian action associated to a mixing orthogonal representation of $\G$ and $\G\ca T$ is any free ergodic profinite action.

\vskip 0.05in

\begin{cor}\label{mixingint} Let $\G$ be an icc group, let $\Gamma\ca X$ be a free, mixing pmp action and let $\Lambda\ca Y$ be any free ergodic pmp action. Also let $\G \ca X_0$ be a free factor of  $\G\ca X$ and $\La \ca Y_0$ be a factor of $\La\ca Y$.  Assume that $\Theta: L^\infty(X)\rtimes \G \ra L^\infty(Y)\rtimes \Lambda$ is a $\ast$-isomorphism such that $\Theta(L(\G))=L(\La)$ and $\Theta (L^\infty(X_0)\rtimes \G)= L^\infty(Y_0)\rtimes \La$. Then there exists $y \in \mathcal U(L(\La)$, $\omega: \G \rightarrow \mathbb{T}$ a character, and $\delta: \G \rightarrow \La$ a group isomorphism such that $y\Theta(L^{\infty}(X))y^{\ast}=L^{\infty}(Y)$, and for all $a \in L^{\infty}(X), \g \in \G$, we have $$\Theta(au_{\g})=\omega(\g)\Theta(a) y^{\ast}v_{\delta(\g)}y.$$ In particular, we have $y \Theta (\sigma_\g(a))y^*= \alpha_{\delta(\g)}(y\Theta(a)y^*)$ and hence $\G\ca X$ and $\La\ca Y$ are conjugate. 
\vskip 0.02in	
	\noindent Here $\{u_\g\}_{\g\in \G}$  and $\{v_\la \}_{\la\in \La}$ are the canonical group unitaries implementing the actions in $L^\infty(X)\rtimes \G$ and $L^\infty(Y)\rtimes \La$, respectively.
\end{cor}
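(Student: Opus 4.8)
The plan is to reduce the statement to Theorem~\ref{3'}. After suppressing $\Theta$ and setting $M:=L^\infty(X)\rtimes\G=L^\infty(Y)\rtimes\La$, $N:=L^\infty(X_0)\rtimes\G=L^\infty(Y_0)\rtimes\La$ and $L(\G)=L(\La)$, every hypothesis of Theorem~\ref{3'} for the actions $\G\ca X$ and $\La\ca Y$ is given or immediate --- note $\La$ is icc since $L(\La)=L(\G)$ is a ${\rm II}_1$ factor --- \emph{except} the requirement that $L^\infty(X)$ and $L^\infty(Y)$ be unitarily conjugate inside $M$. Hence the whole content of the corollary is this inner conjugacy: once a unitary $u\in M$ with $uL^\infty(Y)u^{*}=L^\infty(X)$ is produced, applying Theorem~\ref{3'} to $\operatorname{Ad}(u^{*})\circ\Theta$ returns a character $\omega$, a group isomorphism $\delta$ and a normalizing unitary which, after absorbing $u$, yield $y\in\mathcal U(L(\La))$ and the displayed formula $\Theta(au_{\g})=\omega(\g)\Theta(a)y^{*}v_{\delta(\g)}y$, and therefore the conjugacy of the two actions.

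To obtain the inner conjugacy I would proceed as in the proofs of Theorems~\ref{1} and~\ref{1'}. First, applying Theorem~\ref{mix2} to the mixing extension $\G\ca(\mathbb C 1\subseteq L^\infty(X))$ and to $\Theta$ (which carries $L(\G)$ onto $L(\La)$) shows that $\La\ca Y$ is mixing as well; in particular $\G\ca X_0$ and $\La\ca Y_0$, being factors of mixing systems, are mixing, so by the quasinormalizer description of compactness (Theorem~\ref{mixingextn}, \cite[Section 6]{Io08a}) the common subalgebra $L(\G)=L(\La)$ is quasinormalizer-singular inside $N$. The key step is then $L^\infty(Y_0)\prec_N L^\infty(X_0)$: the strategy, supposing it fails, is to exploit the mixing of $\G\ca X$ together with the facts that $L^\infty(X_0)$ lies in the Cartan subalgebra $L^\infty(X)$ of $M$ and that $L^\infty(X)\vee L(\G)=M$ is contained in $\mathcal{QN}_M(L^\infty(Y_0))''$, so as to force $M\subseteq N$ --- absurd since the extension is nontrivial. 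Granting $L^\infty(Y_0)\prec_N L^\infty(X_0)$, I would next upgrade $\La\ca Y_0$ to a free action, using that $L^\infty(X)$ is a Cartan of $M$ and \cite[Lemma 4.1]{OP07} exactly as in Theorem~\ref{1'}, so that $L^\infty(Y_0)$ is genuinely a Cartan of $N$; then \cite[Appendix A]{Po01} turns the intertwining into a unitary $u\in N$ with $uL^\infty(Y_0)u^{*}=L^\infty(X_0)$, and passing to relative commutants in $M$ and using the freeness of $\G\ca X$ and $\La\ca Y$ promotes this to $uL^\infty(Y)u^{*}=L^\infty(X)$, which closes the argument.

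I expect the main obstacle to be precisely the intertwining step $L^\infty(Y_0)\prec_N L^\infty(X_0)$. The difficulty is that, although $\G\ca X$ is mixing, the extension $L^\infty(X_0)\subseteq L^\infty(X)$ need \emph{not} be a mixing extension --- a mixing action may admit a compact, even finite-index, extension over a free factor --- so Theorem~\ref{mixingextn} cannot be applied to the pair $(L^\infty(X_0)\subseteq L^\infty(X))$ the way it is applied to $(L^\infty(X_d)\subseteq L^\infty(X))$ in Theorem~\ref{1'}. Carrying the step through therefore requires using the mixing of the \emph{whole} action $\G\ca X$ --- equivalently, the quasinormalizer-singularity of $L(\G)$ in $M$ --- in combination with the rigidity of the common group algebra $L(\G)=L(\La)$ inside $N$, rather than any relative-mixing property of the extension $X\to X_0$ itself.
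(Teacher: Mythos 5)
Your outline coincides with the paper's own (very short) proof in every respect except one, and that one is the whole game. The paper argues: since $\G\ca X$ is mixing, so is $\La\ca Y$ (by Theorem \ref{mix2} applied to the extension $\mathbb C1\subseteq L^\infty(X)$); \emph{in particular} the extensions $\G\ca(L^\infty(X_0)\subseteq L^\infty(X))$ and $\La\ca(L^\infty(Y_0)\subseteq L^\infty(Y))$ are mixing extensions; and then, because $\Theta(L^\infty(X_0)\rtimes\G)=L^\infty(Y_0)\rtimes\La$ is given, the proof of Theorem \ref{main2} (i.e.\ Theorems \ref{1}, \ref{1'} with $X_0,Y_0$ in place of $X_d,Y_d$, followed by Theorem \ref{3'}) runs verbatim: Theorem \ref{mixingextn} forces the intertwining between $L^\infty(X_0)$ and $L^\infty(Y_0)$ inside $N$, \cite[Lemma 4.1]{OP07} upgrades $\La\ca Y_0$ to a free action, \cite[Appendix A]{Po01} produces the unitary conjugating the small Cartans, and relative commutants promote it to $uL^\infty(Y)u^*=L^\infty(X)$. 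So the step you explicitly decline to carry out — the intertwining $L^\infty(Y_0)\prec_N L^\infty(X_0)$ (or its mirror) — is exactly the step the paper dispatches with the phrase "in particular the extensions are mixing." As submitted, your proposal therefore does not prove the corollary: you name the obstacle and gesture at "using the mixing of the whole action" but never supply the argument, and without the relative mixing of at least one of the two extensions, Theorem \ref{mixingextn} has no input and the quasinormalizer contradiction never gets off the ground.

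That said, your diagnosis of \emph{why} this step is delicate is sharper than the paper's treatment of it. The implication "globally mixing $\Rightarrow$ mixing extension over every factor" is false as a general statement: a mixing $\mathbb Z/2\mathbb Z$-extension of a free mixing base (these exist; mixing finite group extensions of Bernoulli shifts are even Bernoulli by Rudolph) gives $A=B\oplus Bu$ with $\sigma_\g(u)=c_\g u$ for unitaries $c_\g\in B$, so $E_B(u\sigma_\g(u))=c_\g$ has $\|c_\g\|_2=1$ for all $\g$ even though $u\in A\ominus B$ — global mixing only forces $c_\g\to0$ weakly, not in $\|\cdot\|_2$. This is precisely your remark that a mixing action may be a compact, even finite-index, extension of a free factor. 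So you have correctly located the load-bearing step and correctly observed that it does not follow from the stated hypotheses by the route the paper indicates; but identifying a gap is not the same as closing it, and your proposal closes nothing. To turn this into a proof one must either restrict to factors $X_0$ over which $X$ is genuinely relatively mixing, or find a replacement for Theorem \ref{mixingextn} that extracts the intertwining $L^\infty(X_0)\prec_N L^\infty(Y_0)$ from the singularity of $L(\G)=L(\La)$ in $M$ alone — neither of which appears in your write-up.
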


\begin{proof} Since $\G\ca X$ is mixing then by so is $\La \ca Y$. In particular the extensions $\G \ca (L^\infty(X_0)\subset L^\infty(X))$ and  $\G \ca (L^\infty(X_0)\subset L^\infty(X))$ are mixing. Since $\Theta (L^\infty(X_0)\rtimes \G)= L^\infty(Y_0)\rtimes \La$ the conclusion follows using the same arguments as in the proof of Theorem \ref{main2}. \end{proof}

\vskip 0.05in

\noindent Following the terminology from \cite{PV09} a free ergodic action $\G\ca X$ is called \emph{$\mathcal C_{\rm gms}$-superrigid} if up to unitary conjugacy $L^\infty(X)\subset L^\infty(X)\rtimes \G =M$ is the only group measure space Cartan subalgebra of $M$. Over the last decade many classes of examples of such actions have been discovered via deformation/rigidity theory. For some concrete examples the reader is referred to \cite{OP07,CS11,CSU11,PV11,PV12,Io12,CIK13} and the survey \cite{Io18}. An immediate consequence of \cite[Theorem 5.1]{Po04} is that all weakly mixing $\mathcal C_{\rm gms}$-superrigid actions satisfy the statement of Theorem \ref{main2}. Using our Theorem \ref{3'} we obtain the following generalization

\begin{cor} Any $\mathcal C_{\rm gms}$-superrigid action $\G \ca X$ of any icc group $\G$ satisfies the statement of Theorem \ref{main2}. \end{cor}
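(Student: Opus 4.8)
The plan is to reduce the statement directly to Theorem~\ref{3'}, the only extra input being the very definition of $\mathcal C_{\rm gms}$-superrigidity. So let $\La\ca Y$ be an arbitrary free ergodic pmp action and let $\Theta: L^\infty(X)\rtimes\G\ra L^\infty(Y)\rtimes\La$ be a $\ast$-isomorphism with $\Theta(L(\G))=L(\La)$; write $M=L^\infty(X)\rtimes\G$. First I would note that, since $\La\ca Y$ is free ergodic, $L^\infty(Y)\subseteq L^\infty(Y)\rtimes\La$ is a group measure space Cartan subalgebra, hence $\Theta^{-1}(L^\infty(Y))$ is a group measure space Cartan subalgebra of $M$. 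By $\mathcal C_{\rm gms}$-superrigidity of $\G\ca X$ there is $v\in\mathcal U(M)$ with $v\,\Theta^{-1}(L^\infty(Y))\,v^*=L^\infty(X)$; applying $\Theta$ and setting $w=\Theta(v)^*\in\mathcal U(L^\infty(Y)\rtimes\La)$, this reads $w\,\Theta(L^\infty(X))\,w^*=L^\infty(Y)$.

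Next I would pass to $\Theta':=\mathrm{Ad}(w)\circ\Theta$. By construction $\Theta'(L^\infty(X))=L^\infty(Y)$ and $\Theta'(L(\G))=w\,L(\La)\,w^*$. Since $\G$ is icc and $L(\La)=\Theta(L(\G))$ is a $\rm II_1$ factor, $\La$ is icc as well; thus $\G\ca X$ and $\La\ca Y$ are free ergodic actions of icc groups, and $\Theta'$ satisfies the hypotheses of Theorem~\ref{3'} with the distinguished unitary $w$ (note $w\in L^\infty(Y)\rtimes\La$ and $\Theta'(L(\G))=wL(\La)w^*$). Applying Theorem~\ref{3'} to $\Theta'$ yields $x\in\mathcal N_{L^\infty(Y)\rtimes\La}(L^\infty(Y))$, a character $\eta:\G\ra\mathbb T$ and a group isomorphism $\delta:\G\ra\La$ with $xw\in\mathcal U(L(\La))$ and $\Theta'(au_\g)=\eta(\g)\,\Theta'(a)\,x^*v_{\delta(\g)}x$ for all $a\in L^\infty(X)$, $\g\in\G$.

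Finally I would unwind $\mathrm{Ad}(w)$: putting $y:=xw$ and $\omega:=\eta$, the last identity becomes $\Theta(au_\g)=\omega(\g)\,\Theta(a)\,y^*v_{\delta(\g)}y$ for all $a\in L^\infty(X)$, $\g\in\G$, with $y\in\mathcal U(L(\La))$; moreover $y\,\Theta(L^\infty(X))\,y^*=x\,L^\infty(Y)\,x^*=L^\infty(Y)$ because $x$ normalizes $L^\infty(Y)$. This is precisely the conclusion of Theorem~\ref{main2}, and conjugacy of $\G\ca X$ with $\La\ca Y$ follows exactly as there, via $a\mapsto y\Theta(a)y^*$ intertwining $\sigma_\g$ with $\alpha_{\delta(\g)}$. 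The only point needing care is the bookkeeping with the two conjugating unitaries $v$ and $x$ and checking that after composing with $\mathrm{Ad}(w)$ the map $\Theta'$ genuinely falls under the hypothesis of Theorem~\ref{3'} (in particular that $\Theta'(L(\G))$ is inner conjugate to $L(\La)$ by an element of $L^\infty(Y)\rtimes\La$, which holds by the very unitary $w$ we introduced); there is no deformation/rigidity content here beyond invoking the uniqueness of the group measure space Cartan subalgebra.
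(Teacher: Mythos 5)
Your proposal is correct and follows exactly the route the paper intends (the corollary is stated without proof, but the preceding text makes clear it is meant to follow by combining the uniqueness of the group measure space Cartan subalgebra with Theorem \ref{3'}, just as you do). Your bookkeeping with $v$, $w=\Theta(v)^*$, $\Theta'=\mathrm{Ad}(w)\circ\Theta$ and $y=xw$ is accurate and reproduces the precise conclusion of Theorem \ref{main2}.
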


\noindent In particular the generalized Neshveyev-St\o rmer rigidity holds for all action $\G\ca X$ of icc groups $\G$ that are: hyperbolic groups, \cite{PV12}, free products \cite{Io12} or finite step extensions of such groups \cite{CIK13}.  

\vskip 0.05in
\noindent At this point it is increasingly evident that all the above Neshveyev-St\o rmer type rigidity results were achieved by heavily exploiting, at the von Neumann algebra level, the natural tension between mixing and compactness properties for action. It would be interesting to understand whether such results could still be obtained only in the compact regime. Specifically, we would like to propose for study the following 

\begin{prob} If $\G$ is icc does every free ergodic profinite action $\G\ca X$ satisfy the statement of Theorem \ref{main2}?  \end{prob} 

\noindent While providing a complete answer to this question seems hard at the moment, one can show there are many aspects of $\G \ca X$ that are shared by $\La \ca Y$ through this equivalence (e.g.\ compactness, profiniteness, etc). In fact we have the following result.  

\begin{thm}\label{profnsprob}Let $\Gamma\ca X$ be a free ergodic action and let $\Lambda\ca Y$ be any action. Let $\Theta: L^\infty(X)\rtimes \G \ra L^\infty(Y)\rtimes \Lambda$ be a $\ast$-isomorphism such that $\Theta(L(\G))=L(\La)$. Then  the following hold
\begin{enumerate}
\item If $\G \ca X$ is (weakly) compact then $\La\ca Y$ is also (weakly) compact.
\item If $\G$ is icc and $\G \ca X$ is profinite then $\La \ca Y$ is also ergodic and profinite. Specifically, if $\G \ca X$ is the inverse limit of $\G\ca X_n$ with $X_n$ finite then  $\La \ca Y$ is the inverse limit of $\G\ca Y_n$ with $Y_n$ finite so that for every $n$ we have $\Theta(L^{\infty}(X_n)\rtimes \G )=L^{\infty}(Y_n)\rtimes \La$. In addition, the stabilizer $Stab_\La(Y_n)\lhd \La$ is normal and we have that $\G/Stab_\G(X_n)\cong \La/Stab_\La(Y_n)$ for all $n$. Finally, there exists a normal subgroup $\Sigma\lhd \La$ so that $L^\infty(Y)'\cap L^\infty(Y)\rtimes \La= L^\infty(Y)\rtimes \Sigma$.
\end{enumerate}
\end{thm}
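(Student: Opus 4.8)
The plan is to dispose of two easy structural facts first, then treat the two parts in turn; throughout, suppress $\Theta$ and write $M:=L^\infty(X)\rtimes\G=L^\infty(Y)\rtimes\La=:N$ with $L(\G)=L(\La)$. Since $\G$ is icc, $L(\G)$ is a $\rm II_1$ factor, hence so is $L(\La)$, so $\La$ is icc; and since $\G\ca X$ is free ergodic, $M$ is a factor, so every $\La$-invariant projection in $L^\infty(Y)$ lies in $Z(M)=\mc1$, i.e.\ $\La\ca Y$ is ergodic. For \emph{Part 1}, the key point is that a $\ast$-isomorphism carries quasinormalizing algebras onto quasinormalizing algebras, so $\mathcal{QN}_M(L(\G))''=\mathcal{QN}_N(L(\La))''$. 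By the description of compactness via quasinormalizers (\cite[Theorem 6.9]{Io08a}; cf.\ Theorem \ref{distaltowerCP}), for an ergodic action one has $\mathcal{QN}_M(L(\G))''=L^\infty(X_c)\rtimes\G$ with $\G\ca X_c$ the maximal compact factor, and similarly $\mathcal{QN}_N(L(\La))''=L^\infty(Y_c)\rtimes\La$. If $\G\ca X$ is compact the left side is all of $M$, forcing $L^\infty(Y_c)\rtimes\La=N$, hence $Y_c=Y$; the weakly compact case is identical using the corresponding characterization.

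For \emph{Part 2}, a profinite action is compact, so Part 1 already gives that $\La\ca Y$ is compact (and it is ergodic by the above). Write $X=\varprojlim X_n$ with $X_n=\G/\G_n$ for a decreasing chain of finite-index normal subgroups $\G_n\lhd\G$, $\cap_n\G_n=\{1\}$ (this may be arranged by refining the profinite tower), and set $M_n=L^\infty(X_n)\rtimes\G$, so $L(\G)=M_0\subseteq M_1\subseteq\cdots$, $\overline{\cup_n M_n}^{WOT}=M$, each $M_n$ is a $\rm II_1$ factor (the Jones basic construction of the irreducible inclusion $L(\G_n)\subseteq L(\G)$ --- irreducible because $\G$, and hence every $\G_n$, is icc, cf.\ the argument in Proposition \ref{basicconstr1}), and $[M_n:L(\G)]=|X_n|<\infty$. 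Now $\La\ca(\mc1\subseteq L^\infty(Y))$ is a compact extension in the sense of Definition \ref{compact} (by the Examples following it) and $\La$ is icc, so Theorem \ref{cptint} applies to each intermediate algebra $L(\La)\subseteq\Theta(M_n)\subseteq N$: there is a $\La$-invariant $Q_n$ with $\mc1\subseteq Q_n\subseteq L^\infty(Y)$ and $\Theta(M_n)=Q_n\rtimes\La$, and from the proof (Theorem \ref{usefulresult}) $Q_n=\Theta(M_n)\cap L^\infty(Y)$ with $E_{L^\infty(Y)}\circ E_{\Theta(M_n)}=E_{Q_n}$. Since $[\Theta(M_n):L(\La)]=[M_n:L(\G)]<\infty$, the inclusion $L(\La)\subseteq Q_n\rtimes\La$ admits a finite Pimsner--Popa basis, so by Lemma \ref{finindex1} $Q_n$ is completely atomic and, being the coefficient algebra of the factor $\Theta(M_n)$, the action $\La\ca Q_n$ is ergodic, hence transitive; let $Y_n$ be the associated finite $\La$-set, so $L^\infty(Y_n)=Q_n$ and $|Y_n|=\dim Q_n=|X_n|$. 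The $Q_n$ increase, and $\overline{\cup_n Q_n}^{WOT}=L^\infty(Y)$: for $f\in L^\infty(Y)\subseteq N=\overline{\cup_n\Theta(M_n)}$, approximate $f$ in $WOT$ by $x_k\in\Theta(M_{n_k})$, whence $E_{L^\infty(Y)}(x_k)=E_{Q_{n_k}}(x_k)\in Q_{n_k}$ and $E_{L^\infty(Y)}(x_k)\to f$. Thus $\La\ca Y=\varprojlim\La\ca Y_n$ is profinite and ergodic, with $\Theta(L^\infty(X_n)\rtimes\G)=L^\infty(Y_n)\rtimes\La$ for every $n$.

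For the stabilizers, apply Proposition \ref{basicconstr1} to $\theta=\Theta|_{M_n}\colon L^\infty(X_n)\rtimes\G\to L^\infty(Y_n)\rtimes\La$, a $\ast$-isomorphism carrying $L(\G)$ to $L(\La)$ between crossed products of transitive actions that are $\rm II_1$ factors: for $x\in At(L^\infty(X_n))$, $y\in At(L^\infty(Y_n))$ one obtains a unitary $w\in L(\La)$ with $\theta(L(Stab_\G(x)))=w^*L(Stab_\La(y))w$, and since $Stab_\G(x)=\G_n\lhd\G$, the additional clause of Proposition \ref{basicconstr1} gives that $Stab_\La(Y_n):=Stab_\La(y)$ is normal in $\La$ and $\G/\G_n\cong\La/Stab_\La(Y_n)$. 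Finally, because $K_n:=Stab_\La(Y_n)$ is normal, all point stabilizers of the transitive action $\La\ca Y_n$ equal $K_n$, so $\La/K_n$ acts freely on $Y_n$; hence for $\la\notin K_n$ the map $\sigma_\la$ has no fixed point on $Y_n$. Consequently, if $\sigma_\la\neq\mathrm{id}$ on $Y$, choosing $n$ with $\la\notin K_n$ gives $Fix_Y(\sigma_\la)\subseteq\pi_n^{-1}(Fix_{Y_n}(\sigma_\la))=\emptyset$. Since a direct Fourier computation gives $L^\infty(Y)'\cap N=\{\sum_\la a_\la v_\la:\ \mathrm{supp}(a_\la)\subseteq Fix_Y(\sigma_\la)\}$, this forces the Fourier support of any element of $L^\infty(Y)'\cap N$ into $\Sigma:=\{\la:\sigma_\la=\mathrm{id}_Y\}=\cap_n K_n$, which is a normal subgroup of $\La$, and therefore $L^\infty(Y)'\cap N=L^\infty(Y)\rtimes\Sigma$.

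The main obstacle is the interface between Theorem \ref{cptint} and Proposition \ref{basicconstr1}: one must observe that Theorem \ref{cptint} (with $P_0=\mc1$) needs no freeness hypothesis on $\La\ca Y$, only that $\La$ is icc and the extension is compact, and then that the point-stabilizer data transported by Proposition \ref{basicconstr1} is exactly what yields freeness of $\La/K_n\ca Y_n$ --- the latter being precisely the mechanism forcing the relative-commutant formula $L^\infty(Y)'\cap N=L^\infty(Y)\rtimes\Sigma$. A minor technical nuisance is the reduction to a defining tower $X_n=\G/\G_n$ with $\G_n$ normal, which is harmless since any profinite tower can be refined.
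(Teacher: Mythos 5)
Your proof follows essentially the same route as the paper's: part 1 via the quasinormalizer characterization of compactness from \cite[Theorem 6.10]{Io08a}, and part 2 by pushing the finite-index intermediate algebras $\Theta(L^\infty(X_n)\rtimes\G)$ through Theorem \ref{cptint}, Lemma \ref{finindex1} and Proposition \ref{basicconstr1}, then identifying $L^\infty(Y)'\cap (L^\infty(Y)\rtimes\La)$ by a Fourier-support argument (yours phrased via fixed-point sets, the paper's via conditional expectations onto the $B_n$ --- the same computation). One caveat: the normality of $Stab_\G(X_n)$ is genuinely needed to invoke the last clause of Proposition \ref{basicconstr1}, and the paper treats it as an implicit hypothesis (``is assumed normal''), whereas you assert it ``may be arranged by refining the profinite tower.'' That refinement is not available in general: replacing $X_n=\G/H_n$ by $\G/\mathrm{core}(H_n)$ produces finite transitive $\G$-sets that are factors of a \emph{larger} profinite action extending $X$, not factors of $X$ itself, so an ergodic profinite action need not admit a cofinal tower of finite factors with normal stabilizers. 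Apart from this (shared) hypothesis, your argument is correct and matches the paper's.
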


\begin{proof} \emph{1.} As before we assume that $L^\infty(X)\rtimes \G = L^\infty (Y)\rtimes \La=M$ and $L(\G)=L(\La)$. Since $\G \ca X$ is compact, \cite[Theorem 6.10]{Io08a} implies that the quasinormalizer algebra satisfies  $\mathcal{QN}_M (L(\G))''=M$. Since canonically $\mathcal{QN}_M(L(\G))'' = \mathcal{QN}_M(L(\La))''$ then $\mathcal{QN}_M(L(\La))''=M$ which by  \cite[Theorem 6.10]{Io08a} again implies that $\La \ca Y$ is also compact. The statement on weak compactness follows from \cite[Proposition 3.2]{OP07}.  
\vskip 0.04in	

\noindent \emph{2.} Since $\G$ is icc then $\La$ is also icc. Hence $\La \ca Y$ is ergodic (otherwise $M$ will not be a factor).  Next we show that $\La \ca Y$ is profinite. As $\G\ca X$ is profinite, it is the inverse limit of ergodic actions $\G\ca X_n$ on finite spaces. Thus $A_n=L^\infty(X_n)$ form  a tower of finite dimensional abelian $\G$-invariant subalgebras $A_0\subset...\subset A_n\subset A_{n+1}\subset...\subset L^\infty(X)$ such that $\overline{\cup_n A_n}^{SOT}=L^\infty(X)$. Moreover $\G\ca A_n$ is transitive for every $n$. Since $L^\infty(X_0)\rtimes \G=L^\infty(Y_0)\rtimes \La$ and $L(\G)=L(\La)$ using Theorem \ref{cptint} for every $n$ one can find a $\La$-invariant subalgebra $B_n \subset L^\infty(Y_0)$ such that $A_n\rtimes \G =B_n\rtimes \La$. Factoriality of $A_n \rtimes \G$ and $\La$ being icc imply that the action $\La \ca B_n$ is ergodic. Since $L(\G)\subseteq A_n \rtimes \G$ is a finite index inclusion of II$_1$ factors so is $L(\La)\subseteq B_n \rtimes \La$. Using Lemma \ref{finindex1} we get that $B_n$ is finite dimensional and the action $\La \ca B_n$ is transitive. One can easily check that $B_0 \subset ...\subset B_n\subset B_{n+1}\subset ... \subset L^\infty(Y_0)$ and also $\overline{\cup_n B_n}^{SOT}= L^\infty(Y_0)$. Thus there exist factors $\La \ca Y_n$ of $\La \ca Y$ with $Y_n$ finite such that $\La \ca Y$ is the inverse limit of $\La\ca Y_n$. 
 \vskip 0.04in
\noindent Denote by $\{p^n_i \,|\, 1\leq i\leq k_n\} = At(B_n)$. Since $Stab_\G (q)$ is assumed normal in $\G$ for every $q\in At(A_n)$ it follows from Proposition \ref{basicconstr1} that $Stab_\La(p^n_i)$ is normal in $\La$ for every $i$. Moreover, since the action $\La \ca B_n$ is transitive one can easily see that we actually have $Stab_\La(p^n_i)=Stab_\La(B_n)$ for all $1\leq i\leq k_n$. Finally, by Proposition \ref{basicconstr1} we also have that $\G/Stab_\G(X_n)\cong \La/Stab_\La(Y_n)$ for all $n$.   
 \vskip 0.04in

 \noindent In the remaining part we describe the relative commutant $L^\infty(Y)'\cap M$. So fix $b\in L^\infty(Y)'\cap M$ and consider its Fourier decomposition $b=\sum_{\la \in \La}b_\la v_\la$. Since $b$ commutes with $L^\infty(Y)$ we get that $yb_\la =\alpha_\la (y) b_\la$ for all $\la \in \La$ and $y\in L^\infty(Y)$. Letting $e_\la$ be the support projection of $b_\la b^*_\la$ this further implies that for all $y\in L^\infty(Y_0)$ and $\la \in \La$ we have  \begin{equation}\label{4.1.1}ye_\la =\alpha_\la (y) e_\la.
 \end{equation}
 
\noindent Fix $\la$ such that $b_\la\neq 0$ (and hence $e_\la\neq 0$). Denote by $e^n_\la:=E_{B_n}(e_\la)$ and applying the conditional expectation $E_{B_n}$ in  \eqref{4.1.1}, for all $y\in B_n$ we have \begin{equation}\label{4.1.2}ye^n_\la =\alpha_\la (y) e^n_\la.
 \end{equation}  Since $e_\la\neq 0$ then $e^n_\la\neq 0$ and hence there is $p^n_i\in At(B_n)$ satisfying $e^n_\la p^n_i= cp_i^n$ for some scalar $c>0$. Multiplying \eqref{4.1.2} by $c^{-1}p^n_i$ we get $yp^n_i =\alpha_\la (y) p^n_i$ for all $y\in B_n$. This entails that $\alpha_\la(p^n_i)=p^n_i$ and hence $\la\in Stab_\La(p^n_i)=Stab_\La(B_n)$. Altogether, we have shown that for every $\la$ with $b_\la\neq 0$ we have $\la\in Stab_\La(B_n)$. Applying this for every $n$ we conclude that $\la \in \cap_n Stab_\La (B_n)=:\Sigma$. In particular $b\in L^{\infty}(Y_0)\rtimes \Sigma$ and hence $L^\infty(Y)'\cap M \subseteq L^{\infty}(Y)\rtimes \Sigma$. Since the reverse containment canonically holds we get  $L^\infty(Y)'\cap M = L^{\infty}(Y)\rtimes \Sigma$. As $Stab_\La(B_n)$'s are normal in $\La$ then $\Sigma$ is also normal in $\Lambda$.   \end{proof}	

\noindent These results can be used to produce additional examples of actions satisfying the statement of Theorem \ref{main2}. For example part 1.\ of the previous theorem in combination with Theorem \ref{3'} and \cite[Theorem 4.16]{BIP18}, \cite[Theorem 5.1]{CPS12} shows that any free ergodic weakly compact action $\G \ca X$ satisfies the generalized Neshveyev-St\o rmer rigidity whenever $\G$ is an icc group in one of the following classes

\begin{enumerate}
\item $\G$ is any properly proximal group \cite[Definition4.1]{BIP18}, in particular when $\G=PSL_n(\mathbb Z)$, $n\geq 2$ or any $\G$ that admits a proper array into a nonamenable representation (see \cite[Definition 2.1]{CS11}). In fact the latter also follows by using the results in \cite{CS11,CSU11}; 
\item $\G= H\wr G$ is a wreath product where $H$ is nontrivial abelian  and $G$ nonamenable \cite{CPS12}.      
\end{enumerate}

\section{Some applications to strong rigidity results in von Neumann algebras and orbit equivalence}

\begin{thm}\label{virtualequal} Let $\G$ and $\La$ be icc property (T) groups. Let $\G \ca X = \lim X_n$ be a free ergodic profinite action and let $\La \ca Y$ be a free ergodic compact action. Assume that $\Theta: L^\infty(X)\rtimes \G \ra L^\infty(Y)\rtimes \La$ is a $\ast$-isomorphism. Then $\La \ca Y=\lim Y_n$ is a profinite action. Moreover there exists $l\in \mathbb N$ and a unitary $w\in L^\infty(Y)\rtimes \La$ such that $\Theta(L^\infty(X_{k+l})\rtimes \G)= w(L^\infty(Y_{k+1})\rtimes \La)w^*$ for every positive integer $k$.
\end{thm}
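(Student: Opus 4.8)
The plan is to suppress $\Theta$ and identify $M:=L^\infty(X)\rtimes\G=L^\infty(Y)\rtimes\La$, so that $M_n:=L^\infty(X_n)\rtimes\G$ is an increasing sequence of $\mathrm{II}_1$ subfactors of $M$ with $\overline{\bigcup_n M_n}^{\,\mathrm{SOT}}=M$ and $[M_n:L(\G)]=|X_n|<\infty$ (each $\G\ca X_n$ being ergodic on a finite set, hence transitive, so one may invoke Proposition~\ref{basicconstr1}, or argue directly). Since $\G$ and $\La$ are icc and the actions are ergodic, a routine Fourier computation gives $L(\G)'\cap M=L(\La)'\cap M=\mathbb C1$, and hence also $L(\G)'\cap M_n=L(\La)'\cap M_n=\mathbb C1$. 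We may assume $|X_n|\to\infty$, the case of finite $X$ being trivial. The whole argument combines Popa's property (T) rigidity, Jones' finite index theory, and the intermediate subalgebra theorem for compact extensions (Theorem~\ref{cptint}); at the end the stated formula is read off by reindexing.

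First I would locate $L(\La)$. Since $\La$ has property (T), the inclusion $L(\La)\subseteq M$ is rigid, so applying this to the conditional expectations $E_{M_n}\colon M\to M_n$ (which tend to $\mathrm{id}_M$ pointwise in $\|\cdot\|_2$) the standard rigidity/intertwining dichotomy yields $L(\La)\prec_M M_{n_0}$ for some $n_0$. As $L(\La)$ is a factor with $L(\La)'\cap M=\mathbb C1$, one upgrades this in the usual way (take $p=1$ in Theorem~\ref{corner}; then $v^*v\in L(\La)'\cap M=\mathbb C1$ forces $v^*v=1$, and $\tau(vv^*)=\tau(v^*v)=1$ forces $vv^*=1$, so $v$ is a unitary) to a unitary $w\in\mathcal U(M)$ with $wL(\La)w^*\subseteq M_{n_0}$. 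Replacing each $M_n$ by $w^*M_n w$ and carrying $w$ to the end, we henceforth assume $L(\La)\subseteq M_{n_0}$.

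The heart of the proof — and the main obstacle — is to show $[M_{n_0}:L(\La)]<\infty$. Because $\La\ca Y$ is compact, its maximal compact factor is $Y$ itself, so $\mathcal{QN}_M(L(\La))''=M$ by \cite[Theorem 6.10]{Io08a}; equivalently $L^2(M)$ is a discrete $L(\La)$-bimodule. As $L(\La)\subseteq M_{n_0}$, the subspace $L^2(M_{n_0})$ is an $L(\La)$-sub-bimodule, hence also discrete, so $\mathcal{QN}_{M_{n_0}}(L(\La))''=M_{n_0}$; moreover $M_{n_0}$ is an intermediate subalgebra of the compact extension $\G=\La\ca(\mathbb C1\subseteq L^\infty(Y))$, so Theorem~\ref{cptint} gives $M_{n_0}=B_{n_0}\rtimes\La$ with $B_{n_0}:=L^\infty(Y)\cap M_{n_0}$ a $\La$-invariant abelian subalgebra. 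On the other hand, since $\G$ has property (T), the inclusion $L(\G)\subseteq M_{n_0}=L^\infty(X_{n_0})\rtimes\G$ is rigid; feeding this into the completely positive approximations of $M_{n_0}$ associated to the discrete decomposition of $L^2(M_{n_0})$ over $L(\La)$, rigidity forces $(L(\G))_1$ to be uniformly approximated by a single finite $L(\La)$-sub-bimodule, whence $L(\G)\prec_{M_{n_0}}L(\La)$. Upgrading once more ($L(\G)$ a factor, $L(\G)'\cap M_{n_0}=\mathbb C1$) produces a unitary $w_2\in\mathcal U(M_{n_0})$ with $w_2L(\G)w_2^*\subseteq L(\La)$, and therefore $[M_{n_0}:L(\La)]\le[M_{n_0}:w_2L(\G)w_2^*]=[M_{n_0}:L(\G)]=|X_{n_0}|<\infty$. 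This is precisely the step where the tension between the compactness of $\La\ca Y$, the property (T) rigidity of $\G$, and the profinite structure of $\G\ca X$ is genuinely exploited, and the delicate point is reconciling rigidity with the — a priori only \emph{discrete}, not \emph{finite-index} — approximations of a compact inclusion.

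It remains to conclude. For every $n\ge n_0$ the algebra $M_n$ is intermediate, $L(\La)=\mathbb C1\rtimes\La\subseteq M_n\subseteq L^\infty(Y)\rtimes\La$, so Theorem~\ref{cptint} gives $M_n=B_n\rtimes\La$ with $B_n=L^\infty(Y)\cap M_n$; and $[M_n:L(\La)]=\frac{|X_n|}{|X_{n_0}|}\,[M_{n_0}:L(\La)]<\infty$, so by Lemma~\ref{finindex1} the algebra $B_n$ is finite dimensional and $\La\ca B_n$ is transitive — write $B_n=L^\infty(Y_n)$ with $Y_n$ a finite $\La$-set. The $B_n$ increase and $\overline{\bigcup_{n\ge n_0}B_n}^{\,\mathrm{SOT}}=L^\infty(Y)$ (since $\overline{\bigcup_n M_n}=M$), so $\La\ca Y=\varprojlim(\La\ca Y_n)$ is profinite and $M_n=L^\infty(Y_n)\rtimes\La$ for all $n\ge n_0$. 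Restoring the unitary $w$ from the second paragraph and reindexing the tower with $l:=n_0$ (so that $Y_{k+1}$ now denotes the old $Y_{k+n_0}$) gives $\Theta(L^\infty(X_{k+l})\rtimes\G)=w\,(L^\infty(Y_{k+1})\rtimes\La)\,w^*$ for every $k\ge 0$, as required.
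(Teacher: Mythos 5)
Your overall strategy is the same as the paper's --- play the property (T) rigidity of $L(\La)$ and $L(\G)$ against the compact/profinite approximations of $M$ over $L(\G)$ and over $L(\La)$ respectively, then use Theorem \ref{cptint} and Lemma \ref{finindex1} to recognize the finite stages --- but both of your ``upgrade'' steps rest on a false principle, and the second one breaks the proof. The principle you invoke twice is: if $P\prec_M Q$ with $P$ a factor and $P'\cap M=\mathbb C1$, then $wPw^*\subseteq Q$ for some unitary $w$ (``take $p=1$ in Theorem \ref{corner}, then $v$ is a unitary''). That is not what Theorem \ref{corner} gives: it produces some projections $p,q$, a $\ast$-homomorphism $\theta$ and a partial isometry $v$ with $vxv^*=\theta(x)vv^*$ for $x\in pPp$, and the projection $vv^*$ lies in $\theta(pPp)'\cap qMq$, which need not be contained in $Q$; so even after patching $p$ up to $1$ using factoriality of $P$, one only lands in $Q\vee\{vv^*\}''$, not in $Q$. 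A concrete counterexample: for any proper finite-index irreducible subfactor $Q\subsetneq M$ one has $M\prec_M Q$, $M$ is a factor with $M'\cap M=\mathbb C1$, yet no unitary conjugate of $M$ is contained in $Q$. The paper's proof is structured precisely to avoid this trap: it only produces a corner $upL(\La)pu^*=Qr$ with $Q\subseteq qL(\G)q$ and $r=vv^*$ a priori outside every finite stage, and then spends the Claim proving $[qL(\G)q:Q]<\infty$, so that $Q'\cap qMq$ is finite dimensional and $r$ is forced into some $qM_lq$.

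Your first use of the principle is repairable: rigidity of $L(\La)\subseteq M$ tested against $E_{M_n}\to\mathrm{id}$ gives the much stronger statement $\sup_{u\in\mathcal U(L(\La))}\|E_{M_{n_0}}(u)-u\|_2\le\varepsilon$, and Christensen's perturbation theorem (e.g.\ \cite[Theorem A.1]{Po01}) combined with $L(\La)'\cap M=\mathbb C1$ does then produce a unitary $w$ with $wL(\La)w^*\subseteq M_{n_0}$; you just need to cite that result rather than Theorem \ref{corner}. Your second use is not repairable as stated: the containment $w_2L(\G)w_2^*\subseteq L(\La)$ is false in general --- it would force $[M_{n_0}:L(\La)]\le[M_{n_0}:L(\G)]=|X_{n_0}|$, a constraint nothing in the hypotheses imposes --- so your derivation of the key fact $[M_{n_0}:L(\La)]<\infty$ collapses. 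The fact itself is correct and reachable from what you already have: $L(\G)\prec_{M_{n_0}}L(\La)$ together with a finite Pimsner--Popa basis of $M_{n_0}$ over $L(\G)$ yields $M_{n_0}\prec_{M_{n_0}}L(\La)$ (multiply a nonzero $L(\G)$-$L(\La)$-bimodule of finite right $L(\La)$-dimension by the basis elements), and $M_{n_0}\prec_{M_{n_0}}L(\La)$ implies $[M_{n_0}:L(\La)]<\infty$ --- exactly the step the paper delegates to \cite[Proposition 2.3]{CD18}. With these two repairs your argument goes through, and its organization (landing $L(\La)$ in a finite stage at the outset instead of chasing the projection $r$ through relative commutants) is arguably cleaner than the paper's.
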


\begin{proof} To simplify the notations let $A=L^\infty(X)$,  $B=L^\infty(Y)$ and notice that  $M=A\rtimes \G= B\rtimes \La$. Moreover if $L^\infty(X_n)=A_n$. then $A_n\subseteq A_{n+1}$ and  $A=\overline{\cup_n A_n}^{SOT}$. Also if $M_n = A_n\rtimes \G$ then $M_n\subseteq M_{n+1}$ and $M=\overline{\cup_n M_n}^{SOT}$.

\noindent  Since $L(\La)\subset M$ is rigid subalgebra and $M$ has Haagerup property relative to $L(\G)$ it follows that $L(\La)\prec_M L(\G)$. Hence one can find nonzero projections $p\in L(\La)$, $q\in L(\G)$ a nonzero partial isometry $v\in M$  and an injective $\ast$-homomorphism  $\phi: pL(\La)p\ra rL(\G)r$ satisfying $\phi(x)v=vx$ for all $x\in pL(\La)p$. Since $L(\La)\subset M$ is a irreducible subfactor we have that $v^*v=p$. Denoting by $Q=\phi(pL(\La) p)$ we also have that $r=vv^*\in Q'\cap qMq$. Letting $u\in M$ a unitary such that $u v^*v =v$ we have that \begin{equation}\label{5.1}
u pL(\La)pu^*=Q r.
\end{equation} 
Next we prove the following

\begin{claim}\label{finindexQ} $Q\subseteq qL(\G)q$ is a finite index subfactor.
\end{claim}
\noindent \emph{Proof of Claim \ref{finindexQ}}. Since $L(\G)\subset M$ is a rigid subalgebra and $M$ has Haagerup's property relative to $L(\La)$ we also have that $L(\G)\prec_M L(\La)$. Since $L(\La)$ is a factor this further entails that $L(\G) \prec_M upL(\La)pu^*=Qr$. Hence by Popa's intertwining techniques there exist finitely many $x_j,y_j\in M$ and $c>0$ such that $\sum_j \|E_{Qr }(x_j u y_j)\|_2^2\geq c$ for all $u\in \mathcal U(L(\G))$. Since $E_Q(r)=\tau_q(r)q$ then we have $E_{Qr}(x)= E_Q(r)^{-1} E_Q(qxq) r= \tau_q(r)^{-1}E_Q(qxq)r$ for all $x\in L(\G)$. Using this formula in the previous inequality we further get that $\sum_j \|E_{Q }(qx_j u y_jq)\|_2^2\geq c \tau_q(r)>0$. Approximating $x_j$ and $y_j$ with their Fourier decompositions one can find finitely many $a_i, b_i \in A$ and $\g_i, \de_i\in \G$ such that for all $u\in\mathcal U(L(\G))$ we have $\sum_i \|E_{Q }( qu_{\g_i} a_iu b_i u_{\delta_i}q)\|_2^2\geq \frac{c\tau_q(r)}{2}$. Using this together with $E_Q=E_Q\circ E_{qL(\G)q}$ we see that for all $\g\in \G$ we have  

\begin{equation*}\begin{split}
 \frac{c\tau_q(r)}{2}&\leq \sum^s_{i=1} \|E_{Q }(q u_{\g_i} a_iu_\g  b_i u_{\delta_i}q)\|_2^2=\sum^s_{i=1} \|E_{Q }( qu_{\g_i} a_i \sigma_\g (b_i) u_{\g \delta_i}q)\|_2^2\\
 &=\sum^s_{i=1} \|E_{Q }( q E_{L(\G)}(u_{\g_i} a_i \sigma_\g (b_i) u_{\g \delta_i})q)\|_2^2 =\sum^s_{i=1} |\tau(a_i \sigma_\g (b_i))|^2\|E_{Q }( q  u_{\g_i \g \delta_i}q)\|_2^2\\
 &\leq (\max_{i=1,s}\|a_i\|^2_\infty\|b_i\|^2_\infty)(\sum_i \|E_{Q }( q  u_{\g_i \g \delta_i}q)\|_2^2).
\end{split}\end{equation*}   
Thus letting $d= \frac{c\tau_q(r)}{ 2 \max_{i=1,s}\|a_i\|^2_\infty\|b_i\|^2_\infty}$ we have that $\sum_i \|E_{Q }( q  u_{\g_i \g \delta_i}q)\|_2^2\geq d>0$ for all $\g\in G$. Hence by Theorem \ref{corner} we get $L(\G)\prec_{L(\G)} Q$ and since $L(\G)$ is a II$_1$ factor we actually have $qL(\G)q\prec_{L(\G)} Q$ and hence $qL(\G)q\prec_{qL(\G)q} Q$. In particular this entails $[qL(\G)q:Q]<\infty$ and the claim follows. $\hfill\blacksquare$
\vskip 0.05in
\noindent Combining the Claim \ref{finindexQ} with \cite[Lemma 3.1]{Po02} it follows the inclusion $qL(\G)q'\cap qMq\subseteq Q'\cap qMq$ has finite Pimsner-Popa probabilistic index. Since $\G$ is icc and $\G \ca X$ is free it follows that $L(\G)'\cap M =\mathbb C$ and thus $qL(\G)q'\cap qMq =\mathbb C q$. Combining with the above we conclude that $Q'\cap qMq$ is a finite dimensional von Neumann algebra. Since $Q\subseteq qL(\G)q\subset qM_1q\subset ...\subset qM_n q\subset qM_{n+1}q\subset ...\subset qMq$ and $qMq=\overline{\cup_n qM_n q}^{SOT}$ one can check that  $Q'\cap qM_1q\subset...\subset  Q'\cap qM_n q\subset Q'\cap qM_{n+1}q\subset ...\subset Q'\cap qMq$ and also  $Q'\cap qMq=\overline{\cup_n Q'\cap qM_n q}^{SOT}$. Since $Q'\cap qMq$ is finite dimensional there must be a minimal integer $l$ so that $Q'\cap qM_lq= Q'\cap qMq$. In particular, we have $r\in qM_nq$ and by \eqref{5.1} we obtain $upL(\La)pu^*\subseteq M_l$. As $M_l$ is a factor one can find $w\in \mathcal U(M)$ so that $w L(\La)w^*\subseteq M_l$. 
\vskip 0.03in
\noindent Since the action $\La\ca B$ is compact the using Theorem \ref{cptint} there is a $\La$-invariant von Neumann subalgebra $B_1\subset B$ satisfying $w (B_1\rtimes \La) w^*= A_l \rtimes \G=M_l$. Since $L(\G)$ has property (T) and $[M_l: L(\G)]<\infty$ it follows that $M_l$ has property (T). Thus $B_1\rtimes \La$ is a factor with property (T) and as $B_1\rtimes \La$ has Haagerup  property relative to $L(\La)$ we conclude that $B_1\rtimes \La \prec L(\La)$ and hence by \cite[Proposition 2.3]{CD18} we have  $[B_1\rtimes \La :L(\La)]<\infty$. Hence by Lemma \ref{finindex1} $B_1$ is finite dimensional and the action $\La \ca B_1$ is transitive. Finally, using Theorem \ref{cptint} successively there exist a tower of $\La$-invariant finite dimensional abelian von Neumann subalgebras $B_1\subset ...\subset B_n\subset B_{n+1}\subset ...\subset B$ such that $\overline{\cup_{n\geq 1} B_n}^{SOT} =B$ and also $w(B_{k+1}\rtimes \La) w^*=A_{k+l} \rtimes \G$ for all $k\geq 0$. Thus there exists a sequence of factors $\La \ca Y_n$ of $\La \ca Y$ into finite probability spaces $Y_n$ such that $L^\infty(Y_n)=B_n$ for all $n\geq 1$. From the previous relations one can check $\La \ca Y$ is the inverse limit of $\La\ca  Y_n$ which gives the desired statement. \end{proof}

 


\noindent The von Neumann algebraic methods developed in the previous sections can be used effectively to derive the following version of Ioana's $OE$-superrigidity theorem \cite[Theorem A]{Io08} for profinite actions of icc groups. 
\begin{thm}\label{ioanaoe} Let $\G \ca X$ be a profinite free ergodic action of an icc property (T) group $\G$ and let $\La\ca Y$ be an arbitrary free ergodic action of an icc group $\La$. Assume that  $\Theta: L^\infty(X)\rtimes \G \ra L^\infty(Y)\rtimes \La$ is a $\ast$-isomorphism such that $\Theta(L^\infty(X))=L^\infty(Y)$. Then there exist projections $p\in L^\infty (X)$ and $q\in L^\infty (Y)$, a unitary $u\in \mathcal N_{L^\infty(Y)\rtimes \La}(L^\infty(Y))$ with $u\Theta(p)u^*=q$, normal subgroups $\G'\lhd \G$, $\La'\lhd \La$ with $[\G:\G']=[\La:\La']<\infty$, a character $\eta: \G'\ra \mathbb T$ and a group isomorphism $\delta: \G'\ra \La'$ such that for all $\g \in \G'$ and $a\in A$ we have $$ \Theta(ap u_\g )= \eta(\g) \Theta(ap)  u^* v_{\delta(\g)} u. $$ 

\noindent In particular the actions $\G\ca X$ and $\La \ca Y$ are virtually conjugate.  \end{thm}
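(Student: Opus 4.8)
The plan is to reduce to the setting of Theorem~\ref{virtualequal} and then extract the virtual conjugacy using Proposition~\ref{basicconstr1} together with the conjugacy criterion of Theorem~\ref{3'}, following the blueprint of the proof of Theorem~\ref{main2}. Throughout, suppress $\Theta$ and write $A=L^\infty(X)=L^\infty(Y)$, $M=A\rtimes_\sigma\G=A\rtimes_\alpha\La$. Since $\G\ca X=\varprojlim X_n$ is profinite, fix the finite $\G$-factors and set $A_n=L^\infty(X_n)$, $P_n=A_n\rtimes\G$, so $L(\G)\subseteq P_n\subseteq P_{n+1}$, $[P_n:L(\G)]=\dim A_n<\infty$, $\overline{\bigcup_nP_n}^{\,\mathrm{SOT}}=M$, and, since $\G\ca X$ is compact, $\mathcal{QN}_M(L(\G))''=M$ by \cite[Theorem~6.10]{Io08a}. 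A first remark is that $\La$ has property (T): since $\G$ is Kazhdan and $\G\ca X$ is free, the orbit equivalence relation $\mathcal R$ of $\G\ca X$ has the relative property (T), and relative property (T) is an orbit equivalence invariant which, for free ergodic pmp actions, is equivalent to the Kazhdan property of the acting group; hence $L(\La)\subseteq M$ is a rigid inclusion.

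The core step is to show that $\La\ca Y$ is compact. Rigidity of $L(\La)\subseteq M$ together with $M=\overline{\bigcup_nP_n}^{\,\mathrm{SOT}}$ and Popa's result that a relatively rigid subalgebra of an inductive limit can be unitarily conjugated into one of the terms give a unitary $u\in M$ and an $n$ with $uL(\La)u^*\subseteq P_n$; the finite Pimsner--Popa basis of $P_n$ over $L(\G)$ then forces $uL(\La)u^*\prec_M L(\G)$. Because $\G\ca X$ is profinite, $M$ has the Haagerup property relative to $L(\G)$, so one can run the index analysis of Claim~\ref{finindexQ} from the proof of Theorem~\ref{virtualequal}; here the hypothesis $\Theta(L^\infty(X))=L^\infty(Y)$ is used through Theorem~\ref{cptint} applied to the compact $\G$-extension $\mathbb C\subseteq L^\infty(X)$, which lets one identify the relevant intermediate algebras with crossed products over finite transitive $\G$-factors and thereby obtain that $L(\G)$ and $L(\La)$ are commensurable up to unitary conjugacy inside $M$. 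Since the quasinormalizing algebra is insensitive to passing between commensurable subfactors and $\mathcal{QN}_M(L(\G))''=M$, this yields $\mathcal{QN}_M(L(\La))''=M$, which by \cite[Theorem~6.10]{Io08a} is precisely the statement that $\La\ca Y$ is compact.

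With $\La$ Kazhdan and $\La\ca Y$ compact established, Theorem~\ref{virtualequal} applies: $\La\ca Y=\varprojlim Y_n$ is profinite, and there are $l\in\mathbb N$ and a unitary $w\in M$ with $\Theta(L^\infty(X_{k+l})\rtimes\G)=w\bigl(L^\infty(Y_{k+1})\rtimes\La\bigr)w^*$ for all $k\ge 0$. By Lemma~\ref{finindex1} every $\G\ca X_n$ and $\La\ca Y_n$ is transitive. Since (by the commensuration above, now applied on the $\La$-side via Theorem~\ref{cptint} and the compactness of $\La\ca Y$) a conjugate of $L(\G)$ equals $L^\infty(Y_0)\rtimes\La$ for some finite transitive $\La\ca Y_0$, one can, after cutting by minimal projections and a unitary conjugation, reduce to an isomorphism between crossed products $L^\infty(X_0)\rtimes\G$ and $L^\infty(Y_0)\rtimes\La$ of finite transitive actions that carries $L(\G)$ onto $L(\La)$. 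Passing if necessary to a level with normal point-stabilizer and applying Proposition~\ref{basicconstr1} then produces normal finite-index subgroups $\G'\lhd\G$, $\La'\lhd\La$ with $[\G:\G']=[\La:\La']$ and $\G/\G'\cong\La/\La'$, and identifies the basic constructions $L(\G')\subseteq L(\G)$, $L(\La')\subseteq L(\La)$. Finally, cutting down by a minimal projection $p\in A_{k+l}\subseteq A$ — so $pMp=(pAp)\rtimes\G'$ and $p\bigl(L^\infty(X_{k+l})\rtimes\G\bigr)p=L(\G')$ as in the computation in Proposition~\ref{basicconstr1} — and absorbing all the corner maps and unitaries into one $u\in\mathcal N_{L^\infty(Y)\rtimes\La}(L^\infty(Y))$ puts us in the hypotheses of Theorem~\ref{3'} (in its version for actions properly outer over the center, cf.\ the Remark after Theorem~\ref{3'}) on the corner $pMp$. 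That theorem gives the projections $p$, $q=u\Theta(p)u^*$, the character $\eta\colon\G'\to\mathbb T$ and the isomorphism $\delta\colon\G'\to\La'$ with $\Theta(apu_\g)=\eta(\g)\Theta(ap)\,u^*v_{\delta(\g)}u$ for all $a\in A$ and $\g\in\G'$, whence $\G\ca X$ and $\La\ca Y$ are virtually conjugate.

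The main obstacle is, as indicated, the compactness of $\La\ca Y$. The natural route to it runs through commensurability, up to unitary conjugacy, of $L(\G)$ and $L(\La)$ in $M$, and the delicate half of that — controlling the relevant indices and then transporting the computation $\mathcal{QN}_M(L(\G))''=M$ to $L(\La)$ — would, on the face of it, require $M$ to have the Haagerup property relative to $L(\La)$, which is essentially what one is trying to prove. Breaking this circularity is exactly where the common-Cartan hypothesis enters, through Theorem~\ref{cptint}; making that argument precise is the technical heart of the proof.
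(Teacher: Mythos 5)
Your skeleton --- transfer property (T) to $\La$, establish compactness of $\La\ca Y$, invoke Theorem~\ref{virtualequal}, and finish on a corner via Theorem~\ref{3'} --- matches the paper's, but the two load-bearing steps are not carried out, and in both cases the paper uses a different (and available) idea that your proposal misses. First, for compactness of $\La\ca Y$: your route through commensurability of $L(\G)$ and $L(\La)$ is, as you yourself note, circular (the intertwining $L(\G)\prec_M L(\La)$ in Claim~\ref{finindexQ} needs the Haagerup property of $M$ relative to $L(\La)$, i.e.\ compactness of $\La\ca Y$), and your proposed fix via Theorem~\ref{cptint} does not apply: that theorem classifies intermediate algebras $L(\G)\subseteq N\subseteq L^\infty(X)\rtimes\G$, and you have no containment $L(\G)\subseteq uL(\La)u^*$ to feed into it. The paper sidesteps all of this: the hypothesis $\Theta(L^\infty(X))=L^\infty(Y)$ makes the actions orbit equivalent, weak compactness in the Ozawa--Popa sense is an invariant of the Cartan inclusion (\cite[Proposition~3.4]{OP07}, cf.\ part 1 of Theorem~\ref{profnsprob}), so $\La\ca Y$ is weakly compact; and a weakly compact action of a property (T) group is compact by \cite[Remark~6.4]{Io08}. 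This is a one-line argument replacing your ``technical heart,'' and without it your proof of compactness does not close.

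Second, the endgame has a gap of the same nature. Theorem~\ref{virtualequal} only gives $\Theta(L^\infty(X_{k+l})\rtimes\G)=w(L^\infty(Y_{k+1})\rtimes\La)w^*$; it does \emph{not} identify $L(\G)$ with a conjugate of $L(\La)$, and Proposition~\ref{basicconstr1} and Theorem~\ref{3'} both require exactly such an identification as a hypothesis. Your reduction ``to an isomorphism \dots that carries $L(\G)$ onto $L(\La)$'' by cutting with minimal projections is precisely the hard step: even if a conjugate of $L(\G)$ were shown to equal some $L^\infty(Y_0)\rtimes\La\supseteq L(\La)$, a finite-index containment is not equality. The paper bridges this by a quantitative height argument (Claims~\ref{height1} and~\ref{height2}): starting from $w(A_{s+k}\rtimes\G)w^*=B_k\rtimes\La$ and the common Cartan, it shows the Fourier coefficients of the $u_\g$, $\g\in\G_k$, over $\La$ concentrate on a single atom, and then applies \cite[Theorem~4.1]{KV15} to obtain $s_k^jww_k^*L(\G_k)a_k^i w_kw^*(s_k^j)^*=L(\La_k)b_k^j$ on a corner; only then is Theorem~\ref{3'} applicable. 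Neither this step nor a substitute for it appears in your proposal.
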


\begin{proof} Suppressing $\Theta$ we can assume $L^\infty(X)=L^\infty (Y)=A$ and $A\rtimes \G=A\rtimes \La=M$. Since property (T) is an $OE$-invariant \cite[Corollary 1.4]{Fu99a} it follows that $\La$ is also a property (T) icc group. Since $\G\ca A$ is profinite then it is weakly compact in the sense of Ozawa-Popa and by \cite[Proposition 3.4]{OP07} it follows that $\La\ca B$ is also weakly compact. Since $\La$ has property (T) then \cite[Remark 6.4]{Io08} implies that $\La\ca B$ is compact. Thus using Theorem \ref{virtualequal}  there exist increasing towers of $\G$-invariant, finite dimensional algebras $(A_n)_n\subseteq A$ and $(B_n)_n \subseteq A$ such that $\overline{\cup_n A_n}^{SOT}=A=\overline{\cup_n B_n}^{SOT}$. Also there is a unitary $w\in M$ and an integer $s$ such that for all $k$ we have \begin{equation}
w(A_{s+k}\rtimes \G)w^*= B_k\rtimes \La.
\end{equation}  
Since $A_s\rtimes \G\subseteq A_{s+k}\rtimes \G$ is a finite index inclusion of II$_1$ factors then so is  $B_0\rtimes \La\subseteq B_{k}\rtimes \La$. Thus by Proposition \ref{basicconstr1} it follows that $B_k$ is finite dimensional. Since $\G_{k}:= {\rm Stab}_\G (A_{s+k})\lhd \G$ is a finite index normal subgroup so is  $\La_k:={\rm Stab}_\La(B_k)\lhd \La$.   Since $w\in M= \overline{\cup_k A_k\rtimes \G}^{SOT}$ is a unitary there exists a sequence $w_k\in\mathcal U(A_k\rtimes \G)$ such that $\|w-w_k\|_2\ra 0$ as $k\ra \infty$.

\noindent For the remaining part of the proof for every $m\geq k$ we will keep in mind the following diagram of inclusions

\begin{equation}\label{tower1}\begin{array}{ccc}

ww_k^* (A_{s+m}\rtimes \G) w_kw^* & =& B_m\rtimes \La\\
\cup&&\cup\\
ww_k^* (A_{s+k}\rtimes \G) w_kw^* & =& B_k\rtimes \La\\
\cup&&\cup\\
w(A_{s}\rtimes \G) w^* & =& B_0\rtimes \La
\end{array}
\end{equation}

\noindent Pick $k$ large enough such that $\|1-ww_k^*\|_2\leq 10^{-9}$. Denote by $At(A_{s+l})=\{a^i_l \,:\, 1\leq i\leq r_l\}$ and $At(B_{l})=\{b^j_l \,:\, 1\leq i\leq t_l\}$. Also we can assume without any loss of generality that $\dim B_0 \leq \dim A_s$ (hence $\dim B_k \leq \dim A_{s+k}$ for all $k$); in particular we have $\tau(b^j_l)\geq \tau(a^i_l)$.  Fix $1\leq i\leq r_k$ such that $\|a_k^i(1-ww_k^*)\|_2+ \|(1-ww_k^*)a^i_k\|_2 \leq 2\|a^i_k\|\|1-ww_k^*\|_2$. Hence if we denote by $\delta^i_k=\|a_k^i(1-ww_k^*)\|_2+ \|(1-ww_k^*)a^i_k\|_2 \|a_k^i\|_2^{-1}$ then we have that  \begin{equation}
\delta^i_k \leq 2\|1-ww_k^*\|_2<10^{-8}.
\end{equation}

\noindent With this notations at hand we show that

\begin{claim}\label{height1} There is a unique $1\leq j\leq t_k$ such that for every $\g\in \G_k$ one can find  $\la, \la'\in \La$ such that 
\begin{eqnarray}&&(1-2\delta_k^i) \|b^j_k\|^2_2 \leq Re \tau(a_k^i u_\g v_\la b^j_k ), \text{and}\label{ineq4}\\
&&(1-3\delta_k^i) \|b^{j}_k\|^2_2 \leq Re \tau(ww_k^*a_k^i u_\g w_kw^*v_{\la'} b^{j}_k )\label{ineq5}.\end{eqnarray}
\end{claim} 

\noindent \textit{Proof of Claim \ref{height1}.} Fix $\g\in \G_k$. By triangle inequality we have $\| a^i_k u_\g-ww_k^* a_k^i u_\g w_kw^*\|_2\leq \delta_k^i\|a_k^i\|_2 $. Applying the conditional expectation and using \eqref{tower1} we also have $\|E_{B_k\rtimes \La}( a^i_k u_\g) -ww_k^* a_k^i u_\g w_kw^*\|_2\leq \delta_k^i\|a_k^i\|_2 $. Then the triangle inequality further gives \begin{eqnarray}
&&\|a_k^i u_\g-E_{B_k\rtimes \La}( a^i_k u_\g)\|_2\leq 2\delta_k^i\|a_k^i\|_2 \label{ineq1}\\
&&\|a_k^i u_\g- E_{B_k\rtimes \La}(ww_k^* a_k^i u_\g w_kw^*)\|_2\leq 3\delta_k^i\|a_k^i\|_2.\label{ineq2}
\end{eqnarray}

\noindent By Lemma \ref{Dye} there exist orthogonal projections $e_\la \in A$ so that $\sum_\la e_\la =1$ and $u_\g =\sum_{\la\in \La}  e_\la v_\la$. This combined with \eqref{ineq1} yield 
\begin{equation*}\begin{split}
\sum_{\la\in \La} \|a^i_k e_\la -E_{B_k }(a_k^i u_\g v_{\la^{-1}})\|^2_2&= \|a_k^i u_\g-E_{B_k\rtimes \La}( a^i_k u_\g)\|^2_2  \leq 4(\delta_k^i)^2\|a_k^i\|_2^2=\sum_{\la\in \La} 4(\delta_k^i)^2 \|a^i_k e_\la\|^2_2.
\end{split}
\end{equation*}
\noindent Thus one can find $\la\in \La$ so that $a^i_ke_\la\neq 0$ and $\|a^i_k e_\la -E_{B_k }(a_k^i u_\g v_{\la^{-1}})\|_2\leq 2\delta_k^i \|a^i_k e_\la\|_2$. This inequality and basic calculations show that $2(1- 2 \delta^i_k)\|a^i_k e_\la\|^2_2 \leq (1- 4(\delta^i_k)^2)\|a^i_k e_\la\|^2_2 +\| E_{B_k }(a_k^i u_\g v_{\la^{-1}})\|^2_2\leq 2 Re \tau(a^i_k e_\la E_{B_k }(a_k^i u_\g v_{\la^{-1}}))$; thus  
\begin{equation}\label{ineq3}(1- 2 \delta^i_k)\|a^i_k e_\la\|^2_2 \leq  Re \tau(a^i_k e_\la E_{B_k }(a_k^i u_\g v_{\la^{-1}})).   
\end{equation}

\noindent Using the formulas $\sum_j b_k^j=1$ and $E_{B_k}(x)=\sum_j\tau(x b_k^j)\tau(b^j_k)^{-1} b^j_k$, relation \eqref{ineq3} implies that $(1- 2 \delta^i_k)\sum_j \|a^i_k e_\la b^j_k\|^2_2 \leq  Re \sum_j \tau(a^i_k e_\la b_k^j)\tau (a_k^i u_\g v_{\la^{-1}} b^j_k)\tau(b^j_k)^{-1}$. Hence there is a $j$ (that at this point may depend on $\g$!) so that $a^i_k e_\la b^j_k\neq 0$ and  $(1- 2 \delta^i_k)\|a^i_k e_\la b^j_k\|^2_2 \leq  Re \tau(a^i_k e_\la b_k^j)\tau (a_k^i u_\g v_{\la^{-1}} b^j_k)\tau(b^j_k)^{-1}$. Simplifying $\|a^i_k e_\la b^j_k\|^2_2= \tau(a^i_k e_\la b^j_k)\neq 0$ this further gives \begin{equation}\label{ineq7}(1-2\delta_k^i)\|b_k^j\|^2_2\leq Re \tau(a_k^i u_\g v_\la^{-1} b_k^j).
\end{equation} 
\vskip 0.03in
\noindent Proceeding in a similar manner  inequality \eqref{ineq2} implies there exist $\la'\in \La$ and $1\leq j'\leq r_k$ such that 
\begin{equation}\label{ineq10}(1-3\delta_k^i)\|b_k^{j'}\|^2_2\leq Re \tau(ww_k^*a_k^i u_\g w_kw^*v_{\la'} b_k^{j'}).
\end{equation}  
\vskip 0.03in
\noindent To finish the proof it suffices to argue that $j=j'$ and $j$ is unique (hence does not depend on $\g$).  Since $\tau(a^i_k u_\g v_{\la^{-1}} b_k^j)= \tau(a^i_k E_A( u_\g v_{\la^{-1}}) b_k^j)= \tau(a^i_k e_\la b_k^j)$ then \eqref{ineq7} implies 
$\|b_k^j-a_k^i\|^2_2= \tau(b_k^j + a_k^i-2 b_k^ja_k^i)\leq \tau(b_k^j) + \tau(a_k^i)-2 \tau(b_k^j e_\la a_k^i)\leq 4\delta_k^i\tau(b_k^j)$ and hence
 \begin{equation}\label{ineq8}
\|b_k^j-a_k^i\|_2\leq 2(\delta_k^i)^{\frac{1}{2}}\|b_k^j\|_2.
\end{equation}
\noindent By triangle inequality this also yields \begin{equation}\label{ineq9}
\|b_k^j-ww_k^*a_k^i w_kw^*\|_2\leq (2(\delta_k^i)^{\frac{1}{2}}+ \delta_k^i)\|b_k^j\|_2.
\end{equation}  
Then Cauchy-Schwarz inequality in combination with \eqref{ineq10} and \eqref{ineq9} show that 
\begin{equation*}
\begin{split}
 \|b_k^{j'} b_k^{j} \|_2  &\geq  Re \tau( b_k^j ww_k^* u_\g w_kw^*v_{\la'}b_k^{j'}) \\
& \geq Re \tau(  ww_k^* a_k^i u_\g w_kw^*v_{\la'}b_k^{j'})- \|b_k^j-ww_k^*a_k^i w_kw^*\|_2 \|b_k^{j'}\|_2\\
& \geq (1-4\delta_k^i - 2(\delta_k^i)^{\frac{1}{2}})\|b_k^j\|^2_2.
\end{split}
\end{equation*}

\noindent As $b^j_k$'s are orthogonal this forces that $j=j'$. Uniqueness of $j$ (hence independence of $\g$) follows from \eqref{ineq8}. $\hfill\blacksquare$

\noindent Next we show the following 
\begin{claim}\label{height2} There exist $1\leq j\leq t_k$ and a unitary $s_k^j\in B_k\rtimes \La$ such that $s_k^j ww_k^* a_k^iw_kw^* (s_k^j)^* =c_k^j\leq p_k^j$  and 
\begin{equation}\label{equalcorner}s_k^j ww_k^* a_k^i (A_{s+k} \rtimes \G)a_k^iw_kw^* (s_k^j)^*= c_k^j (B_k\rtimes \La) c_k^j
\end{equation} and  for every  $\g\in \G_k$ there is  $\la'\in \La_k$ satisfying 
\begin{eqnarray}
&&(1-3\delta_k^i -36(\delta_k^i)^{\frac{1}{4}}) \|b^{j}_k\|^2_2 \leq  Re \tau(s_k^jww_k^*a_k^i u_\g w_kw^*(s_k^j)^*v_{\la'} b^{j}_k )\label{ineq11}.\end{eqnarray}
\end{claim} 
\noindent \textit{Proof of Claim \ref{height2}.} As $\tau(a_k^i)=\tau(ww_k^* a_k^iw_kw^*)\leq \tau(p^j_k)$ there is a subprojection $c^j_k\in B_k\rtimes \La$ of  $b_k^j$ that is equivalent (in $B_k\rtimes \La$) to $ww_k^* a_k^iw_kw^*$. By \cite[Lemma 4.1]{Co76} one can find a unitary $s_k^j\in B_k\rtimes \La$ satisfying $s_k^j ww^*_ka_k^i w_kw^* (s_k^j)^*=c_k^i$, $[s_k^i,|ww^*_ka_k^i w_kw^*-c_k^i|]=0$ and $|s_k^j-1|\leq 3|ww^*_ka_k^i w_kw^*-c_k^i|$. 

\noindent Using \eqref{ineq9} we see that \begin{equation*}\begin{split}\|ww^*_ka_k^i w_kw^*-c_k^i\|^2_2&\leq 2(2(\delta_k^i)^{\frac{1}{2}}+ \delta_k^i)^2\|b_k^j\|^2_2+2\|b_k^j-c_k^j\|^2_2=2(2(\delta_k^i)^{\frac{1}{2}}+ \delta_k^i)^2\|b_k^j\|^2_2+2\tau(b_k^j-c_k^j)\\
& = (2(2(\delta_k^i)^{\frac{1}{2}}+ \delta_k^i)^2+2)\|b_k^j\|^2_2-2\|a_k^i\|_2^2 \\
& \leq ((2(2(\delta_k^i)^{\frac{1}{2}}+ \delta_k^i)^2+2)-2 (1-2(\delta_k^i)^{\frac{1}{2}})^2) \|b_k^j\|_2^2\leq 18 (\delta_k^i)^{\frac{1}{2}}\|b_k^j\|_2^2.\end{split}\end{equation*}
\noindent Combining with the previous inequality we get $\|s_k^j-1\|_2\leq 18 (\delta_k^i)^{\frac{1}{4}}\|b_k^j\|_2$. In turn this together with Cauchy-Schwarz inequality and \eqref{ineq5} show that for every $\g\in \G_k$ there is $\la'\in \La$ such that

\begin{equation*}\begin{split}Re \tau ( s_k^j ww_k^* u_\g a_k^iw_k^*w (s_k^j)^* v_{\la'} b_k^j) &\geq  Re \tau ( ww_k^* u_\g a_k^i w_k^*w  v_{\la'} b_k^j) - 2 \|1-s_k^j\|_2 \|b_k^j\|_2 \\
&\geq  (1-3\delta_k^i- 36(\delta_k^i)^{\frac{1}{4}})\|b_k^j\|_2^2.
\end{split}
\end{equation*}

\noindent This shows \eqref{ineq11}. Also since $\|c_k^j v_{\la'} b_k^j\|_2\geq Re \tau ( s_k^j ww_k^* u_\g a_k^iw_k^*w (s_k^j)^* v_{\la'} b_k^j)$ the above inequality also shows that $\la'\in \La_k$. The rest of the statement follows from the previous relations. $\hfill\blacksquare$

\vskip 0.05in
\noindent  Since $a_k^i (A_{s+k}\rtimes \G) a_k^i= L( \G_k) a_k^i$ and $b_k^j( B_k\rtimes \La) b_k^j= L(\La_k) b_k^j$ then \eqref{equalcorner} of Claim \ref{height2} implies that $s_k^j ww_k^*  L( \G_k ) a_k^i w_kw^* (s_k^j)^* = c_k^j L(\La_k) b_k^j c_k^j\subseteq L(\La_k)b_k^j$. Since $\La_k$ and $\G_k$ are icc groups we see that the conditions (1) and (2) in \cite[Theorem 4.1]{KV15} are satisfied, where $\mathcal G =s_k^j ww_k^*  \G_k a_k^i w_kw^* (s_k^j)^*$. Also \eqref{ineq11} shows that (3) in \cite[Theorem 4.1]{KV15} is also satisfied. Therefore using the conclusion of that theorem we get that $c_k^j=b_k^j$.
\vskip 0.03in
\noindent In conclusion we have that $s_k^j ww_k^*  L( \G_k ) a_k^i w_kw^* (s_k^j)^* =  L(\La_k) b_k^j$. Notice we also have $s_k^i ww_k^* (A\rtimes \G_k  )a_k^i w_kw^* (s_k^i)^*=s_k^i ww_k^* a_k^i (A\rtimes \G  )a_k^i w_kw^* (s_k^i)^*=    b_k^j (A\rtimes \G) b_k^j = (A\rtimes \G_k) b_k^j$. Let $z\in \mathcal N_M(A)$ such that $z^*z= a_k^i$ and $z^*z= b^j_k$ and denote by $y= b_k^j z w_kw^* (s_k^j)^*$ one can check that $y$ is a unitary in $(A\rtimes \La_k) b_k^j$ satisfying $y(s_k^j ww_k^* A a_k^i w_k w^* (s_k^j)^*)y^*= Ab_k^j$. Thus applying Theorem \ref{3'} (working with the algebra $(A\rtimes \La_k) b_k^j$) we get the desired conclusion by letting $p=a_k^i$ $q=b_k^j$ etc.\end{proof}

\noindent {\bf Final remarks.} We notice that \eqref{ineq11} can be used directly to show that $\G_k$ is isomorphic to finite index subgroup of $\La_k$. It is plausible that one can exploit this further and show the conclusion directly, without appealing to the results in \cite{IPV10,KV15}. 

\section*{Acknowledgments}  The authors are grateful to Adrian Ioana and Jesse Peterson for many helpful discussions related to this project. The authors are extremely grateful to Yuhei Suzuki for carefully reading a first draft of this paper, for his helpful comments and suggestions, and for correcting numerous typos and minor inaccuracies. The authors are also grateful to Rahel Brugger for her helpful comments on our paper, and for correcting a few typos.
 The second author would like to thank Vaughan Jones for several suggestions and comments regarding the results of this paper. The second author would also like to thank Krishnendu Khan and Pieter Spaas for stimulating conversations regarding the contents of this paper. The first author was partially supported by NSF Grant DMS \#1600688.







\noindent
\textsc{Department of Mathematics, The University of Iowa, 14 MacLean Hall, Iowa City, IA 52242, U.S.A.}\\
\email {ionut-chifan@uiowa.edu} \\
\email{sayan-das@uiowa.edu}

\end{document}